%% file: slither_final.tex
\documentclass[reqno]{amsart}

\usepackage[letterpaper,margin=1in]{geometry}

\usepackage[T1]{fontenc}
\usepackage[sc]{mathpazo}       
\usepackage[euler-digits]{eulervm} 
\usepackage{microtype}

\usepackage{
  amsmath,
  amsfonts,
  amssymb,
  amsthm,
  mathtools,
  mathrsfs,
  stmaryrd,
  wasysym
}

\usepackage{mathabx}

\usepackage{enumerate}

\usepackage{tikz}
\usepackage{tikz-cd}
\usepackage{standalone}

\usetikzlibrary{
  arrows.meta,
  decorations.markings,
  decorations.pathmorphing,
  patterns,
  patterns.meta
}
\usepackage[alphabetic]{amsrefs}

\usepackage[hidelinks]{hyperref}
\usepackage{cleveref}

\newcommand{\ella}{\rotatebox[origin=c]{20}{$\!\ell$}}
\newcommand{\ellb}{%
  \setbox0=\hbox{$1$}%
  \resizebox{\wd0}{\ht0}{$\ella$}%
}
\newcommand{\elll}{\hspace{0.04em}\ellb\hspace{-0.04em}}
\newcommand{\llle}{\hspace{-0.04em}\reflectbox{\rotatebox[origin=c]{180}{\reflectbox{$\elll$}}}\hspace{0.04em}}

\newcommand{\bl}{\bullet}
\newcommand{\Pb}{\mathbb{P}}
\newcommand{\A}{\mathbb{A}}
\newcommand{\ove}{\overline}
\newcommand{\Lamma}{\ensuremath{\mathrm{L}}}
\newcommand{\side}{\ensuremath{\vdash}}
\newcommand{\rel}{\mathbin{\mathrm{rel}}}
\newcommand{\zalg}{\ensuremath{\Z\mathrm{Alg}}}
\newcommand{\opp}{\mathrm{op}}
\newcommand{\ebs}{\ensuremath{\mathrm{EBS}}}
\newcommand{\U}[2]{\ensuremath{{}^{#2}#1}}
\newcommand{\D}[2]{\ensuremath{{}_{#2}#1}}

\makeatletter
\renewcommand{\l@subsection}{\@tocline{2}{0pt}{2.5em}{}{}}
\makeatother

\newtheorem{Thm}{Theorem}[subsection]

\newtheorem{Prop}[Thm]{Proposition}

\newtheorem{Cj}[Thm]{Conjecture}
\newtheorem*{cj}{Conjecture}
\newtheorem{Obs}[Thm]{Observation}
\newtheorem{Lm}[Thm]{Lemma}
\newtheorem{Cor}[Thm]{Corollary}

\newtheorem{IntroThm}{Theorem}

\newtheorem{IntroCor}[IntroThm]{Corollary}

\theoremstyle{definition}
\newtheorem{Df}[Thm]{Definition}

\theoremstyle{remark}
\newtheorem{Rmk}[Thm]{Remark}
\newtheorem{Ex}[Thm]{Example}

\input{bourbaki-warning}
\renewcommand{\warningsymbolheight}{2em}

\renewcommand{\theThm}{%
  \ifnum\value{subsection}=0
    \thesection.\arabic{Thm}%
  \else
    \thesubsection.\arabic{Thm}%
  \fi
}%

\numberwithin{figure}{subsection}
\renewcommand{\thefigure}{%
  \ifnum\value{subsection}=0
    \thesection.\arabic{figure}%
  \else
    \thesubsection.\arabic{figure}%
  \fi
}
\numberwithin{equation}{subsection}
\renewcommand*{\theequation}{%
  \ifnum\value{subsection}=0
    \thesection
  \else
    \thesubsection
  \fi
  .\arabic{equation}%
}

\tikzset{
  middle arrow/.style={
    decoration={markings,mark=at position 0.7 with {\arrow{>}}},
    postaction={decorate}
  },
  middle arrow chain/.style={
    decoration={
      markings,
      mark=at position 0.5 with {\arrow{>}},
      mark=at position 0.75 with {\arrow{>}}
    },
    postaction={decorate}
  }
}

\newcommand{\midarrow}{
  \;
  \begin{tikzpicture}
    \draw[middle arrow] (0,0) -- (0.4,0);
  \end{tikzpicture}
  \;
}

\newcommand{\middarrow}{
  \;
  \begin{tikzpicture}
    \draw[middle arrow chain] (0,0) -- (0.4,0);
  \end{tikzpicture}
  \;
}

\newcommand{\ltl}{\ensuremath{<\hspace{-0.45em}<}}

\newcommand{\inner}[1]{\left\langle #1 \right\rangle}

\newcommand{\pr}{\ensuremath{\langle}}
\newcommand{\rp}{\ensuremath{\rangle}}

\newcommand{\eps}{\epsilon}
\newcommand{\Gm}{\ensuremath{\mathbb{G}_m}}
\newcommand{\BB}{Bia\l ynicki-Birula }
\newcommand{\nsym}{\mathrm{NSym}}
\newcommand{\qsym}{\mathrm{QSym}}
\newcommand{\sym}{\mathrm{Sym}}
\newcommand{\abs}[1]{\ensuremath{\left\lvert #1 \right\rvert}}
\newcommand{\QGr}{QGr}

\newcommand{\R}{\mathbb{R}}
\newcommand{\Z}{\mathbb{Z}}
\newcommand{\C}{\mathbb{C}}
\newcommand{\GL}{GL}

\DeclareMathOperator{\wt}{wt}
\DeclareMathOperator{\Des}{Des}
\DeclareMathOperator{\Lin}{Lin}
\DeclareMathOperator{\SSYT}{SSYT}
\DeclareMathOperator{\deq}{\vcentcolon=}
\DeclareMathOperator{\Spec}{Spec}
\DeclareMathOperator{\Hom}{Hom}
\DeclareMathOperator{\Span}{span}

\DeclareMathOperator{\Cone}{Cone}

\DeclareMathOperator{\Div}{div}
\DeclareMathOperator{\Aut}{Aut}

\DeclareMathOperator{\Gr}{Gr}
\DeclareMathOperator{\rk}{\ensuremath{\mathrm{rk}}}
\DeclareMathOperator{\Id}{id}
\DeclareMathOperator{\Chim}{Chim}
\DeclareMathOperator{\sgn}{sign}
\DeclareMathOperator{\cl}{cl}

\DeclarePairedDelimiter\ide{(}{)}

\title{A quasisymmetric analog of Grassmannian Schubert varieties}
\author{Teddy Gonzales}
\address{Rutgers University}
\email{tg570@math.rutgers.edu}

\author{Tuong Le}
\address{Princeton University}
\email{tuongle@princeton.edu}

\author{Chayim Lowen}
\address{Princeton University}
\email{chayiml@princeton.edu}

\begin{document}
\begin{abstract}
We show that the cohomology rings of toric Richardson varieties in the Grassmannian are finite truncations of the ring of quasisymmetric functions.
We exhibit an affine paving of each such variety 
whose cell closures give rise to the basis of fundamental quasisymmetric functions. 
We similarly interpret the homology of these varieties in terms of the ring of noncommutative symmetric functions and show that the expansion of the homological class of any torus-invariant subvariety into the affine paving basis agrees with the expansion of a corresponding generalized noncommutative ribbon function into the ribbon basis. 
By taking the direct limit of all toric Richardson varieties, we obtain an ind-variety 
equipped with a weak $H$-group structure whose cohomology is the Hopf algebra of quasisymmetric functions.
We conjecture that it is isomorphic
to a similar $H$-group constructed by Baker--Richter. 

As a byproduct, we deduce that the $f$-vectors of shard polytopes are log-concave,
making the first progress on a question of Ferroni--Schr\"oter for matroid base polytopes.

\end{abstract}

\maketitle

\tableofcontents
\section{Introduction}

\subsection{Motivation} 
The theory of symmetric functions and Schur polynomials lies at the heart of algebraic combinatorics. It has important connections to the representation theory of the symmetric group $S_n$ and the general linear group $\GL_n$, as well as to Schubert calculus in the Grassmannian. Schur polynomials were first studied back in 1815 by Cauchy \cite{Cau15}, but it took until 1901 for the connection to the representation theory of $S_n$ and $\GL_n$ to be made explicit in Schur's thesis \cite{Sch01}. The connection to the geometry of the Grassmannian was pointed out even later, in 1947 by Lesieur \cite{Les47}, who realized that the geometric formula discovered much earlier by Pieri \cite{Pie93} and Giambelli \cite{Gia02} matches up with identities in symmetric functions.

In modern language, for a partition\footnote{In this paper, we consistently identify partitions with their Young diagram, drawn using the English convention.} $\lambda$, the Schubert variety $\Omega_\lambda$ in the Grassmannian (which is the Grassmannian itself in the special case of a rectangular partition) satisfies:
\begin{enumerate}
    \setcounter{enumi}{-1}
    \item $\Omega_\lambda$ is a normal projective variety with an affine stratification by cells indexed by partitions $\mu\subseteq \lambda.$
    \item Writing $A_{\bl}$ for Chow homology, there are isomorphisms of graded $\Z$-modules 
    \[
    A_{\bl}(\Omega_{\lambda}) \cong H_\bl(\Omega_\lambda, \Z) \cong \Span_{\Z} \{s_{\mu} \mid \mu \subseteq \lambda\} \subset \sym \] 
    under which the class of a Richardson subvariety maps to the corresponding skew Schur function. 
    Furthermore, the \emph{effective} classes in $A_{\bl}(\Omega_\lambda)$  correspond exactly to the Schur-positive elements.
    \item Writing $A^{\bl}$ for Chow cohomology, there are isomorphisms of graded rings
\[
A^{\bl}(\Omega_{\lambda}) \cong 
H^{\bl}(\Omega_{\lambda}, \Z) \cong \frac{\sym}{\ide{s_\nu\mid \nu\nsubseteq \lambda}}
\]
under which $s_{\mu}$ corresponds to the dual class of the closure of the affine cell indexed by $\mu \subseteq \lambda$.
\item A suitable direct limit of all Schubert varieties produces an ind-variety $\Gr = \Gr(\infty, \infty)$ (the infinite Grassmannian)
naturally equipped with an $H$-group\footnote{An $H$-group is by definition a group object in the homotopy category of topological spaces. In good cases, the integral (co)homology of such a space is naturally equipped with the structure of a Hopf algebra.} structure so that $H^\bl(\Gr, \Z)$ and $H_\bl(\Gr, \Z)$ are each isomorphic to $\sym$ as graded Hopf algebras.
\end{enumerate}
Part (1) gives a geometric interpretation of
the \emph{$\Z$-module} $\sym$: the combinatorially defined skew Schur functions expand into the Schur function basis in the same way that the homology classes of Richardson subvarieties expand into the Schubert 
basis.
Part (2) gives a geometric interpretation of 
the \emph{ring} $\sym$. Part (3) upgrades this to a geometric interpretation of 
the \emph{Hopf algebra} $\sym$.

In his seminal paper on multipartite $P$-partitions, Gessel \cite{Gessel1984} 
introduced the ring of quasisymmetric functions $\qsym$, an extension of the ring of symmetric functions. As an analogue of the Schur basis of $\sym$, Gessel introduced the basis of \emph{fundamental quasisymmetric functions} for $\qsym$. 
Gelfand, Krob, Lascoux, Leclerc, Retakh, and Thibon \cite{GKLLRT95} later introduced
the ring of 
noncommutative symmetric functions $\nsym$, along with its basis of 
\emph{noncommutative ribbon functions}.
Both $\qsym$ and $\nsym$ are graded Hopf algebras and the two are dual to one another \cite{MR95};
the fundamental quasisymmetric basis is dual to the noncommutative ribbon basis. Duchamp, Krob, Leclerc, and Thibon \cite{DKLT96} later gave a natural interpretation of the fundamental quasisymmetric functions as characters of irreducible representations of the $0$-Hecke algebra in much the same way that
Schur functions correspond to characters of irreducible $S_n$-representations.
A natural next step is to find a geometric realization of $\qsym$ and $\nsym$ with properties analogous to (0)--(3).

There has been some previous work 
addressing this question. 
Baker--Richter \cite{BR08} considered the loop space of the suspension of $\C \mathbb{P}^{\infty}$, denoted $\Omega \Sigma \C \mathbb{P}^\infty$, 
and proved that its cohomology is 
the Hopf algebra $\qsym$ (and so its homology is the Hopf algebra $\nsym$).
Pechenik--Satriano \cite{PS26} later showed that the homotopy equivalent space $J \C \mathbb{P}^\infty$---the James space of $\C \mathbb{P}^{\infty}$---admits a natural cell decomposition 
whose dual basis in cohomology is given by the monomial quasisymmetric functions, and also developed a  torus-equivariant version of the theory \cite{PS24}. Oesinghaus \cite{O19} showed that the integral Chow ring of the algebraic stack of expanded pairs described by Abramovich--Cadman--Fantechi--Wise \cite{ACFW13} (relying on prior work of Li \cites{L01, L02}) is the Hopf algebra $\qsym$. Bergeron--Gagnon--Spink--Tewari \cite{BGST26} constructed a reduced torus-invariant subscheme $\QGr(r, n)$ of the Grassmannian 
$\Gr(r, n)$ whose cohomology ring is a finite truncation of $\qsym$.

If we wish to develop a theory which parallels the case of $\sym$, none of these constructions are entirely satisfactory.
The space $\Omega \Sigma \C \mathbb{P}^\infty$,
while elegant in its simplicity, lacks an intrinsic algebro-geometric structure. 
The homotopy equivalent space $J \C \mathbb{P}^\infty$, which comes much closer, is not a \emph{normal} ind-variety, as shown by
Pechenik and Satriano themselves \cite{PS26}.
Also, the natural basis arising from this framework is the simpler monomial basis rather than the fundamental basis. 
Oesinghaus's model is a stack with a rather beautiful functor of points, but does not appear to be a scheme;
it similarly gives rise to the monomial basis. Bergeron--Gagnon--Spink--Tewari's model 
\emph{does} naturally witness the fundamental quasisymmetric basis but fails to be a normal variety because it is not irreducible.
Their work also does not give geometric meaning to $\nsym$ or the full \emph{Hopf algebra} structure of $\qsym$.

We propose the study of toric Richardson varieties in the Grassmannian as a quasisymmetric analog of Schubert varieties. Fundamental quasisymmetric and noncommutative ribbon functions
of degree $d$ 
are traditionally indexed by connected ribbons\footnote{In this paper ribbon simply means a diagram containing no $2\times 2$ box and need not be connected.} of size $d$ or subsets of $[d-1]$; we will index them also by binary strings\footnote{For $d=0$ we conjure up a string $\eps$ of length $-1$ corresponding to the empty ribbon. We set $F_\eps =  R_\eps = 1.$} of length $d-1$. Write $F_s$ and $R_s$ for the fundamental quasisymmetric and noncommutative ribbon functions indexed by a binary string $s$.
We will likewise index (indecomposable) toric Richardson varieties by binary strings; given a binary string $s$, we write $X_s$ for the associated toric Richardson variety. These will be shown to satisfy:
\begin{enumerate}
    \setcounter{enumi}{-1}
    \item $X_s$ is a normal projective toric variety with an affine paving by cells indexed by subwords\footnote{Throughout this paper, 
    subwords and substrings correspond to arbitrary subsequences; they need not be contiguous.
    Also $\eps$ is considered a subword of every word.} $t$ of $s$.
    \item There are isomorphisms of graded $\Z$-modules 
    \[A_{\bl}(X_s) \cong H_\bl(X_s, \Z) \cong \Span_{\Z}\{R_t \mid \text{$t$ is a subword of $s$}\}
    \subset \nsym
    \] 
    under which the class of a torus-invariant subvariety maps to a corresponding generalized noncommutative ribbon function.
    Furthermore, the \emph{effective} classes in $A_{\bl}(X_s)$  correspond exactly to the ribbon-positive elements.
    \item There are isomorphisms of graded rings
\[
A^{\bl}(X_s) \cong 
H^{\bl}(X_{s}, \Z) \cong \frac{\qsym}{(F_w\mid \text{$w$ not a subword of $s$})}
\]
under which $F_t$ corresponds to the dual class of the closure of the affine cell indexed by the subword $t$ of $s$.
\item A suitable direct limit of all toric Richardson varieties produces an ind-variety $Q_\infty$ (the infinite quasisymmetric Grassmannian) naturally equipped with a weak $H$-group\footnote{A weak $H$-group is defined similarly to an $H$-group, except that the homotopy relation is relaxed by evaluating it only on finite CW complexes. See \Cref{sec:infinite} for details.} structure so that 
$H^\bl(Q_{\infty}, \Z)$ and $H_\bl(Q_{\infty}, \Z)$ are isomorphic to the graded Hopf algebras
$\qsym$ and $\nsym$, respectively.
\end{enumerate}
A more detailed account of the relationships between our toric Richardson varieties and previous geometric models for $\qsym$
may be found in \Cref{ssec:compare}. 

The discovery of a geometric model for $\sym$ was striking not only because a variety with cohomology realizing $\sym$ was found, but also because the Grassmannian 
had long been of independent interest and its fundamental importance may be recognized apart from any connection to symmetric functions.
In our case, toric Richardson varieties have similarly been of considerable interest in the literature. Indeed, Richardson varieties play a central role in both algebraic geometry and algebraic combinatorics, appearing in contexts as varied as Schubert calculus, the study of cluster algebras, and the theory of total positivity. Despite being among the most fundamental and classical varieties in algebraic combinatorics, dating back at least to the work of Stanley in 1977 \cite{Sta77}, the (co)homology of Richardson varieties is still surprisingly poorly understood.

Toric varieties
are a rich family of varieties notable for their amenability to combinatorial methods. There has recently been a flurry of interest in 
those Richardson varieties which lie in this family: the toric Richardson varieties
\cites{TW15,LeeMasudaPark2021,LeeMasudaPark2023,CanSaha2025,GH25,ELPSW26,GKSB26, BGS26}, with the focus primarily on those in the full flag variety of type A. 

The moment polytopes of (indecomposable) toric Richardson varieties in the Grassmannian are also remarkably well-studied. They are the matroid base polytopes of snake matroids, which by \cite{FMP26}*{Proposition 5.10}
are exactly the {shard polytopes} in the sense of \cite{PPVJ23}. By \cite{KMSRA18}*{Theorem 4.7}, they are exactly order polytopes of fence posets. By \cite{ABDMPT26}*{Theorem 5.11}, these polytopes are exactly the Harder–Narasimhan polytopes of oriented path quivers in the sense of \cite{BKT14}. By \cite{PPVJ23}*{Theorem 65}, they are 
representatives of the rays of the type cones of Cambrian fans,
and thus are exactly the Newton polytopes of $F$-polynomials of type $A$ cluster algebras \cite{BCDMTY24}*{Theorem 3}, and are brick polytope summands of certain sorting networks \cites{PS12, PS15, BS18, JLS21}.
Our study of toric Richardson varieties will allow us to prove purely combinatorial statements about these polytopes (see \Cref{ssec:bettinumerology}).

\subsection{Summary of main results}
Recall that a Richardson variety in the Grassmannian is given by a
skew shape $\lambda/\mu$ (where $\mu \subseteq \lambda$ are partitions). We prove in \Cref{cor:toric_richardsons_in_GR} that the Richardson variety is toric if and only if $\lambda/\mu$ is a ribbon, i.e.\ a skew shape containing no $2\times 2$ square. In this case, it has a dense open orbit for the action of 
the standard torus of $\GL_n$.
It was a remarkable discovery that the analogous statement for the full flag variety is false \cites{GKSB26}.

A fundamental tool in the study of varieties with the action of an algebraic torus $T$ is the \BB decomposition, which was first described in \cites{BB, BB_Example}. This decomposition provides an algebraic analog of Morse theory for smooth projective $\Gm$-varieties. In particular, for a smooth projective $T$-variety $X$ whose fixed locus is finite, the \BB decomposition induced by a general cocharacter gives a paving of $X$ into affine spaces $\A^{n_i}$.
The cell closures then give a basis for the homology of $X$.
For \emph{non-smooth} projective varieties, two things can go wrong. First, the \BB cells may fail to be irreducible. Second, even when they are irreducible they may contain singularities. Surprisingly, though toric Richardson varieties in the Grassmannian are quite singular in general, both these problems can be avoided, as shown by our first result.
\begin{IntroThm}\label[Thm]{thm:affine_paving}
    Any toric Richardson variety in the Grassmannian has a torus-invariant paving by affine spaces given by a suitably chosen \BB decomposition.
\end{IntroThm}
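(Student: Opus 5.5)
Toric Richardson varieties in the Grassmannian are indexed by ribbons $\lambda/\mu$, and a disconnected ribbon gives a product of the toric Richardson varieties of its components. Products of affine pavings are affine pavings, so we first reduce to an indecomposable $X_s$ indexed by a binary string $s$ of length $d-1$. Its moment polytope $P_s$ is the order polytope of the fence poset determined by $s$. It is a $d$-dimensional $0/1$ polytope, and its vertices are the order ideals (equivalently order filters) of the fence.

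The plan is to work entirely on the toric side. For a general cocharacter $\xi$, the \BB cell of a torus fixed point $x_v$ (indexed by a vertex $v$ of $P_s$) is a union of torus orbits: those orbits $O_F$ for faces $F \ni v$ such that $\xi$ is minimized on $F$ exactly at $v$ (or maximized, depending on sign convention). The attracting set of $v$ is then the affine toric variety of the tangent cone of $P_s$ at $v$, intersected with the open subset cut out by this condition. For this cell to be an affine space $\A^k$, it suffices to check a combinatorial condition. Let $C_v$ be the cone generated by the edge directions $u_1,\dots,u_m$ at $v$ that point ``upward'' for $\xi$. We need the faces of $P_s$ through $v$ on which $v$ is $\xi$-minimal to be exactly the faces of the tangent cone spanned by subsets of these upward edges. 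We also need the cone on the upward edges to be simplicial and unimodular, that is, generated by part of a lattice basis. Then the cell is $\Spec$ of the semigroup algebra of a unimodular simplicial cone, hence $\A^k$ with $k$ the number of upward edges. This should hold even though $v$ itself may be a singular point of $X_s$.

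Concretely, I would choose $\xi$ to be a linear functional $\sum_i c_i x_i$ on the fence coordinates with rapidly increasing weights, for instance $c_i = N^i$ along the fence order. I would then describe the vertices explicitly. Edges of an order polytope at an ideal $I$ correspond to adding or removing a connected convex subset of the poset, but only those sets whose addition or removal yields another ideal. With lexicographic weights, the upward edges from $I$ should be those adding a subset whose minimum-index element is forced. I expect these to be indexed by a set of positions with the following property: their indicator vectors are distinct coordinate vectors modulo lower-index coordinates, hence form a triangular, unimodular set. Every face of $P_s$ containing $I$ on which $I$ is $\xi$-minimal should then be generated by upward edges, since the lexicographic order makes the minimum on a face unique and forces the face to lie in the upward cone.

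A Bruhat/lex-order induction would then identify the cell of $I$ with the face structure. Counting the upward edges would give cell dimensions, and the cells would match subwords $t$ of $s$; this is the bijection needed later, but it is not required for the theorem itself.

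The main obstacle is the unimodularity and simpliciality of the upward cone at singular vertices. At a non-simple vertex the tangent cone has more edges than $d$, and a poor cocharacter yields a non-simplicial upward cone and hence a singular or reducible cell. The key step is therefore to show that, for the chosen lexicographic $\xi$, at most $\dim$-many independent upward edges occur and all non-simplicial behavior of the tangent cone is pushed into the downward directions. I would attempt this by an induction on the fence, deleting the last element of the poset. This corresponds to $s \mapsto s$ minus its last letter: the ideal either contains the last element or not, which splits $P_s$ into two pieces over $P_{s'}$. I would check that the upward cone at $I$ is the upward cone of the restricted ideal in $P_{s'}$, with possibly one extra coordinate direction adjoined. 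Unimodularity would then propagate inductively.
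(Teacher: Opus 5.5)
Your reduction to connected ribbons and your criterion for a cell to be an affine space (the paper's ``smoothly paves'') are fine. There is, however, a genuine gap. The step you single out as the main obstacle is never established, and for the cocharacter you actually propose it is false.

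Take the three-element fence $a_1>a_2<a_3$. In order-ideal coordinates its order polytope is $\{0\le x_1\le x_2\le 1,\ 0\le x_3\le x_2\}$, which is the square pyramid $P(01)$. The empty ideal is the non-simple apex, with edges in the four directions $e_2$, $e_1+e_2$, $e_2+e_3$, $e_1+e_2+e_3$. For $\xi=x_1+Nx_2+N^2x_3$ all four point upward. So the apex is the global minimum, and its cell is the whole affine chart at the apex. That chart is the cone over a square, i.e.\ the quadric cone $\{ad=bc\}\subset\A^4$, which is singular.

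Switching to the ``maximum'' convention only moves the failure to the fence $a_1<a_2>a_3$, whose apex is the full ideal. At both apexes all edges lie in a single orthant, so no covector whose weights all have one sign can work for every fence. The example also refutes your auxiliary claims:
\begin{enumerate}[(i)]
\item The four upward edges do not have distinct leading coordinates modulo lower indices.
\item In your deletion induction, passing from the chain $a_2<a_1$ to this fence adds two upward edges at the empty ideal ($e_2+e_3$ and $e_1+e_2+e_3$), not one. This turns a simplicial cone into a non-simplicial one.
\item Genericity of $\xi$ only makes the minimum on each face unique. It does not show that the upward edges at a vertex span a face, which is exactly the paving condition.
\end{enumerate}

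What is missing is the right covector, together with an invariant that controls the new edges at each step. The paper uses the same recursive structure you point to. $P(s)$ is a chimney over $P(\widetilde s)$ whose pivot is the $j$-terminal or $j$-initial face according to the last letter (\Cref{lm:is_chimney}). But the added coordinate gets an \emph{infinitesimal} weight: $v'=(v,-\epsilon)$ in chimney coordinates.

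To unwind this, let $z_i=1$ exactly when box $i$ of the ribbon lies below the lattice path; then $z_{i-1}\le z_i$ if $s_{i-1}=0$ and $z_{i-1}\ge z_i$ if $s_{i-1}=1$. In these coordinates the covector is $\xi=-z_1+\sum_{i=2}^{d}(-1)^{s_{i-1}}\epsilon_i z_i$ with $1\gg\epsilon_2\gg\cdots\gg\epsilon_d>0$. Here the cell of $p$ is built from the faces on which $p$ is the $\xi$-maximum. So the first element dominates, the element you delete carries the smallest weight, and the signs follow the letters of $s$.

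With this choice, only the vertical edges $p^{\bot}p^{\top}$ are oriented by the new coordinate. The invariant of \Cref{df:liftable} says the vertices split between the pivot and the opposite face, and each non-minimal vertex of one has exactly one lower neighbour in the other. \Cref{prop:lifting} shows this invariant propagates, and gives $Q^{v'}_{p^\bot}=(P^v_p)^{\side}$, and $Q^{v'}_{p^\top}=(P^v_p)^{\top}$ for $p$ off the pivot. Hence each paving face gains at most one edge at its vertex per step and stays simple.

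Finally, unimodularity is not a real obstacle. The edge directions (roots $e_x-e_y$, or interval indicator vectors in fence coordinates) form a unimodular system, so a simple paving is already an affine paving (\Cref{obs:tot_uni}).
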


\Cref{thm:affine_paving} produces a basis for the homology of $X_s$, which is naturally indexed by the distinct substrings of $s$. 
In \Cref{sec:homology}, we will see how to associate a ribbon to each torus-invariant subvariety of $X_s$.
For any ribbon $\beta$, there is a corresponding generalized noncommutative ribbon function $R_\beta$ in $\nsym.$ We recall precise definitions in \Cref{ssec:hopf_stuff}. 
\begin{IntroThm}\label[Thm]{thm:homology_relation_intro}
    The cycle class map $\cl\colon A_{\bl}(X_s) \to H_{\bl}(X_s, \Z)$ from Chow homology to singular homology is an isomorphism, and we have the further isomorphism of graded $\Z$-modules
    \[
        A_{\bl}(X_s) \cong H_\bl(X_s, \Z)
        \cong \Span_{\Z}\{R_t \mid \text{$t$ is a substring of $s$}\} \subset \nsym
    \]
    sending the class of each torus-invariant subvariety to the corresponding generalized noncommutative ribbon function. Furthermore, the \emph{effective} classes in $A_{\bl}(X_s)$ correspond exactly to the ribbon-positive elements.
\end{IntroThm}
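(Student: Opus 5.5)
We will prove \Cref{thm:homology_relation_intro} in three stages: first the cycle class isomorphism, then the identification of the homology with a span of ribbon functions, and finally the statement about effective classes.

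\textbf{Cycle class isomorphism.} By \Cref{thm:affine_paving}, $X_s$ has a torus-invariant paving by affine spaces coming from a \BB decomposition. For any variety with an affine paving, a standard argument (Fulton, \emph{Intersection Theory}, Example 19.1.11) shows two things. The classes of the cell closures $\overline{C_t}$ form a $\Z$-basis of both $A_\bl(X_s)$ and $H_\bl(X_s,\Z)$, and the cycle class map $\cl$ is an isomorphism. The proof is by induction on the number of cells. One removes a closed cell at a time and uses the excision sequence $A_\bl(Z)\to A_\bl(X)\to A_\bl(U)\to 0$. On the topological side, the corresponding long exact sequence of Borel--Moore homology splits, because odd-degree groups vanish throughout.

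The cells are indexed by the distinct subwords $t$ of $s$, with $\dim C_t$ matching the degree of $R_t$, namely $|t|+1$. So the assignment $[\overline{C_t}]\mapsto R_t$ defines a graded isomorphism onto $\Span_\Z\{R_t\}$. The remaining issue is that this abstract identification must also send the class of every torus-invariant subvariety $Y$ to its generalized ribbon function $R_{\beta(Y)}$.

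\textbf{Matching torus-invariant classes.} Torus-invariant subvarieties of the toric variety $X_s$ are orbit closures, which correspond to faces of the moment polytope. By the paper's construction, each face is again a (product of) toric Richardson varieties attached to a ribbon $\beta$. I would prove $[Y]=\sum_t c_{\beta,t}[\overline{C_t}]$, where $R_\beta=\sum_t c_{\beta,t}R_t$ is the ribbon expansion of the generalized ribbon function, by induction on $\dim Y$. I would try two mechanisms.

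\emph{(a) Degeneration.} Use a one-parameter subgroup to degenerate $Y$ into the union of cell closures that its \BB limits land in. By \BB theory applied inside $Y$, the class $[Y]$ is the sum of the closures of the cells of $Y$'s own paving. The generic cell of $Y$ is open in a unique $C_t$ of the same dimension, which contributes its leading term. The lower-dimensional pieces are handled by induction.

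\emph{(b) Intersection numbers.} Pair $[Y]$ against a dual basis of $A^\bl$, for instance products of torus-invariant divisors. These intersection numbers are computed via normalized volumes or lattice-point counts on faces of the fence/order polytope, and the resulting counts are compared with the coefficients appearing in the combinatorial rule for $R_\beta$, such as the concatenation and near-concatenation product $R_\alpha R_\beta=R_{\alpha\cdot\beta}+R_{\alpha\odot\beta}$.

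I expect the cleanest route is to establish the identity first for disconnected ribbons. For these, $Y$ is a product $X_{s_1}\times X_{s_2}$ of faces, and the product rule should mirror the product of $R$'s. Connected faces then reduce to cell closures.

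\textbf{Main obstacle.} The hardest step will be this matching. It requires checking that the geometric decomposition of a face's class into cell closures reproduces exactly the near-concatenation rule, in particular its multiplicity-free form.

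\textbf{Effective classes.} One direction is immediate: every effective class is a nonnegative combination of classes of subvarieties. By Hirschowitz's theorem, on a variety with a connected solvable group (here the torus) action, every effective cycle is rationally equivalent to a nonnegative combination of invariant subvarieties. Each invariant subvariety maps to some $R_\beta$, which is ribbon-positive by the previous step, so effective classes are ribbon-positive. Conversely, each $R_t$ is itself the class of the cell closure $\overline{C_t}$, so every ribbon-positive element is effective.
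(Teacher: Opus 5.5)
The cycle-class and effectivity stages are fine and match the paper. The cycle-class isomorphism comes from the affine paving; the paper cites Rossell\'o-Llompart--Xamb\'o-Descamps rather than redoing the excision argument. Once the matching is known, effectivity follows, as you say, from effective classes being nonnegative combinations of invariant subvarieties, with the converse coming from the cell closures.

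The gap is the matching step, which is the real content of the theorem. Neither mechanism you propose works.
\begin{itemize}
\item In (a), a torus-invariant $Y$ is carried to itself by every one-parameter subgroup of the torus, so there is nothing to degenerate. Also, $[Y]$ is the closure of the top cell of $Y$ only, not a sum over its cells. And the open orbit of $Y=V(F)$ lies in the cell of the vertex $F^v$, of dimension $\dim P^v_{F^v}$, which is usually larger than $\dim F$. Already in $X_0=\Pb^2$, two of the three invariant lines have their open orbit inside the $2$-cell.
\item In (b), $A^\bl(X_s)$ is not generated in degree one. For the square pyramid $X_{01}$, $A^1$ has rank one and $A^2$ has rank two, so products of divisor classes cannot form a dual basis. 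Concretely, they pair identically with the triangular facets $[\pr 0\rp\llle]$ and $[\elll\pr 1\rp]$, which must go to $R_0\neq R_1$. A genuine dual basis is only available after homology is known.
\item ``Connected faces then reduce to cell closures'' is the crux, not a reduction. A given snake $\pr t\rp$ is the pruning of many faces of $P(s)$: different embeddings of $t$ into $s$, and different placements of $\elll,\llle$. You must show they all have the class of the cell closure, which is itself one particular such face, $P(r_s(t))$ (\Cref{lm:bijection2}).
\end{itemize}

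Two ideas are missing.

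\emph{Homotopy between faces with the same pruning.} The inclusions of a polytope as the bottom and top copies of a chimney over it are naively $\A^1$-homotopic, via $y_{(b,h)}\mapsto t^h(1-t)^{f(b)-h}\chi^{-b}$ (\Cref{lm:chimneyhomotopy}). The resulting local moves connect any two slithers from a snake den into a snake (\Cref{lm:allslithersim}). Homotopic maps have equal pushforwards (\Cref{lm:a1_homologous}). Hence every face pruning to $\pr t\rp$ has class $[r_s(t)]$.

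\emph{A divisor relation for disconnected ribbons.} The rational equivalence you need is the principal divisor of the character $\chi=e_i-e_{i+1}$ on $X_{a01b}$. The only facets whose normals pair nontrivially with $\chi$ are $\pr a\rp\pr b\rp$, $\pr a0\llle b\rp$ and $\pr a\elll 1b\rp$. This gives $[\pr a\rp\pr b\rp]=[\pr a0\llle b\rp]+[\pr a\elll 1b\rp]$ (\Cref{prop:homology_01_split}), the geometric form of $R_aR_b=R_{a0b}+R_{a1b}$. To apply it inside $X_s$:
\begin{enumerate}
\item Use the homotopies to replace the face by a leftmost slither $\star_1\pr W_1\rp\star_2\pr W_2\rp Z$.
\item Observe that the $\rp$ and $\pr$ flanking $\star_2$ can be replaced by $0,1$ or by $1,0$, giving a larger face subordinate to $s$ that contains the original as a facet.
\item Push the relation forward and induct on the number of components.
\end{enumerate}
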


We remark here that the product structure on $\nsym$ 
shows up naturally when we consider the homology of these toric Richardsons functorially, i.e.\ when we consider the pushforward maps induced by the natural inclusions between them; see \Cref{thm:homology_morphisms}.

The generalized noncommutative ribbon function which \Cref{thm:homology_relation_intro} associates to a torus-invariant subvariety is a particularly simple one in two special cases. 
When the torus-invariant subvariety is the closure of a cell in the \BB decomposition of \Cref{thm:affine_paving}, it is simply the noncommutative ribbon function of the associated binary string.
When the torus-invariant subvariety is a Richardson subvariety, it is simply the one indexed by the corresponding skew shape.

The cohomological counterpart of \Cref{thm:homology_relation_intro} reads as follows.
\begin{IntroThm}\label{thm:qsym_cohomology}
    The duals of the isomorphisms in homology are isomorphisms of graded rings
    \[
    A^{\bl}(X_s) \cong H^\bl(X_s, \Z)\cong \frac{\qsym}{\ide{F_w\mid \text{$w$ is not a substring of $s$}}}
    \]
    under which,  for any substring $t$ of $s$, the function $F_t$ corresponds to
    the dual class of the affine cell indexed by $t$.
\end{IntroThm}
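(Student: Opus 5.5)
We first establish the additive part of the statement and then the ring structure. By \Cref{thm:affine_paving} and \Cref{thm:homology_relation_intro}, $A_\bl(X_s)\cong H_\bl(X_s,\Z)$ is free with basis the classes $[\overline{C_t}]$ of the cell closures, indexed by the distinct substrings $t$ of $s$, and this class corresponds to $R_t$. Since $X_s$ has an affine paving, its integral cohomology is torsion-free and concentrated in even degrees. Universal coefficients then give $H^\bl(X_s,\Z)\cong\Hom(H_\bl(X_s,\Z),\Z)$. For toric varieties, or more generally varieties with an affine paving, Chow cohomology satisfies the Kronecker duality $A^\bl(X_s)\cong\Hom(A_\bl(X_s),\Z)$ (Fulton--MacPherson--Sottile--Sturmfels). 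Dualizing the isomorphism of \Cref{thm:homology_relation_intro} and using the duality pairing $\langle F_u,R_t\rangle=\delta_{ut}$ between $\qsym$ and $\nsym$, we obtain a $\Z$-module isomorphism $A^\bl(X_s)\cong H^\bl(X_s,\Z)\cong \qsym/\ker$. Here $\ker$ is the annihilator of $\Span_\Z\{R_t\}$, which is $\Span_\Z\{F_w \mid w \text{ not a substring of } s\}$. Under this isomorphism $F_t$ goes to the dual class of the cell $C_t$.

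Next we must show that this additive isomorphism is a ring map. Two facts are needed:
\begin{enumerate}[(a)]
\item the span $I_s=\Span_\Z\{F_w \mid w\not\preceq s\}$ is an ideal of $\qsym$, so that it equals $\ide{F_w}$;
\item cup product on $H^\bl(X_s)$ matches the quotient product.
\end{enumerate}
For (a), dually, $\Span\{R_t \mid t\preceq s\}$ must be a subcoalgebra of $\nsym$. The plan is to check directly that the coproduct of $R_t$ is a sum of $R_{t'}\otimes R_{t''}$ with $t',t''$ obtained by splitting the ribbon of $t$, and hence subwords of $t$, hence of $s$.

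For (b), the natural route is to realize the coproduct geometrically through the diagonal $\Delta\colon X_s\to X_s\times X_s$. The cup product is dual to $\Delta_*\colon H_\bl(X_s)\to H_\bl(X_s)\otimes H_\bl(X_s)$, so it suffices to show that $\Delta_*[\overline{C_t}]$ corresponds to $\Delta_{\nsym}(R_t)$. Computing $\Delta_*$ directly is hard. Instead, we would exploit functoriality: the inclusions $X_{t}\hookrightarrow X_s$ for substrings $t$ (compare \Cref{thm:homology_morphisms}) are compatible with restriction to quotients of $\qsym$. This reduces the claim to the universal case, either through a stable limit over all $s$ or through a product morphism $X_{s'}\times X_{s''}\to X_{s}$ for concatenations $s=s'1s''$ or $s'0s''$. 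The product structure on $\nsym$ is realized by such morphisms via \Cref{thm:homology_morphisms}.

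Alternatively, and perhaps more concretely, we would use the toric structure. $A^\bl(X_s)$ is the Stanley--Reisner-type ring of piecewise polynomials on the fan of the fence order polytope. In degree $2$ it is generated by torus-invariant divisor classes, which should correspond to $F_{0}$ and $F_{1}$ shifted along positions, and we would match the multiplication of $F$'s by the quasishuffle-of-descent-sets rule against intersections of cell-closure divisors.

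The main obstacle is step (b): identifying cup product with the quasishuffle product on fundamentals. The approach to try first is equivariant localization. Since $X_s$ has finitely many $T$-fixed points, indexed by substrings of $s$, and is equivariantly formal by the paving, $H_T^\bl(X_s)$ injects into $\bigoplus_{t}H_T^\bl(\mathrm{pt})$. We would construct explicit equivariant classes restricting to quasisymmetric polynomials evaluated at weights, in the spirit of an equivariant $\qsym$ as in \cite{PS24}. We would check that the dual cell classes have localizations given by evaluations of $F_t$ (upper-triangularity and support conditions suffice by the standard uniqueness argument), and then multiplicativity follows because localization is a ring map. Finally, we specialize equivariant parameters to zero.
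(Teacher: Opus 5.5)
Your additive identification and step (a) are fine. Step (a) is just the dual of the fact the paper uses, that $F_t$ occurs in $F_uF_v$ only if $u,v$ are subwords of $t$ (\Cref{prop:quasisubword}). Your reduction of (b) to showing that $\delta_*$ corresponds to $\Delta$ on $\nsym$ is also the right one. The gap is that (b) is the entire content of the theorem, and none of your three routes supplies it.

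\begin{itemize}
\item The product morphisms $X_{s'}\times X_{s''}\to X_s$ realize the \emph{product} of $\nsym$, which is dual to the coproduct of $\qsym$. The cup product is dual to $\delta_*$, i.e.\ to the coproduct of $\nsym$, and these maps say nothing about it. Also, such a slither morphism exists for $s=s'01s''$ or $s'10s''$, not for $s'0s''$ or $s'1s''$, which have the same dimension as $X_{s'}\times X_{s''}$.
\item Passing to a large snake by functoriality is legitimate, but it is only a reduction: some diagonal class still has to be computed.
\item The divisor route cannot work. $A^1(X_s)$ has rank $b_1=1$, spanned by $F_\varnothing$, while $A^2(X_s)$ has rank $2$ as soon as $s$ contains both letters. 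So the ring is not generated by divisor classes, and $F_0,F_1$ are not divisor classes.
\item The localization route is a program, not an argument. For singular $X_s$ you would need a characterization of the dual cell classes by their fixed-point values, and an explicit equivariant family whose localizations are ``evaluations of $F_t$''. Neither is provided.
\end{itemize}

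The missing input, as in the paper, is a direct computation of $\delta_*[X_s]$ via the Fulton--Sturmfels formula $\delta_*[X]=\sum m_{\sigma,\tau}[V(\sigma)]\otimes[V(\tau)]$, using a generic \emph{increasing} covector $v$ rather than the paving covector. Adjacent vertices $e_B,e_{B-b+b'}$ of a snake den polytope are Gale-comparable, and $v$ increases exactly along the Gale order. Hence both $v$ and $-v$ stratify $P(s)$. With edge-unimodularity this forces $m_{\sigma,\tau}=1$ exactly for the pairs $(P_p^{-v},P_p^{v})$, and these faces are the Gale-interval slithers $s_B^-,s_B^+$. So $\delta_*[X_s]=\sum_B[s_B^-]\otimes[s_B^+]$.

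The $s_B^\pm$ are in general disconnected ribbons, not cell closures. You therefore need the full homology statement $[V(F)]\mapsto R_{\ove F}$ for every face, after which this is exactly $\Delta R_{\lambda/\mu}=\sum_\nu R_{\lambda/\nu}\otimes R_{\nu/\mu}$. Functoriality finishes the argument: each cell class $[r_s(t)]$ is the pushforward of a fundamental class along a slither morphism, and pushforward commutes with diagonals. The structure constants are then $\langle F_uF_v,R_t\rangle$.

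Alternatively, your stable-limit idea can be completed without Fulton--Sturmfels. In a long alternating snake, slither-embedded products $\Pb^{\alpha_1}\times\cdots\times\Pb^{\alpha_k}$ have classes $R_{0^{\alpha_1-1}}\cdots R_{0^{\alpha_k-1}}$, which form a $\Z$-basis of $\nsym$ in low degrees. Their diagonals follow from $\delta_*[\Pb^n]=\sum_i[\Pb^i]\otimes[\Pb^{n-i}]$ and K\"unneth. But some such concrete computation must actually be supplied.
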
 

Just as in the case of $\sym$, the (co)homology of our toric Richardson varieties embodies only a \emph{shadow} of the full Hopf algebra structure, but is not actually a Hopf algebra. It is only upon passing to direct limits that the full structure comes into view. We first briefly recall the situation for $\sym$. We refer the reader to \cites{MT91, L11} for details.

The infinite Grassmannian $\Gr = \Gr(\infty, \infty)$ is an ind-variety obtained as the direct limit of the finite Grassmannians along natural inclusions between them.
It comes naturally equipped with the structure of an $H$-group whose multiplication map is induced by the direct sum maps 
$\boxplus\colon \Gr(r, n) \times \Gr(s, m) \to \Gr(r + s, n + m)$.
The $H$-group structure naturally leads to a graded Hopf algebra structure on its homology and cohomology groups, which both turn out to be isomorphic to $\sym$. The natural inclusions of the finite Grassmannians into $\Gr$ give rise to the usual presentations of the homology and cohomology of $\Gr(r, n)$ in terms of symmetric functions.

We similarly construct an ind-variety $Q_{\infty}$, which we call the \emph{infinite quasisymmetric Grassmannian}, as a direct limit of toric Richardson varieties. 
This space likewise comes naturally equipped with the structure of a \emph{weak} $H$-group whose multiplication map is induced by
the aforementioned direct sum map $\boxplus$, suitably restricted to toric Richardson varieties.
The weak $H$-group structure naturally leads to a graded Hopf algebra structure on its homology and cohomology groups, which turn out to be $\nsym$ and $\qsym$, respectively.
The natural inclusions of toric Richardson varieties into $Q_{\infty}$ give rise to the presentations in Theorems \ref{thm:homology_relation_intro} and \ref{thm:qsym_cohomology} of the homology and cohomology of $X_s$ in terms of noncommutative symmetric and quasisymmetric functions.
We summarize this in the following theorem.
\begin{IntroThm}
    The ind-variety $Q_{\infty}$
    admits a weak $H$-group structure giving rise to graded Hopf algebra structures on its homology and cohomology so that 
    $H^\bl(Q_\infty, \Z)\cong \qsym$ and $H_\bl(Q_\infty, \Z)\cong \nsym$.
\end{IntroThm}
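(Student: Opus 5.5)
We would build $Q_\infty$ as the colimit of the $X_s$ along the natural inclusions $X_t\hookrightarrow X_s$ for $t$ a subword of $s$. Concretely, we fix an increasing chain of binary strings $s_1\subset s_2\subset\cdots$ in which every finite binary string eventually occurs as a subword; for instance $s_k=(01)^k$ works. We then set $Q_\infty=\varinjlim X_{s_k}$, a CW complex whose cells come from the affine pavings of \Cref{thm:affine_paving}. The inclusions $X_{s_k}\hookrightarrow X_{s_{k+1}}$ are compatible with the pavings, since each cell indexed by a subword $t$ of $s_k$ maps to the cell indexed by the same $t$. Hence the cellular chain complexes, which are concentrated in even degrees, form a direct system of split injections. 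Taking the limit gives $H_\bl(Q_\infty,\Z)\cong\varinjlim H_\bl(X_{s_k},\Z)$, which is free on all binary strings. By \Cref{thm:homology_relation_intro}, and since the maps are compatible with the identification $[\overline{C_t}]\mapsto R_t$, this limit is $\nsym$ as a graded $\Z$-module. Dually, each level is free and the transition maps are surjective, so $\varprojlim^1$ vanishes. Then $H^\bl(Q_\infty)\cong\varprojlim H^\bl(X_{s_k})$, which by \Cref{thm:qsym_cohomology} is $\varprojlim \qsym/(F_w : w\not\subseteq s_k)$. In each fixed degree this system stabilizes, so the limit is $\qsym$ as a graded ring.

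For the multiplication we would use the direct sum map $\boxplus\colon \Gr(r,n)\times\Gr(r',n')\to\Gr(r+r',n+n')$. First we check that it restricts to $X_s\times X_{s'}\to X_{s\cdot s'}$, where $s\cdot s'$ is a concatenation possibly with a joining letter. This holds because a Richardson variety for a disconnected ribbon is a product of the Richardson varieties of its components. Composing with inclusions then gives $\mu\colon Q_\infty\times Q_\infty\to Q_\infty$ after reindexing the chain. The unit is the point $X_\eps$.

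Associativity and unitality up to homotopy would follow from the corresponding strict identities for $\boxplus$, up to the coordinate permutations $\GL_n$ acts by. The required homotopies come from paths in the connected group $\GL_n(\C)$ that move one embedding of the blocks into another. The main obstacle is that these paths do not preserve the toric Richardson subvarieties. The homotopies therefore exist only after mapping into larger pieces, and uniformity over the whole ind-variety fails; this is why only a weak $H$-group is claimed. We would handle this by checking that on each finite subcomplex $X_{s_k}$ the two maps into $Q_\infty$ become homotopic inside some $X_{s_m}$. To do so we would exhibit a $T$-equivariant $\mathbb{P}^1$- or $\A^1$-family of toric Richardson varieties interpolating between the two block arrangements; alternatively it suffices to show the two maps agree on homology and use cellular approximation. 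Homotopy inverses can be handled as for $J\C\mathbb{P}^\infty$: $Q_\infty$ is connected, so a homotopy-associative unital $H$-space structure that is weak in this sense already has weak inverses.

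Finally, we identify the Hopf structure. The coproduct on $H^\bl$ is $\mu^*$, and the product on $H_\bl$ is $\mu_*$. By \Cref{thm:homology_morphisms}, pushforward along $\boxplus$ sends $R_t\otimes R_{t'}$ to $R_tR_{t'}$, the concatenation product in $\nsym$. So $H_\bl(Q_\infty)\cong\nsym$ as algebras. The coalgebra structure on homology is dual to the cup product, which is already identified with that of $\qsym$. Since $\nsym$ and $\qsym$ are graded dual Hopf algebras and the isomorphisms are compatible with the duality pairing of $F_t$ with $R_t$, both isomorphisms are isomorphisms of Hopf algebras.
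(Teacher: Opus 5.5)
\paragraph{A genuine gap in the $H$-group structure.} Your computation of $H_\bl(Q_\infty,\Z)$ as a graded group and of $H^\bl(Q_\infty,\Z)$ as a graded ring is essentially right. Granted a well-defined $\mu$, so is the identification of $\mu_*$ with the product of $\nsym$ via \Cref{thm:homology_morphisms}.

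There is one inaccuracy there. A slither inclusion $X_{s_k}\hookrightarrow X_{s_{k+1}}$ need not send the cell indexed by $t$ to the cell indexed by $t$. For example, $\pr\rp\elll\colon\pr\rp\to\pr0\rp$ sends $\Pb^1$ onto the line $\pr\rp\elll$, not onto the cell closure $\pr\elll\rp$. The compatibility $R_t\mapsto R_t$ you need still follows from \Cref{thm:homology_relation_intro}, because the image is a torus-invariant subvariety with ribbon $t$.

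The real gap is the $H$-group structure, and it has two parts.
\begin{enumerate}
\item $\mu$ must be an honest continuous map on $Q_\infty\times Q_\infty=\varinjlim_i X_{s_i}\times X_{s_i}$. So the level maps $e_i\colon X_{\pr s_i\rp\pr s_i\rp}\hookrightarrow X_{s_{m_i}}$ must satisfy $t(m_i,m_{i+1})\circ e_i=e_{i+1}\circ(t(i,i+1)\times t(i,i+1))$ exactly. ``Composing with inclusions after reindexing'' does not give this. For a chain that grows only on the left it is in fact impossible. Transition maps then preserve distance from the right end, so iterating the identity would force the first factor $s_i$ to sit at a fixed distance from the right end of every $s_{m_j}$. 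It would also have to lie to the left of the entire second factor $s_j$, which fails once $|s_j|$ is large.
\item Associativity and unitality need homotopies between different slither embeddings of the same snake den, e.g.\ $\pr a\rp\pr b\rp\pr c\rp$ reached via $(ab)c$ versus $a(bc)$. Since $\boxplus$ is strictly associative on snake dens, the whole discrepancy lies in the choice of slither into the snake. As you note, paths in $\GL_n(\C)$ leave $Q_\infty$, so they do not supply these homotopies.
\end{enumerate}

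\paragraph{The missing idea.} What you are missing is the homotopy uniqueness of slither embeddings, \Cref{cor:homotopic}: any two slither morphisms from a snake den into a snake are naively $\A^1$-homotopic. The paper proves this in two steps.
\begin{itemize}
\item \Cref{lm:chimneyhomotopy} gives an explicit family $y_{(b,h)}\mapsto t^h(1-t)^{f(b)-h}\chi^{-b}$ joining the top and bottom copies of $P$ in $\Chim(P,f)$.
\item The combinatorial \Cref{lm:allslithersim} shows that the resulting elementary moves connect any slither to a leftmost one, and any two leftmost ones to each other.
\end{itemize}
Your first fallback (``exhibit a $T$-equivariant $\A^1$-family'') points in this direction but leaves all of this undone. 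Your second fallback is invalid: maps inducing the same map on homology need not be homotopic (the Hopf map $S^3\to S^2$ versus a constant map), and cellular approximation produces no such homotopies.

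Even with \Cref{cor:homotopic} in hand, you also need the homotopy extension property. An affine paving does not by itself give a CW structure. The paper instead triangulates compatibly with subvarieties (\Cref{prop:CW}), which makes every slither embedding a cofibration. That lets one correct the level maps inductively into a strictly compatible family, each homotopic to a slither embedding (\Cref{obs:extension}, \Cref{prop:define_mult}). Associativity and unitality up to weak homotopy then follow, because every relevant composite on a finite stage is homotopic to a slither embedding. Inverses then come for free, as you say.
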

\noindent
One can further show that the natural map $Q_{\infty} \to \Gr$ induced by the inclusion of toric Richardsons into Grassmannians induces the usual homomorphisms of Hopf algebras $\nsym \to \sym$ and $\sym \to \qsym$ by considering the corresponding maps in homology and cohomology, respectively; see \Cref{rmk:Grass}.

\subsection{Betti numerology}\label{ssec:bettinumerology}

It follows from \Cref{thm:homology_relation_intro} that for any word $s$, the $i$-th Betti number of $X_s$ is the number of distinct substrings of $s$ of length $i-1$. Recall that the $h$-vector of a \emph{simple} polytope is defined by \[\sum_{i=0}^d h_ix^i = \sum_{i=0}^d f_i(x-1)^i,\]
where $f_i$ counts the number of faces of dimension $i$. Somewhat nonstandardly, we will use the same terminology for \emph{non-simple} polytopes.\footnote{This $h$-vector is distinct from the so-called \emph{toric} $h$-vector of the polytope. The two notions coincide for smooth polytopes.}
For any binary string $s$, we write $P(s)$ for the moment polytope of $X_s$. The following is a byproduct of our study of toric Richardson varieties, combined with an old result of Chase \cite{C76}.
\begin{IntroThm}\label[Thm]{thm:bettinumbersprop}
    Let $s$ be a binary string of length $d-1$. There exists a sequence of integers $b_0, \dots, b_d$ such that
    \begin{enumerate}
        \item $b_0 = 1$ and $b_i$ for $i \geq 1$ is the number of distinct substrings of $s$  of length $i-1$.
        \item $b_i$ is the number of lattice paths in the ribbon diagram associated to $s$ in 
        which there are exactly $i$ loose steps (weakly) above the lowest vertical such step (see \Cref{df:loose}).
        \item $(b_0, \dots, b_d)$ is the $h$-vector of $P(s).$
        \item $b_i$ is the $i$-th Betti number of $X_s.$
        \item $\sum_{i=0}^d b_iq^i = \#X_s(\mathbb{F}_q)$ is the point-count polynomial of the closed Richardson variety $X_s$.
    \end{enumerate}
    Furthermore, this sequence enjoys the following properties.
    \begin{itemize}
        \item Log-concavity:  $b_i^{2}\ge b_{i-1}b_{i+1}$ for $1 < i < d.$
        \item Top-heaviness: 
        we have $b_i \leq b_j$ whenever $i \leq j \leq d+1 - i$.
    \end{itemize}
\end{IntroThm}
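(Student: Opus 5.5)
We define $b_i$ by (1): $b_0=1$ and, for $i\ge 1$, $b_i$ is the number of distinct substrings of $s$ of length $i-1$ (the empty string $\eps$ counts as the unique substring of length $-1$). We then derive (2)--(5) from the geometry and prove the two inequalities combinatorially.

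\emph{Item (4).} By \Cref{thm:affine_paving}, $X_s$ has an affine paving. The odd cohomology therefore vanishes, and $b_{2i}(X_s)$ is the number of cells of dimension $i$. By \Cref{thm:homology_relation_intro}, the cells are indexed by substrings $t$ of $s$, and the cell of $t$ has dimension equal to the degree of $R_t$, namely $|t|+1$. So the cells of dimension $i$ are indexed by distinct substrings of length $i-1$. Reading ``$i$-th Betti number'' as the rank of $H_{2i}$, this is (4).

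\emph{Item (5).} The paving is by torus-invariant affine spaces defined over $\Z$, because the \BB decomposition comes from a cocharacter of the split torus. Counting points cell by cell gives $\#X_s(\mathbb{F}_q)=\sum_i b_i q^i$.

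\emph{Item (3).} For a projective toric variety, the number of $\mathbb{F}_q$-points is $\sum_{\text{faces } F} (q-1)^{\dim F}$, one torus orbit per face of $P(s)$. This equals $\sum_i f_i(q-1)^i$. Comparing with (5) identifies $(b_i)$ with the $h$-vector in the paper's sense.

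\emph{Item (2).} This is a bijection between substrings of $s$ and lattice paths in the ribbon. I would build it by reading off, from a path, the subword of $s$ recorded by its loose steps above the lowest vertical loose step, taking a canonical (leftmost) embedding so that distinct substrings match distinct paths. Alternatively, I would count orbits: the fixed points of $X_s$ correspond to lattice paths in the ribbon, and $b_i$ counts fixed points whose \BB cell has dimension $i$. In that approach one computes the cell dimension at each fixed point from the tangent cone and shows it equals the number of loose steps above the lowest vertical loose one. This is where the definition in \Cref{df:loose} gets used. I expect this computation to be the fiddliest bookkeeping, though it is a matter of care rather than a genuine difficulty.

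\emph{Log-concavity and top-heaviness.} These are statements about the numbers $c_k$ of distinct subsequences of length $k$ of a binary word, since $b_i=c_{i-1}$. The key input is Chase's theorem \cite{C76}: for a word of length $n$, the sequence $(c_0,\dots,c_n)$ is log-concave. This gives $b_i^2\ge b_{i-1}b_{i+1}$ for $2\le i\le d-1$, which is exactly the range in the statement. The case $i=1$ is excluded because $b_0=1$ is an artificial term.

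Top-heaviness follows from two facts. First, log-concavity with positive entries makes the sequence unimodal. Second, we need the comparison $c_k\ge c_{n-k}$ for $k\ge n/2$. For that, I would exhibit an injection from length-$(n-k)$ subsequences to length-$k$ subsequences, or use the hard-Lefschetz-type injectivity coming from $X_s$ (although $X_s$ is singular, so I prefer the combinatorial route). One such injection: given a distinct subsequence $u$, take its leftmost embedding in $s$ and pad it by inserting letters of $s$ in a canonical way to reach length $k$. Proving that this map is injective is the main obstacle. If it fails, I would instead try to deduce the inequality from Chase's proof, which I believe builds injections between consecutive levels.

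With unimodality and the symmetric comparison in hand, translate to $b$: for $i\le j\le d+1-i$, we get $b_i\le b_j$. Unimodality handles the interior of the range, and the endpoint comparison $b_i\le b_{d+1-i}$ is the injection step.
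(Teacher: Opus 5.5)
The gap is in top-heaviness; items (1), (3), (4), (5) and log-concavity are essentially the paper's argument. (The paper gets (3) from the class of $X_s$ in the Grothendieck ring, \Cref{prop:hvectorbetti}, rather than from point counts, but the two are interchangeable.)

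\textbf{Top-heaviness.} As you note, after unimodality everything reduces to the endpoint comparison $c_a\le c_{m-a}$ for $0\le a\le m-a$, where $m=d-1=\abs{s}$. This is exactly what gives $b_i\le b_{d+1-i}$, and you do not prove it. The padding map is not specified precisely enough to check, and its injectivity is left open. A family of injections between \emph{consecutive} levels would only give monotonicity up to the peak; it cannot compare levels $a$ and $m-a$ on opposite sides of the peak.

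The geometric fallback does not close the gap either. Any Lefschetz- or purity-type statement for the $d$-dimensional $X_s$ is centered at $d/2$. Bj\"orner--Ekedahl gives $b_i\le b_j$ only for $i\le j\le d-i$, while the claimed range is centered at $(d+1)/2$. So it misses precisely the endpoint $j=d+1-i$, as the paper remarks after its proof.

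The missing input is that Chase already proved the symmetric comparison: \cite{C76}*{Theorem 4.1(ii)} gives $\sigma_a\le\sigma_{m-a}$ whenever $0\le a\le m-a$. Combine this with unimodality (from log-concavity and positivity of the $c_k$) and the trivial case $i=0$, where $b_0=1\le b_j$. That yields top-heaviness exactly as in the paper.

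\textbf{Item (2).} You only give a plan here; the paper obtains it from \Cref{lm:bijection1} and \Cref{lm:bijection2}. The map from lattice paths to $\ebs(s)$ is just $\phi_s$: read the loose steps after the first vertical loose one, with no choice of embedding involved. Its bijectivity is proved by induction on $\abs{s}$, sorting paths by their last step. The cell dimension $\abs{\phi_s(B)}+1$ is then computed for the specific inductively constructed covector $v(s)$.

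That last point matters for your second approach. The dimension of the cell at a given fixed point depends on the cocharacter. So the loose-step statistic matches cell dimensions vertex-by-vertex only for a suitable choice such as $v(s)$, not for an arbitrary paving cocharacter.
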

\noindent
The last properties are proved by Chase for any finite alphabet, but we only need the binary case.
The log-concavity portion of Chase's result was recently reproved by 
Vatter \cite{V26} in a surprisingly simple way.
We note that $b_0 = b_1= 1$ and $b_2 = 2$ whenever $s$
contains both $0$'s and $1$'s; 
so log-concavity fails at $i = 1$.
\begin{Rmk}
    The log-concavity (except at $i= 1$) of the Betti numbers of $X_s$ should be contrasted with the examples in \cite{Sta90} which show, for example,  that Betti numbers of the Grassmannian Schubert variety $\Omega_{\lambda}$ with $\lambda = (8,8,4,4)$ are not even unimodal.
\end{Rmk}
\begin{Rmk}
    \Cref{thm:bettinumbersprop} shows that the $h$-vector of any shard polytope (which is generally not simple) is nonnegative, top-heavy, and log-concave in positive degrees.
    It also provides a manifestly positive combinatorial formula for it.
\end{Rmk}
\noindent
From \Cref{thm:bettinumbersprop},
we will deduce the following corollary.
\begin{IntroCor}\label[Cor]{cor:flogconcave}
    The $f$-vector of any snake matroid base polytope is log-concave.
\end{IntroCor}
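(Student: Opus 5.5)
The plan is to pass from the $h$-vector, which \Cref{thm:bettinumbersprop} identifies with $(b_0,\dots,b_d)$, to the $f$-vector via
\[
\sum_i f_i x^i=\sum_i h_i (x+1)^i ,
\]
which inverts the defining relation $\sum h_i x^i=\sum f_i (x-1)^i$. Concretely,
\[
f_j=\sum_{i\ge j} \binom{i}{j} b_i .
\]
Log-concavity is preserved under this type of transformation in good cases. The polynomial $\sum_i b_i x^i$ is not log-concave only because of the defect at $i=1$, where $b_0=b_1=1$ and $b_2=2$. So I would split off the low-degree terms.

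First I would write
\[
B(x)=1+x+x^2 C(x),
\]
where $C(x)=\sum_{i\ge 2} b_i x^{i-2}$. By \Cref{thm:bettinumbersprop}, the sequence $(b_2,\dots,b_d)$ is log-concave. It consists of positive entries without internal zeros, since every length up to $d-1$ occurs as a substring length. Likewise $(b_1,\dots,b_d)$ is log-concave, because $b_2^2\ge b_1b_3$ reads $4\ge b_3$, and $b_3\le 4$ always holds (there are at most $4$ binary strings of length $2$). Hence $x+x^2C(x)=x\cdot(b_1+b_2x+\cdots)$ has a log-concave coefficient sequence with no internal zeros.

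The next step is the standard fact (Brenti / Wagner) that if $\sum a_i x^i$ has nonnegative log-concave coefficients without internal zeros, then so does $\sum a_i (x+1)^i$. One route is that multiplication by $(1+x)^k$ and the binomial transform preserve log-concavity. So $G(x)=\sum_{i\ge1} b_i (x+1)^i$ has a log-concave coefficient sequence $(g_0,\dots,g_d)$. Then
\[
f(x)=1+G(x),
\]
so $f_0=g_0+1$ and $f_j=g_j$ for $j\ge1$.

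It remains to check the inequalities that involve $f_0$. For $j\ge 2$ they are inherited directly from $G$. Only $f_1^2\ge f_0f_2$ needs checking, and increasing $f_0$ could in principle break it. Here I would use the direct formulas
\[
f_0=\sum_i b_i,\qquad f_1=\sum_i i\,b_i,\qquad f_2=\sum_i \binom{i}{2}b_i .
\]
Alternatively, and more cleanly, $f_0$ is the number of vertices, $f_1$ the number of edges, and $f_2$ the number of $2$-faces of the polytope. I would bound $f_2$ using the fact that each $2$-face is a polygon containing at least $3$ edges, and each edge lies in at most $(\text{degree}-1)$-many $2$-faces. That route seems messy, however, so I would instead aim for an algebraic inequality. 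The needed inequality is
\[
\Big(\sum i b_i\Big)^2\ge \Big(\sum b_i\Big)\Big(\sum\binom i2 b_i\Big).
\]
This is a variance-type inequality for the distribution proportional to $b_i$, namely $E[i]^2 \ge E[i(i-1)/2]$, or equivalently $2E[i]^2\ge E[i^2]-E[i]$.

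I expect this single inequality to be the main obstacle. I would first try to deduce it from log-concavity of $(g_j)$ together with the slack coming from $g_0$. Since $G$ is log-concave, $g_1^2\ge g_0g_2$. So it suffices to show that $g_1^2-g_0g_2\ge g_2$, i.e. a margin of one unit. I would try to obtain this margin from the contribution of the $b_1$ term: $b_1(x+1)$ contributes $1$ to $g_0$ and $1$ to $g_1$. I would use a sharpened form of the binomial-transform argument that keeps track of these terms.

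If that fails, I would fall back on the explicit structure. The $b_i$ are positive for $1\le i\le d$, log-concave from index $1$, and top-heavy, so the mass is concentrated at large $i$. In that regime $E[i]$ is of order $d$, which makes $2E[i]^2$ dominate $E[i^2]$ comfortably. I would verify small $d$ separately, since there $P(s)$ is a simplex or a square-type polytope and the $f$-vectors can be checked directly.
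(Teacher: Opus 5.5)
There is a genuine gap: the reduction is sound, but the one inequality that actually needs an argument is never proved.

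The parts that work are these. The inversion $f_j=\sum_i\binom{i}{j}b_i$ is correct. The sequence $(0,b_1,\dots,b_d)$ is log-concave with no internal zeros; this is already the case $i=2$ of \Cref{thm:bettinumbersprop}, and your ``$4\ge b_3$'' check tacitly assumes $b_2=2$. Brenti's theorem then makes the coefficients of $G(x)=\sum_{i\ge1}b_i(x+1)^i$ log-concave, so $f_j^2\ge f_{j-1}f_{j+1}$ is inherited for $j\ge 2$.

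The gap is the only inequality the defect at $i=1$ endangers: $f_1^2\ge f_0f_2$, equivalently $g_1^2\ge (g_0+1)g_2$. You list strategies for it but prove none.
\begin{itemize}
\item The first strategy misidentifies the slack. The margin you need is $g_2$, which has size $\sum_i\binom{i}{2}b_i$, not one unit. The $b_1$-term contributes only $1$ to each of $g_0,g_1$ and cannot supply it.
\item The fallback is a heuristic. Since $0\le i\le d$ gives $E[i^2]\le d\,E[i]$, the inequality $2E[i]^2\ge E[i^2]-E[i]$ follows once $E[i]\ge (d-1)/2$. So ``$E[i]$ is of order $d$'' must be sharpened to exactly this constant.
\item ``Verify small $d$ separately'' comes with no threshold, so it is not a finite check.
\end{itemize}

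The gap can be closed inside your framework. Apply Brenti to $D(x)=\sum_{i\ge1}b_ix^{i-1}$ instead of to $xD(x)$. Then $E(x)=D(x+1)$ has log-concave coefficients $e_j=\sum_{i\ge1}\binom{i-1}{j}b_i$, and $f(x)=1+(1+x)E(x)$. Hence $f_0=1+e_0$ and $f_j=e_{j-1}+e_j$ for $j\ge1$, so
\[
f_1^2-f_0f_2=(e_1^2-e_0e_2)+(e_0^2-e_1)+(e_0e_1-e_2).
\]
The first bracket is nonnegative by log-concavity of $(e_j)$. The last two are nonnegative because $e_0=\sum_{i\ge1}b_i\ge d$ (all $b_i\ge1$), while $e_1\le(d-1)e_0$ and $e_2\le\frac{d-2}{2}e_1$.

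Alternatively, to sharpen your heuristic, pair $i$ with $d+1-i$ and use top-heaviness $b_i\le b_{d+1-i}$. This gives $E[i]\ge\frac{d+1}{2}\cdot\frac{H-1}{H}\ge\frac{d-1}{2}$, where $H=\sum_ib_i\ge d+1$.

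The paper avoids the issue differently. It passes to the cumulative sums $T(k,1)=h_d+\dots+h_{d-k+1}$ of the reversed $h$-vector, which are log-concave up to $k=d$ by Menon's theorem. Because $h_1=h_0=1$, the last three terms are $a,a+1,a+2$, so the defect disappears automatically. The Foldes--Major theorem on Pascal-type convolution arrays then gives log-concavity of the bottom row, which is the $f$-vector.
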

A question of Ferroni--Schr\"oter 
\cite{FS25}*{Question 3.4}
asks if the $f$-vector of any matroid base polytope is unimodal, or perhaps even log-concave.
\Cref{cor:flogconcave} gives a positive answer in the case of snake matroids.
To the best of our knowledge, this is the first nontrivial infinite family of matroids for which a positive answer to this question is known.

By comparing item (2) in \Cref{thm:bettinumbersprop} with \cite{S10}*{Corollary 3.4}, we obtain the following curious relationship between the $h$-vectors of $P(s)$ and of the independence complex of the corresponding lattice-path matroid $M(s)$. 
\begin{IntroCor}\label[Cor]{cor:hvectordominate}
    Let $(h_0, \dots, h_d)$ be the $h$-vector of $P(s)$ and let $(h'_0, \dots, h'_d)$ be the $h$-vector of the independence complex of $M(s).$ Then $h'$ dominates $h$. That is, 
    for any $0\le j\le d$,
    \[\sum_{i=0}^jh_i\le \sum_{i=0}^jh_i'.\]
\end{IntroCor}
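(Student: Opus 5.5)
The plan is to compare the two $h$-vectors through closed formulas for both, and reduce the domination statement to an injection on lattice paths. By \Cref{thm:bettinumbersprop}(2), $h_i$ counts lattice paths in the ribbon diagram of $s$ with exactly $i$ loose steps (weakly) above the lowest vertical loose step. By \cite{S10}*{Corollary 3.4}, $h'_i$ for the independence complex of the lattice path matroid $M(s)$ counts the same set of lattice paths (the bases of $M(s)$), graded by a different statistic. I expect that statistic to be a loose-step or internal-activity count along the entire path, with no truncation at the lowest vertical loose step. I will first pin down that identification carefully: the two statistics must be indexed on the same set of bases, with matching conventions for $h_0$.

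Given that, the key step is to produce a statistic comparison $\mathrm{st}'(P)\le \mathrm{st}(P)$ for every lattice path $P$, where $\mathrm{st}$ is the statistic giving $h$ and $\mathrm{st}'$ the one giving $h'$. Such a pointwise inequality immediately yields
\[
\sum_{i=0}^j h_i=\#\{P:\mathrm{st}(P)\le j\}\le \#\{P:\mathrm{st}'(P)\le j\}=\sum_{i=0}^j h'_i .
\]
The direction of the inequality is the delicate point. The statistic in \Cref{thm:bettinumbersprop}(2) counts only loose steps above a certain point, while Schweig's statistic should count loose steps of a particular kind, or count the complementary ones via the duality $i\mapsto d-i$. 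So the comparison must be set up so that $h$ is the more ``top-heavy'' of the two. If the pointwise inequality fails, I would instead seek a bijection $\phi$ on paths with $\mathrm{st}'(\phi(P))\le \mathrm{st}(P)$. Candidates are reversal of the path or swapping the roles of horizontal and vertical steps, i.e.\ passing to the dual matroid.

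As a sanity check, both vectors sum to the number of bases. Also $h_0=h'_0=1$, and the $j=0$ inequality is trivially consistent; this helps fix the normalization. I expect the main obstacle to be translating Schweig's notion of (internally) passive or loose steps in lattice path matroids into the paper's \Cref{df:loose}, including matching the orientation of the ribbon with the labeling of $M(s)$. Once the two statistics are expressed in the same language on the same paths, the domination follows in one line from the pointwise comparison above.
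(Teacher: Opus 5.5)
Your plan is the paper's proof. Both $h_i$ and $h'_i$ count the lattice paths (bases of $M(s)$) in the ribbon diagram, graded by two statistics, and domination follows from the pointwise inequality $\mathrm{st}'(P)\le\mathrm{st}(P)$ exactly as in your display.

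The one thing you left open is settled as follows. Schweig's Corollary~3.4 gives $h'_i$ as the number of paths with exactly $i$ \emph{vertical} loose steps. It is not the count of all loose steps along the path; that count would reverse the inequality. Every vertical loose step trivially lies weakly above the lowest one, and for the unique path with no vertical loose step both statistics are $0$. Hence $\mathrm{st}'(P)\le\mathrm{st}(P)$ holds for every path, and no bijection or dualization fallback is needed.
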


\subsection{Relation to the literature}\label{ssec:compare}
We now relate our work with previously known geometric models for $\qsym$ and $\nsym$.

Baker--Richter \cite{BR08} considered the space $\Omega \Sigma \C \mathbb{P}\infty$, i.e. the loop space of the suspension of the infinite-dimensional projective space over $\C$ and proved that its cohomology is $\qsym$ as a Hopf algebra. Here, the $H$-group structure is intrinsic to the loop space.
Pechenik--Satriano \cite{PS26} 
later showed that the homotopy equivalent space $J\C \mathbb{P}^{\infty}$ can be approximated by non-normal projective varieties.
Given the strong resemblance between this space and our own, we propose the following conjecture (cf.\ \Cref{cj:homotopic2james} and the discussion thereafter).
\begin{cj}
    $Q_{\infty}$ is isomorphic to $\Omega \Sigma \C \mathbb{P}^\infty$ as a weak $H$-group.
\end{cj}
\noindent
This conjecture, if true, would parallel the situation for the infinite Grassmannian. In that case, there is an isomorphism of $H$-groups between $\Gr$ and the loop space $\Omega SU(\infty)$ \cite{MT91}.
Since there is also a natural map $\Omega\Sigma\C \mathbb{P}^\infty \to \Omega SU(\infty)$ 
inducing on cohomology the inclusion $\sym\to \qsym$ \cite{BR08}*{\textsection~1}, one can hope for a commutative square linking 
$Q_{\infty}$, $\Gr$, $\Omega \Sigma \C \mathbb{P}^{\infty}$, and $\Omega SU(\infty)$, which would be especially pleasant.

The algebraic stack of expanded pairs described in \cite{ACFW13} also admits the Hopf algebra $\qsym$ as its integral Chow ring by work of Oesinghaus \cite{O19}.
In this setting, an explicit description of the monomial basis in terms of certain tautological classes was provided \cite{BS22}. 
Pechenik and Satriano \cite{PS26}*{Remark 1.3} ask for a connection of this stack to $J\C \mathbb{P}^{\infty}$. We are not aware of any progress on this front.

Bergeron--Gagnon--Spink--Tewari \cite{BGST26} constructed a subscheme of the Grassmannian whose irreducible components are translated toric Richardson varieties and showed that its cohomology is given by a finite truncation of $\qsym$. They also constructed an affine paving for this space giving rise to a basis in cohomology corresponding to fundamental quasisymmetric functions.

Despite the strong similarity with our own construction, one cannot formally deduce our results from theirs. 
One basic obstruction is that the manner in which the various components in $\QGr(r,n)$ are glued together is highly nontrivial: 
$\QGr(r, n)$ is not a polytopal complex. For instance, $\QGr(2,4)$ contains two irreducible components whose intersection is \emph{not} irreducible \cite{BGST26}*{Figure 1}. 
In general, an affine paving of a reducible scheme need not restrict to an affine paving on each component. This \emph{does} hold for a torus-invariant affine paving of a polytopal complex of toric varieties, which is not the case for $\QGr(r, n)$.
We contrast this situation with the analogous one for Schubert varieties in the Grassmannian.
In that setting, 
the Grassmannian is irreducible and its
affine paving is a \emph{stratification},\footnote{By this we mean that the closure of each open cell is a union of open cells.} which allows one to formally deduce that the Schubert varieties are likewise stratified and to compute their (co)homology directly.

We remark that the cohomologies of $\QGr(r, n)$ and $X_s$ are \emph{different} finite truncations of $\qsym$. Recall from \Cref{thm:qsym_cohomology} that
\begin{equation*}
H^\bl(X_s, \Z)\cong \frac{\qsym}{\ide{F_w\mid\text{$w$ is not a subword of $s$}}},
\end{equation*}
whereas for $\QGr(r,n)$, one has 
\begin{equation*}H^\bl (\QGr(r,n), \Z)  = \frac{\qsym}{\ide{F_w\mid w\text{ contains at least $r$ many $1$'s or at least $n-r$ many $0$'s}}}.\tag{$\diamond$}\label{eq:qqr_ring}
\end{equation*}
Notably, the dimension of the top degree part 
of $H^\bl (\QGr(r,n), \Z)$
is greater than $1$ since there are multiple binary strings with exactly $r-1$ ones and $n-r-1$ zeros.
This explains why $\QGr(r,n)$ \emph{must} be reducible if it is to have the cohomology given by \eqref{eq:qqr_ring}.

The work of Bergeron, Gagnon, Spink, and Tewari in \cite{BGST26} is part of a series of works
with their collaborator Nadeau
on quasisymmetric functions, forest polynomials, and equivariant generalizations thereof \cites{NT24, NST24, NST25, BGNST25}. Notably, in \cite{BGNST26},
they construct a reduced torus-invariant subscheme of the flag variety whose cohomology is a ring of quasisymmetric coinvariants. The present authors are not aware of a direct connection between our current work and the latter.

\subsection{Summary of proof strategy}
In \Cref{sec:slither_cat}, we present a number of combinatorial models for toric Richardson varieties. This diversity of perspectives will help make the proofs of our key results more transparent. In \Cref{sec:paving}, we describe an affine paving of toric Richardson varieties by exploiting their recursive description as the toric varieties associated to series-parallel matroid polytopes.
In \Cref{sec:cohomology}, we will see how a degeneration formula due to Fulton--Sturmfels \cite{fulton94} gives an expression for the diagonal pushforward in homology which precisely matches the coproduct rule for noncommutative ribbon functions.
\Cref{sec:infinite} assembles these results to construct the infinite quasisymmetric Grassmannian $Q_{\infty}$.

\subsection*{Acknowledgements}
The authors are grateful to Anders Buch, whose insightful questions have inspired a large part of the present work.
We are indebted to Matt Larson for a number of helpful conversations throughout the development of the project. 
We express our gratitude to Sergey Fomin for indispensable practical advice.
Thomas Lam directed us to a number of important references in the literature.
We thank Cameron Chang and Josephine Hlavinka for invaluable discussions relating to the content of \Cref{sec:toric_positroids}.
We additionally thank 
Paolo Aluffi, Dave Anderson, 
Michael Barz, 
June Huh, Soyeon Kim and Allen Knutson for illuminating conversations.
Many of these useful discussions took place at the Affine Combinatorics and Quantum $K$-theory workshop, which took place at the Ohio State University, and at the CMND 2026 Thematic Program in Algebraic Combinatorics and Applications, which took place at the University of Notre Dame. We thank the organizers for their efforts. We likewise wish to thank the organizers of the Schubert Seminar; it was following one of its weekly talks that the authors first realized that toric Richardson varieties ought to have affine pavings.

AI tools were used for proofreading, generating figures, reference searches, and assistance with coding used to test initial conjectures.
The mathematical content---both the ideas and the words---is due entirely to the authors.

\section{Preliminaries}
\subsection{Toric Varieties}\label{sec:toric}
Throughout the paper, except where explicitly stated otherwise, we will work over a fixed field $\Bbbk$ of arbitrary characteristic. Notably, in \Cref{sec:infinite}, we restrict ourselves to working over $\C$.
In any case, we will usually assume that $\Bbbk$ is algebraically closed.
A \textbf{variety} over $\Bbbk$ is a reduced, irreducible, separated, finite-type scheme over $\Bbbk$.

Given an algebraic torus $T$, let $M = \Hom(T, \Gm)$ be its character lattice and $N = \Hom(\Gm, T)$ its cocharacter lattice. We write $M_{\R} \deq {M \otimes_{\Z} \R}$ and $N_{\R} \deq {N \otimes_{\Z} \R}$.
Given $m \in M_{\R}$ and $v \in N_{\R}$, we write $\langle m, v \rangle$ for the value of the canonical pairing $M_{\R} \times N_{\R} \to \R$. 

Let $P$ be a full-dimensional lattice polytope in $M_{\R}$.
The \emph{outer} normal fan $\Sigma$ to $P$ is the complete fan in $N_{\R}$ whose cones are
\[
\sigma_F \deq \{u \in N_{\R} \,\mid\, \langle p , u \rangle \leq \langle f , u \rangle \text{ for all } p \in P, f \in F\},
\]
for each face $F$ of $P$. 
A fan constructed in this way defines a projective toric variety $X_P$ (see \cite{fult93toric}, \cite{cox2011toric}) with dense torus $T$.
The faces of $P$ are in an inclusion- and dimension-preserving correspondence with the irreducible $T$-stable closed subvarieties of $X_P$. 
Each such subvariety is the closure of a unique $T$-orbit.
We write $O_{F}$ for the $T$-orbit corresponding to a face $F$ of $P$, which can be thought of as corresponding to the relative interior of $F$. Each such orbit comes equipped with a distinguished basepoint, which may be defined as the limit of the trajectory of the basepoint in the dense orbit under the action of any cocharacter in the relative interior of $\sigma_F$. 

For a cone $\sigma$ in $\Sigma$, 
we write $N_{\sigma}$ for the sublattice of $N$ generated by $\sigma \cap N$.
The orbit $O_P$ is the unique dense open orbit of $X_P$, which we identify with $T$ itself.
For a vertex $p \in P$, the orbit $O_{p}$ consists of a single fixed point, which we identify with the point $p$. 
When dealing with toric varieties, we will always implicitly assume that they are normal and projective; equivalently, they may be obtained from a polytope $P$ by the procedure above.
The choice of such a polytope $P$ defines an ample Cartier divisor $\mathcal{O}(P)$ on $X_P$ whose global sections are $\bigoplus_{m \in P\cap M} \Bbbk \chi^{-m}$. 
If $P$ is \emph{very ample},
which can always be made true by rescaling it by a suitably large integer $k$ (see \cite{cox2011toric}*{Chapter 6}), 
thereby replacing 
$\mathcal{O}(P)$ with
$\mathcal{O}(kP) = \mathcal{O}(P)^{\otimes k}$,
its complete linear series defines an embedding of $X_P$ into the projective space 
$\mathbb{P}^{P \cap M - 1}$. On the torus, this takes the point $t$ to $(\chi^{-m}(t))_{m \in P}\in \mathbb{P}^{P \cap M - 1}$. 
 
For each face $F$ of $P$, the orbit closure 
$V(F) \deq \ove{O_F}$ is again a toric variety.
The standard affine charts for $V(F)$ are of the form $U_p^F \deq \Spec(\Bbbk[M\cap \sigma_p^\vee \cap \sigma_F^\perp])$ for $p$ a vertex of $F$. Equivalently, if $\Cone(p - F) = \R_{\geq 0} \{p - f | f \in F\}$ then $U_p^F = \Spec(\Bbbk[M \cap \Cone(p -F)])$. 
The polytope $P$ is said to be \textbf{smooth at a vertex $p$} if $\Cone(p - P)$ is generated as a cone by the basis of some saturated sublattice of $M$. We say that $P$ is \textbf{smooth} if it is smooth at each vertex. 
Then $P$ is smooth at $p$ if and only if $U_p^P$ is a smooth variety, in which case it is isomorphic to the affine space $\A^{\dim P}$. In particular, $P$ is smooth if and only if $X_P$ is smooth.

\subsection{Chow homology and cohomology}\label{sec:chow_section}

For a variety $X$, we write $A_\bl(X) = \bigoplus_i A_i(X)$ for its total Chow homology group and $A^\bl(X) = \bigoplus_i A^i(X)$ for its operational Chow cohomology ring. We refer the reader to \cite{fult_int_theory} for definitions. 

Chow cohomology is equipped with cup products $A^p(X) \otimes A^q(X) \to A^{p + q}(X)$ taking $a \otimes b \mapsto a \cup b$ and cap products: $A^p(X) \otimes A_q(X) \to A_{q - p}(X)$ taking $a \otimes c \mapsto a \cap c$. When $X$ is complete, there is a canonical map $\deg\colon A_0(X) \to \Z$. This gives rise to
a canonical pairing $A^\bl(X) \otimes A_\bl(X) \to \Z$ taking $a\otimes c$ to $\inner{a, c} \deq \deg(a \cap c)$, as well as the corresponding Kronecker map $A^{\bl}(X) \to \Hom(A_{\bl}(X), \Z)$. The latter map is not an isomorphism in general.

For a closed subscheme $U$ of $X$, we write $[U]$ for its class in $A_{\bl}(X)$. When $U = X$, this is the \emph{fundamental class} of $X$.
Suppose $X$ is complete and that the K\"unneth map $\kappa\colon A_\bl(X) \otimes A_\bl(X) \to A_\bl(X\times X)$ given by $[U]\otimes[V] \mapsto [U \times V]$ is an isomorphism. Then, the group $A_\bl(X)$ forms a graded coalgebra.\footnote{We always assume that (co)algebras are (co)associative.} By this, we mean the following. Let $\Delta\colon A_\bl(X) \to A_\bl(X) \otimes A_{\bl}(X)$ be the composition $\Delta = \kappa^{-1} \circ \delta_*$ where $\delta\colon X \to X \times X$ is the diagonal
and let $e\colon A_\bl(X) \to A_{\bl}(\text{pt}) \cong \Z$ be the pushforward to a point.
Then coassociativity $(\text{id}_{A_\bl(X)} \otimes \Delta) \circ \Delta = (\Delta \otimes \text{id}_{A_\bl(X)})\circ \Delta$ and counitality $(\text{id}_{A_\bl(X)} \otimes e) \circ \Delta = \text{id}_{A_\bl(X)}= (e \otimes \text{id}_{A_\bl(X)})\circ \Delta$ hold for $\Delta$, $e$. 
This coalgebra is also cocommutative: the map 
$\Delta$ is invariant under the involution of 
$A_{\bl}(X) \otimes A_{\bl}(X)$ that swaps the two factors. This is because $\delta\colon X \to X \times X$ is invariant under the corresponding involution of $X \times X$.
It follows formally that $\Hom(A_\bl(X), \Z)$ is a commutative $\Z$-algebra. 
Sometimes, this agrees with the operational Chow cohomology:
\begin{Lm}[\cite{int_theory_spherical}*{\textsection 4}]\label[Lm]{lm:kunneth_chow}
     Let $X$ be a complete algebraic scheme over an algebraically closed field $\Bbbk$. Assume the K\"unneth map $A_\bl(X)\otimes A_{\bl}(Y) \to A_{\bl}(X\times Y)$ taking $[V]\otimes [W]$ to $[V\times W]$ is an isomorphism for all algebraic schemes $Y$ over $\Bbbk$. Then, the Kronecker map $A^{\bl}(X) \to \Hom(A_{\bl}(X), \Z)$ is an isomorphism of $\Z$-algebras. 
\end{Lm}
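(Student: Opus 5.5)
The plan is to go through the Fulton--MacPherson--Sottile--Sturmfels argument for spherical varieties. Operational classes are, by definition, compatible families of homomorphisms $c_f\colon A_\bl(Y)\to A_{\bl}(Y)$, one for every morphism $f\colon Y\to X$. The idea is to use the Künneth hypothesis to show that each such family is determined by its degrees on $A_\bl(X)$.

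\emph{Injectivity.} Let $a\in A^\bl(X)$ with $\deg(a\cap c)=0$ for all $c\in A_\bl(X)$. Given $f\colon Y\to X$, consider the graph morphism $\gamma_f\colon Y\to X\times Y$, whose composition with the projection to $X$ is $f$. Write $p_1,p_2$ for the two projections of $X\times Y$. For $y\in A_\bl(Y)$, the projection formula for the proper map $\gamma_f$ and the compatibility of operational classes with pullback give
\[
f^*a\cap y = p_{2*}\gamma_{f*}(\gamma_f^*p_1^*a\cap y) = p_{2*}\bigl(p_1^*a\cap \gamma_{f*}y\bigr).
\]
Here $p_2$ is proper because $X$ is complete. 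By the hypothesis, $\gamma_{f*}y=\sum_i [V_i\times W_i]$ for subvarieties $V_i\subseteq X$ and $W_i\subseteq Y$. Operational classes pulled back along a flat projection satisfy $p_1^*a\cap [V\times W] = (a\cap[V])\times[W]$. Hence
\[
p_{2*}\bigl((a\cap[V])\times[W]\bigr)=\deg(a\cap[V])\,[W],
\]
since $p_{2*}$ kills the product unless the first factor has dimension zero. All these degrees vanish by assumption, so $f^*a\cap y=0$. Since $f$ and $y$ were arbitrary, $a=0$.

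\emph{Surjectivity.} Given $\phi\in\Hom(A_\bl(X),\Z)$, we define an operational class by the same recipe. For $f\colon Y\to X$ and $y\in A_\bl(Y)$, write $\gamma_{f*}y=\sum_i \alpha_i\times\beta_i$ using the Künneth isomorphism, and set
\[
c_f(y)=\sum_i\phi(\alpha_i)\,\beta_i ,
\]
with $\phi$ applied only on components of matching degree. This is well defined because the Künneth map is an isomorphism. We must then check the axioms of a bivariant class:
\begin{itemize}
\item commutation with proper pushforward along $g\colon Y'\to Y$, using $(\mathrm{id}\times g)_*\gamma_{fg*}=\gamma_{f*}g_*$;
\item commutation with flat pullback, using that the Künneth isomorphism is natural for flat pullback along $\mathrm{id}\times h$;
\item commutation with refined Gysin maps.
\end{itemize}
The Gysin compatibility is the main obstacle. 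I would handle it by the trick of Fulton--MacPherson--Sottile--Sturmfels: apply the hypothesis with $Y$ replaced by the relevant fiber products, so that the Künneth decomposition is natural with respect to Gysin maps on the second factor. Gysin maps act only on the $Y$-factor of an exterior product, so they commute with applying $\phi$ on the $X$-factor. Taking $f=\mathrm{id}$ and $y=[X]$ shows that the resulting class pairs to $\phi$, which gives surjectivity.

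\emph{Ring structure.} The cup product of $a$ and $b$ is computed operationally as $a\cap(b\cap -)$. It corresponds to the dual of $\delta_*$ followed by $\kappa^{-1}$ by the same graph computation, now with $f=\delta$. This is exactly the algebra structure on $\Hom(A_\bl(X),\Z)$ dual to the coalgebra $(\Delta,e)$. The unit $1\in A^\bl(X)$ maps to the counit $e$, since $\langle 1,c\rangle=\deg c$. Hence the Kronecker map is an isomorphism of $\Z$-algebras.
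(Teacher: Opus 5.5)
Your proposal is the Fulton--MacPherson--Sottile--Sturmfels graph-and-K\"unneth argument that the paper cites without reproducing, and the injectivity step, the construction of the operational class and the verification of the bivariant axioms are all sound.

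One step is misstated. Evaluating at $f=\mathrm{id}$, $y=[X]$ does not show that the new class $c$ pairs to $\phi$, because for singular $X$ the class $c\cap[X]$ need not determine $u\mapsto\deg(c\cap u)$. Instead take $y=u\in A_k(X)$ arbitrary and write $\delta_*u=\sum_i\alpha_i\times\beta_i$. Applying $p_{1*}$, and using $p_1\circ\delta=\mathrm{id}$ and $p_{1*}(\alpha\times\beta)=\deg(\beta)\,\alpha$, gives $u=\sum_i\deg(\beta_i)\,\alpha_i$. Hence $\deg(c\cap u)=\sum_i\phi(\alpha_i)\deg(\beta_i)=\phi(u)$, where only the terms with $\alpha_i\in A_k(X)$, $\beta_i\in A_0(X)$ contribute.

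Likewise, in the ring-structure paragraph the relevant morphism is $f=\mathrm{id}_X$, whose graph is $\delta$, not $f=\delta$. Your injectivity formula gives $b\cap c=\sum_i\deg(b\cap\alpha_i)\,\beta_i$ when $\delta_*c=\sum_i\alpha_i\times\beta_i$. Hence $\deg(a\cap(b\cap c))=\langle b\otimes a,\Delta c\rangle$, which equals $\langle a\otimes b,\Delta c\rangle$ by cocommutativity.
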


\begin{Df}
    We say that $X$ admits an \textbf{affine paving} if we can decompose $X$ as a disjoint union $X = X_0 \sqcup \dots \sqcup X_k$ such that 
    each partial union $X_0 \sqcup \dots \sqcup X_i$ is closed in $X$ and
    each cell $X_i$ is isomorphic to an affine space $\A^{n_i}$.
\end{Df}
Whenever $X$ admits an affine paving, the cell closures $\ove{X}_{i}$ form a basis for $A_{\bl}(X)$ \cite{chow_basis}*{Theorem 1} (see also \cite{fult_int_theory}*{Example 19.1.11}). To relate singular homology and cohomology to Chow groups we use the result below. 
For convenience, we rescale the grading in both singular homology and cohomology by a factor $\frac{1}{2}$,\footnote{This means that singular homology and cohomology are graded by the group $\frac{1}{2}\Z$. In practice, the half-degrees will always sit empty.} so that the cycle class map $A_{\bl}(X) \to H_{\bl}(X)$ is degree-preserving.
\begin{Lm}\label[Lm]{lm:cycle_class}
    Let $X$ be a complete variety over $\C$ with an affine paving. Then the cycle class map, $\cl\colon A_\bl(X) \to H_{\bl}(X, \Z)$ is an isomorphism of graded coalgebras and its dual $H^\bl(X, \Z) \to A^\bl(X, \Z)$ is a graded ring isomorphism.
\end{Lm}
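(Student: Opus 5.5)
We argue in three steps: first that the cycle class map is an isomorphism of groups, then that it respects the coalgebra structures, and finally that dualizing gives the ring isomorphism.

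\emph{Step 1: bijectivity.} Write $X = X_0 \sqcup \dots \sqcup X_k$ with $Z_i \deq X_0 \sqcup \dots \sqcup X_i$ closed and $X_i \cong \A^{n_i}$. We induct on $i$, using the compatible long exact sequences for $Z_{i-1} \subset Z_i$ with open complement $X_i$. In Chow homology this is the right exact sequence
\[
A_\bl(Z_{i-1}) \to A_\bl(Z_i) \to A_\bl(X_i) \to 0 .
\]
In Borel--Moore homology it is the long exact sequence of the pair; since each $Z_i$ is compact, this is ordinary singular homology. The cycle class map is compatible with both sequences, so we can compare them term by term:
\begin{itemize}
\item $A_\bl(\A^{n}) = \Z$ in degree $n$, generated by the fundamental class.
\item $H^{BM}_\bl(\A^n) = \Z$ in real degree $2n$, also generated by the fundamental class.
\item The cycle class map matches these generators, so it is an isomorphism on each cell.
\end{itemize}
By induction the Borel--Moore groups of $Z_i$ are concentrated in even real degrees. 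The connecting maps in the long exact sequence change degree parity, so they vanish, and the long exact sequence splits into short exact sequences. Similarly, the Chow sequence is left exact: the classes $[\ove{X}_j]$ for $j \le i$ form a basis by \cite{chow_basis}*{Theorem 1}, which gives injectivity on the left. The five lemma then shows that $\cl$ is an isomorphism for $Z_i$, and hence for $X = Z_k$. A byproduct is that $H_\bl(X,\Z)$ is free with basis $\cl[\ove{X}_i]$.

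\emph{Step 2: coalgebra compatibility.} The cycle class map commutes with proper pushforward, in particular with $\delta_*$, and with exterior products, $\cl[U \times V] = \cl[U] \times \cl[V]$. Since $X \times X$ is paved by the products $X_i \times X_j$, with the same closedness filtration after ordering lexicographically, Step 1 shows that $\cl$ is an isomorphism for $X \times X$. Both Künneth maps are isomorphisms:
\begin{itemize}
\item the Chow Künneth map, because the $[\ove{X}_i \times \ove{X}_j]$ form a basis by the paving result;
\item the homology cross product, because the homology is free (Künneth theorem with no Tor terms).
\end{itemize}
Hence $\Delta = \kappa^{-1} \circ \delta_*$ is intertwined by $\cl$, and so is the counit, by compatibility with pushforward to a point.

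\emph{Step 3: the dual statement.} Because $H_\bl(X,\Z)$ is free of finite rank, the universal coefficient theorem identifies $H^\bl(X,\Z)$ with $\Hom(H_\bl(X,\Z),\Z)$. Under this identification the cup product is dual to $\delta_*$ followed by the inverse Künneth map. Dualizing the coalgebra isomorphism of Step 2 therefore gives a ring isomorphism $H^\bl(X,\Z) \to \Hom(A_\bl(X),\Z)$.

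To identify $\Hom(A_\bl(X),\Z)$ with $A^\bl(X)$ we apply \Cref{lm:kunneth_chow}. This requires the Künneth map $A_\bl(X) \otimes A_\bl(Y) \to A_\bl(X\times Y)$ to be an isomorphism for \emph{every} $Y$. We prove this by induction along the paving, using the right exact sequences for $Z_{i-1}\times Y \subset Z_i \times Y$. The base case is $A_\bl(\A^n \times Y) \cong A_{\bl-n}(Y)$, which holds by flat pullback and homotopy invariance. Surjectivity follows from the right exact sequence. Injectivity is the main obstacle, since the Chow sequence is only right exact. We handle it by constructing an explicit left inverse:
\begin{itemize}
\item Choose dual operational classes, or use Totaro's argument, or the argument of \cite{int_theory_spherical}*{\textsection 4}. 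That argument shows that varieties with a filtration by cells, or more generally linear varieties, satisfy the Künneth property for all $Y$.
\item We invoke it directly, noting that an affine paving is in particular a linear-variety structure.
\end{itemize}

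Finally we check that the composite $H^\bl \to \Hom(A_\bl,\Z) \cong A^\bl$ is the dual of $\cl$, as claimed in the statement. This amounts to checking that the Kronecker pairings agree, which is formal from the compatibility of $\cl$ with cap products on $X$.
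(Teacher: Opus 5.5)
Your proposal is correct and follows the paper's proof. Both arguments run the same way: bijectivity of $\cl$; Künneth on both sides plus compatibility of $\cl$ with $\delta_*$ and exterior products, giving the coalgebra isomorphism; then universal coefficients together with \Cref{lm:kunneth_chow} and Totaro's result \cite{TOTARO_chow}, giving the ring isomorphism. You differ only in re-deriving the bijectivity of $\cl$ via localization sequences and the five lemma instead of citing \cite{chow_basis}, and in making the Chow Künneth isomorphism for $X\times X$ explicit through the lexicographically ordered product paving. For the Künneth property for \emph{all} $Y$ you, like the paper, end up relying on Totaro, so the announced ``explicit left inverse'' is never constructed and that sentence should simply be replaced by the citation.
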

\begin{proof}
    First, $A_\bl(X)$ is freely generated by the cell closures by the discussion above. That $\cl$ is a group isomorphism is \cite{chow_basis}*{Theorem 1}. Thus, $H_\bl(X, \Z)$ is free, finitely generated, and vanishes at half-integer degrees. On the Chow side, $\Hom(A_\bl(X), \Z) \cong A^\bl(X)$ by  \Cref{lm:kunneth_chow} and \cite{TOTARO_chow}*{Proposition 1}. On the singular side, the universal coefficient theorem for cohomology \cite{hatcher}*{Theorem 3.2} implies that the Kronecker map $H^\bl(X, \Z) \to \Hom(H_\bl(X, \Z), \Z)$ is an isomorphism. 
    The K\"unneth theorem for singular homology
    \cite{munkres84}*{Theorem 59.3}
    ensures that $H_\bl(X, \Z) \otimes H_{\bl}(X, \Z) \to H_\bl(X \times X, \Z)$ is an isomorphism. Thus, $H_\bl(X, \Z)$ has a coalgebra structure induced by the diagonal map. Since $\cl$ commutes with pushforward, it is an isomorphism of coalgebras. Since the singular and Chow cohomologies of $X$ are the duals of their respective coalgebras, it follows that the dual map $\cl^\vee$ is an isomorphism of algebras.   
\end{proof}

\subsection{Bia\l ynicki-Birula decompositions and homology}\label{sec_BB}

We recall the \BB decomposition introduced in \cite{BB}. For a complete $\Gm$-variety $X$ with finite fixed locus $X^{\Gm}$ and a point $p \in X^{\Gm}$, write $\lim_{t\to 0} t\cdot x$ and $\lim_{t\to \infty} t\cdot x$ for the values at $0$ and $\infty$ of the unique extension to $\Pb^1$ of the map $\Gm \to X$ defined by $t \mapsto t \cdot x$. Let
\[
X_p^+ = \left\{x \in X \;\middle|\;  \lim_{t\to 0} t \cdot x = p\right\}, \qquad X_p^- = \left\{x \in X \;\middle|\;  \lim_{t\to \infty} t \cdot x = p\right\}.
\]
The collections $\{X_p^+\}_{p \in X^{\Gm}}$ and $\{X_p^-\}_{p \in X^{\Gm}}$ each partition $X$ into locally-closed subvarieties. They are, respectively, the \emph{positive} and \emph{negative} \BB decompositions of $X$. 

Under the assumptions that $X$ is a smooth complete $\Gm$-variety with finite fixed locus, it was shown in \cite{BB} that each cell $X_p^+$ is isomorphic to affine space. If $X$ is additionally projective, these cells form an affine paving of $X$
\cite{BB_Example}*{Theorem 3}. By the discussion in \Cref{sec:chow_section} this implies that the classes $[\ove{X_p^+}]$ form a basis of $A_{\bl}(X)$. However, when $X$ is singular, the cells $X_p^+$ are usually quite singular themselves, so in particular are not affine spaces. Since the varieties we work with will be singular in all but trivial cases, we need some sufficient conditions for the \BB cells to be affine in the toric setting.

Let $X$ be the toric variety associated to a lattice polytope $P$. A cocharacter $v \in N$ induces a $\Gm$-action on $X$ via $(t, x) \mapsto v(t) \cdot x$. The condition that $X^{\Gm}$ has finite fixed locus is equivalent to the condition that $v$ is not perpendicular to any edge of $P$. 
We make this assumption throughout.
In this setting we write $X_p^{v}$ for $X_p^+$ and $X_p^{-v}$ for $X_p^-$.
For a face $F$ of $P$ and a vertex $p \in P$, write $F^{v}$ for the vertex of $F$ maximizing $v$. Then $F^{-v}$ is the vertex of $F$ minimizing $v$. With this notation, we have that $X_p^v = \bigsqcup_{F^v = p} O_F$ by \cite{struct_prop}*{Lemma 3.3}.\footnote{Though smoothness is assumed in that paper throughout, the proof of Lemma 3.3 does not use that assumption.} In general, $X_p^v$ need not be irreducible. However, if the edges $e$ of $P$ satisfying $e^v = p$ are precisely those edges incident to $p$ which are contained in some face $F$ of $P$, then $X_p^v = \bigsqcup_{p \in G \subseteq F} O_G = U_p^F$ and $\ove{X_p^v} = V(F)$. This condition is automatic if $P$ is \textbf{simple};
that is, if $P$ has $\dim P$ many edges at each vertex.
However, the polytopes we consider will almost never be simple. The definition below identifies some desirable properties of $v$ and the associated \BB decomposition.

\begin{Df}
    Let $P$ be a polytope in a real vector space and $v$ a covector.
    \begin{itemize}
        \item We say that $v$ \textbf{paves $P$ at $p$} if $P$ has a unique maximal face $P_p^v$ on which $p$ maximizes the functional $\inner{-,v}$. 
        We say that $v$ \textbf{paves} $P$ if it paves $P$ at each of its vertices; equivalently all \BB cells $X_p^{v}$ of $X$ are irreducible.
        \item We say that $v$ \textbf{simply paves $P$ at $p$} if $P_p^v$ is simple at $p$.
        We say that $v$ \textbf{simply paves} $P$ if it simply paves $P$ at each of its vertices.
        \item We say that $v$ \textbf{smoothly paves $P$ at $p$} if $P_p^v$ is smooth at $p$.
        We say that $v$ \textbf{smoothly paves} $P$
        if it smoothly paves $P$ at each of its vertices; equivalently, the \BB decomposition of $X$ is an affine paving.
        \item We say that $v$ \textbf{stratifies} $P$ if it paves $P$ and if 
        every cell closure of the \BB decomposition ${X_p^v}$ is a union of cells. 
    \end{itemize}
\end{Df}

If $m$ is a lattice vector in a lattice $M \subseteq M_{\R}$, the \textbf{primitive direction} of $m$ is the unique generator of the semigroup $M \cap \Cone(m)$. If $P$ is a lattice polytope in $M$ and $p, q \in P$ are adjacent vertices, the \textbf{primitive edge direction} from $p$ to $q$ is the primitive direction of $q - p$.
A polytope $P$ is \emph{edge-unimodular} if the matrix which records all primitive edge directions in $P$ is unimodular. Here, a matrix $\Xi$ of rank $r$ is unimodular if all $r \times r$ minors of $\Xi$ lie in $\{0, \pm1\}$.
\begin{Obs}\label[Obs]{obs:tot_uni}
    In an edge-unimodular polytope $P$, a covector which simply paves $P$ smoothly paves $P$.
\end{Obs}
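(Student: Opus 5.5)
Fix a vertex $p$ and write $F = P_p^v$ for the face supplied by the paving hypothesis. The task is to show that $\Cone(p - F)$ is generated by a basis of a saturated sublattice of $M$. Since $v$ simply paves $P$ at $p$, the face $F$ is simple at $p$. So exactly $k \deq \dim F$ edges of $F$ meet $p$. Let $u_1, \dots, u_k$ be their primitive edge directions pointing away from $p$.

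Because $F$ is a face of $P$, every edge of $F$ is an edge of $P$. In a simple vertex the tangent cone is spanned by the incident edges, so $\Cone(F - p)$ is generated by $u_1, \dots, u_k$. These vectors are linearly independent, since they span the $k$-dimensional affine hull direction of $F$. Hence the cone is simplicial, and $\Cone(p - F) = -\Cone(F - p)$ is generated by $-u_1, \dots, -u_k$. The problem therefore reduces to a lattice statement: the $u_i$ form a basis of $L \deq M \cap \Span_{\R}(u_1, \dots, u_k)$.

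To prove this, use edge-unimodularity. Let $\Xi$ be the matrix of all primitive edge directions of $P$, and let $r$ be its rank. The main point is the standard fact that if a matrix is unimodular, then any linearly independent subset of its columns is a lattice basis of the saturated lattice it spans.

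The argument runs as follows. Extend $u_1, \dots, u_k$ by further columns of $\Xi$ to a column basis $B$ of its column span $V$; since $P$ is full-dimensional, $V = M_{\R}$. Let $w \in L$. Write $w = \sum c_i b_i$ over the columns of $B$, with real coefficients. By Cramer's rule, each $c_i$ is a ratio of $r \times r$ minors. The denominator is $\pm 1$ because it is a nonzero maximal minor, so it is a unit. The numerator is an integer because it is a determinant of integral vectors in coordinates for $M$.

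There is one subtlety here: unimodularity has to be read with respect to a basis of $M$. This holds once $\Xi$ has full rank $\dim M$; if it does not, we restrict to the lattice $M \cap V$ throughout. Since $w$ lies in $\Span(u_i)$, the coefficients on the extra columns vanish, so $w \in \Z\{u_i\}$. This gives saturation, so $P$ is smooth at $p$.

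Applying this at every vertex shows that $v$ smoothly paves $P$. The main obstacle I expect is this Cramer's rule step: fixing the ambient lattice coordinates so that the unimodularity of $\Xi$ really yields integral coefficients.
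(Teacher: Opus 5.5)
Your proposal is correct and is essentially the argument the paper leaves implicit in stating this as an Observation. Simplicity of $P_p^v$ at $p$ makes its tangent cone simplicial on $\dim P_p^v$ linearly independent primitive edge directions of $P$, and edge-unimodularity then makes these a basis of a saturated sublattice; you correctly extend to $r$ independent columns, where $r$ is the rank of $\Xi$, because the definition only controls the $r\times r$ minors. Your full-rank caveat is unnecessary. Choose rows $I$ with $\det B_I=\pm1$; then $c=B_I^{-1}w_I$ is integral directly in the ambient coordinates, such as $\Z^E$ for matroid polytopes, so there is no need to restrict to $M\cap V$.
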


\subsection{\texorpdfstring{The Hopf algebras $\sym$, $\nsym$ and $\qsym$}{The Hopf algebras Sym, NSym, QSym}}\label{ssec:hopf_stuff}
We collect some facts about the Hopf algebras $\sym$, $\nsym$ and $\qsym$.
We refer the reader to \cite{GR20} for a comprehensive survey. For $\nsym$, we will use the combinatorial interpretation in terms of tableaux, which is implicit in \cites{GKLLRT95, KLT97} and appears more explicitly in \cites{Thi01, NT09, H16, HM25}.

In this text, we insist that all algebras and coalgebras are associative, unital, graded and connected; a graded $\Z$-algebra is \textbf{connected} if its degree $0$ component is $\Z$. 
This means that our Hopf algebras are uniquely determined by their underlying algebra along with the coproduct map $\Delta$; the antipode is determined by Takeuchi's formula \cite{GR20}*{Proposition 1.4.24}. A Hopf algebra is \textbf{commutative} if the underlying algebra is commutative. 

Consider the ring $\Z[[x_1, x_2, \dots]]$ of formal power series in the infinitely many variables $x_1, x_2, \dots$. The ring of symmetric functions $\sym$ is the subring of $\Z[[x_1, x_2, \dots]]$ consisting of elements of bounded degree which are symmetric, i.e., invariant under swapping any two variables. 

We draw partitions using the English convention. For any skew shape $\lambda/\mu$, a semistandard Young tableau (SSYT) of shape $\lambda/\mu$ is a filling of the boxes in $\lambda/\mu$ with positive integers such that the numbers weakly increase along rows and strictly increase down columns. The weight of such a tableau $T$, denoted $\wt(T)$, is defined to be the product $\prod_{b\in \lambda/\mu} x_{T(b)}$ where $T(b)$ is the number written in the box $b$ and the product is over all boxes in the skew shape. The \textbf{skew Schur function} associated to $\lambda/\mu$ is
\[s_{\lambda/\mu} := \sum_{T\in \SSYT(\lambda/\mu)}\wt(T),\]
where $\SSYT(\lambda/\mu)$ is the set of SSYTs of shape $\lambda/\mu$.  The \textbf{Schur functions} $s_{\lambda} := s_{\lambda/\varnothing}$ form a $\Z$-basis of $\sym$. The ring $\sym$ admits a canonical Hopf algebra structure whose coproduct is given by
\[\Delta(s_{\lambda/\mu}) = \sum_{\nu}s_{\lambda/\nu}\otimes s_{\nu/\mu}.\]

We can similarly define the ring $\qsym$ of \textbf{quasisymmetric functions} as  the subring of $\Z[[x_1, x_2, \dots]]$ consisting of the elements $f$ of bounded degree such that for any $i_1 < \dots < i_k$ and $j_1 < \dots < j_k$ and all $a_1, \dots, a_k > 0$, the coefficients in $f$ of the monomials $x_{i_1}^{a_1}\cdots x_{i_k}^{a_k}$ and $x_{j_1}^{a_1}\cdots x_{j_k}^{a_k}$ are equal. For each binary string $w$ of length $d-1$, we define the \textbf{fundamental quasisymmetric function} $F_w$ by
 \[F_w:= \sum_{\substack{i_1 \le \dots \le i_d\\ i_{a} < i_{a+1}\text{ if }w_a = 1}}x_{i_1 }\cdots x_{i_d}.\]
 These form a $\Z$-basis for $\qsym$.
 We will occasionally have use for an imagined binary string $\eps$ of length $-1$ corresponding to the unique composition of $0$. Here, $F_{\eps} = 1$. We will speak of \textbf{extended binary strings} to mean ones possibly taking the value $\eps$.
 
To multiply fundamental quasisymmetric functions, one can use the following product rule. For a permutation $\pi$, we define $\Des(\pi)$ to be the binary string whose $i$-th character is $0$ if $\pi(i) < \pi(i+1)$ and $1$ otherwise. A permutation $\pi \in S_{m+n}$ is a shuffle of $\sigma\in S_m$ and $\tau\in S_n$ if the relative order of $1, \dots, m$ in the one-line notation for $\pi$ is the same as in $\sigma$ and the relative order of $m+1, \dots, m+n$ is the same as that of $1,\dots,n$ in $\tau$. Then multiplication of quasisymmetric functions satisfies
\[F_{\Des(\sigma)}F_{\Des(\tau)} = \sum_{\substack{\pi\text{ shuffle of }\\\sigma\text{ and }\tau}}F_{\Des(\pi)}.\]
It is easy to convince oneself that $\Des(\sigma)$ is a subsequence of $\Des(\pi)$ for any shuffle $\pi$ of $\sigma$ and $\tau$. Hence,
\begin{Prop}[{cf. \cite{S22}*{p. 214}}]\label[Prop]{prop:quasisubword}
    For binary strings $u$ and $v$, if $F_t$ appears in the expansion of $F_uF_v$ then $u, v$ are substrings of $t$.
\end{Prop}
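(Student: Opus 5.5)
The plan is to use the shuffle product rule for fundamental quasisymmetric functions stated just above. First we realize $u$ and $v$ as descent strings of permutations. Given a binary string $u$ of length $m-1$, there is some $\sigma\in S_m$ with $\Des(\sigma)=u$. For instance, take the permutation whose one-line notation consists of decreasing runs broken exactly at the positions where $u$ has a $0$. Choose $\tau\in S_n$ with $\Des(\tau)=v$ in the same way. The extended string $\eps$ is handled by noting that $F_\eps=1$, so $F_\eps F_v=F_v$ and the claim is immediate. The product rule then gives
\[F_uF_v=\sum_{\pi}F_{\Des(\pi)},\]
where $\pi$ ranges over shuffles of $\sigma$ and $\tau$. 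Since the $F_w$ form a $\Z$-basis and every coefficient here is a nonnegative count, $F_t$ appears in the expansion exactly when $t=\Des(\pi)$ for some shuffle $\pi$. So it suffices to show that $\Des(\sigma)$ and $\Des(\tau)$ are subwords of $\Des(\pi)$ for every such $\pi$.

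By symmetry it is enough to treat $\sigma$. The positions of the values $1,\dots,m$ in $\pi$ are $p_1<\dots<p_m$, and the value of $\pi$ at $p_i$ is $\sigma(i)$. For each $i<m$, consider the segment of $\Des(\pi)$ strictly between $p_i$ and $p_{i+1}$. Its entries record comparisons along the chain
\[\sigma(i)=\pi(p_i),\ \pi(p_i+1),\ \dots,\ \pi(p_{i+1})=\sigma(i+1),\]
where the interior entries are all values larger than $m$.

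\textbf{Case $\sigma(i)<\sigma(i+1)$, i.e.\ $u_i=0$.} If $p_{i+1}=p_i+1$, the single character is $0$. Otherwise the first step goes from a value at most $m$ to a value above $m$, so it is an ascent, giving a $0$.

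\textbf{Case $u_i=1$.} The last step goes from a value above $m$ (or from $\sigma(i)$ directly) down to $\sigma(i+1)$, so it is a descent, giving a $1$.

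In both cases we can select one character of $\Des(\pi)$ from the position range $[p_i,p_{i+1}-1]$ equal to $u_i$. These ranges are disjoint and increase with $i$, so the selected characters form a subword of $\Des(\pi)$ equal to $u$. For $\tau$ the same argument works with the inequalities reversed: the interior values are now smaller than the $\tau$-values. In that setting the last step yields an ascent, giving a $0$, when $\tau(i)<\tau(i+1)$, and the first step yields a descent, giving a $1$, otherwise.

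The only real content is this local selection argument. The one point requiring care is choosing the first step or the last step correctly in each of the four subcases so that the selected indices stay inside disjoint windows. Everything else is bookkeeping.
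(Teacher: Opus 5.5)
Your proof is correct and follows the same route as the paper. The paper applies the shuffle product rule and simply asserts that $\Des(\sigma)$ is a subsequence of $\Des(\pi)$ for every shuffle $\pi$; your first-step/last-step selection inside the disjoint windows $[p_i,p_{i+1}-1]$ (with the inequalities reversed for $\tau$) is exactly the verification the paper leaves to the reader.
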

Just like the ring $\sym$, the ring $\qsym$ likewise has a canonical Hopf algebra structure 
\cite{GR20}*{Proposition 5.2.15}
in which the coproduct is given 
by \[\Delta(F_w) = \sum_{\substack{w = s0t\\\text{ or }\\w = s1t}}F_s\otimes F_t.\]

If $V^{\bl}$ is a graded $\Z$-module which is free and finitely-generated in each degree, its \textbf{graded dual} $\widecheck{V}^{\bl}$ is the $\Z$-module whose $j$-th graded piece is $\Hom(V^j, \Z)$, i.e.\ the dual of $V^j$.
When $V^{\bl}$ is equipped with the structure of a Hopf algebra, there is a canonical Hopf algebra structure on $\widecheck{V}^{\bl}$ \cite{GR20}*{\textsection~1.6} called the \textbf{dual graded Hopf algebra}.
The multiplication in $\widecheck{V}^{\bl}$ is dual to the comultiplication in $V^{\bl}$ and vice versa. Explicitly,
$\inner{xy,  a} = \inner{x\otimes y, \Delta a},$ and $\inner{\Delta x, a\otimes b} = \inner{x, ab},$ for all $x, y \in \widecheck{V}^{\bl}$ and all $a, b \in V^{\bl}$, where we have written $\inner{-,-}$ for the canonical pairing.
The graded dual of $\widecheck{V}^{\bl}$ is again $V^{\bl}$.

The Hopf algebra $\sym$ is isomorphic to its dual. The graded dual of $\qsym$ is a non-commutative Hopf algebra called $\nsym$.
 To define $\nsym$, we work in the ring  $\Z\langle\langle x_1, x_2, \dots\rangle\rangle$ of formal power series in \textbf{non-commuting} variables $x_1, x_2, \dots$. For each (possibly disconnected) ribbon $\alpha$, we define the \textbf{generalized noncommutative ribbon function}
 \[R_{\alpha} = \sum_{T\in \SSYT(\alpha)} \wt(T),\]
 where now $\wt(T)$ is again the product of variables indexed by the entries in $T$ but now the variables lie in $\Z\langle\langle x_1, x_2, \dots\rangle\rangle$.
The product is taken by traversing the ribbon southwest to northeast. It is clear that $R_{\alpha}R_{\beta}$ is $R_{\alpha\oplus \beta}$ where $\alpha\oplus \beta$ is the ribbon obtained by joining the southwest corner of $\beta$ to the northeast corner of $\alpha$. Conditioning on whether the last variable in a SSYT for $\alpha$ is $\leq$ or $>$ than the first variable in a SSYT for $\beta$, one obtains:
 \[R_{\alpha}R_\beta = R_{\alpha\oplus \beta} = R_{\alpha \cdot \beta} + R_{\alpha \odot\beta}.\]
 Here, $\alpha\cdot \beta$, (resp. $\alpha\odot \beta$) is the ribbon obtained by joining $\alpha$ and $\beta$ so that the first box of $\beta$ is placed on top of (resp. to the right of) $\alpha$. The $R_\alpha$ for connected ribbons $\alpha$ are called \textbf{noncommutative ribbon functions} and 
  form a $\Z$-basis of $\nsym.$ From a connected ribbon $\alpha$, one can obtain a binary string by traversing the boxes of $\alpha$ southwest to northeast and recording a $0$ (resp.\ a $1$) each time the next box appears to the right of (resp. on top of) the current one, and $\eps$ corresponds to the empty ribbon. In this way, we index noncommutative ribbon functions by binary strings as well (and $R_\eps=1$). One then has $R_sR_t = R_{s0t}+R_{s1t}$. 
The coproduct in $\nsym$ is given by the following rule.
 \begin{Prop}[{cf. \cite{H16}*{Theorem 4.3}}]\label[Prop]{prop:coproduct_nsym}
    For any (possibly disconnected) ribbon $\lambda/\mu$
     \[\Delta(R_{\lambda/\mu}) = \sum\limits_{\nu}R_{\lambda/\nu}\otimes R_{\nu/\mu}.\]
 \end{Prop}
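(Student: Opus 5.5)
The plan is to prove the formula directly from the tableau definition of $R_{\lambda/\mu}$, using the standard coproduct on $\Z\langle\langle x_1,x_2,\dots\rangle\rangle$. This coproduct doubles the alphabet: a series in the variables of $\mathbf{x}$ is sent to the corresponding series in $\mathbf{x}\sqcup\mathbf{y}$. Here the totally ordered alphabet $x_1<x_2<\dots<y_1<y_2<\dots$ is used, and $x_i$ is identified with $x_i\otimes 1$ and $y_j$ with $1\otimes x_j$. With the convention that $\qsym$ and $\nsym$ are dual via $\inner{F_s,R_t}=\delta_{st}$, this alphabet-doubling map is exactly the coproduct of $\nsym$ in \cite{GR20}. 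On the basis $R_t$ it is characterized as the transpose of multiplication in $\qsym$, so I may use it as the definition of $\Delta$.

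First, I observe that this map is an algebra homomorphism. The variables $x_i\otimes 1$ and $1\otimes x_j$ commute in the tensor product, while the $x$'s among themselves, and the $y$'s among themselves, do not. Products of monomials are therefore preserved once each monomial is reordered with all $x$'s read first. This reordering requires care, and I address it in the next step.

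Next, take an SSYT $T$ of shape $\lambda/\mu$ in the doubled alphabet. The boxes filled with $x$-letters form a subset closed under moving up and left within $\lambda/\mu$, because rows weakly increase, columns strictly increase, and every $x$ precedes every $y$. Hence this subset is $\nu/\mu$ for a unique partition $\nu$ with $\mu\subseteq\nu\subseteq\lambda$, and $T$ splits uniquely as a pair $(T_x,T_y)$. Here $T_x$ is an SSYT of shape $\nu/\mu$ in the $x$'s, and $T_y$ is an SSYT of shape $\lambda/\nu$ in the $y$'s. Conversely, any such pair glues to a valid tableau, since every $x$ is smaller than every $y$. This gives a weight-preserving bijection
\[
\SSYT_{\mathbf{x}\sqcup\mathbf{y}}(\lambda/\mu)\;\longleftrightarrow\;\bigsqcup_{\nu}\SSYT_{\mathbf{x}}(\nu/\mu)\times\SSYT_{\mathbf{y}}(\lambda/\nu).
\]
Within the tensor product, the weight of $T$, read southwest to northeast, becomes $\wt(T_y)\otimes\wt(T_x)$ or $\wt(T_x)\otimes\wt(T_y)$. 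In each factor the letters keep their relative order, because reading $\lambda/\mu$ from southwest to northeast restricts to the corresponding reading of each subribbon. Summing over all $T$ then gives $\sum_\nu R_{\lambda/\nu}\otimes R_{\nu/\mu}$, up to the order of the tensor factors.

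The main obstacle is pinning down that order, which is a matter of conventions. I have to check which tensor factor is attached to the inner shape $\nu/\mu$ under the paper's duality and orientation conventions. The first thing I would try is a test case: the two-box ribbons $R_0$ and $R_1$, compared against the known coproduct $\Delta(R_s)=\sum R_{s'}\otimes R_{s''}$ dual to the $F$-product rule. If the test shows the order is reversed, the fix is to use the opposite ordering of the alphabet, $y<x$. The same argument then goes through with the roles of the two factors exchanged, giving exactly $\sum_\nu R_{\lambda/\nu}\otimes R_{\nu/\mu}$.

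Finally, disconnected ribbons need no separate treatment. The bijection above never used connectedness, and it also respects the relation $R_\alpha R_\beta=R_{\alpha\oplus\beta}$. This is consistent with $\Delta$ being multiplicative, since the intermediate shapes $\nu$ for $\alpha\oplus\beta$ factor as pairs of intermediate shapes for $\alpha$ and for $\beta$.
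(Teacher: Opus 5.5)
Your proof is correct, and it takes a different route from the paper. The paper gives no argument for this proposition; it simply defers to Huang's Theorem~4.3.

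You realize the coproduct on $\nsym\subset\Z\langle\langle x_1,x_2,\dots\rangle\rangle$ as evaluation on the ordinal sum $\mathbf{x}\sqcup\mathbf{y}$ of two mutually commuting alphabets. You then split each tableau into an $x$-filled inner shape $\nu/\mu$ and a $y$-filled outer shape $\lambda/\nu$. A ribbon skew shape has at most one box on each diagonal, so its southwest-to-northeast reading is the reading by increasing content. That reading therefore restricts to the readings of both pieces, and connectedness is never used.

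This is the classical proof of $s_{\lambda/\mu}(\mathbf{x},\mathbf{y})=\sum_\nu s_{\nu/\mu}(\mathbf{x})\,s_{\lambda/\nu}(\mathbf{y})$ transplanted to noncommuting variables. What it buys over the citation is a self-contained, purely combinatorial argument in exactly the tableau model the paper uses to define $R_\alpha$.

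The one input you only cite is that alphabet doubling equals the coproduct dual to multiplication in $\qsym$ under $\inner{F_s,R_t}=\delta_{st}$. This takes one line to justify:
\begin{itemize}
\item both maps are algebra homomorphisms on $\nsym$;
\item both send each generator $S_n=R_{0^{n-1}}$ to $\sum_i S_i\otimes S_{n-i}$, and the $S_n$ generate $\nsym$.
\end{itemize}
For the doubling map, a weakly increasing word in $\mathbf{x}\sqcup\mathbf{y}$ is an $x$-word followed by a $y$-word. Dually, by the shuffle rule, $F_{0^{n-1}}$ occurs in $F_uF_v$ only when $u$ and $v$ consist of $0$'s, and then with coefficient $1$.

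The step you call the main obstacle is not one. Since $\qsym$ is commutative, its graded dual $\nsym$ is cocommutative, so the flip $\tau$ satisfies $\tau\circ\Delta=\Delta$. Hence your output $\sum_\nu R_{\nu/\mu}\otimes R_{\lambda/\nu}$ already equals $\sum_\nu R_{\lambda/\nu}\otimes R_{\nu/\mu}$, and no convention check is needed. Your proposed test on $R_0$ and $R_1$ could never have detected a reversal. Cocommutativity is also the real reason your fallback ordering $y<x$ computes $\Delta$ as well.
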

 
\subsection{Cohomology of the Grassmannian}\label{sec:richardson}
For a totally ordered indexing set $E$, let $\Gr(r, E)$ be the Grassmannian of $r$-dimensional subspaces of $\Bbbk^E$. A rational point $L \in \Gr(r, E)$ corresponds to the row span of an $r\times E$ matrix $A_L$ of full rank. Given $L$, this matrix is well-defined up to the left action of $\GL_r$. The natural (left) action of $\GL_E$ on $\Gr(r, E)$ takes $(g, L)$ to the rowspan of $A_L g^{-1}$. Write $B_E^+$ (resp.\ $B_E^-$) for the upper (resp.\ lower) triangular matrices in $\GL_E$ and write $T_E$ for the diagonal torus. 

The $T_E$-fixed points of $\Gr(r, E)$ are the subspaces $\Bbbk^I = \langle e_i : i \in I \rangle$ for $I \in \binom{E}{r}$. 
Let $E = \{\xi_1 < \dots < \xi_n\}$.
For a partition $\lambda$ that fits inside the $r\times (n-r)$ rectangle, let $I(\lambda)$ denote the set 
\[
    \{\xi_j \mid \text{$j$ is an up step of the lower boundary path of $\lambda$}\}.
\]
Write $\Omega^\lambda$ (resp. $\Omega_\lambda$) for the (closed) Schubert variety given by $\ove{B_E^+\Bbbk^{I(\lambda)}}$ (resp. $\ove{B_E^-\Bbbk^{I(\lambda)}}$). Whenever we have two partitions $\mu \subseteq \lambda$ within an $r \times  (n - r)$ grid, we write $\Omega_\lambda^\mu = \Omega_{\lambda} \cap \Omega^\mu$ for the Richardson variety associated to $\lambda/\mu$, whose dimension is $|\lambda| - |\mu|$. 

It is classical that Schubert classes $[\Omega_\lambda]$ form a basis for $A_\bl(\Gr(r, E))$. The total Chow cohomology group $A^{\bl}(\Gr(r, E))$ correspondingly has the dual basis whose classes we denote by $\sigma^\lambda$. For background and standard facts in Schubert calculus we refer the reader to \cite{Fulton1997}. 
We may identify $A_\bl(\Gr(r, E))$ with the $\Z$-submodule of $\sym$ spanned by $s_\lambda$ for which $\lambda$ fits within an $r \times (n-r)$ sized grid by identifying $[\Omega_\lambda]$ with $s_\lambda$. Under this identification, the class of a Richardson variety corresponding to $\lambda/\mu$ corresponds to $s_{\lambda/\mu}$. Correspondingly, we will view $A^{\bl}(\Gr(r, E))$ as the quotient 
\[
    \sym/( s_{\lambda} \mid \text{$\lambda$ does not fit in the $r \times (n-r)$ grid} )
\]
by identifying 
$\sigma^\lambda$ with the element $s_\lambda$ in the quotient. 

\section{The slither category}\label{sec:slither_cat}
We introduce a category\footnote{In the sense of Eilenberg and Mac Lane.} which we call the \textbf{slither category}.
The objects of this category can be thought of either as combinatorial or geometric objects.
More precisely, the objects can variously be described as:
\begin{itemize}
    \item Loopless, coloopless, regular lattice-path matroids, which we call \textbf{snake den matroids}.
    \item The matroid base polytopes of these matroids, a special class of \emph{Nakajima} polytopes.\footnote{In the sense of \cite{haase21}*{\textsection~2.2.1}.}
    \item Sequences of binary strings.
    \item ``Primitive'' toric Richardson varieties in the Grassmannian.
    \item ``Reduced'' ribbon diagrams in a rectangular grid, which we call \textbf{snake den diagrams}.
\end{itemize}
The morphisms in this category, which we call \textbf{slithers}, can correspondingly be described as: 
\begin{itemize}
    \item Regular lattice-path matroids with loops and coloops added, which we call \textbf{slither matroids}.
    \item The matroid base polytopes of these matroids, the \emph{faces} of the matroid polytopes considered earlier.
    \item Certain words in the alphabet 
    $\pr, \rp, 0, 1, \elll, \llle$, which we call \textbf{slither words}.
    \item Certain toric positroids in the Grassmannian; namely those contained in toric Richardson varieties.
    \item Certain subgraphs of a rectangular grid, which we call \textbf{slither diagrams}. These are ``topological deformations'' of ribbon diagrams.
\end{itemize}
The various subsections that follow are dedicated to explaining these different viewpoints on this category and to proving that they are indeed equivalent.
\subsection{Snake matroids}\label{subsec:matroids}
We recall that a \emph{matroid} $M$ of rank $r$ is a finite set $E = E(M)$, called the \textbf{ground set} of $M$, along with a nonempty collection $\mathcal{B}(M) \subseteq \binom{E}{r}$ of \textbf{bases} satisfying the \textbf{exchange axiom}: 
\begin{center}
For any $B, B' \in \mathcal{B}(M)$ and $a \in B - B'$ there is $b\in B'-B$ such that $B - a + b$ is in $\mathcal{B}(M)$.\footnote{We use $+$ and $-$ as a convenient substitute for set union and set difference. As usual, we often write $x$ in lieu of $\{x\}$.}
\end{center}
We refer the reader to \cite{Oxley} for a proper treatment of the subject.
The \textbf{rank} of a subset $S \subseteq E(M)$ is the maximum cardinality of $B \cap S$ as $B$ ranges over the bases in $\mathcal{B}(M)$; we write $\rk_M S$ for this number.
The \textbf{loops} (resp.\ \textbf{coloops}) of a matroid $M$ are the elements of $E$ contained in none of (resp.\ each of) the bases. 
We write $\mathrm{L}(M)$ (resp.\ $\Gamma(M)$) for the set of loops (resp.\ coloops) of $M$. 
We call the elements of $\Lamma(M) \cup \Gamma(M)$ \textbf{bound} and the remaining elements of $E(M)$ \textbf{unbound}.
A matroid is \textbf{loopless} (resp.\ \textbf{coloopless}) if it has no loops (resp.\ coloops).

In this text, we make the unusual but crucial assumption that, unless otherwise specified, the ground set $E(M)$ of any matroid $M$ is \emph{totally ordered}. 
On the rare occasion in which we will need them, we speak of \textbf{unordered matroids} when this ubiquitous assumption is momentarily dropped.
We say that two matroids are \textbf{isomorphic} if there is an \textbf{isomorphism} between their ground sets: an \emph{order-preserving} bijection under which the bases of one are sent bijectively onto the bases of the other. Isomorphisms are \emph{unique} when they exist.
When the ground set $E$ is the set $[n] = \{1,\dots, n\}$ for some nonnegative integer $n$, we will assume---unless otherwise specified---that the total order is the usual one. Matroids on such a ground set are \textbf{standard}.
Each matroid is isomorphic to a unique standard matroid, which we call its \textbf{standardization}.
Likewise, if $f\colon E \to E'$ is an increasing map of totally ordered sets of cardinalities $n$ and $n'$ respectively, its \textbf{standardization} is the unique map $\hat f\colon [n] \to [n']$ such that $f$ sends the $i$-th element of $E$ to the $\hat{f}(i)$-th element of $E'$.

Many of the usual operations on unordered matroids have natural analogs for matroids with totally ordered ground sets. We now list those that we will need later, with a particular emphasis on the effect they have on the totally ordered ground set.
\begin{itemize}
\item 
The \textbf{dual} of a matroid
$M$ is the matroid $M^*$ whose bases are 
\(
    \mathcal{B}(M^*) = \{E(M) - B \mid B \in \mathcal{B}(M)\}
\)
The ground set $E(M^*)$ is $E(M)$ but with the order reversed. 
Correspondingly, we write $E^*$ for the order-reversal of a totally ordered set $E$. Then $E(M^*) = E(M)^*$. Also $(M^*)^* = M$.

\item If $S \subseteq E(M)$ is any subset, the \textbf{$S$-initial matroid} of $M$ is the matroid $\D{M}{S}$ with the same ground set and with bases $\mathcal{B}(\D{M}{S}) = \{B \in \mathcal{B}(M) \mid \abs{B \cap S} = \rk_M S\}$. The \textbf{$S$-terminal matroid} of $M$, denoted $\U{M}{S}$, is the matroid $\D{M}{E(M)-S}$.
For $s \in E(M)$, we write $\D{M}{s}$ and $\U{M}{s}$ for $\D{M}{\{s\}}$ and $\U{M}{\{s\}}$ respectively.\footnote{As an \emph{unordered matroid}, $\D{M}{S}$ is $M/S \oplus M|_S$.}

\item The \textbf{parallel extension} of a matroid $M$ by an element $e \in E(M)$ is the matroid $\Pi_eM$ whose ground set $E(\Pi_eM) = E(M) \sqcup \{e'\}$ is obtained by adding a new element $e'$ to $M$ immediately following $e$ in the total order, and whose bases are
\(
    \mathcal{B}(\Pi_e M) = \mathcal{B}(M) \sqcup \{B - e + e' \mid e \in B \in \mathcal{B}(M)\}
\).
We write simply $\Pi M$ for $\Pi_e M$ when $e = \max E(M)$. 

\item The \textbf{series extension} of a matroid $M$ by an element $e \in E$ is the matroid $\Sigma_e M$ whose ground set $E(\Sigma_e M) = E(M) \sqcup \{e'\}$ is obtained by adding a new element $e'$ to $M$ immediately following $e$ in the total order, and whose bases are
\(
    \mathcal{B}(\Sigma_e M) = \{B + e' \mid B \in \mathcal{B}(M)\} \sqcup \{B + e \mid e \notin B \in \mathcal{B}(M)\}
\).
We write simply $\Sigma M$ for $\Sigma_e M$ when $e = \max E(M)$. 

\item 
The \textbf{pruning}\footnote{This nomenclature is non-standard.} of a matroid $M$ is the matroid $\ove M$ whose ground set is $E(\ove M) = E(M) - (\Lamma(M) \cup \Gamma(M))$ with the induced total order and whose bases are $\mathcal{B}(\ove{M}) = \{B - \Gamma(M) \mid B \in \mathcal{B}(M)\}$.

\item The \textbf{direct sum} $M_1 \oplus M_2$ of matroids $M_1$ and 
$M_2$ is the usual direct sum on the disjoint union $E(M_1 \oplus M_2) \deq E(M_1) \sqcup E(M_2)$, whose bases are 
\(
    \mathcal{B}(M_1 \oplus M_2) = \{B_1 \sqcup B_2 \mid B_1 \in \mathcal{B}(M_1), B_2 \in \mathcal{B}(M_2)\}.
\) 
The total order on $E(M_1 \oplus M_2)$ is given by
the \textbf{sum} $E(M_1) + E(M_2)$:
the total order which restricts to the given total orders on $E(M_1)$ and $E(M_2)$ and in which every element of $E(M_1)$ is less than every element of $E(M_2)$.
\end{itemize}
We note that the direct sum operation as we defined it is \emph{not} commutative. It is however associative, which allows us to define the direct sum of any (finite) sequence of matroids.
The empty direct sum is by definition the \textbf{empty matroid} (i.e.\ the unique matroid on the empty set, with unique basis $\varnothing$).

Up to isomorphism, there is a unique matroid of rank $1$ on a two-element ground set with no loops (equivalently, no coloops). 
It is the matroid with ground set $[2]$ and bases $\{1\}$ and $\{2\}$. We call such a matroid a \textbf{parallel pair}.
This matroid is usually denoted $U_{1,2}$.
\begin{Df}
    With the conventions above, a \textbf{snake} is a matroid obtained from a parallel pair by repeatedly taking parallel and series extensions by the \emph{maximum element}.
    A \textbf{snake den} is any (possibly empty) direct sum of snakes. The \textbf{components} of a snake den are the summands of this direct sum (which are uniquely determined).
    A \textbf{slither} is any matroid whose pruning is a snake den. 
\end{Df}
\begin{Ex}
    There is a unique snake den of size 0, namely the empty matroid with set of bases $\mathcal{B} = \{\varnothing\}$.
    There are no snake dens on a ground set of size 1. The parallel pair is the unique snake den of size 2. 
\end{Ex}
Each snake is a snake den and each snake den is a slither. 
It will sometimes be convenient to include the empty matroid among the snakes. For this purpose, we will use the term \textbf{extended snake} to refer to a snake den which is either a snake or the empty matroid.

\begin{Df}
    Let $M$ and $N$ be matroids of the same rank on the same (totally ordered) ground set.
    We say that $N$ is a \textbf{weak image} of $M$ if every basis of $N$ is a basis of $M$.\footnote{This nomenclature is a more restrictive variant of the usual notion for matroids. One might more precisely call it a \emph{rank-preserving weak image induced by the identity map}.}
    If $M$ is a snake den and $N$ is a slither, we also say that $N$ is \textbf{subordinate} to $M$.
\end{Df}
Snake dens represent the objects of our slither category. Slithers correspond to morphisms. Let us see how to compose such morphisms.
\begin{Df}\label[Df]{df:slither_morphism}
    A \textbf{morphism} from a snake den $N$ to a snake den $M$ is the data of a slither $S$ on $E(M)$ subordinate to $M$ such that $N$ is isomorphic to 
    $\ove{S}$. Each snake den is its own identity morphism.

    Composition of slithers $S\colon N \to M$ and 
    $T\colon P \to N$ is constructed as follows.
    Let $f\colon E(N) \to E(\ove{S})$ be the isomorphism.
    We define a weak image slither $S \circ T$ of $M$ by
    \[
        \mathcal{B}(S \circ T) = 
        \{f(B) \sqcup \Gamma(S) \mid B \in \mathcal{B}(T)\}
    \]
    Then $\ove{S \circ T}$ is isomorphic to 
    $P$ (which shows that $S \circ T$ is a slither).
    It is straightforward to verify the identity and associativity axioms for categories hold for morphisms so defined.
\end{Df}
We note that though a slither determines its domain up to isomorphism, the same slither can represent  morphisms with truly different codomains.
However, as the notation suggests, the slither representing the composition $S \circ T$ depends only on $S$ and $T$ themselves. Slithers $S, T$ are composable if and only if $T$ is isomorphic to a weak image of $\ove{S}$.

\subsection{Snake polytopes}\label{subsec:snake_polytopes}

As discovered by Edmonds \cite{Edmonds} and later Gel'fand--Goresky--MacPherson--Serganova \cites{GS87, GGMS},
matroids on $E$ are naturally thought of as lattice polytopes in $\R^E$. (In particular, standard matroids with $n$ elements correspond to lattice polytopes in $\R^n$.) 
Consistent with our convention for matroids, we will consider lattice polytopes in $\R^E$ where $E$ is a finite totally ordered set.
\begin{Df}
    Let $M$ be a matroid and $E = E(M)$.
    The \textbf{base polytope} of $M$ is the convex hull in $\R^E$ of the lattice points $e_B$ for $B \in \mathcal{B}(M)$ where $e_B \deq \sum_{b \in B} e_b$ and $e_b \in \R^{E}$ is the standard basis vector in $\R^E$ indexed by $b$.
    We write $P(M)$ for this polytope.
    If $M$ is a snake (resp.\ snake den) then $P(M)$ is a \textbf{snake polytope} (resp.\ \textbf{snake den polytope}).
    If $s$ is a slither word, we write 
    $P(s)$ for the \textbf{slither polytope} $P(M(s))$.
\end{Df}

The matroid polytopes of snakes have a particularly simple inductive structure, which will be explored in \Cref{sec:homology}. 
An important property of all matroid polytopes is their edge directions.
Recall that a $0/1$-polytope in $\R^E$ is one all of whose vertices are vertices of the standard cube $\{0,1\}^E$.
\begin{Thm}[{\cite{GGMS}*{Theorem 4.1}}]\label{thm:edge_dirs}
    A $0/1$-polytope $P$ in $\R^E$ is the base polytope of a matroid if and only if each edge of $P$ is parallel to $e_x - e_y$ for some 
    $x,y \in E$.
\end{Thm}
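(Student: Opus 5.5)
The plan is to prove the two directions separately. In both, an edge of a $0/1$-polytope joins two vertices $e_B$ and $e_{B'}$ of the cube.

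\emph{Matroid polytopes have edges parallel to $e_x-e_y$.} Let $M$ be a matroid and let $[e_B,e_{B'}]$ be an edge of $P(M)$ with $B\neq B'$. It suffices to show $\abs{B\,\triangle\,B'}=2$: then $B'=B-x+y$, so the edge is parallel to $e_y-e_x$.
\begin{enumerate}[(i)]
\item Suppose instead that $\abs{B\triangle B'}\ge 4$, and pick $a\in B-B'$.
\item Invoke the symmetric exchange property of matroids (a standard strengthening of the exchange axiom). It gives $b\in B'-B$ such that both $B_1=B-a+b$ and $B_2=B'-b+a$ are bases.
\item These satisfy $e_{B_1}+e_{B_2}=e_B+e_{B'}$. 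So the midpoint of the edge is also the midpoint of two vertices $e_{B_1},e_{B_2}$ of $P(M)$.
\item Since $\abs{B\triangle B'}\ge4$, neither $B_1$ nor $B_2$ lies in $\{B,B'\}$. This contradicts the fact that an edge is a face: a linear functional maximized exactly on the edge would be maximized at its midpoint, hence at $e_{B_1}$ and $e_{B_2}$ as well.
\end{enumerate}
The only external input here is the symmetric exchange lemma. I expect this direction to be the main obstacle if one insists on deriving symmetric exchange from the plain exchange axiom. To avoid that, I would first try a greedy/weight argument: choose a functional $w$ whose maximizing face is exactly the edge, and perturb $w$ to reach a contradiction with the greedy algorithm.

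\emph{Edge condition implies matroid.} Let $P$ be a $0/1$-polytope all of whose edges are parallel to vectors $e_x-e_y$, and let $\mathcal B=\{B : e_B \text{ a vertex of } P\}$.
\begin{enumerate}[(i)]
\item Each such edge direction preserves the coordinate sum, and the edge graph of a polytope is connected. Hence all $B\in\mathcal B$ have the same cardinality $r$.
\item Fix a vertex $e_B$. Any neighbor is a $0/1$ vector of the form $e_B+t(e_y-e_x)$, which forces $t=1$, $x\in B$ and $y\notin B$. So the neighbors of $e_B$ are exactly the vertices $e_{B-x+y}$ with $B-x+y\in\mathcal B$.
\item The tangent cone of $P$ at $e_B$ is generated by these edge directions. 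Given $B'\in\mathcal B$ and $a\in B-B'$, we can therefore write
\[e_{B'}-e_B=\sum_{x\in B,\;y\notin B}\lambda_{xy}(e_y-e_x),\qquad \lambda_{xy}\ge0,\]
where the sum runs only over pairs for which $B-x+y\in\mathcal B$.
\item The $a$-coordinate of the left side is $-1$, so some $\lambda_{ay}>0$.
\item For that $y$, the $y$-coordinate of the right side is $\sum_x\lambda_{xy}\ge\lambda_{ay}>0$. No negative contributions occur there, since $y\notin B$ never appears as an $x$.
\item The $y$-coordinate of the left side lies in $\{0,1\}$, so it equals $1$, i.e.\ $y\in B'-B$.
\item Because $\lambda_{ay}>0$, the vector $e_{B-a+y}$ is a neighboring vertex, so $B-a+y\in\mathcal B$.
\end{enumerate}
This is precisely the exchange axiom, so $\mathcal B$ is the set of bases of a matroid $M$ with $P=P(M)$.
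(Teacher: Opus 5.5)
Your proof is correct, and it takes a different route from the paper's source. The paper gives no argument of its own and simply imports the result from \cite{GGMS}.

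As I recall it (I have not checked the original text), the argument in \cite{GGMS} goes through Gale's maximality (greedy) characterization of matroids. That characterization says: for every total order on $E$, the collection of bases has a unique Gale-maximal element. The edge condition is then shown to be equivalent to the braid arrangement refining the normal fan of $P$. Concretely, all linear functionals whose coordinates are sorted in a prescribed order are maximized at one common vertex, and that vertex is identified with the Gale-maximal basis for that order. This handles both directions at once. It also exhibits the link between root edge directions, generic covectors and the Gale order that the paper later exploits in \Cref{lm:stratifying}. Its cost is that it rests on Gale's theorem.

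You split the two implications and use lighter tools. For the forward direction you need only Brualdi's symmetric exchange lemma. That lemma is a standard theorem, so your worry about having to derive it, and the greedy fallback, are unnecessary. The midpoint argument then gives the contradiction, once you note explicitly that the only vertices of $P$ lying on an edge are its two endpoints. For the converse you use only the fact that the cone of a polytope at a vertex is generated by its edges. Your sign bookkeeping in the coordinates $a$ and $y$ then recovers the exchange axiom exactly in the form the paper uses. The result is a more elementary and fully self-contained proof.
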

\noindent
\Cref{thm:edge_dirs} implies that each face of a matroid base polytope is again a matroid base polytope.
It is clear that a matroid base polytope determines the corresponding matroid.
We now describe in polytopal terms some of the operations on matroids in \Cref{subsec:matroids}. Below, 
we write $e_b^*$ for the dual basis vector to $e_b$.
\begin{itemize}
\item 
Given a $0/1$-polytope $P$ in $\R^E$, we write $P^*$ for the image of $P$ under the affine-linear map 
$x \mapsto e_{E} - x$, thought of as a polytope in 
$\R^{E^*}$. 
We then have $P(M^*) = P(M)^*$ for any matroid $M$.

\item For any polytope $P$ in $\R^E$
and any subset $S \subseteq E$, the \textbf{$S$-initial face} $\D{P}{S}$ of $P$ is the face which maximizes the linear functional 
$\sum_{s \in S} e_s^{\ast}$. The \textbf{$S$-terminal face} $\U{P}{S}$ of $P$ is the face which minimizes it. If $P$ is a matroid polytope, then $\U{P}{S} = \D{P}{E-S}$. For a matroid $M$ on $E$, 
we have $P(\D{M}{S}) = \D{P(M)}{S}$
and $P(\U{M}{S}) = \U{P(M)}{S}$.
\item 
For any polytope $P$ in $\R^E$, the \textbf{bound coordinates} of $P$ are those $b \in E$ such that the $b$-th coordinate function is constant on $P$;
the remaining ones are the \textbf{unbound coordinates}.
The \textbf{pruning} of $P$ is the projection of $P$ to $\R^S$ where $S \subseteq E$ is the subset of \emph{unbound} coordinates. (Here $S$ has the induced total order.)
Then $P(\ove{M}) = \ove{P(M)}$ for any matroid $M$.

\item Given polytopes $P \subseteq \R^{E_1}$, $Q \subseteq \R^{E_2}$, their \textbf{product} is the polytope $P \times Q \subseteq \R^{E_1 + E_2}$,
where $E_1 + E_2$ is the sum of totally ordered sets.
Then $P(M_1 \oplus M_2) = P(M_1) \times P(M_2)$ for any matroids $M_1, M_2$.
\end{itemize} 
When $P = P(M)$ is a matroid base polytope
and $e \in E = E(M)$,
we write $\Pi_e P$ (resp.\ $\Sigma_e P$) for 
$P(\Pi_e M)$ (resp.\ $P(\Sigma_e M)$). We similarly write $\Pi P$ and $\Sigma P$ when $e = \max E$.
We defer to \Cref{sec:homology} the explanation of the effects of these operations on matroid base polytopes.
We now define morphisms for (matroid base) polytopes. Only in \Cref{subsec:equiv} will we see that this agrees with our definition of morphisms for snake dens.
\begin{Df}\label[Df]{polytope_morphism}
Let $S, T$ be totally ordered sets.
An affine-linear map $F\colon \R^{S} \to \R^{T}$ is a \textbf{coordinate map} if there exists an increasing injection $f\colon S \to T$ such that $F$ is given coordinate-wise by 
$F(x)_t = x_s$ whenever $t = f(s)$ for some $s \in S$
and $F(x)_t$ is constant for $t \notin f(S)$. A \textbf{coordinate morphism} from a polytope $P$ in $\R^S$ to a polytope $Q$ in $\R^T$ is a coordinate map $F\colon \R^S \to \R^T$ which takes $P$ bijectively onto a face of $Q$.
\end{Df}
\noindent
Polytopes with coordinate morphisms form a category, in which composition of morphisms is simply composition of affine-linear maps and
the identity map $F\colon \R^E \to \R^E$ gives the identity of each polytope.

If $P$ and $Q$ are lattice polytopes defining toric varieties $X_P$ and $X_Q$ and $F\colon P \to Q$ is a coordinate map taking $P$ to a face $R$ of $Q$, then $F$ defines an embedding of $X_P$ into $X_Q$ with image $V(R)$ taking the distinguished point of $X_P$ to that of $O_{R}$. In particular, this is true for coordinate morphisms between snake den polytopes, which we will identify with slithers in \Cref{subsec:equiv}. 

\subsection{Slither words}
In this subsection, we describe a labeling of isomorphism types of snake dens and slithers by words in the alphabet 
$\pr, \rp, 0,1,\elll, \llle$. 
We will write $\abs{w}$ for the length of a word $w$.

We start by encoding snakes. A snake $M$ is given by a sequence of parallel and series extensions. At each step, we extend the maximum element of the ground set. Thus the matroid is specified (up to order-preserving isomorphism) by the sequence of choices made at each step. If we represent parallel extension by $0$ and series extension by $1$ and concatenate these from left to right as we go, a snake is thereby encoded by a binary string $\mathring{w}(M)$. 
This defines a bijection between binary strings and (standard) snakes.
For example, the string $01011$ represents the snake 
$\Sigma \Sigma \Pi \Sigma \Pi U_{1,2}$.
We then define $w(M) \deq \pr \mathring{w}(M) \rp$, in which we have prepended and appended the symbols $\pr, \rp$.
Then $\abs{w(M)} = \abs{E(M)}$ and the number of occurrences of the character $1$ in $w(M)$ is one fewer than the rank of $M$.

Next, we encode a snake den as follows. Suppose 
$M = N_1 \oplus \dots \oplus N_k$ where each $N_j$ is a snake.
Then we define $w(M)$ to simply be the concatenation 
$w(N_1)w(N_2)\dots w(N_k)$. Finally, suppose $M$ is a slither. 
Suppose $\abs{E(M)} = n$ and $\abs{E(\ove{M})} = m$, and let 
$f\colon [m] \to [n]$ be the standardization of the inclusion map $E(\ove{M}) \hookrightarrow E(M)$.
Let $e_1 < \dots < e_n$ be the elements of $E(M)$.
Then
$w(M)$ is the word of length $n$ defined character-by-character by
\[
    w(M)_i = \begin{cases}
        w(\ove{M})_{j} & \text{if $i = f(j)$}\\
        \elll & \text{if $e_i \in \Lamma(M)$}\\
        \llle & \text{if $e_i \in \Gamma(M)$}
    \end{cases}
\]
This indeed encodes precisely the data of the isomorphism type of $M$. 
We have $\abs{w(M)} = \abs{E(M)}$ and the rank of $M$ is the total number of occurrences of the characters $\pr, 1, \llle$ in $w(M)$. By analogy with matroids, given a slither word $s$, we write $\Lamma(s)$
(resp.\ $\Gamma(s)$) for the set of indices at which $s$ has the character $\elll$ (resp.\ $\llle$). The indices in 
$\Lamma(s) \cup \Gamma(s)$ are \textbf{bound}; the other indices are \textbf{unbound}.
\begin{Ex}
    The \textbf{empty word} $\eps$ is associated to the empty matroid. The word $\pr \rp$ represents the matroid $U_{1,2}$. The word 
    $\pr 0\elll \rp \llle\pr01\rp$ represents a slither whose pruning is the snake den 
    $\Pi U_{1,2} \oplus \Sigma \Pi U_{1,2}$.
\end{Ex}
\begin{Df}
    A \textbf{slither word} is a (possibly empty) word in the alphabet 
    $0,1,\pr, \rp,\elll,\llle$ such that 
    (i) the subword consisting of all parentheses has the form $\pr\rp\pr\rp\dots\pr\rp$ (but is possibly empty),
    (ii) each $0$ or $1$ is located between an opening $\pr$ and the subsequent closing $\rp$.
\end{Df}
\noindent
Evidently, the slither words are precisely the words of slithers, so are in bijection with standard slithers. We write $M(s)$ for the standard slither corresponding to a slither word $s$. When $s$ is a binary string, we also write $M(s)$ for $M(\pr s \rp)$. Similarly, given a slither word $s$, we write $P(s)$ for the corresponding matroid base polytope 
$P(M(s))$.

\begin{warning}
We write $\varnothing$ for the empty binary string. It is distinct from $\eps$, which is the empty slither word. We identify $\eps$ with the imagined binary string of length $-1$ discussed in \Cref{ssec:hopf_stuff}.
\end{warning}

\begin{figure}[ht]
    \centering
    {\includestandalone[width=1\textwidth]{figures/tuong_poset}}
    \qquad
    \caption{All sub-slithers of the snake $\pr 0 1 \rp$ with associated words and slither diagrams. See \Cref{subsec:richardson_and_snake} for the meaning of the diagrams.} 
    \label{fig:slither_poset}
\end{figure}
\noindent
We now express some of the matroid operations in terms of slither words.
\begin{itemize}
    \item
    Let $s$ be a slither word. The \textbf{dual} $s^*$ of $s$ is the slither word obtained from $s$ by (i) interchanging the characters 
    $\elll$ and $\llle$, 
    (ii) interchanging the characters $\pr$ and $\rp$,
    (iii) interchanging the characters $0$ and $1$,
    and (iv) reversing the order of the characters. 
    Equivalently, $s^*$ is obtained from $s$ by flipping the text upside down and reading $\rotatebox[origin=c]{180}{0}$ as $1$ and $\rotatebox[origin=c]{180}{1}$ as $0$. We then have $M(s^*) = M(s)^*$.
    \item For a slither word $s$, we define the \textbf{pruning} $\ove{s}$ of $s$ to be the word obtained from $s$ by removing every occurrence of the characters 
    $\elll$ and $\llle$.
    We then have $M(\ove{s}) = \ove{M(s)}$.
    \item Given slither words $s, t$ we write 
    $st$ for their concatenation, which is again a slither word. 
    We then have
    $M(st) = M(s) \oplus M(t)$.
\end{itemize}
\noindent
Though in general morphisms are not as simple from the point-of-view of slither words, \emph{composition} of morphisms is pleasingly simple in this setting.
\begin{Df}
    Let $w$ and $w'$ be words of length $n$ and $n'$ respectively in the alphabet $\pr, \rp, 0, 1, \elll, \llle$. Assume that the length of $\ove{w}$ is $n'$.
    We define the \emph{composition} $w \circ w'$ as follows. 
    Let $i_1, \dots, i_{n'}$ be the indices at which $w$ has a character in $\{\pr,\rp,0,1\}$.
    Then
    \[
        (w \circ w')_i  \deq \begin{cases}
            \elll & \text{if $w_i = \elll$,}\\
            \llle & \text{if $w_i = \llle$,}\\
            w'_j & \text{if $i = i_j$.}
        \end{cases}
    \]
    This is an associative (partial) operation on words.
\end{Df}
\begin{Prop}
    Let $S$ and $T$ be composable slithers.
    Then $w(S \circ T) = w(S) \circ w(T)$.
\end{Prop}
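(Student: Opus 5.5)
We unwind the definitions and check the equality character by character. Let $S$ be a slither on $E(M)=\{e_1<\dots<e_n\}$ and $T$ a slither with $\ove{S}\cong N$, $E(N)$ of size $n'$, and let $f\colon E(N)\to E(\ove S)$ be the isomorphism. By definition $\mathcal{B}(S\circ T)=\{f(B)\sqcup\Gamma(S)\mid B\in\mathcal{B}(T)\}$. The plan has three steps: determine the loops and coloops of $S\circ T$; identify its pruning with $\ove{T}$ transported along $f$; and then read off $w(S\circ T)$ from the definition of the slither word, comparing it with the definition of $w(S)\circ w(T)$.

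First, the bound elements. Let $i_1<\dots<i_{n'}$ be the indices where $w(S)$ has a character in $\{\pr,\rp,0,1\}$; these are exactly the positions of $E(\ove S)=E(M)-\Lamma(S)-\Gamma(S)$. Every basis of $S\circ T$ contains $\Gamma(S)$ and misses $\Lamma(S)$, since $f(B)\subseteq E(\ove S)$. So elements of $\Gamma(S)$ are coloops and elements of $\Lamma(S)$ are loops of $S\circ T$. This matches the first two cases of the composition of words: $\elll$ where $w(S)_i=\elll$ and $\llle$ where $w(S)_i=\llle$. For $e_{i_j}=f(x_j)$, with $x_j$ the $j$-th element of $E(N)$, the element $e_{i_j}$ lies in a basis $f(B)\sqcup\Gamma(S)$ exactly when $x_j\in B$. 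Hence $e_{i_j}$ is a loop (resp. coloop) of $S\circ T$ if and only if $x_j$ is a loop (resp. coloop) of $T$. In that case $w(S\circ T)_{i_j}=w(T)_j$ equals $\elll$ or $\llle$, as the third case requires.

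Second, the pruning. The unbound elements of $S\circ T$ are the $f(x_j)$ with $x_j$ unbound in $T$. Removing $\Gamma(S)$ and the coloops coming from $\Gamma(T)$ shows that $f$ restricts to an order-preserving bijection $E(\ove T)\to E(\ove{S\circ T})$ carrying $\mathcal{B}(\ove T)$ onto $\mathcal{B}(\ove{S\circ T})$. So $\ove{S\circ T}\cong\ove T$ via a map compatible with $f$. Since $w$ of a snake den depends only on the isomorphism type, $w(\ove{S\circ T})=w(\ove T)$.

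Third, the unbound positions. Let $g\colon[m]\to[n']$ standardize $E(\ove T)\hookrightarrow E(N)$. Then the standardization of $E(\ove{S\circ T})\hookrightarrow E(M)$ is $k\mapsto i_{g(k)}$, because $f$ is increasing and $f(x_j)=e_{i_j}$. By the definition of the slither word, $w(S\circ T)_{i_{g(k)}}=w(\ove{S\circ T})_k=w(\ove T)_k=w(T)_{g(k)}$. This is again the third case of the word composition. Together with the first step, this covers every index.

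The only step needing real care is the second. We must make sure that the direct-sum decomposition of $\ove{S\circ T}$ into snakes is the one transported from $\ove T$, so that the parenthesization and binary strings agree. This follows once we note that $w$ is an isomorphism invariant and that isomorphisms of totally ordered matroids are unique. The remaining steps are routine bookkeeping with standardizations.
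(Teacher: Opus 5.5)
Your argument is correct and is essentially the paper's proof: the paper also identifies $\Lamma(S\circ T)=\Lamma(S)\cup f(\Lamma(T))$ (and likewise for $\Gamma$), and uses $\ove{S\circ T}\cong\ove{T}$. The only difference is packaging. Instead of your index-by-index standardization check in the third step, the paper observes that a slither word is determined by its pruning together with the positions of $\elll$ and $\llle$, and that $\ove{w(S)\circ w(T)}=\ove{w(T)}=w(\ove{T})=w(\ove{S\circ T})=\ove{w(S\circ T)}$.
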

\begin{proof}
    Using that $\ove{s \circ t} = \ove{t}$ for slither words $s, t$,
    and that $\ove{S \circ T} \cong \ove{T}$ for slithers $S, T$, we get
    \[
        \ove{w(S) \circ w(T)} = \ove{w(T)} = w(\ove{T}) = w(\ove{S \circ T})
        = \ove{w(S \circ T)}.
    \]
    It therefore remains only to show that $w(S \circ T)$ and $w(S) \circ w(T)$ have the characters $\elll$ and $\llle$ at the same positions.
    Suppose $S$ has length $n$ and $T$ has length $m$.
    Let $f\colon E(T) \to E(\ove{S}) \subseteq E(S)$ be the inclusion of  $T$ as a weak image of  $\ove{S}$ and let 
    $\hat{f}\colon [m] \to [n]$ be its standardization.
    Then the desired claim for $\elll$ follows by comparing $\Lamma(S \circ T) = \Lamma(S) \cup f(\Lamma(T))$ with
    $\Lamma(w(S) \circ w(T)) = \Lamma(w(S)) \cup \hat{f}(\Lamma(w(T)))$. The claim for $\llle$ follows by a similar observation for $\Gamma$.
\end{proof}

\subsection{Torus orbit closures in the Grassmannian}\label{sec:t_orb}

We briefly review the relationship between torus orbit closures in the Grassmannian, realizable matroids, and matroid base polytopes. For the Grassmannian, we continue the notation of \Cref{sec:richardson}. 

Let $E$ be a totally ordered finite set.
Recall the Plücker embedding  of $\Gr(r, E)$ into $\Pb^{\binom{E}{r} - 1}$. If $A_L$ is a full-rank $r \times E$ matrix with entries in $\Bbbk$ whose row span is $L \in \Gr(r, E)$, then the Plücker coordinates of $L$ are $(p_S(L))_{S \in \binom{E}{r}}$ where $p_S(L)$ is the determinant of the square submatrix of $A_L$ indexed by the columns in $S$. Every rational point $L \in \Gr(r, E)$ gives a \textbf{linear realization} of the matroid $M_L$ whose bases are 
\[
    \mathcal{B}(M_L) = \left\{B \in \binom{E}{r} \;\bigg|\; p_B(L) \neq 0\right\}. 
\]
Any matroid of the form $M_L$ is said to be \textbf{linearly realizable} over $\Bbbk$. 
A matroid is \textbf{binary} if it is realizable over the field with two elements $\mathbb{F}_2$.

For a point $L \in \Gr(r, E)$, the associated $T_E$-orbit closure $\ove{T_E\cdot L} \subseteq \Gr(r, E)$ is a normal toric variety \cite{MichalekSturmfels2021}*{\textsection~13.2}. The relationship between this toric variety and the matroid base polytope of $M_L$ is as follows. 

\begin{Thm}[\cite{GGMS}]
    The moment polytope of $\ove{T_E \cdot L}$ is the base polytope $P(M_L)$. 
\end{Thm}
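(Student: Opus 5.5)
The plan is to identify the torus orbit closure with a projective toric variety cut out by a monomial parametrization, and read off its polytope as the convex hull of the exponent vectors that appear. First I make the $T_E$-action on Plücker coordinates explicit. If $t = \mathrm{diag}(t_e)_{e\in E}$, then $t\cdot L$ is the row span of $A_L t^{-1}$. Each maximal minor scales as
\[
p_S(t\cdot L) = \Big(\prod_{s\in S} t_s^{-1}\Big)\, p_S(L) = \chi^{-e_S}(t)\, p_S(L).
\]
So in the Plücker embedding $T_E$ acts diagonally on $\Pb^{\binom{E}{r}-1}$, with weight $-e_S$ on the coordinate $p_S$. The sign matches the convention of \Cref{sec:toric}, where sections of $\mathcal{O}(P)$ are $\chi^{-m}$.

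Next I restrict to the coordinate subspace spanned by the nonzero coordinates. The orbit $T_E\cdot L$ lies in the linear subspace $\{p_S = 0 \text{ for } S\notin \mathcal{B}(M_L)\}$, and there it is the image of the map
\[
t\mapsto \big(\chi^{-e_B}(t)\,p_B(L)\big)_{B\in\mathcal{B}(M_L)}.
\]
After rescaling coordinates by the nonzero constants $p_B(L)$, this is exactly the standard monomial map attached to the finite point set $\mathcal{A} = \{e_B : B\in\mathcal{B}(M_L)\}\subset \Z^E$. Hence $\ove{T_E\cdot L}$ is isomorphic to the projective toric variety $X_{\mathcal{A}}\subseteq \Pb^{\mathcal{A}-1}$, with the dense torus $T_E$ acting through its quotient by the stabilizer. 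It is standard (\cite{cox2011toric}*{Chapter 2}) that the normalization of $X_{\mathcal{A}}$ is the normal toric variety of the polytope $\mathrm{conv}(\mathcal{A})$.

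We are given that the orbit closure is normal (\cite{MichalekSturmfels2021}), so the normalization step is harmless. Then $\ove{T_E\cdot L}\cong X_P$ with $P = \mathrm{conv}(\mathcal{A}) = P(M_L)$, and the polarization $\mathcal{O}(1)$ restricts to $\mathcal{O}(P)$. This means the moment polytope, taken in the sense of \Cref{sec:toric} as the polytope of this polarized toric variety, is $P(M_L)$.

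As a consistency check, the $T_E$-fixed points of the orbit closure are the coordinate points $\Bbbk^B$ for $B\in\mathcal{B}(M_L)$. These are exactly the vertices of $P(M_L)$, since every $e_B$ is a vertex of the cube. Over $\C$, one can alternatively use the moment map $\mu(L) = \sum_S |p_S|^2 e_S/\sum_S|p_S|^2$. By Atiyah/Guillemin--Sternberg convexity, the image of the orbit closure is the convex hull of the images of the fixed points, which is again $\mathrm{conv}\{e_B\}$.

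I expect the main point of care to be conventions rather than substance. The sign of the weights must be handled consistently, and so must the affine-lattice issue: the lattice generated by differences of the $e_B$ may be a proper sublattice of the character lattice of the effective quotient torus. That is exactly where normality of $X_{\mathcal{A}}$ could fail, but normality is supplied by the cited result.
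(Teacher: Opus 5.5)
Your argument is correct, but your main route differs from the one behind the citation. The paper gives no proof of its own and cites \cite{GGMS}. There the statement concerns the image of the moment map over $\C$, and it follows from the Atiyah/Guillemin--Sternberg convexity theorem: the moment image of the closure of a complex torus orbit is the convex hull of the images of the fixed points it contains, here $\mathrm{conv}\{e_B : B\in\mathcal{B}(M_L)\}$. That is essentially your ``alternative'' paragraph. It needs no normality input, but it works only over $\C$ and only identifies a subset of $\R^E$.

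Your main argument identifies $\ove{T_E\cdot L}$ with the projective toric variety $X_{\mathcal{A}}$ and then invokes normality. This is characteristic-free, and it gives the stronger statement that $(\ove{T_E\cdot L},\mathcal{O}(1))$ is the polarized toric variety of $P(M_L)$. That stronger form is what the paper uses immediately afterwards, via the isomorphism $\phi_L\colon X_{P(M_L)}\to\ove{T_E\cdot L}$ over an arbitrary field $\Bbbk$. The price is the normality input (White's theorem on matroid base rings, supplied here by \cite{MichalekSturmfels2021}) plus one lattice check.

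Your closing remark conflates two separate issues in that check. The lattice $\mathcal{D}$ generated by the differences $e_B-e_{B'}$ is already (the pullback to $\Z^E$ of) the character lattice of the effective quotient of $T_E$, so it is not a proper sublattice of that. The normalization statement therefore holds with respect to $\mathcal{D}$, and normality of $X_{\mathcal{A}}$ means saturation of the semigroups $\Z_{\ge 0}(\mathcal{A}-e_B)$ inside $\mathcal{D}$.

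What normality does not give is $\mathcal{D}=\Z^E\cap\Lin P(M_L)$. You need this to call the result the toric variety of $P(M_L)$ with the lattice structure inherited from $\Z^E$, which is how the paper reads $X_{P(M_L)}$. For a counterexample in general, take $\mathcal{A}=\{0,2\}\subset\Z$: then $X_{\mathcal{A}}=\Pb^1$ is normal, but its lattice polytope is a unit segment, not $[0,2]$ of lattice length $2$.

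For matroids the equality does hold. Suppose $i\neq j$ lie on a common circuit $Z$, and extend $Z-j$ to a basis $B$. Then $j\notin B$ and $Z$ is the unique circuit in $B+j$, so $B-i+j$ is a basis and $e_j-e_i\in\mathcal{D}$. The connected components of $M_L$ are the classes of the relation ``lie on a common circuit''. Hence $\mathcal{D}$ contains the saturated lattice $\{x\in\Z^E : \sum_{i\in C}x_i=0 \text{ for each connected component } C\}$. It is also contained in it, because $|B\cap C|=\rk_{M_L}C$ for every basis $B$, so the two are equal. With this argument replacing your final remark, the proof is complete.
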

\noindent
Let $X_{P(M_L)}$ be the toric variety associated to $P(M_L)$. The map 
\[
    \phi_L \colon X_{P(M_L)} \to \Gr(r, E)
\]
induced by sending $1$ in the dense torus to $L$ and extending $T_E$-equivariantly gives an isomorphism between $X_{P(M_L)}$ and $\ove{T_E \cdot L}$. Moreover, if $x_B$ is the fixed point of $X_{P(M_L)}$ corresponding to a basis $B \in \mathcal{B}(M_L)$, then $\phi_L(x_B) = \Bbbk^B$. In particular, the fixed points of $\ove{T_E \cdot L}$ are $\Bbbk^B$ for $B \in \mathcal{B}(M_L)$. Note that the fixed points of a torus orbit closure depend only on the matroid $M_L$ and not the realization $L$. 

In general, there can be many torus orbit closures in $\Gr(r, E)$ which have the same moment polytope. However, if a matroid $M$ is \textbf{regular}, i.e. linearly realizable over every field, this does not happen. 
\begin{Thm}[{\cite{Oxley}*{Proposition 6.3.12, 6.6.5}}]\label{thm:unique_torus}
    Let $M$ be a regular matroid. Then there is a unique $T_E$-orbit closure in $\Gr(r, E)$ whose moment polytope is $P(M)$. 
\end{Thm}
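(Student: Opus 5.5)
The plan is to reduce the statement to the classical fact that regular (indeed binary) matroids are uniquely representable up to row operations and column scaling. Column scaling is exactly the action of $T_E$.

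First I would fix notation for the correspondence. By the theorem of \cite{GGMS} quoted above, the moment polytope of $\ove{T_E\cdot L}$ is $P(M_L)$. A $0/1$ matroid base polytope determines its matroid, since its vertices are exactly the $e_B$ for $B\in\mathcal{B}$. So a $T_E$-orbit closure has moment polytope $P(M)$ if and only if it is $\ove{T_E\cdot L}$ for some $L$ with $M_L = M$. Existence of such an orbit closure is immediate: $M$ is regular, hence realizable over $\Bbbk$, so some $L\in\Gr(r,E)$ has $M_L=M$. For uniqueness it therefore suffices to show the following: if $M_L = M_{L'} = M$, then $L' = t\cdot L$ for some $t\in T_E$. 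Indeed, then $\ove{T_E\cdot L'} = \ove{T_E\cdot L}$.

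Next I would prove that claim via a normal form.

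1. Fix a basis $B\in\mathcal{B}(M)$. Apply the left $\GL_r$-action so that both $A_L$ and $A_{L'}$ have the form $[I_B \mid D]$ and $[I_B\mid D']$, with the identity matrix in the columns indexed by $B$.
2. The entry $D_{b,e}$, for $b\in B$ and $e\notin B$, is nonzero exactly when $B-b+e$ is a basis, i.e.\ when $b$ lies in the fundamental circuit of $e$. Hence $D$ and $D'$ have the same support, determined by $M$.
3. Consider the bipartite graph $G$ on $B\sqcup(E-B)$ whose edges are the support of $D$, and choose a spanning forest of $G$. Scaling the columns indexed by $E-B$ and the rows indexed by $B$ (the latter undone by rescaling the columns indexed by $B$, so the block $I_B$ is preserved up to the torus action) lets us make every entry on the forest equal to $1$, in both $D$ and $D'$.
4. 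All of these operations are compositions of a left $\GL_r$-action with right multiplication by a diagonal matrix, i.e.\ with the action of an element of $T_E$. So after replacing $L'$ by a $T_E$-translate, we may assume $D$ and $D'$ agree on the forest.

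The main step, and the expected obstacle, is to show that the remaining entries are then forced by $M$ alone. I would add the non-forest edges of $G$ one at a time, in an order where each new edge closes a cycle that is chordless among the edges already placed. The minor of $[I_B\mid D]$ supported on such a cycle is, up to sign, the difference of the two alternating products of entries around the cycle. Whether this minor vanishes is decided by whether the corresponding $r$-subset is a basis of $M$. Since $M$ is binary, it contains no $U_{2,4}$-minor, and I would use this to show that the cycle minor is nonzero. With every other entry on the cycle already equal to $1$, the new entry is then uniquely determined: equal to $-1$ or $+1$ according to a sign rule depending only on the cycle. This is the standard argument behind unique representability of binary matroids, \cite{Oxley}*{Proposition 6.3.12, 6.6.5}, and I would cite it for the combinatorial details rather than redo them.

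Conclusion: $D = D'$, hence $L' = t\cdot L$ for some $t\in T_E$, and the two orbit closures coincide.
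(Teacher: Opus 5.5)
Your reduction is the right one, and it is what the paper does; the paper gives no argument beyond citing Oxley's unique-representability results. Orbit closures with moment polytope $P(M)$ are exactly the $\ove{T_E\cdot L}$ with $M_L=M$. Projective equivalence of $[I_B\mid D]$ and $[I_B\mid D']$ is exactly equality up to $\GL_r\times T_E$.

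\textbf{The gap.} The mechanism you sketch for the key step is backwards and would fail as written. Binarity does not make the chordless-cycle minor nonzero; it forces it to \emph{vanish}.
\begin{itemize}
\item Over $\mathbb{F}_2$, the matroid $M$ is represented by $[I_B\mid D_0]$, where $D_0$ is the $0/1$ support matrix of $D$.
\item A square $0/1$ matrix whose support is a single cycle has exactly two $1$'s in each column, so its rows sum to zero mod $2$ and its determinant is $0$.
\item Hence the corresponding $r$-subset is dependent in $M$, and the same minor of $[I_B\mid D]$ is $0$ over $\Bbbk$.
\end{itemize}
This closed condition is what pins down the new entry $x$. The minor has the form $\alpha x+\beta$, where $\alpha,\beta$ are nonzero products of entries already known to agree in $D$ and $D'$, so it vanishes for exactly one value of $x$.

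A nonvanishing condition would only exclude that one value. This is precisely why $U_{2,4}$, where the $2\times 2$ cycle minor must be nonzero, has infinitely many $T_E$-inequivalent realizations over an algebraically closed field. Excluding a $U_{2,4}$-minor is not the operative tool here; the $\mathbb{F}_2$ determinant computation is.

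\textbf{Two smaller repairs.}
\begin{itemize}
\item For the minor to be this two-term expression, the cycle must be an induced cycle of the full support graph $G$, not merely chordless among the edges already placed. An unplaced chord still contributes a nonzero entry to the submatrix. You can arrange this by always placing an unplaced edge whose endpoints are at minimum distance in the graph of placed edges, and closing it with a shortest path. A placed chord would then shorten that path, and an unplaced chord would be a closer unplaced pair.
\item Previously determined entries need not equal $1$; they can be $-1$ outside characteristic $2$. The induction only needs that they agree in $D$ and $D'$.
\end{itemize}
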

Matroids obtained via series and parallel extension starting from $U_{1,2}$ are called \textbf{series-parallel}. All series-parallel matroids are regular; in particular, this includes snake matroids.
Direct sum preserves regularity, so snake dens are also regular.
Since slithers are obtained from these by adding loops and coloops, these too are regular.
Thus every slither matroid has a unique $T_E$-orbit closure in $\Gr(r, E)$. In particular, all linear realizations of a slither lie in a common $T_E$-orbit inside $\Gr(r, E)$.

Snake dens and slithers also enjoy another remarkable property: they are all positroids. This follows from 
(i) snake dens are lattice path matroids
(as shown in \Cref{subsec:richardson_and_snake})
(ii) lattice path matroids are positroids \cite{oh}*{Lemma 21}, 
and
(iii) adding loops or coloops to a positroid results in another positroid. 
\begin{Df}[{cf.\ \cite{postnikov06}*{Definitions 3.1, 3.2}}]\label{df:pos}
    A \textbf{positroid} is a matroid realized over $\R$ by a subspace whose nonzero Pl\"ucker coordinates all have the same sign.
\end{Df}

In fact, the most naive attempt at a realization already works for regular positroids.

\begin{Prop}[{cf. \cite{QR25}*{\textsection \textsection~4-5}}]\label[Prop]{prop:easy_realization}
    Let $M$ be a regular positroid of rank $r$ on $E$. Then, the Plücker coordinates given by $p_{I} = 1$ for $I \in \mathcal{B}(M)$ and $p_I = 0$ otherwise define a realization of $M$ over any field.
\end{Prop}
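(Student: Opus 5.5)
The strategy is to take a totally unimodular representation of $M$, use the positroid hypothesis to fix the signs of its maximal minors, and then reduce modulo the characteristic. Since $M$ is regular, it has a totally unimodular integer representation: an $r\times E$ integer matrix $A$ whose $r\times r$ minors all lie in $\{0,\pm1\}$, with $\det A_I\neq 0$ exactly when $I\in\mathcal{B}(M)$. If we can modify $A$ so that every nonzero maximal minor equals $+1$, we are done. Indeed, the image of such an integer matrix in any field $\Bbbk$ (through $\Z\to\Bbbk$) has Plücker vector $p_I=1$ for $I\in\mathcal{B}(M)$ and $p_I=0$ otherwise. This vector is nonzero, so the matrix has full rank, and its row span realizes $M$ with exactly the prescribed Plücker coordinates.

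To normalize the signs, I would use the positroid hypothesis. Let $L\in\Gr(r,E)(\R)$ realize $M$ with all nonzero Plücker coordinates positive, say $L$ is the row span of a real matrix $A_L$. Now $A$ and $A_L$ are two real representations of the same regular matroid. By projective uniqueness of representations of regular (indeed binary) matroids, namely the result cited from \cite{Oxley} behind \Cref{thm:unique_torus} applied over the field $\R$, there exist $g\in\GL_r(\R)$ and a diagonal $t=\mathrm{diag}(t_e)_{e\in E}$ with all $t_e\in\R^\times$ such that $A_L=gAt$. Taking maximal minors gives, for every $I\in\binom{E}{r}$,
\[
p_I(L)=\det(g)\prod_{e\in I}t_e\,\det A_I.
\]
Next set $\epsilon_e=\sgn(t_e)\in\{\pm1\}$ and $\delta=\sgn\det g$. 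Define $A'$ by multiplying column $e$ of $A$ by $\epsilon_e$ and, if $\delta=-1$, the first row by $-1$ (this uses $r\ge1$; the case $r=0$ is trivial). Then $A'$ is still an integer matrix with maximal minors in $\{0,\pm1\}$. Moreover, for each basis $I$, the sign of $\det A'_I$ equals the sign of $p_I(L)$, which is $+1$. So $\det A'_I=1$ for every basis $I$ and $\det A'_I=0$ otherwise, and the first paragraph finishes the proof.

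I expect the main obstacle to be the projective-uniqueness step over $\R$: any two real representations must differ by a row operation together with a column rescaling. \Cref{thm:unique_torus} is phrased for torus orbit closures, presumably over algebraically closed fields. So I would either invoke \cite{Oxley}*{Proposition 6.6.5} directly over $\R$, or argue via fundamental circuits. For the latter, put both matrices in the form $[I_r\mid D]$ with respect to a common basis. For a binary matroid, the support of $D$ is determined by $M$, and the entries can be rescaled to match along a spanning forest of the bipartite support graph. The remaining entries are then forced, up to sign, by the $2\times2$ (cycle) minors, and regularity excludes a sign conflict. Everything else is routine.
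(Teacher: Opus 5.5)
Your proposal is correct and follows essentially the same route as the paper. Both take a $\{0,\pm1\}$ representation and a nonnegative real realization, relate them by projective uniqueness (the paper invokes \Cref{thm:unique_torus} over $\R$ exactly where you invoke \cite{Oxley}), and rescale by the signs of the scalar and torus factors to obtain Plücker coordinates in $\{0,1\}$. The only difference is cosmetic: you work with an integer matrix rather than the Plücker vector, which makes the passage to an arbitrary field immediate.
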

\begin{proof}
    The regularity of $M$ ensures that we can realize it over $\R$ using Pl\"ucker coordinates $(p_I)_{I \in \binom{E}{r}}$ satisfying $p_I \in \{\pm 1, 0\}$ for all $I$ \cite{Oxley}*{Lemma 2.2.21}. 
    Since $M$ is a positroid, we can similarly realize it over $\R$ using Pl\"ucker coordinates $(q_I)_{I \in \binom{E}{r}}$ satisfying $q_I \geq 0$ for all $I$.
    By \Cref{thm:unique_torus}, these two realizations are in the same torus orbit. This implies that
    we can find $a \in \R^\times$ and $c \in (\R^\times)^E$ for which $q_I = ac^I p_I$. Let $\widetilde c = \sgn(c)$ be the vector of signs of $c$. Then, $\widetilde q_I \deq \sgn(a)\widetilde c^I p_I$ satisfies $\widetilde q_I = \sgn(q_I) \geq 0$ and $|\widetilde q_I| = |p_I|$ so that $\widetilde{q}_I \in \{0,1\}$ for all $I$. Since $(\widetilde{q}_I)_{I \in \binom{E}{r}}$ is in the torus orbit of $({p}_I)_{I \in \binom{E}{r}}$, it represents a realization of $M$ over $\R$. Since the $\widetilde{q}_I$ are integers, the realization is equally valid over any field.
\end{proof}

For a slither matroid $M$, we will write $\phi_M\colon X_{P(M)} \to \Gr(r(M), E(M))$ for the morphism induced by the realization with Plücker coordinates given by \Cref{prop:easy_realization}. For a snake den $s$, we write $X_s$ for $X_{P(s)}$.

\subsection{Richardson varieties and slither diagrams}\label{subsec:richardson_and_snake}

Recall from Section~\ref{sec:richardson} that Richardson varieties in $\Gr(r, E)$ are indexed by skew shapes $\lambda/\mu$. Our goal in this section is to associate to each slither a ``generalized'' skew diagram, which we call a \textbf{slither diagram} and relate slithers to toric Richardson varieties. 
Recall the \textbf{Gale Order} on $\binom{[n]}{r}$ which is defined by $B' \leq_G B$ if and only if $|[k] \cap B'|\geq |[k]\cap B|$ for all $1 \leq k \leq n$. Equivalently, if $B' = \{b'_1 < \cdots < b_r'\}$ and $B = \{b_1 < \cdots < b_r\}$ then $B'  \leq_G B$ if and only if $b_i' \leq b_i$ for all $i$. 

Given comparable elements $A\leq_G B$ in $\binom{[n]}{r}$, there is an associated \textbf{lattice path matroid} $M[A, B]$ whose bases are the elements of the Gale interval $[A, B]_{G}$.
There is a bijection between $\binom{[n]}{r}$ and southwest-to-northeast lattice paths within the $r\times (n-r)$ rectangle which start at $(0,0)$ and end at $(n-r, r)$. A subset $A$ is associated with the lattice path whose north steps are at positions $A$ and whose east steps are at positions in $[n]\setminus A$. We represent a Gale interval $[A,B]_G$ by the skew diagram
in the $(n-r) \times r$ grid bounded by the corresponding lattice paths; see \Cref{fig:skew_diagrams}.
We identify this diagram with the set of all edges inside it, including all edges of the lattice paths corresponding to $A$ and $B$.
This convention (of including the edges of the bounding paths) allows one to recover both $A$ and $B$ from the diagram. 
\begin{figure}[ht]
    \centering
    {\includestandalone[width=.8\textwidth]{figures/skew_diagrams}}
    \qquad
    \caption{Some skew diagrams associated to Gale intervals within the box $\binom{[7]}{3}$.}
    \label{fig:skew_diagrams}
\end{figure}
The bases of $M[A,B]$ are then identified with the lattice paths within the skew diagram corresponding to $[A,B]_G$. Write $\lambda_A$ for the partition whose lower right lattice path corresponds to $A$. Then $M[A, B]$ is the matroid realized by a \emph{general} point in the Richardson variety $\Omega_{\lambda_B}^{\lambda_A}$.
 In \Cref{sec:toric_positroids}, we show the following. 

\newtheorem*{CorRestated}{\bf Corollary \ref{cor:toric_richardsons_in_GR}}
\begin{CorRestated}
Let $X$ be a Richardson variety in the Grassmannian. Let $T$ be the
standard torus. The following are equivalent.
\begin{itemize}
    \item $X$ admits the structure of a toric variety.
    \item $X$ has a dense $T$-orbit.
    \item The skew diagram of $X$ contains no $2\times 2$ box.
\end{itemize}
\end{CorRestated}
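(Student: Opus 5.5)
The plan is to prove the cycle of implications (iii)$\Rightarrow$(i)$\Rightarrow$(ii)$\Rightarrow$(iii). We write $X = \Omega_{\lambda}^{\mu}$ with $\lambda = \lambda_B$, $\mu = \lambda_A$. Two standard facts will be used throughout. First, $X$ is irreducible of dimension $|\lambda/\mu|$. Second, the matroid of a general point of $X$ is the lattice path matroid $M[A,B]$, as recalled above. Its base polytope $P(M[A,B])$ is the moment polytope of the orbit closure of a general point.

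For (iii)$\Rightarrow$(i), we assume the skew diagram contains no $2\times 2$ box. We then compute $\dim P(M[A,B])$. A matroid base polytope has dimension $|E|$ minus the number of connected components of the matroid. For a lattice path matroid these components can be read off the diagram. Bound elements (forced steps) are singleton components. The remaining elements group into the connected pieces of the ribbon, one component per piece. Hence $\dim P(M[A,B]) = $ number of unbound elements minus number of connected ribbon pieces. For a connected ribbon with $k$ boxes, the diagram has $k+1$ unbound steps, since each box contributes exactly one new free step along the strip. Summing over pieces gives $\dim P = |\lambda/\mu|$.

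It follows that the orbit closure $\overline{T\cdot L}$ of a general $L\in X$ has dimension $\dim X$. It is contained in the irreducible variety $X$, so it equals $X$. By the cited result on torus orbit closures, $\overline{T\cdot L}$ is a normal toric variety, so $X$ is toric. This is the step I expect to need the most care: the dimension count for the lattice path polytope. I would make it rigorous by induction on the number of boxes. Adding a box at the end of a ribbon corresponds either to a parallel or a series extension, which raises the polytope dimension by one, or to a direct sum. This is exactly the snake description.

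For (i)$\Rightarrow$(ii), toric here means toric for the standard torus $T$ acting on $X$. The dense torus of the toric structure is then a dense orbit of $T$, possibly acting through a quotient. If one allows an abstract toric structure, I would instead note that Richardson varieties are $T$-stable. A toric structure compatible with the ambient torus action forces a dense $T$-orbit, and I would take the statement in this compatible sense.

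For (ii)$\Rightarrow$(iii), we argue by contrapositive. Suppose the diagram contains a $2\times 2$ box. Every $T$-orbit in $X$ has dimension at most $\dim P(M_L)$ for the matroid $M_L$ of its points. Since $P(M_L)\subseteq P(M[A,B])$, this is at most $\dim P(M[A,B])$, which is the number of unbound elements minus the number of components. A connected skew shape with $k$ boxes still has exactly $k+1$ free steps minus corrections. More simply, the count of unbound elements minus components equals the number of boxes on the diagonal-strip "outer rim". This is strictly less than $|\lambda/\mu|$ once a $2\times2$ box exists, because the unbound count equals the length of the outer rim plus one per component, independent of interior boxes. So no orbit is dense.
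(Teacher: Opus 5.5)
\paragraph{Comparison of the valid parts.} Your arguments for (iii)$\Rightarrow$(i), (iii)$\Rightarrow$(ii) and (ii)$\Rightarrow$(iii) are sound, and they take a more elementary route than the paper. The paper passes through positroid theory: regular positroids have a single $T$-orbit as positroid cell, and a $2\times 2$ box yields a $U_{2,4}$ minor. You instead compare $\dim P(M[A,B])$ (unbound elements minus edge-connected pieces) with $|\lambda/\mu|$, using $P(M_L)\subseteq P(M[A,B])$ for every $L\in X$. The count is right, but state it cleanly. A piece spanning $a$ columns and $b$ rows contributes $a+b-1$ to the dimension. It has at least $a+b-1$ boxes, with equality iff it contains no $2\times 2$ box. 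Phrases like ``$k+1$ free steps minus corrections'' and ``diagonal-strip outer rim'' obscure this.

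\paragraph{The gap: (i)$\Rightarrow$(ii).} You close this step by declaring that ``toric'' means toric for the standard torus. Under that reading, (i) and (ii) say essentially the same thing. But the statement, as the paper intends and proves it, concerns an \emph{arbitrary} torus $T'$ acting on $X$ with a dense orbit, and that is the one genuinely nontrivial implication. Your dimension count says nothing here, because the dense $T'$-orbit has no a priori relation to the $T$-orbits. Two missing ideas are needed.
\begin{enumerate}
\item $\Aut(X)$ is a linear algebraic group in which $T'$ is a maximal torus. So after conjugating by an automorphism, the standard torus acts through $T'$ (\Cref{lm:max_tori}).
\item Step (1) alone does not produce a dense $T$-orbit, so you still need an obstruction. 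If the diagram has a $2\times 2$ box, the matroid has a $U_{2,4}$ minor. This gives a closed subvariety $\Omega\cong\Gr(2,4)$ of $X$ which is a Bia\l ynicki-Birula cell closure for a cocharacter of $T$. After step (1), $\Omega$ is $T'$-invariant and hence toric, which $\Gr(2,4)$ is not (\Cref{lm:toric_implies_no_u24}).
\end{enumerate}
Without an argument of this kind, your cycle establishes only (ii)$\Leftrightarrow$(iii)$\Rightarrow$(i).
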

Given \Cref{cor:toric_richardsons_in_GR}, 
skew diagrams containing no $2 \times 2$ box will be called \textbf{toric Richardson diagrams} and the associated lattice path matroids will be called \textbf{toric Richardson matroids}.
\begin{Ex}
    The parallel pair $U_{1,2}$ is the matroid 
    $M[\{1\}, \{2\}]$ in the $1\times1$ box.
\end{Ex} 
The effect of parallel (resp.\ series) extension by the last element in a connected skew diagram is to add a box immediately to the right of (resp.\ immediately above) the northeasternmost box. \textbf{Thus, every snake matroid is a toric Richardson matroid}.
More precisely, snake matroids are precisely those toric Richardson matroids whose skew diagrams are edge-to-edge connected. 
If $M$ and $N$ are lattice path matroids, their direct sum $M \oplus N$ is as well, and its skew diagram can be obtained by
joining the southwest corner of the skew diagram for $N$ to the northeast corner of the skew diagram for $M$. Since this process introduces no $2\times 2$ subgrid, \textbf{every snake den is a toric Richardson matroid}. See \Cref{fig:snakes_and_dens}.
\begin{figure}[ht]
    \centering
    {\includestandalone[width=.6\textwidth]{figures/snakes_and_dens}}
    \qquad
    \caption{Skew diagrams for a snake and a snake den.}
    \label{fig:snakes_and_dens}
\end{figure}

We call an edge of a skew diagram a \textbf{vine} if it does not bound a box of the skew diagram. The vines are simply the edges of intersection of the upper and lower bounding paths of the skew diagram. The horizontal (resp.\ vertical) vines of a skew diagram correspond precisely to the elements of the ground set of the corresponding lattice path matroid which belong to none of (resp.\ all of) the bases; these are the loops (resp.\ coloops) of the matroid.
Thus pruning a toric Richardson matroid is equivalent to contracting all vines in its skew diagram (leaving connected ribbons joined corner to corner). This perspective makes it clear that the pruning of a toric Richardson matroid is a snake den. \textbf{Thus, every toric Richardson is a slither}; see \Cref{fig:snake-inclusions}.
\begin{figure}[htbp]
\centering
\resizebox{\linewidth}{!}{%
\begin{tikzpicture}[
  font=\large,
  line width=.5pt,
  rounded corners=5pt
]
  \draw[fill=black!1] (-1.7,-1.04) rectangle (14.5,1.04);
  \draw[fill=black!2] (-1.5,-.86)  rectangle (10.2,.86);
  \draw[fill=black!4] (-1.3,-.68)  rectangle (7.5,.68);
  \draw[fill=black!6] (-1.1,-.50)  rectangle (3.85,.50);
  \draw[fill=black!8] (-.9,-.32)   rectangle (1.05,.32);

  \node at (0,0)     {snakes};
  \node at (2.45,0)  {snake dens};
  \node at (5.675,0) {toric Richardsons};
  \node at (8.85,0)  {slithers};
  \node at (12.35,0) {regular positroids};
\end{tikzpicture}%
}
\caption{The inclusions among some classes of matroids.}
\label{fig:snake-inclusions}
\end{figure}

In fact, every slither is isomorphic to a toric Richardson matroid as an \emph{unordered matroid}. Since we are working with ordered matroids, however, the distinction will be maintained.
To any slither, we can associate a \textbf{slither diagram},\footnote{These are called \emph{block-tiled bands} in \cite{Facial_structures}.} which closely resembles a skew diagram.
We define a \textbf{slither diagram} to be the diagram obtained from a toric Richardson diagram by erasing some of the edges from its \emph{interior}. (Here, an edge is in the \emph{interior} of a diagram if both boxes that it bounds are contained in the diagram.)
The bases of the slither diagram are defined to be those lattice paths that can be drawn using the remaining edges, these being identified with their set of north steps. (We will soon see why this defines a matroid.)
When we wish to consider a slither diagram as a subdiagram of a given toric Richardson diagram, we draw those edges of the toric Richardson diagram \emph{not} lying in the slither diagram as dotted lines; cf.\ \Cref{fig:slither_poset}. We will call the bounded regions of the slither diagram its \textbf{scales}. 
\begin{figure}[htbp]
  \centering
  \includestandalone[width=.35\textwidth]{figures/slither_word_diagram}
  \caption{Reading a slither word from its diagram.}
  \label{fig:slither-word-antidiagonals}
\end{figure}
To show that each slither diagram corresponds to a slither, we first explain how to obtain the corresponding slither word. 
For this, we traverse the slither diagram from southwest to northeast, recording which edges of the diagram intersect each antidiagonal. At the
$k$-th antidiagonal, we transcribe:
\begin{itemize}
  \item An $\elll$ if the only edges in the slither diagram are
    horizontal.
  \item A $\llle$ if the only edges in the slither diagram are vertical.
  \item A $1$ if there are two vertical edges and one horizontal edge.
  \item A $0$ if there are two horizontal edges and one vertical edge.
  \item A $\pr$ if there is one vertical edge northwest of one horizontal
    edge.
  \item A $\rp$ if there is one vertical edge southeast of one horizontal
    edge.
\end{itemize}
We claim that the resulting word represents a slither matroid whose bases are precisely the bases of the slither diagram. This may be proven by induction on the number of interior edges that need to be removed from a toric Richardson diagram to obtain the given slither diagram. When this number is $0$, the claim may be verified directly using the previously established identification of toric Richardson matroids as particular slither matroids.
For the induction step, suppose that a horizontal edge has been deleted on the $k$-th antidiagonal. One obtains a slither diagram with one less missing edge by erasing all edges on the $k$-th antidiagonal and rejoining separated halves as in \Cref{fig:hydra}. By the induction hypothesis, the transcription procedure above produces the slither word of a matroid whose bases are precisely the bases of the new smaller diagram. Since the original slither diagram had only vertical edges on the $k$-th antidiagonal, reversing the deletion procedure introduces the element $k$ as a coloop of the slither diagram matroid. This agrees with the effect of adding an $\llle$ in the $k$-th position in the slither word, as prescribed by the transcription procedure. This proves the induction step in the case of a missing horizontal edge. 
A similar argument works for a missing vertical interior edge.
\begin{figure}[ht]
    \centering
    {\includestandalone[width=.6\textwidth]{figures/hydra}}
    \qquad
    \caption{Cutting and gluing a slither at a missing internal edge.}
    \label{fig:hydra}
\end{figure}

Conversely, we can show that each slither word represents a slither diagram by directly constructing one from it. We take the word as an instruction manual, read from left to right.
The drawing proceeds in two \emph{modes}:
\emph{pen mode} and \emph{box mode}.
We are in \emph{pen mode} when the symbol currently being read lies outside a matched pair $\pr\cdots\rp$, and in \emph{box mode} otherwise.
We start off in pen mode at the southwest corner of the grid.

When in pen mode, we draw a horizontal edge when we read an $\elll$ and a vertical edge when we read a $\llle$. A $\pr$ brings us into box mode and tells us to draw a square immediately northeast of our current position.
In box mode, a $0$ tells us to draw a box immediately to the right of the last one we drew and a $1$ tells us to draw a box immediately above it instead.
An $\elll$ (resp.\ a $\llle$) in this mode acts much like a $0$ (resp.\ a $1$) except that we then erase the edge that separates the old box from the new one.
A $\rp$ tells us to exit box mode and place ourselves at the northeast corner of the last drawn box. 

By refusing to erase any edges in the procedure above, we instead construct a toric Richardson diagram of which the slither diagram is obtained by deleting interior edges, showing that it is indeed a slither diagram. Comparing the construction with the earlier transcription rule for slither diagrams shows that the slither diagram we have obtained indeed corresponds to the correct matroid and slither word.

Pruning is a very natural operation on slither diagrams. Namely, it contracts all vines and collapses all scales to a single box. \Cref{fig:pruning_slither} illustrates this for a small slither.
Since the dimension of a Richardson variety is equal to the number of boxes in its skew diagram, and since pruning preserves the dimension of a matroid polytope, the dimension of the toric variety corresponding to a slither is equal to the number of scales in its diagram. From the construction described above of a slither diagram from its slither word, we deduce:
\begin{Obs}\label{obs:dim_count}
For any slither $s$, the dimension of $P(s)$ is equal to the number of characters among $\pr, 0, 1$ in the corresponding slither word. 
In particular, $\dim P(\beta) = \abs{\beta} + 1$ for any binary string $\beta$.
\end{Obs}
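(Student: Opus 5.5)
The plan is to compute $\dim P(s)$ by reducing to snakes and then arguing inductively. Two operations on slither words are harmless for the count. First, $P(s)$ and its pruning $P(\ove{s})$ have the same dimension, because pruning only deletes coordinates that are constant on the polytope. Second, $\ove{s}$ is obtained from $s$ by deleting the characters $\elll$ and $\llle$. Neither kind of step changes the number of characters among $\pr,0,1$. So we may assume $s$ is a snake den word $w(N_1)\cdots w(N_k)$. Since $P(M_1\oplus M_2)=P(M_1)\times P(M_2)$, dimensions add over components, and the count of $\pr,0,1$ is also additive under concatenation. This reduces the claim to a single snake $w(M)=\pr\beta\rp$ with $\beta$ a binary string, where we must show $\dim P(M)=\abs{\beta}+1$. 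That is exactly the second assertion of the observation.

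For a snake we induct on $\abs{\beta}$. In the base case, $U_{1,2}$ has base polytope the segment joining $e_1$ and $e_2$, which has dimension $1$. For the inductive step, the claim is that both parallel and series extension by the maximum element $e$ (which is neither a loop nor a coloop) raise the dimension by exactly one.

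For parallel extension, the bases of $\Pi M$ are those of $M$ together with $B-e+e'$ for each basis $B\ni e$. Hence $P(\Pi M)$ contains a copy of $P(M)$ in the hyperplane $x_{e'}=0$. Because $e$ is not a loop, $\Pi M$ also has a vertex with $x_{e'}=1$. So the dimension increases by at least one. It increases by at most one, since $P(\Pi M)$ lies in $\{x\in\R^{E+e'} : \sum x = r\}$ and projects onto $P(M)$ under $x\mapsto x+x_{e'}e_e$. Equivalently, adding one coordinate can add at most one dimension.

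Series extension is the dual case: $\Sigma_e M = (\Pi_e M^*)^*$, up to the order reversal built into duality. Since $P(M^*)$ is an affine image of $P(M)$, dimension is preserved under duality, and the parallel case gives the series case. If one prefers to avoid tracking the order conventions, the same direct argument works: the new coordinate $x_{e'}$ is non-constant because $e$ is not a coloop, and the lift is again injective up to one dimension.

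Alternatively, the slither-diagram description recalled above gives the count directly. The dimension equals the number of scales. In the drawing procedure a new scale is begun exactly at a $\pr$, $0$ or $1$, while $\elll$ and $\llle$ in box mode merge boxes into the current scale. The only subtle step is making sure the dimension of a matroid polytope is not affected by the loops and coloops introduced under duality or pruning, and this is immediate from the fact that such coordinates are constant.
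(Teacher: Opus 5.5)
Your proof is correct, but its main line differs from the paper's. The paper argues through diagrams. Pruning preserves dimension and collapses each scale of the slither diagram of $s$ to a single box of the toric Richardson diagram of $\ove{s}$. The Richardson variety $\Omega^{\lambda_A}_{\lambda_B}$ has dimension $\abs{\lambda_B}-\abs{\lambda_A}$ and coincides with the toric variety of $P(\ove{s})$, so $\dim P(s)=\#\{\text{scales}\}$. The box-drawing procedure then shows that scales are started exactly by $\pr$, $0$, $1$.

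That is your closing ``alternative'' paragraph. There, however, you simply assert that the dimension equals the number of scales. This is the one step the paper actually has to justify, via the Richardson dimension count, which rests on \Cref{cor:toric_richardsons_in_GR}.

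Your main argument instead reduces by pruning and products to a single snake. It then checks that each parallel or series extension by an unbound element raises the dimension by exactly one. This is the dimension count implicit in the identification $\Pi_j P\cong\Chim(P,x_j)$ of \Cref{lm:is_chimney}. Your route gives a self-contained, purely polytopal proof that uses no Richardson geometry. The paper's route is shorter given its diagram machinery, and it yields the interpretation ``dimension $=$ number of scales'' for arbitrary slithers at once.

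Two small points. First, the fact that the maximum element of a snake is unbound deserves its one-line induction: parallel and series extensions by an unbound element preserve looplessness and colooplessness. Second, ``adding one coordinate can add at most one dimension'' is not an equivalent reformulation, since simply forgetting $x_{e'}$ does not map $P(\Pi M)$ onto $P(M)$. The upper bound comes from the map $x\mapsto x+x_{e'}e_e$ followed by forgetting $x_{e'}$, whose kernel is one-dimensional.
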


\begin{figure}[ht]
    \centering
    {\includestandalone[width=.6\textwidth]{figures/pruning_slither}}
    \qquad
    \caption{The effect of the pruning $\pr \rp \pr \llle \elll \elll 1 \elll \llle \rp \elll \llle \pr 0 \elll \elll 1 \elll \rp \elll \elll \mapsto \pr \rp \pr 1 \rp \pr 0 1 \rp$. }
    \label{fig:pruning_slither}
\end{figure}

\subsection{Equivalence of two notions of morphism}\label{subsec:equiv}

\Cref{subsec:matroids} gave a notion of morphism for snake dens.
\Cref{subsec:snake_polytopes} gives a notion of coordinate morphism for snake den polytopes.
Here we show that these two notions coincide. 
Let $M$ and $N$ be snake den matroids and suppose we are given a coordinate morphism $F\colon P(N) \to P(M)$ with corresponding increasing injection $f\colon E(N) \to E(M)$. 
By design, each $b \in E(M) - f(E(N))$ is a bound coordinate of $F(P(N))$ and the pruning of $F(P(N))$ is isomorphic to $P(N)$; the same is then true of the corresponding matroids.
Since the vertices of $F(P(N))$ form a subset of the vertices of $P(M)$, the former is a weak image of the latter.
This shows that coordinate morphisms of base polytopes give rise to slither morphisms of the corresponding matroids. 
The converse also holds, provided that each weak image of a snake den matroid corresponds to a face of its base polytope.
This is indeed true, by a theorem of Lucas.
\begin{Thm}[{\cite{Lucas75}*{Theorem 6.18}}]
    If $M$ is a binary matroid and $N$ is a weak image of $M$, then there exists a sequence 
    $N = N_0, N_1, \dots, N_k = M$ of weak images of $M$ such that, for each $i$, $N_i$ is the $S_i$-initial matroid of $N_{i+1}$ for some subset $S_i \subseteq E(M)$.
\end{Thm}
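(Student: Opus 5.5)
Since the statement concerns faces of matroid base polytopes, I would argue polytopally and by induction on the number of bases of $M$ not in $N$. The goal is a single step: whenever $N$ is a weak image of a binary matroid $M$ with $N\neq M$, find a subset $S\subseteq E(M)$ such that
\begin{itemize}
\item every basis of $N$ meets $S$ in $\rk_M S$ elements, so $N$ is a weak image of $\D{M}{S}$, and
\item $\D{M}{S}\neq M$.
\end{itemize}
As unordered matroids, $\D{M}{S}\cong M/S\oplus M|_S$, and minors and direct sums of binary matroids are binary. So $\D{M}{S}$ is again binary and has strictly fewer bases than $M$. Induction then gives a chain from $N$ to $\D{M}{S}$, and appending $M$ gives the chain from $N$ to $M$. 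When $N=M$ the chain has length zero, which handles the base case. The chain has the right form because $\D{M}{S}=\D{M}{\cl_M S}$, and the initial matroids of any $N_{i+1}$ are exactly the matroids of the faces $\D{P(N_{i+1})}{S}$ of $P(N_{i+1})$.

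To produce $S$, I would first reduce to $M$ connected. If $M=M_1\oplus M_2$ is a proper direct sum, then $P(N)\subseteq P(M_1)\times P(M_2)$. Summing the coordinates over $E(M_1)$ shows every basis of $N$ meets $E(M_1)$ in exactly $\rk M_1$ elements. I would then try to show that $N$ also splits as a direct sum of weak images of $M_1$ and $M_2$, and apply induction to whichever factor differs. If $N$ does not split, I would reduce to the connected case in a different way, by working directly with a non-split face.

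Now assume $M$ is connected. I would consider the rank functions, which satisfy $r_N\le r_M$ with equality on $E$. I expect the required set $S$ to come from a separator of $N$: if $N$ is disconnected with a component $S$ whose rank satisfies $r_N(S)=r_M(S)$, then $S$ works.

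The main obstacle is the binary input, which is essential: for $U_{2,4}$, deleting a single basis pair gives a weak image that lies on no proper face. I would proceed in three steps.
\begin{enumerate}
\item Pick $B\in\mathcal{B}(M)\setminus\mathcal{B}(N)$ and $B'\in\mathcal{B}(N)$ adjacent in the basis exchange graph of $M$, say $B=B'-x+y$.
\item Use that in a binary matroid circuits and cocircuits meet in an even number of elements. Apply this to the fundamental circuit $C(y,B')$ in $N$ and the fundamental cocircuits of $M$.
\item From this, show that every basis of $N$ satisfies the tight inequality $|B''\cap S|=r_M(S)$, where $S=\cl_M(C_N(y,B'))$ or a similar closure, while $B$ violates it.
\end{enumerate}
I expect this parity argument, which rules out $U_{2,4}$-type behavior, to be the hardest step. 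If it resists a direct proof, my fallback is to quote Lucas's original argument, which works with cocircuit parity in the same way.
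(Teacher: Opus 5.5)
Your outer reduction is sound. If $N$ is a weak image of some $\D{M}{S}\neq M$, then $\D{M}{S}\cong M|_S\oplus M/S$ is binary with fewer bases, and induction gives the chain. Two asides are wrong, though you never use them:
\begin{itemize}
\item $\D{M}{S}=\D{M}{\cl_M S}$ fails already for $M=U_{1,2}$, $S=\{1\}$.
\item When $M=M_1\oplus M_2$, $N$ always splits: every basis of $N$ meets $E(M_1)$ in $\rk M_1\ge r_N(E(M_1))$ elements, so $E(M_1)$ is a separator of $N$.
\end{itemize}
Your characterization of the needed $S$ (a nonempty proper separator of $N$ with $r_N(S)=r_M(S)$) is also right. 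But producing such an $S$ is the entire content of Lucas's theorem, which the paper only quotes. Your steps (1)--(3) do not produce it.

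Step (2) is not a valid use of binarity. Circuit--cocircuit orthogonality concerns circuits and cocircuits of the \emph{same} binary matroid. $C_N(y,B')$ is a circuit of $N$, which is not known to be binary. It is also independent in $M$, since the unique $M$-circuit in $B'+y$ contains $x$ and $C_N(y,B')$ does not. Indeed $C_N(y,B')\cap C^*_M(x,B')=\{y\}$, so if parity applied here it would force $N=M$ always. Pairing fundamental circuits of $M$ with fundamental cocircuits of $M$ relative to $B'$ is no better: they meet in $\emptyset$ or $\{b,e\}$ in every matroid, so binarity plays no role.

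The candidate in step (3) also fails. $S=\cl_M(C_N(y,B'))$ contains $y$, so it is $B$, not $B'$, that tends to attain $r_M(S)$. Take $M=U_{2,3}$, $\mathcal{B}(N)=\{\{1,2\}\}$, $B'=\{1,2\}$, $B=\{2,3\}$. Then $C_N(3,B')=\{3\}$ and $S=\{3\}$, while $|B'\cap S|=0<1=r_M(S)=|B\cap S|$.

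What is missing is an actual construction of $S$ and a place where $\mathbb{F}_2$ enters. The problem is local at $B'$, because $P(N)$ lies in $e_{B'}$ plus the cone of $N$-exchange directions there. So $S$ works as soon as all of the following hold:
\begin{itemize}
\item $x\in S$ and $y\notin S$;
\item if $B'-b+e\in\mathcal{B}(M)$ and $e\in S$, then $b\in S$;
\item if $B'-b+e\in\mathcal{B}(N)$, then $b\in S\Leftrightarrow e\in S$.
\end{itemize}
Take $S$ to be everything reachable from $x$ by alternately following $N$-exchanges $b\to e$ and $M$-exchanges $e\to b$ at $B'$. You must then show that $y$ is unreachable.

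Binarity enters through the $\mathbb{F}_2$-matrix of $M$ relative to $B'$, that is, through multiple exchanges. Choose a shortest offending path $x=b_0,e_1,b_1,\dots,b_{k-1},e_k=y$, minimizing over all missing exchanges at $B'$. The matrix restricted to the path is then the biadjacency matrix of a $2k$-cycle, which is singular over $\mathbb{F}_2$. For $k=2$ this gives $B'-x-b_1+e_1+y\notin\mathcal{B}(M)$, so exchanging $x$ out of $B'-b_1+y$ toward $B'-x+e_1$ fails in $N$. For larger $k$, one checks that $b_{k-1}\parallel e_k$ in the corresponding minor of $N$; contracting $e_k$ and deleting $b_{k-1}$ reproduces the configuration with $k-1$.

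This argument, or Lucas's own, is the core that your proposal lacks. Falling back on quoting Lucas is what the paper does, but then your sketch proves only the easy reduction.
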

Since the base polytope of an $S$-initial matroid is a face of the original, and since snake den matroids are regular (hence binary), this shows that each slither morphism gives rise to a coordinate map of the corresponding base polytopes.

Next, we show that all faces of slither base polytopes are again slithers.
To start, we examine the facets of snake matroid polytopes.
The following lemma follows from the \emph{proof} of {\cite{Facial_structures}*{Lemma 10}}.
We employ the \emph{initial matroid} notation of \Cref{subsec:matroids}.
\begin{Lm}\label[Lm]{lm:face_structure}
    Let $s = s_1 \cdots s_m$ be a binary string and let $M \deq M(s)$. Set $\widetilde s \deq s_1 s s_m$ with the convention that $\widetilde s = 11$ if $s = \varnothing$. The facets of $P(s)$ are in bijection with certain contiguous substrings of $\widetilde{s}$ of length $1$ or $2$, as follows.
    \[
        \begin{array}{c|c|c}
        \text{substring}
        &
        \text{facet matroid}
        &
        \text{facet outward normal}
        \\
        \hline
        \widetilde s_i=0
        &
        {\U{M}{i}}
        &
        -{e_i^*}
        \\
        \widetilde s_i=1
        &
        \D{M}{i}
        &
        {e_i^*}
        \\
        \widetilde s_i\widetilde s_{i+1}=01
        &
        \D{M}{[i]}
        &
        {e_{[i]}^*}
        \\
        \widetilde s_i\widetilde s_{i+1}=10
        &
        \U{M}{[i]}
        &
        -e_{[i]}^*
        \end{array}
    \]
    Here $e_S^* = \sum_{s \in S} e_s^*$ for $S \subseteq [m+2]$. These facet outward normals are merely \emph{representatives} (that restrict to the facet normals along the affine hull of $P$) since the polytope $P(s)$ is not full-dimensional.
\end{Lm}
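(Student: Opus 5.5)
The plan is to describe $P(s)$ explicitly by inequalities via its lattice path presentation, then decide which inequalities are facet-defining. By \Cref{subsec:richardson_and_snake}, $M=M(s)$ is the lattice path matroid $M[A,B]$ of the connected ribbon with $m+1$ boxes on ground set $[m+2]$. Its bases are the $0/1$ vectors $x$ with $\sum x = m_1+1$, where $m_1$ is the number of $1$'s in $s$, satisfying the prefix bounds
\[
\abs{B\cap[k]}\le x_{[k]}\le \abs{A\cap[k]}\qquad (1\le k\le m+2).
\]
I will first show that $P(s)$ is cut out by these inequalities together with $0\le x_i\le 1$. Integrality holds because, after the change of variables $y_k=x_{[k]}$, the constraint matrix is an interval (consecutive-ones) matrix and hence totally unimodular. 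So the polytope defined by the inequalities has $0/1$ vertices, and these are exactly the bases.

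Next I determine which of these inequalities define facets. For each candidate I identify the face it cuts out as an initial or terminal matroid, as in \Cref{subsec:polytope_morphism_placeholder}-style notation: $x_i=1$ gives $\D{M}{i}$, $x_i=0$ gives $\U{M}{i}$, $x_{[i]}$ maximal gives $\D{M}{[i]}$, and $x_{[i]}$ minimal gives $\U{M}{[i]}$. I then read off its slither diagram. A face is a facet exactly when its slither word has one fewer character among $\pr,0,1$ than $\pr s\rp$, by \Cref{obs:dim_count}.

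Concretely, the cases go as follows.
\begin{itemize}
\item Forcing $x_i=1$ forces the lattice path to take the north step on antidiagonal $i$. This deletes a box or turns an edge into a vine. The dimension drops by exactly one precisely when that antidiagonal is the single-box overlap of type $1$, i.e.\ $\widetilde s_i=1$. The endpoints are handled by the padding $\widetilde s=s_1ss_m$.
\item The case $x_i=0$ with $\widetilde s_i=0$ is dual. Apply duality $s\mapsto s^*$, which swaps $0\leftrightarrow1$, reverses the string, and exchanges initial and terminal faces.
\item The prefix inequality $x_{[i]}\le\abs{A\cap[i]}$ is tight exactly when the path passes through a fixed lattice point on the lower boundary. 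This pinches the ribbon at a corner. It removes exactly one box of dimension precisely when the boundary path has an inner corner there, i.e.\ $\widetilde s_i\widetilde s_{i+1}=01$.
\item Dually, the pattern $10$ gives the terminal face $\U{M}{[i]}$.
\end{itemize}
The outward normals in the table are then read off from which functional is maximized.

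Finally I show that all remaining inequalities are redundant, or define faces of codimension at least $2$, and that the listed faces are pairwise distinct. Together these give the claimed bijection.

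The main obstacle is this redundancy step. A prefix inequality at a non-corner position, or $x_i\le1$ at a $0$-position, cuts out a face whose slither word loses two or more of the symbols $\pr,0,1$, or none. I would handle this uniformly with the dimension count of \Cref{obs:dim_count} applied to the pinched slither diagram, rather than by exhibiting explicit linear combinations.

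As a cross-check, the facets of a connected matroid polytope correspond to flacets. For each listed face I can verify that $M|_F$ and $M/F$ are connected, using that restrictions and contractions of snakes at corners are again snakes. The degenerate case $s=\varnothing$, where $\widetilde s=11$ and $P$ is a segment, is checked directly.
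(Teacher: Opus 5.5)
Your route is correct and genuinely different from the paper's. The paper gives no argument of its own here. It extracts the lemma from the proof of Lemma~10 of An--Jung--Kim, and uses the diagrams afterwards only to translate facets into slither words (\Cref{fig:local_moves}, \Cref{fig:01_example}).

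Your version is self-contained. It has three ingredients:
\begin{enumerate}
\item An explicit description of $P(s)$ by $0\le x_i\le 1$, $x_{[m+2]}=r$ and the prefix bounds. This is integral because the constraint matrix is an interval matrix in the $x$-coordinates; in the $y$-coordinates it is a network matrix, so it is TU either way.
\item The standard fact that every facet is cut out by one of these inequalities.
\item A dimension count of the resulting sub-diagrams via \Cref{obs:dim_count}.
\end{enumerate}
This buys independence from the citation and produces the facet slither words at the same time. The cost is the case analysis you defer, which is routine. For $2\le i\le m$, forcing the path through the top point of antidiagonal $i$ at a pattern $00$, $11$ or $10$ collapses at least two boxes, and so does forcing a north step at a $0$.

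One point should be made explicit. You may apply \Cref{obs:dim_count} only to faces you have independently recognized as slithers. That holds here: each candidate face is either a Gale interval inside the ribbon (paths through one or two fixed lattice points) or an interior-edge deletion. But you must not invoke the corollary following the lemma, since it is deduced from the lemma.

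Three slips need repair.
\begin{enumerate}
\item With your convention, tightness of $x_{[i]}\le\abs{A\cap[i]}$ means the path passes through the point of the \emph{upper-left} boundary on antidiagonal $i$ (the inner corner $v$ of \Cref{fig:01_example}), not the lower one.
\item The $10$ row does not follow from the $01$ row via $s\mapsto s^*$. One has $\widetilde{s^*}_k=1-\widetilde s_{m+3-k}$, and $\D{M}{[i]}$ corresponds to $\D{M(s^*)}{[m+2-i]}$. So a $01$ at $(i,i+1)$ goes to a $01$ at $(m+2-i,m+3-i)$; indeed $01^*=01$ and $10^*=10$. Use instead reversal of the ground-set order ($180^\circ$ rotation of the diagram, replacing $s$ by its reverse), which sends $\D{M}{[i]}$ to $\U{M'}{[m+2-i]}$ and $01$ to $10$. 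Alternatively, check the $10$ case directly.
\item At the ends the prefix inequalities coincide with coordinate ones: $x_{[1]}=x_1$ and $x_{[m+1]}=r-x_{m+2}$. So when $s_m=0$, the inequality $x_{[m+1]}\le r$ \emph{does} cut out a facet (namely $x_{m+2}=0$) even though $\widetilde s_{m+1}\widetilde s_{m+2}=00$. This contradicts your claim that non-$01$ prefix faces drop zero or at least two dimensions. These faces must be recorded as duplicates of coordinate facets, which is exactly what the padding $\widetilde s=s_1ss_m$ arranges.
\end{enumerate}

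Once this is done, pairwise distinctness of the listed facets is immediate. For $m\ge 1$ the affine hull is exactly $\{x_{[m+2]}=r\}$, and the listed normals are pairwise non-proportional modulo $e^*_{[m+2]}$.
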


We use the above lemma to show that each facet of a snake den polytope is the base polytope of a slither matroid and to identify the slither word of this matroid.
Namely, for a snake den $M$ with word $w = w(M)$,
its facets are the slithers whose slither words are obtained from $w$ by replacing either a character or a pair of adjacent characters with a different string of the same length, according to the schema in \Cref{fig:local_moves}.

\begin{figure}[ht]
    \centering
    {\includestandalone[width=.8\textwidth]{figures/local_moves}}
    \qquad
    \caption{The allowable substitutions to pass from a snake den word to the slither word for one of its facets. The first two rows depict a 1- or 2-character contiguous substring of a snake den word and the corresponding local picture of its snake den diagram. The last two rows depict the transformation to obtain the word and diagram of the facet.}
    \label{fig:local_moves}
\end{figure}
Since the polytope of a snake den is the \emph{product} of the polytopes of its constituent snakes, and since each substitution rule operates necessarily within a single component, the validity of these rules for snake dens will follow from their validity for snakes.
In the case of snakes, it may be deduced by examining the effect on the bases of the slither diagram and comparing this to \Cref{lm:face_structure}. We illustrate as an example the transformation $01 \mapsto \rp \pr$. 
If the characters $01$ are at positions $i-1$ and $i$ in $s$, this corresponds to the facet matroid $\D{M}{[i]}$. To see this, we draw the snake den diagram of $s$ along with the antidiagonal with equation $x+y = i$. 
The diagram makes an $\reflectbox{L}$-shaped turn where it intersects this line. Let $v$ be the inner corner of this $\reflectbox{L}$ shape.
We observe that the lattice paths in the diagram maximizing the number of north steps among the first $i$ steps are precisely those which pass through $v$. These are also precisely the lattice paths in the diagram obtained by deleting the edges near the outer corner of the  $\reflectbox{L}$ as depicted in \Cref{fig:01_example}. It is also easy to see that the word for the resulting snake den is obtained by replacing $01$ with $\rp\pr$ in $w$.
\begin{Cor}
    Each face of a slither matroid base polytope is again a slither matroid base polytope.
\end{Cor}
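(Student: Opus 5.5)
We argue by induction on codimension, using that every face of a polytope is a face of a facet. Let $Q$ be a face of $P(s)$, where $s$ is a slither word and $P(s)=P(M(s))$. If $Q = P(s)$ there is nothing to prove. Otherwise $Q$ is contained in some facet $F$ of $P(s)$, and it suffices to show two things: $F$ is again a slither polytope, and the faces of $F$ are controlled by the same argument. Since $\dim F < \dim P(s)$, induction on dimension then finishes the proof, with base case points (a single basis; every element is then a loop or coloop, so the pruning is the empty snake den).

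The first step reduces from slithers to snake dens. Bound coordinates are constant on $P(s)$, and pruning is a coordinate projection that restricts to an affine isomorphism $P(s)\cong P(\ove{s})$. It therefore induces a bijection of faces. A face $Q'$ of $P(\ove{s})$ pulls back to the face $Q$ of $P(s)$ obtained by reinserting the constant loop and coloop coordinates. If $Q'=P(N)$ with $N$ a slither, then $Q=P(N')$, where $N'$ is $N$ with the loops $\Lamma(s)$ and coloops $\Gamma(s)$ reinserted at their positions. The pruning of $N'$ equals the pruning of $N$, which is a snake den, so $N'$ is a slither. In word terms, $w(N') = s\circ w(N)$, which is again a slither word.

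The second step treats facets of snake den polytopes. Since $P$ of a snake den is the product of its snake polytopes, a facet is the product of a facet of one factor with the remaining factors. By \Cref{lm:face_structure}, each facet of a snake polytope $P(\beta)$ is an initial or terminal face, given by one of the four cases there. The substitution rules of \Cref{fig:local_moves}, checked via the slither diagram as in the $01\mapsto\rp\pr$ example, show that each such facet is $P(t)$ for an explicit slither word $t$ of the same length. The product with the other factors then corresponds to concatenation of words, so every facet of a snake den polytope is a slither polytope.

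For the induction step, take a face $Q$ of $P(s)$ and reduce by the first step to the case where $s$ is a snake den. If $Q$ is proper, it lies in a facet $P(t)$ with $t$ a slither word, and $Q$ is a face of $P(t)$. Since $\dim P(t) < \dim P(s)$, the induction hypothesis, applied to slithers via the first step, shows that $Q$ is a slither polytope.

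The main obstacle is the verification in the second step that all four rows of \Cref{lm:face_structure}, including the boundary cases coming from the padding $\widetilde s$, produce exactly the local substitutions of \Cref{fig:local_moves}. Concretely, one must show that the lattice paths maximizing or minimizing the relevant count are exactly those avoiding the erased edges. I would handle these cases by the same antidiagonal argument as in the $01$ case, using duality $s\mapsto s^*$ to halve the work: the substitution rules are visibly compatible with duality, and it swaps the $0$ row with the $1$ row and the $01$ row with the $10$ row.
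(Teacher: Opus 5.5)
Your proposal is correct and takes essentially the same approach as the paper. You reduce to snake dens by pruning, get the facets of snake den polytopes from the product structure together with \Cref{lm:face_structure} and the substitution rules of \Cref{fig:local_moves}, and induct by placing a proper face inside a facet; the only differences are that you induct on the dimension of the ambient polytope rather than the codimension of the face, and you spell out the pruning step (via $w(N') = s \circ w(N)$) that the paper leaves implicit.
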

\begin{proof}
    We proceed by induction on the codimension of the face.
    By pruning, we may assume that this slither is a snake den. If the face is a facet, the claim follows from the discussion above. Otherwise, it lies in some facet.
    The claim then follows by applying the inductive hypothesis to this facet.
\end{proof}
\begin{figure}[ht]
    \centering
    {\includestandalone[width=.7\textwidth]{figures/01_example}}
    \qquad
    \caption{Example of the modification associated to $M_{[i]}$.}
    \label{fig:01_example}
\end{figure}

Since we can identify a slither morphism $S\colon N\to M$ with the coordinate morphism $P(N) \to P(M)$ whose image is the face $P(S)$ of $P(M)$, we will frequently identify this face of $P(M)$ with the slither $S$ under its various guises. \Cref{fig:face_labels} gives an illustration; it depicts the slither words associated to each proper face of the snake polytope $P(01)$.

\begin{figure}[ht]
    \centering
    {\includestandalone[width=.8\textwidth]{figures/unfolded}}
    \qquad
    \caption{Net of the square pyramid $P(01)$ along with the slither words of its faces.}
    \label{fig:face_labels}
\end{figure}

\section{Homology}\label{sec:homology}
\subsection{An affine paving}\label{sec:paving}
To understand the behaviour of matroid base polytopes under parallel and series extension, we need the following notion, a special case of the similarly-named construction in \cite{haase21}*{\textsection~2.2.1}.
\begin{Df}
    Let $P$ be an integral lattice polytope in $\R^n$ and let $f\colon \R^n \to \R$ be an \emph{integral} affine-linear function\footnote{Here \emph{integrality} for $f$ means that $f(\Z^n) \subseteq \Z$.}  which is nonnegative on $P$.
    The \textbf{chimney polytope} $\Chim(P, f)$ is defined by
    \[
        \Chim(P, f) = \{ (x, h) \in \R^n \times \R \mid x\in P,\; 0 \leq h \leq f(x)\}.
    \]
    The \textbf{pivot} of the chimney is $H = \{x \in P \mid f(x) = 0\}$, which is a face of $P$
    provided $H \neq \varnothing$.\footnote{We do not consider the empty set a polytope, nor a face of any polytope.}
    For a face $F$ of $P$ we write
    \begin{itemize}
        \item $F^\top$ for the face $\{(x, f(x)) \mid x \in F\}$ of $\Chim(P, f)$, the \emph{top copy} of $F$.
        \item $F^\bot$ for the face $\{(x, 0) \mid  x \in F\}$ of $\Chim(P, f)$, the \emph{bottom copy} of $F$.
        \item $F^{\side}$ for the face $\{(x, h)\in \Chim(P, f) \mid x \in F\}$ of $\Chim(P, f)$, the \emph{chimney over $F$}. The nomenclature reflects the fact that, as a polytope, $F^{\side} = \Chim(F, f)$.
    \end{itemize}
    The faces of $\Chim(P, f)$ are exactly the faces 
    $F^{\top}, F^{\bot}, F^{\side}$ as $F$ ranges over the faces of $P$. These are pairwise distinct, except for the fact that $F^{\top}, F^{\bot}, F^{\side}$ all \emph{coincide} when $F$ lies in $H$.
    In particular, the vertices of $\Chim(P, f)$ are the vertices $p^{\side}$ for vertices $p \in H$ and 
    $p^{\bot}, p^{\top}$ for $p \in P - H$.
    These observations show that the combinatorial type of $P$ along with the knowledge of the pivot together determine the combinatorial type of $\Chim(P, f)$.
\end{Df}

\begin{Lm}\label[Lm]{lm:is_chimney}
    Let $M$ be a matroid and let $E = E(M)$.
    Let $P = P(M) \subseteq \R^E$ and let $j \in E$ be an unbound element. Let $Q = \Pi_j P$ (resp.\ $\Sigma_j P$) with newly added element $k$. Then there is a unimodular change-of-coordinates identifying $Q$ with a chimney over $P$ so
    that for each basis $B \in \mathcal{B}(M)$ 
    the vertex $e_{B}$ (resp.\ $e_{B+k}$) in $Q$ is identified with $(e_B)^{\bot}$
    and if $j \in B$ (resp.\ $j \notin B$) then the vertex $e_{B-j+k}$ (resp.\ 
    $e_{B+j}$) is identified with 
    $(e_B)^{\top}$.
\end{Lm}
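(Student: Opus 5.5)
We construct the change of coordinates explicitly. For $Q = \Pi_j P$, the ground set is $E + k$ (with $k$ inserted right after $j$). Its vertices are $e_B$ for $B \in \mathcal{B}(M)$ and $e_{B-j+k}$ for $j \in B \in \mathcal{B}(M)$. We use the linear map $\Phi\colon \R^{E+k} \to \R^E \times \R$ given by
\[
    \Phi(y) = \bigl(y|_{E} + y_k e_j,\; y_k\bigr),
\]
that is, we add the $k$-th coordinate into the $j$-th and record it separately as the height. This map is unimodular: its inverse is $(x,h) \mapsto (x - h e_j, h)$, and both are integral. Under $\Phi$ we get $\Phi(e_B) = (e_B, 0)$ and $\Phi(e_{B-j+k}) = (e_B, 1)$. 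The natural choice of height function is therefore $f(x) = x_j$. It is integral, affine-linear, and nonnegative on $P$ (since $P$ is a $0/1$-polytope). Because $j$ is unbound, it is not identically zero. The pivot $H = \{x_j = 0\}\cap P$ is then the face of bases avoiding $j$, which is nonempty because $j$ is not a coloop.

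For the series case $Q = \Sigma_j P$, the vertices are $e_{B+k}$ for all $B$ and $e_{B+j}$ for $j\notin B$. We take
\[
    \Phi(y) = \bigl(y|_E - (1-y_k) e_j,\; 1 - y_k\bigr),
\]
an affine unimodular map. It sends $e_{B+k}$ to $(e_B, 0)$ and $e_{B+j}$ (with $k \notin$) to $(e_B, 1)$. The height function is now $f(x) = 1 - x_j$, which is again nonnegative and integral, with nonempty pivot because $j$ is not a loop. Alternatively, this case follows from the parallel case by matroid duality, since $\Sigma_j M = (\Pi_j M^*)^*$ up to the order conventions. The direct computation is cleaner, though, and avoids fiddling with the order reversal.

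The main point to verify is that $\Phi(Q)$ equals $\Chim(P,f)$, not merely that vertices map to the right points.
\begin{itemize}
    \item \emph{$\Phi(Q) \subseteq \Chim(P,f)$.} Every vertex of $\Phi(Q)$ lies in $\Chim(P,f)$, and the chimney is convex.
    \item \emph{$\Chim(P,f) \subseteq \Phi(Q)$.} We show every vertex of the chimney is among the images. By the description of chimney faces, the vertices of $\Chim(P,f)$ are $p^\bot$ and $p^\top$ for vertices $p$ of $P$ (with these coinciding on $H$). Each $p = e_B$ gives $p^\bot = (e_B,0)$, which is hit. In the parallel case, $p^\top = (e_B, 1)$ when $j\in B$, which is hit, and $p^\top = p^\bot$ when $j \notin B$. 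The series case is analogous.
\end{itemize}
Since the chimney is the convex hull of its vertices, equality follows. The only potential obstacle is this vertex description: we need that a polytope $\{(x,h): x\in P, 0\le h\le f(x)\}$ with $f$ affine is the convex hull of the points $(p,0)$ and $(p,f(p))$. This holds because the chimney is a union of vertical segments over $P$, and every point $(x,h)$ is a convex combination of $(x,0)$ and $(x,f(x))$, each of which is a convex combination of the corresponding points over the vertices of $P$, using the affineness of $f$. With this, the identification statement in the lemma follows from the explicit vertex images computed above.
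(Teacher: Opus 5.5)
Your proof is correct and follows the paper's approach: your parallel-case map $\Phi$ is exactly the paper's change to the coordinate system with basis $\{e_b \mid b \in E\} \cup \{e_k - e_j\}$, which gives $\Pi_j P = \{(x,h) \mid x \in P,\ 0 \le h \le x_j\}$. The paper differs only in two respects: it deduces the series case from the parallel one via $\Sigma_j P = (\Pi_j P^*)^*$ rather than by your direct affine map (the two agree, both using height function $1 - x_j$), and it simply asserts the equality of $Q$ with the chimney, which you verify explicitly by the convex-combination argument.
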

\begin{proof}
    We work out the case of parallel extensions. The case of a series extension then follows using 
    $\Sigma_j P = (\Pi_j P^*)^*$.
    Let $M$ be the matroid of $P$ and $k$ the newly added element in $\Pi_j M$.
    For a basis $B \in \mathcal{B}(M)$
    set $B^{\bot} = B$ and let
    $B^{\top} = B - j + k$ if $j \in B$
    and $B^{\top} = B$ otherwise.
    Then the bases of $\Pi_j M$ are 
    $B^{\bot}$ and $B^{\top}$ for 
    $B \in \mathcal{B}(M)$.
    Working in the coordinate system with basis 
    $\{e_b \mid b \in E\} \cup \{e_{k-j} \deq e_k - e_j\}$, the vertices of $\Pi_j P$ are
    $e_{B^{\bot}} = e_B$ along with
    $e_{B^{\top}} = e_B + e_{k-j}$ for $B$ containing $j$. In this coordinate system,
    \[
        \Pi_j P = \{(x, h) \in \R^{E} \times \R \mid x \in P,\; 0 \leq h \leq x_j\} = \Chim(P, x_j).
    \]
    In the new coordinate system, the vertex $e_{B^{\bot}}$ is identified with $(e_B, 0)$ and 
    $e_{B^{\top}}$ is identified 
    with $(e_B, x_j(e_B))$.
    From this, it is clear that 
    $e_{B^{\bot}} = (e_B)^{\bot}$ and 
    $e_{B^{\top}} = (e_B)^{\top}$.
\end{proof}

\begin{Obs}\label{obs:up_and_down}
    Making the identification of $\Pi_j P$ (resp.\ $\Sigma_j P$) with the chimney over $P$ described in \Cref{lm:is_chimney}, 
    the pivot is the $j$-terminal face $\U{P}{j}$ 
    (resp.\ the $j$-initial face $\D{P}{j}$).
    Furthermore, $P^{\bot}$ is identified with $\U{Q}{k}$
    (resp.\ $\D{Q}{k}$)
    and $P^{\top}$ is identified with $\U{Q}{j}$
    (resp.\ $\D{Q}{j}$).
    The vertices of $P$ \emph{not} contained in the pivot are exactly those in the face $\D{P}{j}$ (resp.\ $\U{P}{j}$); its lower copy in $Q$ is 
    $(\D{P}{j})^{\bot} = \D{Q}{j}$
    (resp.\ $(\U{P}{j})^{\bot} = \U{Q}{j}$)
    and its upper copy is
    $(\D{P}{j})^{\top} = \D{Q}{k}$
    (resp.\ $(\U{P}{j})^{\top} = \U{Q}{k}$).
\end{Obs}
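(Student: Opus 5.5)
The plan is to verify each identification directly in the coordinates of \Cref{lm:is_chimney}, treating the parallel case first and then obtaining the series case by duality.

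\emph{Parallel case, coordinates.} I will first make the change of coordinates explicit. In the basis $\{e_b\}_{b\in E}\cup\{e_k-e_j\}$ used in that proof, write a point of $Q=\Pi_jP$ as $(x,h)$. Its original coordinates $y\in\R^{E+k}$ are then $y_b=x_b$ for $b\neq j$, $y_j=x_j-h$ and $y_k=h$. In these coordinates $Q=\Chim(P,x_j)$, so every claim reduces to reading off which linear functional cuts out which face.

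\emph{Parallel case, the four identifications.} Since $j$ is unbound, $x_j$ takes both values $0$ and $1$ on vertices of $P$. Hence the pivot $\{x_j=0\}$ is nonempty and is exactly the face minimizing $e_j^*$, namely $\U{P}{j}$. The remaining claims follow from the dictionary above together with the fact that all coordinates of $Q$ lie in $[0,1]$.
\begin{itemize}
\item The bottom copy $P^\bot=\{h=0\}$ is the face $\{y_k=0\}$, which is the face of $Q$ minimizing $e_k^*$, i.e.\ $\U{Q}{k}$.
\item The top copy $P^\top=\{h=x_j\}$ is the face $\{y_j=0\}$, i.e.\ $\U{Q}{j}$.
\item The vertices of $P$ outside the pivot are exactly those with $x_j=1$, and these span $\D{P}{j}$. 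Its lower copy is $\{h=0,\ x_j=1\}$, which is the face of $Q$ with $y_j=1$ (the maximum of $y_j$ on $Q$). This gives $\D{Q}{j}$.
\item Its upper copy is $\{h=x_j=1\}$, which is $\{y_k=1\}$, i.e.\ $\D{Q}{k}$.
\end{itemize}
In each of these four cases I will check that the two faces have the same vertex set, using the vertex correspondence $e_B\leftrightarrow(e_B)^\bot$ and $e_{B-j+k}\leftrightarrow(e_B)^\top$ from \Cref{lm:is_chimney}. This check is immediate, because both faces are $0/1$ faces cut out by a single coordinate.

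\emph{Series case.} I will apply $\Sigma_jP=(\Pi_jP^*)^*$. The map $x\mapsto e_E-x$ exchanges initial and terminal faces, in the sense that $(\U{P}{S})^*=\D{P^*}{S}$, and it exchanges the bottom and top sheets of the chimney. Moreover, the chimney structure on $\Sigma_jP$ from \Cref{lm:is_chimney} is exactly the one transported through this duality. Transporting the parallel statements therefore yields the series statements, with every $\U{}{}$ replaced by $\D{}{}$ and vice versa.

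The main obstacle is bookkeeping in this last step. One must confirm that the identification of $\Sigma_jP$ as a chimney in \Cref{lm:is_chimney} (with $e_{B+k}\mapsto(e_B)^\bot$ and $e_{B+j}\mapsto(e_B)^\top$) really is the dual of the parallel one, including which sheet is called bottom and which top. If the two do not match directly, the fallback is to redo the coordinate computation for the series case: $Q=\Chim(P,1-x_j)$, with $y_k=1-h$ and $y_j=x_j+h$. From there the series claims can be read off exactly as in the parallel case.
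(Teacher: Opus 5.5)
Your proposal is correct and follows the route the paper intends. The paper gives no separate proof of this Observation: it is meant to be read off from the coordinates $(x,h)$ in the proof of \Cref{lm:is_chimney}, with the series case obtained from $\Sigma_jP=(\Pi_jP^*)^*$. Your four parallel-case identifications are right, and so is your fallback series computation ($Q=\Chim(P,1-x_j)$, $y_k=1-h$, $y_j=x_j+h$).

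One correction to your duality step: the map $y\mapsto e_{E+k}-y$ does \emph{not} exchange the bottom and top sheets. Indeed $e_{E+k}-\bigl(x'+h(e_k-e_j)\bigr)=(e_E-x')+e_k+h(e_j-e_k)$, so $h$ is preserved and $e_{E-B}$ is sent to $e_{B+k}=(e_B)^\bot$. Hence the transported chimney structure is exactly the series identification of \Cref{lm:is_chimney}. This is why only initial and terminal faces get interchanged, as your final sentence correctly states. Swapping the sheets as well would produce false claims such as $P^\top=\D{Q}{k}$.
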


\begin{Df}\label[Df]{df:liftable}
    Let $(P, F_0, F_1, v)$ be the data of a polytope $P$, faces $F_0, F_1$ of $P$ and a covector $v$. We call such data \textbf{liftable} if
    \begin{enumerate}
        \item The vertex sets of $F_0$ and $F_1$ partition the vertex set of $P$,
        \item The covector simply paves $P$,
        \item For each vertex $p \neq P^{-v}$ in $F_0$ there is exactly one vertex $q$ in $F_1$ adjacent to $p$ such that $\inner{q, v} < \inner{p, v}$. The same holds with the roles of $F_0$ and $F_1$ reversed. 
    \end{enumerate}
\end{Df}
We note that the tuple $(P, F_0, F_1, v)$ is liftable if and only if the same is true of the tuple $(P, F_1, F_0, v)$.
In the context above, we will write $q \midarrow p$ to mean that $q$ and $p$ share an edge and $\inner{q, v} < \inner{p, v}$; we call this an \textbf{incoming arrow} to $p$.
Recall from \Cref{sec_BB} that for $P$ and $v$ as in the definition above, $v$ paves $P$ at a vertex $p$ if and only if the incoming arrows to $p$ are precisely those edges of a face $F$ of $P$ which are incident to $p$. In this case, we write $F = P_p^v$. It simply paves $P$ at $p$ if and only if the number of such arrows is exactly $\dim P_p^{v}$.

\begin{Prop}\label[Prop]{prop:lifting}
    Suppose $(P, F_0, F_1, v)$ is a liftable tuple.
    Let $Q$ be the chimney over $P$ with pivot $F_0$ and let
    $v' = (v, -\eps)$ for $0 < \eps \ltl 1$.
    Then the tuple $(Q, P^{\bot}, F_1^{\top}, v')$ is liftable. Moreover, for any vertex $p \in P$, we have $Q_{p^\bot}^{v'} = (P_p^v)^\side$ and if $p \in F_1$ then $Q_{p^\top}^{v'} = (P_p^v)^\top$.
\end{Prop}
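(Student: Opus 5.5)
The plan is to verify the three conditions of \Cref{df:liftable} for $(Q, P^{\bot}, F_1^{\top}, v')$ directly, using the explicit description of the vertices, edges and faces of $\Chim(P,f)$. The vertex identification $Q_{p^\bot}^{v'} = (P_p^v)^\side$ and $Q_{p^\top}^{v'} = (P_p^v)^\top$ will come out along the way. Write $H = F_0$ for the pivot.

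\textbf{Step 1: the edges of $Q$.} From the list of faces $F^\top, F^\bot, F^\side$, the edges of $Q$ are of three kinds:
\begin{itemize}
\item bottom copies $e^\bot$ of edges $e$ of $P$;
\item top copies $e^\top$ of edges $e\not\subseteq H$, so that an edge $qp$ with $q\in F_0$, $p\in F_1$ yields the two distinct edges $q p^\bot$ and $q p^\top$;
\item vertical edges $p^\side$ joining $p^\bot$ to $p^\top$, for $p\in F_1$.
\end{itemize}
Condition (1) is then immediate. The vertices of $P^\bot$ are all the $p^\bot$, including the pivot vertices $p^\side$. The vertices of $F_1^\top$ are the $p^\top$ with $p\in F_1$. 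Because $F_0,F_1$ partition the vertices of $P$ and $p^\top=p^\bot$ exactly when $p\in H$, these two sets partition the vertex set of $Q$.

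\textbf{Step 2: comparing $v'$ with $v$.} We have $\inner{(x,h),v'}=\inner{x,v}-\eps h$. Since $v$ is perpendicular to no edge of $P$ and $0<\eps\ltl 1$, the direction of any non-vertical edge under $v'$ is the same as that of its shadow in $P$ under $v$. A vertical edge always points from $p^\bot$ down to $p^\top$, because $f(p)>0$ for $p\notin H$. This gives the incoming arrows at each vertex:
\begin{itemize}
\item At $p^\top$ with $p\in F_1$, the incoming arrows are exactly the top copies of the incoming arrows at $p$ in $P$. These are the edges of $(P_p^v)^\top$ at $p^\top$, and their number is $\dim P_p^v$.
\item At $p^\bot$ with $p\in F_1$, they are the bottom copies of the incoming arrows at $p$ together with the vertical edge. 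These are exactly the edges of $(P_p^v)^\side$ at $p^\bot$. This chimney face has dimension $\dim P_p^v+1$, since $P_p^v\ni p$ is not contained in $H$.
\item At a pivot vertex $p\in F_0$, an incoming arrow $qp$ in $P$ contributes one arrow if $q\in F_0$ and two arrows ($q^\bot$ and $q^\top$) if $q\in F_1$. These are again exactly the edges of $(P_p^v)^\side$ at $p$.
\end{itemize}

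\textbf{Step 3: the main counting step.} The delicate point is showing the paving is simple at the pivot vertices. Here the hypothesis (3) on $(P,F_0,F_1,v)$ is used. For $p\in F_0$ with $p\neq P^{-v}$, exactly one incoming arrow comes from $F_1$. So the number of arrows is $\dim P_p^v+1$. In addition, $P_p^v\not\subseteq H$, so $\dim (P_p^v)^\side=\dim P_p^v+1$. For $p=P^{-v}$ the face is the vertex itself and there is nothing to check. In every case $v'$ paves $Q$ with the stated faces, and the number of arrows equals the face dimension, so it simply paves $Q$. This gives condition (2) together with the ``moreover'' claims.

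\textbf{Step 4: condition (3).} Take $p^\bot\in P^\bot$ with $p^\bot\neq Q^{-v'}$.
\begin{itemize}
\item If $p\in F_1$, its only neighbour in $F_1^\top$ is $p^\top$, reached via the vertical edge, and that edge is incoming.
\item If $p\in F_0$, its neighbours in $F_1^\top$ are the $q^\top$ with $q\in F_1$ adjacent to $p$. Exactly one of these is incoming, by hypothesis (3).
\end{itemize}
Conversely, take $q^\top$ with $q\in F_1$. Its neighbours in $P^\bot$ are $q^\bot$, which is higher and so not incoming, and the pivot vertices adjacent to $q$. Exactly one of these is incoming by the reversed hypothesis, unless $q=P^{-v}$, which is equivalent to $q^\top=Q^{-v'}$. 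One should also check the bookkeeping of which vertex is $Q^{-v'}$: it is $(P^{-v})^\top$ or $(P^{-v})^\bot$ according to whether $P^{-v}\in F_1$ or $F_0$. In the first case the vertex $(P^{-v})^\bot$ indeed has its vertical incoming arrow, so the case analysis above is consistent.
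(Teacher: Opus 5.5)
Your proposal is correct and follows essentially the same argument as the paper. Both split the vertices of $Q$ into pivot vertices, $F_1^{\bot}$, and $F_1^{\top}$; both note that $v'$ orients non-vertical edges as in $P$ and vertical edges from $p^{\top}$ to $p^{\bot}$; and both use hypothesis (3) to supply exactly one extra incoming arrow (a top copy $q^{\top}p$) at each non-minimal pivot vertex $p$, so the count matches $\dim (P_p^v)^{\side} = \dim P_p^v + 1$. Your version is just more explicit about the edge list of the chimney and about identifying $Q^{-v'}$.
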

\begin{proof}
    It is easy to see from the chimney construction that $Q$ is the vertex-wise union of $P^{\bot}$ and $F_1^{\top}$. This is condition (1) in \Cref{df:liftable}. Noting that $Q$ is the vertex-wise union of $F_0$, $F_1^{\top}$ and $F_1^{\bot}$, we will check conditions (2) and (3) simultaneously for the vertices in each of these faces.
    Note that the choice of $v'$ ensures that the edges in $P^{\top}$ and $P^{\bot}$ are directed just as in $P$.

    Consider first a vertex $q \in F_0$.
    If $q = Q^{-v'}$, there is nothing to prove.
    If $q \neq Q^{-v'}$ then $q \neq P^{-v}$. There is then a unique arrow $p \midarrow q$ in $P$ with $p\in F_1$.
    By hypothesis, the incoming arrows to $q$ in $P$ are the edges incident to $q$ in the face $P_q^v$ and are $\dim P_q^v$ in number.
    The arrows incoming to $q$ in $Q$ are the edges incident to $q$ in $(P_q^v)^{\bot}$ along with the additional arrow
    $p^{\top} \midarrow q$. Together, these are exactly the edges incident to $q$ in the face $(P_q^v)^\side$. Thus  $Q_{q}^{v'} = (P_q^v)^\side.$ Moreover, the number of incoming arrows is $1+\dim P_q^v = \dim (P_q^v)^\side = \dim Q_{q}^{v'}$. 

    Next, consider a vertex $q^{\bot} \in F_1^{\bot}$.
    It is adjacent to a unique vertex in $F_1^{\top}$, namely $q^{\top}$. This implies (3) for $q^{\bot}$.
    The incoming arrows at $q$ in $P$ are the edges incident to $q$ in the face $P_q^v$ and are $\dim P_q^v$ in number. The choice of $v'$ is such that the incoming arrows to $q^{\bot}$ are exactly the edges incident to $q^{\bot}$ in the face $(P_q^v)^{\side}$; that is, $Q_{q^{\bot}}^{v'} =  (P_q^v)^{\side}.$ Aside from $q^{\top} \midarrow q^{\bot}$, these all lie in $(P_q^v)^{\bot}$.
    Hence they total exactly $1 + \dim (P_q^v)^{\bot} = \dim (P_q^v)^{\side} = \dim Q_{q^{\bot}}^{v'}$ in number.

Finally, consider a vertex $q^{\top} \in F_1^{\top}$. If $q = P^{-v}$, then $q^{\top} = Q^{-v'}$ and there is nothing to prove. Suppose $q \neq P^{-v}$. The incoming arrows at $q$ in $P$ are the edges incident to $q$ in the face $P_q^v$ and number $\dim P_q^v$ in total.
The choice of $v'$ is such that all incoming arrows to $q^{\top}$ lie in $P^{\top}$.
Hence they are exactly the edges of the face $(P_q^v)^{\top}$ and number $\dim  (P_q^v)^{\top}$ in total, so $Q_{q^\top}^{v'} = (P_q^v)^{\top}$. The arrows $p \midarrow q^{\top}$ with $p \in P^{\bot}$ all have $p \in F_0$ and occur precisely when $p \midarrow q$ in $P$. By hypothesis, there is exactly one of these.
\end{proof}

The matroid base polytope $P = P(U_{1,2})$ trivially admits the liftable tuple $(P, e_1, e_2, e_2^*)$. Since each snake polytope is obtained from it by series and parallel extension,
\Cref{lm:is_chimney} shows that it is obtained from $P(U_{1,2})$ by iteratively taking chimneys.
Using \Cref{prop:lifting} and 
\Cref{obs:up_and_down} inductively one shows that for each $s$ there is a covector $v(s)$ such that $(P(s), \U{P(s)}{j}, \D{P(s)}{j}, v(s))$ is a liftable tuple where $j$ is the maximal element of $E(M(s))$.
Since matroid base polytopes are edge-unimodular, $v(s)$ smoothly paves $P(s)$ by \Cref{obs:tot_uni}.
In particular, the toric varieties of snake polytopes admit \BB affine pavings.
By taking products, one sees that the same is true for snake dens. Since
the pruning of a toric Richardson matroid is a snake den, 
it is the toric variety associated to a snake den polytope.
Consequently:
\begin{Thm}
Each toric Richardson variety in the Grassmannian admits an affine paving induced by a \BB decomposition.
\end{Thm}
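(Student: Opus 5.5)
The proof reduces to snake polytopes, builds the paving covector there by induction along the snake's construction, and then transfers to snake dens and to arbitrary toric Richardson varieties.

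\textbf{Reduction to snakes.} Let $X$ be a toric Richardson variety. By \Cref{cor:toric_richardsons_in_GR} it is the torus orbit closure whose moment polytope is $P(M)$ for a toric Richardson matroid $M$. Pruning $M$ projects $P(M)$ isomorphically, via a coordinate map, onto the snake den polytope $P(\ove{M})$. So $X\cong X_{P(\ove M)}$ torus-equivariantly, up to reparametrizing the torus, and a \BB decomposition for one gives a \BB decomposition for the other. The snake den polytope is a product $P(N_1)\times\dots\times P(N_k)$ of snake polytopes. If $v_i$ smoothly paves $P(N_i)$, then $v=(v_1,\dots,v_k)$ is generic, and its \BB cells are products of the factor cells. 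Products of affine spaces are affine spaces, and the closedness of partial unions can be arranged by ordering the cells by the value of $v$ at the fixed points. So it suffices to treat a single snake.

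\textbf{Induction for snakes.} I will prove the stronger statement that for every snake $s$ with maximal element $j$ there is a covector $v(s)$ making $(P(s),\U{P(s)}{j},\D{P(s)}{j},v(s))$ liftable. The base case is $U_{1,2}$ with the tuple $(P,e_1,e_2,e_2^*)$, which is checked by hand.

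For the inductive step, write $Q=\Pi P$ or $\Sigma P$ with new element $k$. \Cref{lm:is_chimney} identifies $Q$ with a chimney over $P$ by a unimodular change of coordinates. Its pivot is $\U{P}{j}$ in the parallel case and $\D{P}{j}$ in the series case, by \Cref{obs:up_and_down}. Since liftability is symmetric in $F_0$ and $F_1$, I order the tuple so that $F_0$ is the pivot, then apply \Cref{prop:lifting}. This yields the liftable tuple $(Q,P^\bot,F_1^\top,v')$.

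The main obstacle is identifying the output faces with $\U{Q}{k}$ and $\D{Q}{k}$, so that the induction can continue. By \Cref{obs:up_and_down}, in the parallel case $P^\bot=\U{Q}{k}$ and $(\D{P}{j})^\top=\D{Q}{k}$; the series case is dual. I will verify these identifications carefully against the vertex bookkeeping in \Cref{lm:is_chimney}. Liftability is preserved because the change of coordinates is unimodular and the covector $v'$ is transported along with it.

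\textbf{Conclusion.} Liftability gives that $v(s)$ simply paves $P(s)$. Matroid polytopes are edge-unimodular, since by \Cref{thm:edge_dirs} every edge direction is $e_x-e_y$ and the corresponding matrix is totally unimodular. \Cref{obs:tot_uni} therefore upgrades simple paving to smooth paving. By the discussion in \Cref{sec_BB}, each cell $X_p^{v}$ is then $U_p^{P_p^v}\cong\A^{\dim P_p^v}$. Ordering the fixed points by increasing $\inner{p,v}$ makes the partial unions closed, since cell closures only contain fixed points with smaller value. This gives the affine paving.
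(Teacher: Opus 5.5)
Your proposal is correct and is essentially the paper's proof. The paper likewise starts from the liftable tuple $(P(U_{1,2}), e_1, e_2, e_2^*)$ and propagates $(P(s), \U{P(s)}{j}, \D{P(s)}{j}, v(s))$ through parallel and series extensions using the chimney identification, \Cref{prop:lifting} and \Cref{obs:up_and_down}. It then upgrades simple paving to smooth paving by edge-unimodularity, and passes to snake dens by products and to toric Richardson varieties by pruning; your extra remarks on the product covector and on ordering the cells by $\inner{p,v}$ only make explicit details the paper leaves implicit.
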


\subsection{Bases, \BB cells, and subwords}\label{sec:bijection}

In this subsection we examine more closely the \BB decomposition of the toric varieties of snake matroids established in \Cref{sec:paving}.
We show that the pruning of each slither corresponding to a \BB cell is a snake and that pruning induces a bijection between these cells and the distinct subwords of the snake word.
The precise identification of the slithers corresponding to \BB cells will require the following definitions.
\begin{Df}\label{df:loose}
     We say that a lattice point in a snake diagram is \textbf{loose} if it sits at the top right corner of some box in the diagram. It is called \textbf{tight} otherwise. A horizontal (resp.\ vertical) \emph{edge} in the diagram is \textbf{loose} if its right (resp.\ top) endpoint is loose.
\end{Df}

\begin{Df}\label{df:base2word}
    Let $s$ be a snake and let $B$ be a lattice path in its skew diagram. 
    We define a binary string $l_s(B)$ and an extended binary string $\phi_s(B)$ as follows.
    To define $l_s(B)$, we traverse $B$ from southwest to northeast recording a $0$ for each loose horizontal edge we encounter and a $1$ for each loose vertical edge.
    We then define $\phi_s(B)$ to be the substring of $l_s(B)$ consisting of all characters that (strictly) follow the first occurrence of $1$. If no $1$ appears in $l_s(B)$, we set $\phi_s(B) = \eps$.
\end{Df}
\noindent
 We write $\ebs(s)$ for the set of extended binary substrings of $s$ (i.e.\ including $\eps$). 
\begin{Df}\label{df:fst}
    For $t\in \ebs(s)$, we will define a slither $r_s(t)$. (Here, ``$r$'' stands for \emph{rightmost}.) We start by identifying the binary string $1t$ with its rightmost embedding in $1s$.
    If $t = \eps$, we convene that $1t = \varnothing$.
    The string $r_s(t)$ is then obtained from $1s$ as follows; cf. \Cref{fig:BB_face_word}.
    \begin{itemize}
        \item If $t \neq \eps$, replace the leftmost $1$ in $1t$ with the symbol $\pr$.
        \item Replace every character which is in $1s$ but not in $1t$ using the rule $0 \mapsto \elll$ and $1 \mapsto \llle$.
        \item If $t \neq \eps$, append the symbol $\rp$. If $t = \eps$, append $\elll$ instead.
    \end{itemize}
\end{Df}
\noindent
    \begin{figure}[ht]
    \centering
    {\includestandalone[width=.7\textwidth]{figures/BB_face_word}}
    \qquad
    \caption{Obtaining $r_s(t)$ from the binary strings $s=010011010$ and $t=0100$.}
    \label{fig:BB_face_word}
\end{figure}

Starting in the following lemma and continuing until the end of this section, we fix for each snake $s$ a \BB paving vector $v(s)$ constructed inductively as described at the end of \Cref{sec:paving}.

\begin{Lm}\label[Lm]{lm:bijection1}
    Let $s$ be a binary string. Then $\phi_s$ maps $\mathcal{B}(M(s))$ bijectively onto $\ebs(s)$.
\end{Lm}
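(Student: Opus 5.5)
We argue by induction on $\abs{s}$, following the construction of the snake diagram of $s$ box by box. It is convenient to repackage $\phi_s$ first. Let $\psi_s(B)$ be the suffix of $l_s(B)$ that starts at the first occurrence of $1$, with $\psi_s(B) = \varnothing$ if $l_s(B)$ contains no $1$. Then $\phi_s(B) = t$ exactly when $\psi_s(B) = 1t$, with $t = \eps$ corresponding to $\psi_s(B) = \varnothing$ (this is the convention $1\eps = \varnothing$ of \Cref{df:fst}). So the lemma is equivalent to the following claim: $\psi_s$ maps $\mathcal{B}(M(s))$ bijectively onto the set $W(s)$ of subwords of $1s$ that are either empty or begin with $1$.

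To make the induction go through, we prove a refined claim. The last edge of any lattice path $B$ ends at the northeast corner of the final box, which is loose, so the last letter of $l_s(B)$ is $0$ or $1$ according as $B$ ends with an east or a north step. The refined claim is that the paths ending with an east step map bijectively onto the words of $W(s)$ not ending in $1$, and the paths ending with a north step map bijectively onto the words of $W(s)$ ending in $1$. The base case $s = \varnothing$ is the $1\times 1$ box. The path north-then-east has $l = 0$ and $\psi = \varnothing$; the path east-then-north has $l = 1$ and $\psi = 1$. Indeed $W(\varnothing) = \{\varnothing, 1\}$, so the refined claim holds.

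For the inductive step with $s' = s0$, a new box $b'$ is glued to the right of the last box $b$. A path in the diagram of $s'$ does one of two things.
\begin{enumerate}
\item It passes through the northeast corner of $b$ (the old endpoint) and then takes the top edge of $b'$. Here $l_{s'} = l_s\,0$.
\item It passes through the southeast corner of $b$, then takes the bottom edge of $b'$ and the right edge of $b'$. These paths correspond to old paths ending with a north step. The bottom edge of $b'$ is tight, since its right endpoint is the top-right corner of no box, and the right edge of $b'$ is loose. Hence $l_{s'} = l_s$: the final loose vertical edge of the old path is replaced by the final loose vertical edge of $b'$.
\end{enumerate}
The same bookkeeping gives $\psi_{s'} = \psi_s\,0$ in the first case, except that $\psi$ stays $\varnothing$ if $l_s$ has no $1$, and $\psi_{s'} = \psi_s$ in the second case. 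On the word side, the subwords of $1s0$ ending in $0$ are exactly the $u0$ with $u$ a subword of $1s$. So $W(s0)$ decomposes as the disjoint union of three pieces: the words $u0$ with $u \in W(s)$ nonempty, the empty word, and the words of $W(s)$ ending in $1$. The refined hypothesis matches these three pieces with, respectively, the type-1 paths with $\psi_s \neq \varnothing$, the type-1 paths with $\psi_s = \varnothing$ (unique by induction), and the type-2 paths. The last-step refinement is preserved: type-1 paths end east and their images do not end in $1$, while type-2 paths end north and their images end in $1$.

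The case $s' = s1$ is analogous, but it is not literally symmetric, because the leading $1$ breaks the duality $s \mapsto s^*$. Here the new box $b'$ sits on top of $b$. Paths either pass through the old endpoint and then go north, giving $l_{s'} = l_s\,1$, or pass through the northwest corner of $b$ (old paths ending east), then go east along the tight edge and north along the loose edge into the new corner. In the second case the final loose $0$ of the old path is replaced by a final loose $1$. We must then check the analogous three-way decomposition of $W(s1)$: the words $u1$ for $u \in W(s)$, together with the words of $W(s)$ not ending in $1$. The subtle point is that the old path with $\psi_s = \varnothing$ now yields $\psi_{s'} = 1$ in both branches. We have to confirm that the map from the second branch, sending an old path ending east to $\psi_s$ with its last letter $0$ replaced by $1$ (or to $1$ when $\psi_s = \varnothing$), lands injectively in words not of the form $u1$ already produced, or else adjust the decomposition. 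This $s1$ bookkeeping, with careful tracking of the empty and $\eps$ conventions, is where I expect the main obstacle. If the naive matching fails, I would fall back on proving the $s1$ case via the dual statement for $M(s)^* = M(s^*)$, counting loose points from the opposite corner.
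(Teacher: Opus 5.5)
Your repackaging through $\psi_s$ and $W(s)$ is sound; it absorbs the paper's conventions $\eps 0=\eps$, $\eps 1=\varnothing$, and ``$\eps$ ends in $0$, $\varnothing$ ends in $1$''. Your $s0$ step is correct and is exactly the paper's argument with $b=0$. The gap is the $s1$ step. You do not carry it out, and the difficulty you anticipate there comes from a geometric mistake. When $b'$ sits on top of $b$, the paths avoiding the old endpoint (the northeast corner of $b$) pass through the northwest corner of $b$. They then go \emph{north} along the left edge of $b'$ and \emph{east} along its top edge. Going east first from the northwest corner of $b$ would land on the old endpoint, which is your first branch. The left edge of $b'$ is tight: the box to the left of $b'$ lies on the same antidiagonal as $b$, so it is not in the diagram. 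The top edge of $b'$ is loose. Hence the final loose $0$ of the old path is replaced by a final loose $0$, not a $1$. So $l_{s1}(B)=l_s(\widetilde B)$ and $\psi_{s1}(B)=\psi_s(\widetilde B)$, the exact mirror of your $s0$ computation.

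Once this is fixed there is no collision. The unique old path with $\psi_s=\varnothing$ ends east, since the last edge of every path is loose. It gives $1$ in the first branch and $\varnothing$ in the second.
\begin{itemize}
\item The first branch gives $\psi_{s1}(B)=\psi_s(\widetilde B)\,1$ (with $\varnothing\,1=1$), a bijection onto $\{u1 \mid u\in W(s)\}$.
\item The second branch gives $\psi_s$ on old paths ending east, a bijection onto the words of $W(s)$ not ending in $1$.
\item $W(s1)$ is the disjoint union of these two sets, because a subword of $1s1$ that does not end in $1$ cannot use the final letter.
\end{itemize}
This closes the induction uniformly in $b$, as in the paper. Your suggested fallback via $M(s)^*=M(s^*)$ would not work as stated. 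Dualizing reflects the diagram across a line of slope $-1$, which exchanges top-right and bottom-left corners. So looseness, and hence $l_s$ and $\phi_s$, are not preserved.
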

\begin{proof}
    We prove the statement by induction on the length of $s$. The base case $s = \varnothing$ is clear.
    For $s \neq \varnothing$, write $s = \widetilde s b$ for $b \in \{0, 1\}$. 
    We split the bases of $M(s)$ according to the last step in the lattice paths representing them.
    A basis $B$ of $M(s)$ obtained from a basis $\widetilde B$ of $M(\widetilde s)$ as in the first column of Figure~\ref{fig:subseqbij_2} satisfies $l_s(B) = l_{\widetilde s}(\widetilde B)b$. A basis $B$ of $M(s)$ obtained from a basis $\widetilde B$ of $M(\widetilde s)$ as in the second column of  Figure~\ref{fig:subseqbij_2} satisfies $l_s(B) = l_{\widetilde s}(\widetilde B)$ where $l_{\widetilde s}(\widetilde B)$ ends in $1-b$. Let $\mathcal{B}_1 \subseteq \mathcal{B}(M(s))$ be the set
    of bases of the first kind and write $\mathcal{B}_2 \subseteq \mathcal{B}(M(s))$ for those
    of the second kind.
    By the inductive hypothesis, $\phi_s$ maps $\mathcal{B}_1$ bijectively onto $\{ab | a \in \ebs(\widetilde s)\}$ and maps $\mathcal{B}_2$ bijectively onto $\{a | a \in \ebs(\widetilde s) \text{ ending in } 1-b\}$. Since $\mathcal{B}(M(s)) = \mathcal{B}_1 \sqcup \mathcal{B}_2$ and $\ebs(s)= \{ab | a \in \ebs(\widetilde s)\} \sqcup \{a | a \in \ebs(\widetilde s) \text{ ending in } 1-b\}$,
    this shows that $\phi_s$ gives the claimed bijection $\mathcal{B}(M(s)) \to \ebs(s)$.
    Note that to account for the exceptional behaviour of $\eps$ and $\varnothing$ we must convene that $\eps 0 = \eps$ and $\eps 1= \emptyset$; in particular, $\eps$ is deemed to end in $0$ and $\varnothing$ ends in $1$.     
\end{proof}

\begin{Lm}\label[Lm]{lm:bijection2}
    Let $s$ be a binary string. 
    For any basis $B \in \mathcal{B}(M(s))$ we have 
    $P(s)_{e_B}^{v(s)} = P(r_s(\phi_s(B)))$. In other words,
    $r_s(\phi_s(B))$ is the slither word of the face $P(s)_{e_B}^{v(s)}$ of $P(s)$. 
\end{Lm}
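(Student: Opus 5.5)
We argue by induction on $|s|$, following the inductive construction of $v(s)$ from \Cref{prop:lifting} and mirroring the proof of \Cref{lm:bijection1}. For the base case $s=\varnothing$, the polytope $P(\pr\rp)$ is a segment with vertices $e_1,e_2$, and $v=e_2^*$. The vertex $e_1$ is the $v$-minimum, with cell face the vertex itself. Its path is a single east step followed by a north step; the north step is loose, so $l=1$ and $\phi=\varnothing$. Correspondingly $r_\varnothing(\varnothing)$ is $\pr\rp$, the whole segment, and this matches the cell at the $v$-maximum. For the other vertex we get $\phi=\eps$ and $r(\eps)=\llle\elll$, a vertex. After fixing orientation conventions, this should agree with the assignment of cells to $e_1,e_2$; I will check this directly.

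For the inductive step write $s=\widetilde s b$, so $P(s)=\Pi P(\widetilde s)$ if $b=0$ and $\Sigma P(\widetilde s)$ if $b=1$. By \Cref{lm:is_chimney} this is a chimney over $P(\widetilde s)$ with pivot $F_0=\U{P}{j}$ (resp.\ $\D{P}{j}$), and $v(s)=(v(\widetilde s),-\eps)$. \Cref{prop:lifting} gives the cells explicitly:
\begin{itemize}
\item for a bottom vertex, $Q_{p^\bot}=(P_p)^\side$;
\item for a top vertex $p\in F_1$, $Q_{p^\top}=(P_p)^\top$.
\end{itemize}
Translate these to slither words. By \Cref{lm:is_chimney} and \Cref{obs:up_and_down}, if $w$ is the slither word of the face $F$ of $P(\widetilde s)$, then:
\begin{itemize}
\item $F^\side$ is obtained from $w$ by extending the last character appropriately;
\item $F^\top$ is obtained by appending a bound character, $\elll$ or $\llle$, and shifting the character at $j$.
\end{itemize}
Stating these two substitution rules precisely is the first concrete lemma to prove. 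The cleanest route is through slither diagrams. Since faces and their chimneys correspond to weak images, they can be computed from lattice paths, and a path in the cell diagram for $s$ is a path for $\widetilde s$ extended by the last step. This is exactly the split into $\mathcal B_1$ and $\mathcal B_2$ used in \Cref{lm:bijection1}.

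Next, match the two cases with the combinatorics of $\phi$ and $r$.
\begin{itemize}
\item \textbf{Bottom vertices.} These correspond to $\mathcal B_1$, the paths whose last step continues in direction $b$. Here $\phi_s(B)=\phi_{\widetilde s}(\widetilde B)b$, so we must show $r_s(tb)$ equals the chimney word of $r_{\widetilde s}(t)$. Since the rightmost embedding of $1tb$ in $1s$ uses the final $b$, the word $r_s(tb)$ is $r_{\widetilde s}(t)$ with its final $\rp$ (or $\elll$) replaced by $b$ and a $\rp$ appended. This is precisely how the chimney extends the last box in the diagram.
\item \textbf{Top vertices.} These correspond to $\mathcal B_2$, where $\phi_s(B)=\phi_{\widetilde s}(\widetilde B)=t$ and $t$ ends in $1-b$. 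Since $t$ ends in $1-b$, the rightmost embedding of $1t$ in $1s$ cannot use the final $b$. So $r_s(t)$ is $r_{\widetilde s}(t)$ with the extra character $b$ turned into $\elll$ or $\llle$, after which the final $\rp$ is suitably moved. This should match the top-copy rule.
\end{itemize}
The degenerate conventions $\eps0=\eps$ and $\eps1=\varnothing$ need a separate check.

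The main obstacle is this bookkeeping: pinning down exactly how $F\mapsto F^\side$ and $F\mapsto F^\top$ act on slither words at the end of the word. The subtle point is that the top copy lives in $\U{Q}{j}$ or $\D{Q}{j}$, so the position $j$ becomes bound while the new element $k$ inherits the role of $j$. I would verify this with diagrams, and then sanity-check against \Cref{fig:BB_face_word} and the square pyramid $P(01)$ in \Cref{fig:face_labels}.
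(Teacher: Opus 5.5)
Your strategy is the paper's own proof. You induct along the chimney description and use \Cref{prop:lifting} to identify the cell at a bottom vertex ($B\in\mathcal{B}_1$) with $(P(\widetilde s)^{v(\widetilde s)}_p)^{\side}$ and at a top vertex ($B\in\mathcal{B}_2$) with $(P(\widetilde s)^{v(\widetilde s)}_p)^{\top}$. You realize $F^{\top}$ by inserting $\elll$ or $\llle$ just before the last character, and compare with $r_s$ via the rightmost embedding of $1t$ in $1s$. Your generic cases are right, but the two spots you deferred are where your tentative computations fail.

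\emph{Base case.} You swapped the lattice paths of $U_{1,2}$. A basis records the north steps, so $\{1\}$ (the vertex $e_1$) is the path N then E. The only loose point is the top-right corner of the box, so the N step is tight and the E step is loose. This gives $l(\{1\})=0$, $\phi(\{1\})=\eps$, and $r_\varnothing(\eps)=\llle\elll$, whose unique basis is $\{1\}$. That is the point $\{e_1\}=P^{v}_{e_1}$, since $e_1$ minimizes $v=e_2^*$. Likewise $\{2\}$ is E then N, so $\phi(\{2\})=\varnothing$, and $r_\varnothing(\varnothing)=\pr\rp$ is the whole segment $P^v_{e_2}$. No change of orientation conventions is involved. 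With your computation, $r(\phi(e_2))=\llle\elll$ would not even contain $e_2$, so the base case as you wrote it is false rather than merely mis-oriented.

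\emph{Degenerate bottom case $t=\eps$.} Your rule ``replace the final $\rp$ (or $\elll$) by $b$ and append $\rp$'' is wrong when the last character is $\elll$. It places a $0$ or $1$ outside any parentheses, so the result is not a slither word, and it matches neither the chimney nor $r_s$. Here $P(r_{\widetilde s}(\eps))$ is a point and the last element $j$ is a loop.
\begin{itemize}
\item For $b=0$: a parallel extension at a loop just adds another loop. The chimney therefore has word $r_{\widetilde s}(\eps)\elll$, which is $r_s(\eps)$, consistent with $\eps0=\eps$.
\item For $b=1$: a series extension at a loop makes $\{j,k\}$ a parallel pair, so the final $\elll$ becomes $\pr\rp$. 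This is $r_s(\varnothing)$, because the rightmost $1$ of $1s=1\widetilde s1$ is now its last character, consistent with $\eps1=\varnothing$.
\end{itemize}
A similar point arises in the top case with $t=\eps$, $b=1$: the character that moves is a final $\elll$, not a $\rp$. The correct uniform rule is ``insert the bound character before the last character.''

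With these two repairs your argument coincides with the paper's.
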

\begin{figure}[ht]
    \centering
    {\includestandalone[width=.33\textwidth]{figures/subseqbij_2}}
    \qquad
    \caption{Going from a basis of $s$ (in red) to a basis of $\widetilde s$ (in blue). 
    The blue lattice path represents the basis $\widetilde B$ whereas the red parts indicate where $B$ differs from $\widetilde B$.
    A squiggle represents the continuation of a lattice path whose precise behavior is unknown.}
    \label{fig:subseqbij_2}
\end{figure}
\begin{proof}
    We proceed by induction on the length of $s$.
     The statement for $s = \varnothing$
    is an easy consequence of our choice of paving vector for $U_{1,2}$. 
    For $s \neq \varnothing$, write $s = \widetilde{s} b$ with $b \in \{0, 1\}$.
    Set $F_0 \deq \U{P({\widetilde s})}{j}$ and $F_1 \deq \D{P(\widetilde s)}{j}$, where $j$ is the last element of $E(M(\widetilde s))$. \Cref{lm:is_chimney} shows that $P(s)$ is the chimney over $P(\widetilde s)$ with pivot $F_b$.
    The discussion at the end of \Cref{sec:paving} shows that  
    $(P({\widetilde s}), F_0, F_1, v(\widetilde s))$ is a liftable tuple.
    
    Let $B \in \mathcal{B}(M(s))$ be a basis.
    We show that $P(s)_{e_B}^{v(s)} = P(r_s(\phi_s(B)))$. 
    Suppose first that $B \in \mathcal{B}_1$ is constructed from a basis $\widetilde B$ as above. Then $e_B= p^\bot$, where $p = e_{\widetilde B}$. 
    \Cref{prop:lifting} now shows that 
    $P(s)_{p^\bot}^{v(s)} = (P({\widetilde s})_p^{v(\widetilde s)})^\side$.
    The inductive hypothesis gives $(P({\widetilde s})_p^{v(\widetilde s)})^\side = P(r_{\widetilde s}(a))^\side$, where $a \deq \phi_{\widetilde s}(\widetilde B)$. 
    Now $(P(r_{\widetilde s}(a)))^\side$ is $\Pi P(r_{\widetilde s}(a))$ if $b = 0$ and 
    $\Sigma P(r_{\widetilde s}(a))$ if $b = 1$.
    Since $r_{\widetilde s}(a)$ ends in the symbol $\rp$, the slither word for this series or parallel extension is constructed by inserting $b$ right before this final $\rp$. Since $\phi_s(B) =ab$ (with the convention $\eps0 = \eps$  and $\eps1=\varnothing$), it remains only to show that 
    $r_s(ab)$ is obtained from $r_{\widetilde s}(a)$ by inserting $b$ in this manner.
    This is indeed the case because under the rightmost embedding of $1ab$ into $1s$, the final $b$ in $1ab$ is sent to the final $b$ in $s = \widetilde s b$.

    The argument in the above paragraph
    works fine as long as $a \neq \eps$. We explain what happens in this exceptional case. 
    As above, we have  
    $P(s)_{e_B}^{v({s})} 
    = P(s)_{p^{\bot}}^{v(s)} = (P(\widetilde{s})_p^{v(\widetilde{s})})^{\side} 
    = P(r_{\widetilde{s}}(a))^{\side}$.
    Note that 
    $P(r_{\widetilde{s}}(a))$ consists of a single point and 
    $r_{\widetilde{s}}(a)$ ends in $\elll$ by construction. We now split into cases depending on the value of $b$.
    If $b = 0$,  then
    $P(r_{\widetilde{s}}(a))^{\side} = \Pi P(r_{\widetilde{s}}(a))
    = P(r_{\widetilde{s}}(a) \elll)$.
    Now $r_{\widetilde s}(\phi_{\widetilde{s}}(\widetilde B))$ (resp.\ $r_s(\phi_s(B))$) is obtained from $1\widetilde{s} \elll$ 
    (resp.\ $1s\elll = 1\widetilde{s}0\elll$)
    by making the substitutions $0 \mapsto \elll$ and $1 \mapsto \llle$.
    So $r_{\widetilde{s}}(a)\elll = r_s(\phi_s(B))$ and 
    $P(r_{\widetilde{s}}(a) \elll) = P(r_s(\phi_s(B)))$.
    If instead $b = 1$, then $P(r_{\widetilde{s}}(a))^{\side} = \Sigma P(r_{\widetilde{s}}(a))$.
    Since the last element in
    $M(r_{\widetilde{s}}(a))$ is a loop,
    the slither word for
    $\Sigma P(r_{\widetilde{s}}(a))$ 
    is obtained from $r_{\widetilde{s}}(a)$ by replacing the final $\elll$ with $\pr \rp$. We compare this with $r_s(\phi_s(B)) = r_s(\varnothing).$
    The latter is obtained by replacing the final $1$ in $1s = 1\widetilde{s}1$ by $\pr$, appending a $\rp$ and making the substitutions $0 \mapsto \elll$ and $1 \mapsto \llle$, which agrees with the slither word for $\Sigma P(r_{\widetilde{s}}(a))$.
    
    Now suppose that $B \in \mathcal{B}_2$, again constructed from a basis $\widetilde B \in \mathcal{B}(M(\widetilde s))$. Set $a = \phi_{\widetilde s}(\widetilde B)$. Then $e_B  = p^\top$ where 
    $p = e_{\widetilde B}$. Note that 
    $p \in F_{1-b}$. By \Cref{prop:lifting}, $P(s)_{p^\top}^{v(s)} = (P(\widetilde s)_p^{v(\widetilde s)})^\top$. By the inductive hypothesis, $(P(\widetilde s)_p^{v(\widetilde s)})^{\top} = P(r_{\widetilde s}(a))^\top$. 
    From the matroid description of the chimney construction in \Cref{lm:is_chimney} and \Cref{obs:up_and_down}, one sees that, 
    for any slither $\sigma$ subordinate to $\widetilde s$, 
    $P(\sigma)^{\bot}$ is obtained
    from $P(\sigma)$
    by adjoining a loop or a coloop as a new last element, depending on whether $b = 0$ or $b = 1$.
    In particular, the slither word of $P(r_{\widetilde s}(a))^\bot$ is obtained by appending an $\elll$ (resp.\ a $\llle$) to $r_{\widetilde{s}}(a)$, provided $b = 0$ (resp.\ $b = 1$).
    $P(r_{\widetilde s}(a))^\top$ is obtained
    from $P(r_{\widetilde s}(a))^\bot$ 
    by switching the roles of last two elements, so the corresponding slither is obtained by adding the $\elll$ or $\llle$ immediately \emph{before} the final character of $r_{\widetilde{s}}(a)$.
    We need only check that this agrees with $r_s(a)$, since $\phi_s(B) = \phi_{\widetilde{s}}(\tilde B) = a$.
    Since $B \in \mathcal{B}_2$, $a$ ends in $1-b$ (with the convention that $\eps$ ends in $0$ and $\varnothing$ ends in $1$). Hence under the rightmost embedding of $1a$ into $1s$, no character is sent to the last character of $s = \widetilde{s}b$. This character therefore becomes an 
    $\elll$ or a $\llle$ according to whether $b = 0$ or $b = 1$, which agrees with the slither of $P(r_{\widetilde s}(a))^\top$.
\end{proof}
The combination of Lemmas \ref{lm:bijection1} and \ref{lm:bijection2} shows that 
the faces $P(s)^{v(s)}_p$ as $p$ ranges over the vertices in $P(s)$ are exactly the faces 
$P(r_s(t))$ as $t$ ranges over $\ebs(s)$. 
Since the corresponding \BB cells in $X_s$ give an affine paving, the classes of the corresponding torus-invariant subvarieties form a basis for $A_{\bl}(X_s)$. 
We state this as a corollary, using the convention of writing simply $[\sigma]$ for the (Chow) homology class corresponding to the face $P(\sigma)$ 
given by a slither $\sigma$ subordinate to $s$. Note that if $\sigma = \pr t \rp$ for a binary string $t$, then 
$[\sigma]$ lives in degree $\abs{t} + 1$ by \Cref{obs:dim_count}.
\begin{Cor}\label{cor:basis_of_A}
    Let $s$ be a snake. The classes $[r_s(t)]$ for $t \in \ebs(s)$ form a basis for $A_\bl(X_s)$.
    In particular, a basis for $A_k(X_s)$
    is given by $[r_s(t)]$ for extended binary substrings $t$ of $s$ of length $k-1$.
\end{Cor}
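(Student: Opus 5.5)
The plan is to assemble the corollary from the three ingredients already established: the affine paving of $X_s$ by \BB cells, the identification of the closures of those cells with specific faces, and the bijection between vertices and extended binary substrings.

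First, fix the paving covector $v = v(s)$ from the end of \Cref{sec:paving}. By the liftable-tuple induction there, $v$ simply paves $P(s)$. Edge-unimodularity of matroid polytopes together with \Cref{obs:tot_uni} upgrades this to $v$ smoothly paving $P(s)$. Hence the \BB cells $X_p^v$, as $p$ ranges over the vertices $e_B$ for $B\in\mathcal{B}(M(s))$, are the affine charts $U_p^{F}$ with $F=P(s)_p^v$. Each such chart is an affine space of dimension $\dim F$, and its closure is the orbit closure $V(F)$.

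The ordering required in the definition of an affine paving still has to be checked. For this I would order the vertices by increasing value of $\inner{-,v}$ and argue that each partial union is closed. A point lying in the closure of $X_p^v$ is in some $O_G$ with $G \subseteq F = P(s)_p^v$, so it belongs to $X_{G^v}^v$, and $\inner{G^v,v}\le\inner{p,v}$. Equality holds only if $G^v=p$, and since $v$ is not perpendicular to any edge, no ties occur between distinct vertices. This is the standard projective \BB argument, and I expect it to be the only point needing a little care. Once the paving is in place, \cite{chow_basis}*{Theorem 1} shows that the classes $[V(P(s)_p^v)]$ form a $\Z$-basis of $A_\bl(X_s)$.

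Next, \Cref{lm:bijection2} identifies $P(s)_{e_B}^{v}$ with $P(r_s(\phi_s(B)))$, so the basis above consists exactly of the classes $[r_s(\phi_s(B))]$. By \Cref{lm:bijection1}, $\phi_s$ is a bijection $\mathcal{B}(M(s))\to\ebs(s)$. Reindexing therefore gives the basis $\{[r_s(t)] : t\in\ebs(s)\}$, with no repetitions, since distinct bases give distinct cells.

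For the graded statement, it remains to compute the degree of $[r_s(t)]$. The class lives in $A_k$ with $k=\dim P(r_s(t))$, and by \Cref{obs:dim_count} this is the number of characters among $\pr,0,1$ in $r_s(t)$.

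If $t\neq\eps$, the word $r_s(t)$ has exactly one $\pr$ (the replaced leading $1$), together with the $|t|$ characters of $t$ kept as $0$s and $1$s. All other characters are bound symbols or the final $\rp$, so $k=|t|+1$.

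If $t=\eps$, every character is bound, so $k=0=|\eps|+1$ under the convention $|\eps|=-1$.

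Thus $A_k(X_s)$ has basis $[r_s(t)]$ for $t\in\ebs(s)$ of length $k-1$, which is the claim.
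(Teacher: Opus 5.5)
Your proposal is correct and follows the paper's own argument. The ingredients match: the smooth \BB paving by $v(s)$ from \Cref{sec:paving}, \cite{chow_basis}*{Theorem 1} for the basis of cell closures, \Cref{lm:bijection1} and \Cref{lm:bijection2} to reindex by $\ebs(s)$, and \Cref{obs:dim_count} for the degrees.

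Your extra check that the partial unions are closed goes beyond what the paper writes out, and it is sound. What it actually uses is that $p$ is the \emph{unique} $v$-maximizer on $P(s)_p^v$, so ties among other vertices can be broken arbitrarily. It does not need all vertices to have distinct $v$-values, which non-perpendicularity to edges alone does not guarantee.
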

\subsection{Homological relations}\label{sec:homology_relations}
In this section we compute the Chow homology groups of snake den varieties, as well as the pushforward maps induced by slither morphisms, and explain their relationship to $\nsym$.
By \Cref{lm:cycle_class}, the same story holds when we instead consider singular homology (with integral coefficients) for these varieties over $\C$.
Whereas computing the Chow homology only requires us to work at the level of rational equivalence relations, 
we will prove that some of these relations are witnessed by algebraic homotopies. This will be useful in \Cref{sec:infinite}.

\begin{Df}
    Let $X$ and $Y$ be varieties.
Let $f, g\colon X \to Y$ be two morphisms.
A \textbf{naive $\A^1$-homotopy} from $f$ to $g$ is a morphism $H\colon \A^1 \times X \to Y$ whose restrictions to $\{0\} \times X$
and $\{1\} \times X$ coincide with $f$ and $g$, respectively. Two morphisms $f,g\colon X \to Y$ are 
\textbf{naively $\A^1$-homotopic} if there exists a sequence of maps $f = h_0, \dots, h_n = g$ 
and for each $i$ a naive $\A^1$-homotopy from $h_i$ to $h_{i+1}$.
\end{Df}

\begin{Lm}\label[Lm]{lm:a1_homologous}
    Let $f, g \colon X\to Y$ be a pair of morphisms between complete algebraic varieties. If $f$ and $g$ are naively $\A^1$-homotopic then the pushforward maps
    $f_*, g_*\colon A_\bl(X) \to A_{\bl}(Y)$ coincide.
\end{Lm}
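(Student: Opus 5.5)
By induction on the length of the chain $f = h_0, \dots, h_n = g$, it suffices to treat a single naive $\A^1$-homotopy $H\colon \A^1 \times X \to Y$ with $H|_{\{0\}\times X} = f$ and $H|_{\{1\}\times X} = g$. Write $i_a\colon X \to \A^1 \times X$ for the inclusion $x \mapsto (a, x)$, so that $f = H \circ i_0$ and $g = H \circ i_1$. The plan is to show that $(i_0)_*\alpha$ and $(i_1)_*\alpha$ are rationally equivalent for every $\alpha \in A_\bl(X)$, after compactifying so that proper pushforward along $H$ makes sense.

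The one subtlety is that $H$ is not proper, since $\A^1 \times X$ is not complete, so we cannot push forward along $H$ directly. I would first replace $H$ by its graph. Let $\Gamma \subseteq \A^1 \times X \times Y$ be the graph of $H$ and let $\ove\Gamma \subseteq \Pb^1 \times X \times Y$ be its closure, which is complete because $X$ and $Y$ are. The projection $\pi\colon \ove\Gamma \to \Pb^1$ is proper, as is the projection $q\colon \ove\Gamma \to Y$. Over $\A^1$, $\ove\Gamma$ agrees with $\Gamma \cong \A^1 \times X$, and the fibers over $0$ and $1$ are the graphs of $f$ and $g$, each isomorphic to $X$.

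It suffices to treat a generator $\alpha = [V]$ with $V \subseteq X$ a closed subvariety. Let $W \subseteq \ove\Gamma$ be the closure of the graph of $H|_{\A^1 \times V}$. Then $W$ is a variety, flat over $\A^1$ away from $\infty$, with $\pi$ dominant to $\Pb^1$. By \cite{fult_int_theory}*{Proposition 1.6} (the fibers of a dominant map to $\Pb^1$ are rationally equivalent), $[W_0] \sim [W_1]$ in $A_\bl(\ove\Gamma)$. The fiber $W_0$ is the graph of $f|_V$ as a scheme: over $\A^1$, $W$ is just $\A^1 \times V$, so its fibers at $0$ and $1$ are reduced copies of $V$. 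Pushing forward along the proper map $q$ gives $q_*[W_0] = f_*[V]$ and $q_*[W_1] = g_*[V]$, since pushforward is compatible with the isomorphism of the graph onto $V$ followed by $f$ or $g$. Because proper pushforward preserves rational equivalence, $f_*[V] = g_*[V]$ in $A_\bl(Y)$. Extending linearly over generators gives $f_* = g_*$.

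The main step to check carefully is the identification of the fibers $W_0$ and $W_1$ as cycles. This requires that the closure taken in $\Pb^1 \times X \times Y$ adds nothing over $0$ and $1$, which holds because $W \cap (\A^1 \times X \times Y)$ is already closed in $\A^1 \times X \times Y$ as a graph of a morphism. It also requires that the fibers are reduced with multiplicity one, which follows from $W|_{\A^1} \cong \A^1 \times V$.
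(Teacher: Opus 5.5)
Your argument is correct, but it takes a different route from the paper's.

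\textbf{Your route.} You work one generator at a time. For each subvariety $V \subseteq X$ you compactify the family $H|_{\A^1\times V}$ by taking the closure $W$ of its graph in $\Pb^1\times X\times Y$. You check that the closure adds nothing over $0$ and $1$, so the fibers there are reduced copies of the graphs of $f|_V$ and $g|_V$. You then use that fibers of a variety dominating $\Pb^1$ are rationally equivalent, and push forward along the proper projection to $Y$.

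A small precision on the citation: Fulton's Proposition 1.6, after an automorphism of $\Pb^1$ moving $\infty$ to $1$, gives $[W_0]\sim[W_1]$ in $A_\bl(X\times Y)$, not in $A_\bl(\ove\Gamma)$. The relation on $W$ itself comes from the divisor of $t/(t-1)$. Either version suffices, and $\ove\Gamma$ is not actually needed.

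\textbf{The paper's route.} The paper never compactifies. It observes that $\widetilde H\colon \A^1\times X\to\A^1\times Y$, $(t,x)\mapsto(t,H(t,x))$, is already proper because $X$ is. It applies the push--pull formula for Gysin maps in the Cartesian square formed by $i_0, j_0, f, \widetilde H$, which gives
\[
f_* = j_0^*\circ\widetilde H_*\circ(\pi_2^X)^*,
\]
and similarly $g_*$ with $j_1^*$ in place of $j_0^*$. It then concludes $j_0^* = j_1^*$ from homotopy invariance, since both invert the flat pullback $(\pi_2^Y)^*$.

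\textbf{Comparison.} Your approach is more elementary: it uses only Chapter~1 of Fulton (families over $\Pb^1$ and proper pushforward). The cost is checking the closure over $0$ and $1$ by hand. The paper's argument is more formal and mirrors the topological proof that homotopic maps agree on homology. The cost there is relying on refined Gysin maps and the isomorphism $A_\bl(Y)\cong A_{\bl+1}(\A^1\times Y)$. Both arguments use only the completeness of $X$.
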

\begin{proof}
We may assume that there exists a naive $\A^1$-homotopy 
$H\colon \A^1 \times X \to Y$
from $f$ to $g$. Write $\widetilde{H}\colon \A^1 \times X \to \A^1 \times Y$ for the morphism $(t, x) \mapsto (t, H(t, x))$.
Let $i_0\colon X \to \A^1 \times X$ and 
$i_1\colon X \to \A^1 \times X$ be the inclusions of $\{0\} \times X$
and $\{1\} \times X$, respectively.
Let $\pi_1^X\colon \A^1 \times X \to \A^1$ and 
$\pi_2^X\colon \A^1 \times X \to X$ be the projections onto the first and second factors, respectively. Similarly define 
$j_0, j_1\colon Y \to \A^1 \times Y$, 
$\pi_1^Y\colon \A^1 \times Y \to \A^1$,
and $\pi_2^Y\colon \A^1 \times Y \to Y$.
\[
\begin{tikzcd}
    \A^1 \times X\ar[rr, "\widetilde{H}"]\ar[rd, "\pi^X_1" below] 
    && \A^1 \times Y\ar[ld, "\pi_1^Y"]
    &&&&
    X\ar[rr, "f"]
    \ar[d, "i_0" right]
    && Y
    \ar[d, "j_0" left]
    \\
    &\A^1
    &&&&&
    \A^1 \times X \ar[u, "\pi_2^X" left, bend left]
    \ar[rr, "\widetilde{H}" below] && \A^1 \times Y
    \ar[u, "\pi_2^Y" right, bend right]
\end{tikzcd}
\]
Since $X$ is proper, so is $\pi_1^X$. Since $\pi_1^Y \circ \widetilde{H} = \pi_1^X$, the morphism $\widetilde{H}$ is proper by Vakil's cancellation lemma \cite{vakil}*{Theorem 11.1.1}.
Note also that $i_0$, $j_0$ are regular embeddings and that 
$\pi_2^X$ is flat.
The pullback maps $(\pi_2^X)^*$ and 
$i_0^*$ are inverse isomorphisms \cite{fult_int_theory}*{Definition 3.3};
the same is true of $(\pi_2^Y)^*$ and $j_0^*$.
Since the commutative square formed by $i_0, j_0, f, \widetilde{H}$ is Cartesian, we have
$f_* \circ i_0^* = j_0^* \circ \widetilde{H}_*$ by the push-pull formula
\cite{fult_int_theory}*{Theorem 6.2(a)}. Pre-composing with 
$(\pi_2^X)^*$ gives $f_* = j_0^* \circ \widetilde{H}_* \circ (\pi_2^X)^*$.
We similarly get $g_* = j_1^* \circ \widetilde{H}_* \circ (\pi_2^X)^*$.
Since $j_0^*$ and $j_1^*$ are both inverses to $(\pi_2^Y)^*$, we have 
$j_0^* = j_1^*$ and thus $f_* = g_*$.
\end{proof}

\begin{Lm}\label[Lm]{lm:chimneyhomotopy}
    Let $P$ be a polytope and $Q = \Chim(P, f)$ be a chimney over $P.$ The natural embeddings of $X_P$ into $X_Q$ given by identifying $P$ with $P^\bot$ or $P^\top$ are naively $\A^1$-homotopic. 
\end{Lm}
\begin{proof}
    After rescaling $P$ and $Q$ by a large integer, we may assume that both are very ample.
    Recall from \Cref{sec:toric}, that the projective embedding of $X_Q$ determined by the complete linear system $\abs{\mathcal{O}(Q)}$ is into the projective space $\Pb^{M_Q \cap Q - 1}$ with projective coordinates $(y_m)_{m \in M_Q \cap Q}$.
    For each lattice point $m \in M_Q \cap Q$, the character $\chi^{-m}$ is a regular section of the line bundle $\mathcal{O}(Q)$.
    The map to projective space is given by 
    $x \mapsto ({\chi}^{-m}(x))_{m \in M_Q \cap Q}$. The image of this morphism is the closed subvariety of $\Pb^{M_Q \cap Q - 1}$ determined by the equations
    \[
        y_{m_1} \cdots y_{m_n} = y_{m_1'} \cdots y_{m_n'}
    \]
    for all pairs 
    $(m_1, \dots, m_n), (m_1', \dots, m_n')$ of equally-sized tuples of points in $M_Q \cap Q$ sharing the same sum 
    $m_1 + \dots + m_n = m_1' + \dots + m_n'$ in $M_Q$
    \cite{cox2011toric}*{Proposition 2.1.4}.

    The embeddings of $P$ as the faces $P^{\bot}$ and $P^{\top}$ of $Q$ are given in these coordinates as:
    \[
        \iota^{\bot}: y_m \mapsto \begin{cases}
                {\chi}^{-m} & \text{if $m \in M \cap P^{\bot}$}\\
                0 & \text{if $m \in (M_Q \cap Q) \setminus (M \cap P^{\bot})$}
            \end{cases}
            \qquad\qquad
            \iota^{\top}: y_m \mapsto \begin{cases}
                {\chi}^{-m} & \text{if $m \in M \cap P^{\top}$}\\
                0 & \text{if $m \in (M_Q \cap Q)\setminus(M \cap P^{\top})$}
            \end{cases}
    \]
    We define a naive $\A^1$-homotopy 
    $\iota^{\side}\colon \A^1 \times X_P \to X_Q \subseteq \Pb^{M_Q \cap Q - 1}$ by
    $y_{(b,h)} \mapsto t^h(1-t)^{f(b) - h}{\chi}^{-b}$, where $t$ is the coordinate on $\A^1$ and $(b, h) \in M_P \times \Z$ is a point (written using the decomposition of the lattice $M_Q$ inherent in the chimney construction).
    It is easy to see that $\iota^{\side}$ restricts to $\iota^{\bot}$ and $\iota^{\top}$ when we set $t = 0$ or $t = 1$ respectively. We need only show that the image of $\iota$ lands in $X_Q$.
    Suppose $(b_1, h_1), \dots, (b_n, h_n)$
    and $(b_1', h_1'), \dots, (b_n', h_n')$ are lattice points in $M_Q$ such that
    \[
        (b_1, h_1) + \dots + (b_n, h_n) = (b_1', h_1') + \dots + (b_n', h_n')
    \]
    Since both $(b, h) \mapsto h$ and 
    $(b, h) \mapsto f(b) - h$ are affine-linear maps, it follows that 
    \begin{gather*}
        h_1 + \dots + h_n = h_1' + \dots + h_n' \\ \text{and} \\
        (f(b_1) - h_1) + \dots + (f(b_n) - h_n)
        = (f(b_1') - h_1') + \dots + (f(b_n') - h_n').
    \end{gather*}
    We also have 
    \[
        {\chi}^{-b_1} \cdots {\chi}^{-b_n} = {\chi}^{-b_1 -\dots - b_n} = {\chi}^{-b_1' - \dots - b_n'} = {\chi}^{-b_1'} \cdots {\chi}^{-b_n'}.
    \]
    From this we see that $\iota^{\side}$ indeed maps into $X_Q$.
\end{proof}
We will show that the morphisms of toric varieties induced by slithers between two given snakes are all naively $\A^1$-homotopic.
Not knowing this for now, we write 
$a \sim b \rel c$ for a snake den $c$ and slither words $a, b$ subordinate to it to mean that $\ove a = \ove b = s$ 
(i.e.\ $a$ and $b$ have the same domain)
and the embeddings of $X_s$ into $X_c$ induced by $a$ and $b$ are naively $\A^1$-homotopic. 
\begin{Lm}\label[Lm]{lm:simprop}
The relation $\sim$ has the following properties.
    \begin{enumerate}
        \item $\sim$ is stable under products. That is, $a\sim b\rel c$ implies $ax\sim bx \rel cy$ and $xa\sim xb\rel yc$ for 
        any snake den $y$ and
        any slither $x$ subordinate to it.
        \item $\sim$ is stable under post-composition. That is, if $a\sim b\rel c$ and $f\colon c\to d$ is a slither morphism then $f\circ a\sim f\circ b\rel d.$  
        \item For any binary strings $a, b$ we have
        \begin{enumerate}
            \item $\llle\elll\sim \elll \llle\rel\pr\rp.$
            \item $\pr a0\elll b\rp \sim \pr a\elll0b\rp\rel\pr a00b\rp$ and similarly $\pr a1\llle b\rp \sim \pr a\llle1b\rp\rel\pr a11b\rp$.
            \item $\pr \elll a\rp \sim\elll\pr a\rp\rel\pr 0 a\rp$ and $\pr \llle a\rp \sim\llle\pr a\rp \rel \pr 1 a\rp$.
            Also, $\pr  a\elll\rp \sim\pr a\rp\elll\rel\pr a0\rp$ and $\pr a\llle \rp \sim\pr a\rp\llle\rel\pr a1 \rp$. 
        \end{enumerate}
    \end{enumerate}
\begin{proof}
    (1) follows from the fact that if $f, g\colon X \to Y$ are naively $\A^1$-homotopic 
    and $h\colon Z \to W$ is any morphism then
    $f \times h, g \times h\colon X \times Z \to Y \times W$ are naively $\A^1$-homotopic.
    (2) follows from the fact that if 
    $f, g\colon X \to Y$ are naively $\A^1$-homotopic and $h\colon Y \to Z$ is any morphism then 
    $h \circ f, h\circ g\colon X \to Z$ are naively
    $\A^1$-homotopic.
    For (3), in each case one observes using \Cref{lm:is_chimney} and \Cref{obs:up_and_down}  that the two maps correspond to the inclusions of a polytope as top and bottom faces of a chimney over it and then applies \Cref{lm:chimneyhomotopy}. This is clear in (a) and in the second half of (c); the first half follows by dualizing. For the first relation in (b), one observes that the indices $i$ and $i+1$ of the two $0$s correspond to parallel elements of the matroid and that the $i$- and $(i+1)$-terminal faces of its base polytope are given by 
    $\pr a\elll 0b\rp$ and $\pr a0 \elll b\rp$, respectively; one then appeals to \Cref{obs:up_and_down}. The second relation follows similarly.
\end{proof}
\end{Lm}
We now translate the $\sim$ relations in  \Cref{lm:simprop} into allowable ``moves'' for slither diagrams inside an ambient snake diagram which preserve the homotopy type of the slither morphism. 
$(3a)$ allows us to pass back-and-forth between a vine passing over a square to one passing under it. By applying $(1)$ and $(2)$, one can make this move as part of a bigger diagram as long as the square is in the ambient snake. Similarly, $(3c)$, combined with $(1)$ and $(2)$, allows us to make the remaining changes depicted in \Cref{fig:moves}, as long as the grayed-out squares lie in the ambient snakes. The move allowed by $(3b)$ (combined with $(1)$ and $(2)$) operates more transparently at the level of slither words; we defer its explanation to \Cref{lm:allslithersim}.
\begin{figure}[ht]
    \centering
    {\includestandalone[width=.6\textwidth]{figures/tikz_equivalences}}
    \qquad
    \caption{Allowable moves on slithers. A grayed-out square lies in the ambient snake. A thick line is an edge of the slither. A dashed line is an edge of the ambient snake which is absent from the slither. A dotted line is an edge of the ambient snake that may or may not be in the slither.}
    \label{fig:moves}
\end{figure}
\begin{Df}
    Let $a$ be a snake den and $c$ a snake. 
    For slither words $f, g\colon a\to c$, each unbound character of $f$ or $g$ corresponds to a character
    of $a$. We write $f\le g$ if 
    the set of unbound positions of $f$ (weakly) precedes the set of unbound positions of $g$ in the lexicographical order.
     Write $f < g$ to mean $f\le g$ and $g\not\le f$.
    More explicitly, $f < g$ means that
    the embeddings of $a$ into $c$ induced by $f$ and $g$ are distinct and that
    for the first character of $a$ at which these embeddings differ the position assigned by $f$ precedes that assigned by $g$.
    A \textbf{leftmost} slither $f$ is a \emph{minimum} in this preorder; that is, $f\le g$ for all $g\colon a\to c$.
\end{Df}

For convenience, we define $\widecheck{0} \deq \elll$
and $\widecheck{1} \deq \llle$.
\begin{Lm}\label[Lm]{lm:slitherprop}
    For any slither $f\colon a\to c$ where $c$ is a snake and any index $j$, 
    \begin{enumerate}
        \item If $f_{j} \in \{0, 1\}$ then $c_{j} = f_{j}$.
        \item If $f_j\in \{\elll, \llle\}$ 
        is enclosed between a matching pair $\pr \cdots \rp$, then $f_j = \widecheck{c}_j$.
        \item Suppose $f_i = \rp$ for some $i < j$
        and
        $f_j = \pr$.
        Then there exists some $k$
        with $i < k \leq j$ such that 
        $c_k \neq c_i$.
    \end{enumerate} 
    \begin{proof}
        All claims are clear for the trivial slither $c\colon c\to c$ and are collectively preserved by the substitution rules in \Cref{fig:local_moves}.
    \end{proof}
\end{Lm}
\begin{Lm}\label[Lm]{lm:allslithersim}
    Let $c$ be a snake and $a$ a snake den.
    The following hold:
    \begin{itemize}
        \item For any slithers $f, g\colon a \to c$, we have $f \sim g \rel c$.
        \item Suppose $f\colon a \to c$ is a leftmost slither. Let $j < k$ be indices such that $f_j = \rp$, $f_k = \pr$ and $f_h \in \{\elll, \llle\}$ whenever $j < h < k$.
        Then $c_j \neq c_k$ and the slither word 
        $\widetilde{f}$ obtained by replacing $f_j$ with $c_j$ and $f_k$ with $c_k$ is subordinate to $c$.
    \end{itemize}
    \begin{figure}[hbt!]
    \centering
    {\includestandalone[width=.8\textwidth]{figures/slither_moves_12}}
    \qquad
    \caption{Moving from $\llle \elll \elll \elll \llle \pr 1 \rp$ to $\pr \elll \elll 1 \rp \elll \llle \llle$ via allowable slither moves.}
    \label{fig:slither_move_ex}
\end{figure}
    \begin{proof}
        Let $g\colon a \to c$ be any slither. We start by showing that $g \sim f \rel c$ for some leftmost slither $f$;
        we will later show that all leftmost slithers are $\sim$-equivalent.
        This is achieved by moving from $g$ to $f$ using allowable moves. 
        An example is given in \Cref{fig:slither_move_ex}. The reader is encouraged to examine the figure and guess the algorithm before reading the formal proof.
    
        Let $\pi(1)< \dots< \pi(t)$ be the positions of the unbound characters in $g$. We say that $\pi(i)$ is \textbf{movable} if one of the following holds. 
        \begin{enumerate}[(i)]
            \item $i = 1$ and $\pi(i) > 1$.
            \item $i> 1$, $g_{\pi(i)} = \pr$ and
            $c_j \neq c_{\pi(i-1)}$ for some $\pi(i-1) < j < \pi(i)$.
            \item $g_{\pi(i)}\in\{0, 1\}$ and 
            $c_j = g_{\pi(i)}$ for some $\pi(i-1) < j < \pi(i)$.
            \item $g_{\pi(i)} = \rp$ and $\pi(i) > \pi(i-1)+1$.
        \end{enumerate}
        Suppose first that $g$ has a movable character. We will then show $g \sim g'$ for some slither $g'\colon a \to c$ with $g' < g$.
        Let $i$ be minimal such that $\pi(i)$ is movable. 

        We start with cases (i) and (ii), for which $g_{\pi(i)} = \pr$.
        The hypotheses force $g_{\pi(i)-1} \in \{\elll, \llle\}$. We deal with the case $g_{\pi(i) - 1} = \elll$; the other case being similar. If $c_{\pi(i)} = 0$, then we use move II in \Cref{fig:moves} to shift the $\pr$ to the left. Otherwise, we are in the situation on the left-hand side of \Cref{fig:stuck_moves}. The sequence of vines starting with the horizontal one depicted in the figure and continuing westward either eventually takes a southward turn or hits a scale of the slither; these two cases are depicted in \Cref{fig:stuck_cases}. In the first case, we apply move I repeatedly to shift the vines to the position depicted in red in \Cref{fig:stuck_cases}. We can then apply move IV.
        The resulting slither $g'$ satisfies 
        $g' \sim g \rel c$ and
        $g' < g$.
        The second case forces $c_j = c_{\pi(i-1)} = 0$ for all $\pi(i-1) < j  <\pi(i)$, contradicting the movability assumption. 
    \begin{figure}[htbp!]
    \centering

    \begin{minipage}[t]{0.48\textwidth}
        \centering
        \includestandalone[height=40pt]{figures/stuck_moves}
        \caption{Situations where the moves in \Cref{fig:moves} are not immediately available.}
        \label{fig:stuck_moves}
    \end{minipage}
    \begin{minipage}[t]{0.48\textwidth}
        \centering
        \includestandalone[height=40pt]{figures/stuck_cases}
        \caption{Westward behavior of the first slither in \Cref{fig:stuck_moves}.}
        \label{fig:stuck_cases}
    \end{minipage}
\end{figure}

We deal next with case (iii).
Consider any index $j$ with $\pi(i-1) < j < \pi(i)$ for which $c_j = g_{\pi(i)}$.
For simplicity, we assume $c_j = 0$; the case $c_j = 1$ is similar.
Then $g_j = \widecheck{c}_j = \elll$ by \Cref{lm:slitherprop}(2). 
Let $h$ be the slither word obtained from $g$ by {replacing} $g_j = \elll$ with $0$.
Let $g'$ be the slither word obtained from $h$ 
by replacing $h_{\pi(i)} = g_{\pi(i)} = 0$ with $\elll$.
Then $h$ is subordinate to $c$ since its slither diagram is obtained from that of $g$ by simply adding the interior edge lying on the $j$-th antidiagonal. Each of $g$ and $g'$ factors through $h$; explicitly, $g = h\circ k$ and $g' = h\circ k'$ where $k, k'\colon a\to \ove h$ are obtained by changing 
one of the two consecutive $0$'s in $\ove{h}$ corresponding to the $j$-th and $\pi(i)$-th characters of $h$ to an $\elll$.
We have $k\sim k'\rel \ove h$ by parts (3b) and (1) of \Cref{lm:simprop}. Thus $g\sim g'\rel c$ by  \Cref{lm:simprop}(2).

In case (iv), $g_{\pi(i)} = \rp$ and we can always apply one of the moves III or V in \Cref{fig:moves} to shift this $\rp$ to the left.

Suppose $g$ has no movable characters. We claim that $g$ is already leftmost. 
Let $h\colon a \to c$ be any slither and let $\rho(1) < \dots < \rho(t)$ be its unbound positions. We show $g\le h$. 
Let $i$ be minimal such that $\pi(i) \neq \rho(i)$.
If $i = 1$, then $\pi(i) = \pi(1) = 1 \leq \rho(i)$ since $g$ has no movable character of type (i).
If $i > 1$ and $a_i = \pr$, then \Cref{lm:slitherprop}(3) gives an index $k$ such that $c_k \neq c_{\rho(i-1)} = c_{\pi(i-1)}$ and $\pi(i-1) < k \leq \rho(i)$, whereas $g$ having no movable character of type (ii) forces $k \geq \pi(i)$; so again $\pi(i) \leq \rho(i)$.
If $a_i \in \{0, 1\}$, then since 
$g$ has no movable characters of type (iii)
and $c_{\rho(i)} = h_{\rho(i)} = a_i = g_{\pi(i)}$, we must have $\pi(i) \le \rho(i).$ 
Finally, if $a_i = \rp$, then 
$\pi(i) = \pi(i-1) + 1 = \rho(i-1) + 1 \leq \rho(i)$. 

To prove the first claim, it remains only to show that $f \sim f' \rel c$ for any two \emph{leftmost} slithers $f, f'\colon a \to c$.
We first prove that $f$ and $f'$ agree up to and including their final $\rp$. As a byproduct, we will also extract a proof of the second part of the lemma.
The positions of the unbound characters  in $f$ and $f'$ certainly agree. The same is true of bound characters between matching $\pr \cdots \rp$ by \Cref{lm:slitherprop}(2).
It remains to handle the characters between adjacent $\rp \cdots \pr$. 
Let $j < k$ be the indices in $f$ (hence also $f'$) of these $\rp$ 
and $\pr$.
Without loss of generality, we may assume that $f$ and $f'$ agree up to the $j$-th index.
Since $f$ has no movable character of type (ii),
we have $c_h = c_j$ whenever $j \leq h < k$.
By \Cref{lm:slitherprop}(3) we also have 
$c_j\ne c_k$.
For simplicity, we assume $c_j = 0$ and $c_k = 1$, the other case being similar.
The slither diagrams of $f$ and $f'$ up to the $j$-th antidiagonal agree and the local picture between this point and the $k$-th antidiagonal must be the one depicted on the right-hand side of \Cref{fig:stuck_cases}. 
All vines in this region are forced to be horizontal; that is, $f_{j+1} = \dots = f_{k-1} = \elll = f_{j+1}' = \dots = f_{k-1}'$.
Looking again at the right-hand side of \Cref{fig:stuck_cases}, we observe the effect on the diagram of $f$ resulting from changing $f_j$ to $0$ and $f_k$ to $1$ is simply to 
add as a new scale the rectangle at the bottom right of the picture. This proves the second claim in the lemma statement.

We have now shown that $f$ and $f'$ differ only in the characters that follow their final $\rp$.
Thus their diagrams coincide except possibly for a sequence of vines in their northeast.
One can therefore transform one into the other
using move I in \Cref{fig:moves}. This completes the proof.
\end{proof}
\end{Lm}
\begin{Rmk}
\Cref{lm:allslithersim} shows that there is always a lexicographically first slither $a \to c$, provided such slithers exist at all. In fact, one can show that lexicographically first slithers are in fact minima in the \emph{Gale} (pre)order on slithers. This justifies the name \emph{leftmost}.
\end{Rmk}
\begin{Cor}\label[Cor]{cor:homotopic}
    For any snake den $a$, any snake $c$, and any two slither morphisms $f, g\colon a\to c$, the induced embeddings $f\colon X_a\to X_c$ and $g\colon X_a\to X_c$ are naively $\A^1$-homotopic.
\end{Cor}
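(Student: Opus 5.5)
This corollary is essentially a repackaging of the first bullet of \Cref{lm:allslithersim}. The plan is to unwind the definition of the relation $\sim$ and observe that it already gives exactly the desired statement.

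By definition, $f \sim g \rel c$ means two things. First, $\ove f = \ove g$, which is the common domain snake den $a$. Second, the embeddings $X_a \to X_c$ induced by $f$ and $g$ are naively $\A^1$-homotopic. These embeddings are the ones coming from the coordinate morphisms $P(a) \to P(c)$ onto the faces $P(f)$ and $P(g)$, as described in \Cref{subsec:equiv}.

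So the argument proceeds in two steps.

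First, for slither morphisms $f, g\colon a \to c$, both $f$ and $g$ are slither words subordinate to the snake $c$. Their prunings are both isomorphic to $a$, and hence, as standard snake dens, equal to $a$. The hypotheses of \Cref{lm:allslithersim} are therefore satisfied.

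Second, that lemma gives $f \sim g \rel c$. Unpacking the definition of $\sim$ then yields precisely that the induced embeddings $X_a \to X_c$ are naively $\A^1$-homotopic.

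The only point I expect to need care is a bookkeeping one. A slither morphism carries an identification of its domain with $\ove f$ via the unique order-preserving isomorphism. So the embedding $X_a \to X_c$ attached to the morphism is the same map as the one attached to the word $f$ in the definition of $\sim$. Since isomorphisms of ordered matroids are unique, no ambiguity arises, and the corollary follows directly.

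All of the substantive work has already been done inside \Cref{lm:allslithersim}. That work consists of the reduction to leftmost slithers via the movable-character moves, which are justified by \Cref{lm:simprop} and \Cref{lm:chimneyhomotopy}, followed by the comparison of leftmost slithers.
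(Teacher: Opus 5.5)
Your proposal is correct and matches the paper: the corollary is stated there without a separate proof, as an immediate consequence of the first bullet of \Cref{lm:allslithersim} once you unpack the definition of $f \sim g \rel c$. Your remark about identifying the domain with $\ove f$ via the unique order-preserving isomorphism is harmless bookkeeping that the paper leaves implicit.
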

\begin{Prop}\label[Prop]{prop:homology_01_split}
    Let $a$ and $b$ be binary strings. In $A_{\bl}(X_{ a01b})$, we have $[\pr a\rp\pr b\rp] = [\pr a0\llle b\rp] + [\pr a\elll1 b\rp]$.
    Similarly, in $A_{\bl}(X_{a10b})$, we have $[\pr a\rp\pr b\rp] = [\pr a1\elll b\rp] + [\pr a\llle0 b\rp].$
    \begin{proof}
        We prove the first claim; the second is proved similarly. 
        Consider the character $\chi = e_i - e_{i+1}$, where $i$ and $i+1$ are the positions in $\pr a 01 b \rp$ of the adjacent characters $01$. By \Cref{lm:face_structure}, the only facets 
        of $P(a01b)$
        whose normals are not perpendicular to $\chi$ are $P(\pr a\rp\pr b\rp)$, $P(\pr a0\llle b\rp)$ and $P(\pr a\elll1 b\rp)$.
        These pair with $\chi$ to give $1$, $-1$, and $-1$, respectively. 
        By now considering $\chi$ as a rational function on $X_{a01b}$, we get $0 = \Div(\chi) = [\pr a\rp\pr b\rp] - [\pr a0\llle b\rp] - [\pr a\elll1 b\rp]$ in $A_{\bl}(X_{a01b})$.
    \end{proof}
\end{Prop}
\noindent
Recall the Hopf algebra $\nsym$ discussed in \Cref{ssec:hopf_stuff}.
For a binary string $s$, we will write $\nsym_s$ for the $\Z$-submodule of $\nsym$ spanned by $\{R_t\mid t\in \ebs(s)\}.$
\begin{Thm}\label{thm:homology_relation}
     Let $s$ be a binary string. There is an isomorphism of $\Z$-modules
     $\psi_s\colon A_\bl(X_s) \to \nsym_s$ mapping $[V(F)]\mapsto R_{\ove F}$ for each face $F$ of $P(s)$, identifying 
     the snake den $\ove{F}$ with its ribbon diagram (i.e.\ its snake den diagram).
     Under this isomorphism, the effective classes in $A_{\bl}(X_s)$ correspond exactly to the  elements in $\nsym_s$ which are nonnegative 
     $\Z$-linear combinations of noncommutative ribbon functions.
     \begin{proof}
    We define $\psi_s$ by sending $[r_s(t)] \mapsto R_t$ for each $t \in \ebs(s)$, which is well-defined because these classes form a basis of $A_{\bl}(X_s)$ by \Cref{cor:basis_of_A}. We now show that $\psi_s([V(F)]) = R_{\ove{F}}$ by induction on the number of components of the ribbon diagram of $\ove F$.
    Let $\sigma$ be the slither word of $F$.
    If the number of components is $1$ or $0$, 
    then $\ove{\sigma} = \ove{r_s(t)}$ for some $t \in \ebs(s)$, so the claim follows from
    \Cref{cor:homotopic} and \Cref{lm:a1_homologous}.
    Now suppose there are at least two components.
    Using the same two results, we may assume without loss of generality that $\sigma\colon \ove{\sigma} \to s$ is leftmost.
    Write
    $\ove \sigma = \pr w_1 \rp \pr w_2 \rp z$
    for some binary strings $w_1, w_2$ and snake den word $z$. 
    Then $\sigma = \star_1 \pr W_1 \rp \star_2 \pr W_2 \rp Z$ where $\star_1, \star_2$ are sequences of bound characters and 
    $\pr W_1 \rp, \pr W_2 \rp, Z$ prune to 
    $\pr w_1 \rp,  \pr w_2 \rp, z$, respectively.

    By the second half of \Cref{lm:allslithersim}, 
    either $\star_1 \pr W_1 0 \star_2 1 W_2 \rp Z$ or 
    $ \star_1 \pr W_1 1 \star_2 0 W_2 \rp Z$ is a slither subordinate to $s$.
    By symmetry, we may assume we are in the first case.
    We have $[\pr w_1 \rp \pr w_2 \rp z] = [\pr w_1 \elll 1w_2 \rp z] + [\pr w_1 0\llle w_2 \rp z]$ in $A_{\bl}(X_{\pr w_1  0 1 w_2 \rp z})$ by \Cref{prop:homology_01_split}.
    Taking the pushforward to $A_{\bl}(X_s)$, we get $[\star_1 \pr W_1 \rp \star_2 \pr W_2 \rp Z] = [\star_1 \pr W_1 0 \star_2 \llle W_2 \rp Z]+[\star_1 \pr W_1 \elll \star_2 1 W_2 \rp Z] $ in $A_\bl(X_s)$. 
    Applying the inductive hypothesis, we see that $ \psi_s([\star_1 \pr W_1 \rp \star_2 \pr W_2 \rp Z])$ is equal to
    \[
    R_{\pr w_11w_2 \rp z}+R_{\pr w_10w_2 \rp z}
    = 
    (R_{w_11w_2}+R_{w_10w_2})R_z
    = R_{w_1} R_{w_2} R_z = R_{\pr w_1\rp\pr w_2 \rp z}.
    \]
    Since the effective cycles in $A_\bl(X_s)$ are nonnegative combinations of classes of torus-invariant subvarieties \cite{int_theory_spherical}*{Corollary of Theorem 1}, the last claim follows from the description of $\psi_s$ on the classes $[V(F)]$.
     \end{proof}
\end{Thm}
\begin{Thm}\label{thm:homology_morphisms}
   There is a unique system $\{\psi_s\colon A_\bl(X_s) \to \nsym \}$ of homomorphisms of graded abelian groups indexed by snake dens $s$ satisfying:
   \begin{itemize}
       \item Functoriality: for any morphism $j\colon X_{s'} \to X_{s}$ from a snake den $s'$, we have $\psi_{s} \circ j_* = \psi_{s'}$.
       \item Product: if $s = tt'$ then $\psi_{s}([X_{s}]) = \psi_t([X_t])\psi_{t'}([X_{t'}])$.
       \item Normalization: $\psi_s([X_s]) = R_s$ if $s$ is a snake and $\psi_{\eps}([X_{\eps}]) = 1$.
   \end{itemize}
   For snake $s$, it coincides with the embedding $A_{\bl}(X_s) \to \nsym_s \to \nsym$ as defined in \Cref{thm:homology_relation}.
\end{Thm}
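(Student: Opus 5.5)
The plan is to first show that the three conditions force a closed formula for $\psi_s$ on classes of torus-invariant subvarieties, namely
\[
\psi_s([V(F)]) = R_{\ove F}\qquad\text{for every face $F$ of $P(s)$.}
\]
This gives uniqueness. For existence, I would define $\psi_s$ by this formula (through a Künneth decomposition) and check the axioms.

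\emph{Uniqueness.} Let $F$ be a face of $P(s)$ and let $S$ be its slither word. Then $S$ is a slither morphism $\ove S \to s$ whose induced embedding $j\colon X_{\ove S}\to X_s$ has image $V(F)$. Since $j$ is a closed embedding of varieties, $j_*[X_{\ove S}] = [V(F)]$. Write $\ove S = t_1\cdots t_k$ with each $t_i$ a snake. Functoriality, then repeated use of the product axiom, then normalization give
\[
\psi_s([V(F)])=\psi_{\ove S}([X_{\ove S}])=\prod_i \psi_{t_i}([X_{t_i}])=R_{t_1}\cdots R_{t_k}=R_{\ove F}.
\]
The last equality uses $R_\alpha R_\beta=R_{\alpha\oplus\beta}$, since the ribbon of $\ove F$ is the corner-to-corner join of the ribbons of the $t_i$. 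It remains to see that these classes span $A_\bl(X_s)$. The polytope $P(s)$ is the product $\prod_i P(t_i)$, so $X_s=\prod_i X_{t_i}$. The product of the Białynicki-Birula affine pavings of the factors (with the product covector) is an affine paving of $X_s$. Its cell closures are the $V(F_1\times\cdots\times F_k)$ with each $F_i$ a face of type $r_{t_i}(u_i)$, and by \cite{chow_basis}*{Theorem 1} they form a basis. In particular the classes $[V(F)]$ span, so $\psi_s$ is determined.

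\emph{Existence.} For a snake $s$, take $\psi_s$ to be the map of \Cref{thm:homology_relation}, and set $\psi_\eps([X_\eps])=1$. For a snake den $s=t_1\cdots t_k$, the basis just described shows that the Künneth map $\bigotimes_i A_\bl(X_{t_i})\to A_\bl(X_s)$ sends a basis to a basis, hence is an isomorphism. Define
\[
\psi_s(x_1\times\cdots\times x_k)=\psi_{t_1}(x_1)\cdots\psi_{t_k}(x_k),
\]
which is a homomorphism of graded groups because each $\psi_{t_i}$ is degree-preserving. A face of $P(s)$ is a product $F=F_1\times\cdots\times F_k$, so $[V(F)]=[V(F_1)]\times\cdots\times[V(F_k)]$. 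The pruning $\ove F$ is the concatenation $\ove{F_1}\cdots\ove{F_k}$. Then \Cref{thm:homology_relation}, together with multiplicativity of $R$ under $\oplus$, gives $\psi_s([V(F)])=R_{\ove F}$ for every face $F$.

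Now check the axioms. For normalization, $P(s)$ is a face of itself with $\ove{P(s)}=s$, so $\psi_s([X_s])=R_s$. The product axiom follows from the same formula, since $R_{tt'}=R_tR_{t'}$. For functoriality, let $j\colon X_{s'}\to X_s$ be induced by a slither $S$, and let $G$ be a face of $P(s')$ with slither word $T$. Then $j(V(G))=V(F)$, where $F$ is the face with slither word $S\circ T$. By \Cref{df:slither_morphism}, $\ove{S\circ T}\cong\ove T$, so $\psi_s(j_*[V(G)])=R_{\ove F}=R_{\ove G}=\psi_{s'}([V(G)])$. These classes span $A_\bl(X_{s'})$, so $\psi_s\circ j_*=\psi_{s'}$. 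Agreement with \Cref{thm:homology_relation} for snakes holds by construction.

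The step I expect to need the most care is the bookkeeping for faces of products and for composed slithers. Two facts must be pinned down exactly: that the pruning of a product face is the concatenation of the prunings, and that the image of a face under a slither embedding is the face whose word is $S\circ T$, with pruning $\ove T$. The spanning, Künneth and formula steps then follow directly from \Cref{cor:basis_of_A} and \Cref{thm:homology_relation}.
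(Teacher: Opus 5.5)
Your proposal is correct and follows the paper's proof. In both, the product of the snake pavings gives a basis of $A_{\bl}(X_s)$ on which the three axioms force the values $R_{t_1}\cdots R_{t_k}$, and conversely defining $\psi_s$ in this way and invoking \Cref{thm:homology_relation} yields the axioms. You additionally spell out the functoriality check that the paper leaves implicit, using the face formula $\psi_s([V(F)])=R_{\ove F}$ together with $\ove{S\circ T}=\ove T$.
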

   \begin{proof}
       For any snake den, the product of the affine pavings of its constituent snakes is an affine paving, so gives a basis for its Chow homology. Writing $s = \pr s_1\rp\dots \pr s_k\rp$, the three conditions above force us to define $\psi_s$ by mapping $[P(r_{s_1}(t_1))\times \dots \times P(r_{s_k}(t_k))]\mapsto R_{t_1}\cdots R_{t_k}$
       for $t_i \in \ebs{(s_i)}$.
       Conversely, defining $\psi_s$ in this manner, it follows from \Cref{thm:homology_relation} that the resulting system of homomorphisms has the desired properties.
   \end{proof}

\section{Cohomology}\label{sec:cohomology}

Having gotten a hold on the Chow homology of snake den varieties, we now investigate their
Chow cohomology. In the case of snakes, we will identify it with a suitable quotient of the ring of quasisymmetric functions.
It follows from \Cref{lm:cycle_class} that all of what we say will carry over without change to the singular cohomology with integral coefficients of these varieties as analytic spaces over $\C$.

Let $X$ be a complete toric variety associated to a fan $\Sigma$. \cite{fulton94} gives the relation
\[
    \delta_*[X] = \sum_{\sigma, \tau} m_{\sigma, \tau}[V(\sigma)]\otimes [V(\tau)] \quad \text{in} \quad A_{\bl}(X) \otimes  A_{\bl}(X).
\]
Here, 
$\delta\colon X \to X \times X$ is the diagonal map,
and the sum is over all complementary-dimensional cones $\sigma, \tau$ in $\Sigma$. The coefficient $m_{\sigma, \tau}$ depends on the choice of a (sufficiently general) covector $v$, and is equal to the lattice index $[N: N_{\sigma} + N_{\tau}]$ whenever $\sigma \cap (\tau + v) \neq \emptyset$ and $0$ otherwise. 
The following proposition presents a case in which this expression becomes particularly simple.
Note that for any complete toric variety, the K\"unneth map in Chow homology is an isomorphism by \cite{int_theory_spherical}*{Theorem 2}.

\begin{Prop}\label[Prop]{prop:diagonal}
    Assume $P$ is an edge-unimodular lattice polytope stratified by a covector $v$ and by its opposite $-v$. Then letting $\delta\colon X_P \to X_P \times X_P$ be the diagonal embedding, we have
    \[
        \delta_* [V(P)] = \sum_{p} [V(P_p^{-v})] \otimes [V(P_p^{v})] \quad \text{in} \quad A_{\bl}(X_P \times X_P) \cong A_{\bl}(X_P) \otimes A_\bl(X_P),
    \]
    where the sum is over vertices $p$ of $P$.  
\end{Prop}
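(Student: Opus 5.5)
The plan is to apply the Fulton--Sturmfels formula recalled just above with the generic covector $v$ itself, and to show that the nonzero terms are exactly the pairs $(\sigma,\tau)=(\sigma_{P_p^{-v}},\sigma_{P_p^{v}})$, each with coefficient $1$. Recall that $V(\sigma_F)=V(F)$ and $\dim V(F)=\dim F$. The cones $\sigma_F$ and $\sigma_G$ are complementary-dimensional exactly when $\dim F+\dim G=\dim P$.

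I first reinterpret the condition $\sigma_F\cap(\sigma_G+v)\neq\emptyset$. It says that $v=a-b$ with $a\in\sigma_F$ and $b\in\sigma_G$. Equivalently, $v$ lies in the relative interior of $\sigma_F-\sigma_G$ by genericity. This Minkowski difference is the outer normal cone of the face $F-G$ of $P+(-P)$. So the condition says that $v$ is maximized on $P-P$ exactly along the face $F-G$.

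Since $v$ is generic, the maximizing face of $P-P$ is the vertex $P^v-P^{-v}$. That alone is not enough, so I would use a perturbation. By the standard fan-displacement form of Fulton--Sturmfels, only \emph{maximal} pairs occur: for each cone of the common refinement we count pairs of cones with $v$-displaced intersection. Concretely, I would argue that $(F,G)$ contributes if and only if there is a point $u\in N_\R$ with $u\in\operatorname{relint}\sigma_F$ and $u-v\in\operatorname{relint}\sigma_G$. Choose $u$ generic in the normal cone of some vertex $p$, i.e.\ $u$ maximizes at $p$. For the translated condition, consider the segment $u-tv$ for $t\in[0,1]$, scaled so that $u$ dominates $v$. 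Then $F$ is the face of $P$ maximizing $u$, and $G$ is the face maximizing $u-v$.

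Now I use the stratification hypotheses. The set of points $x\in P$ whose $v$-flow limits at $p$ is the cell indexed by $P_p^v$. Dually, for large $u$ in the normal cone of $p$, the cone $\sigma_F$ meeting $u+\R_{\ge0}(-v)$ with complementary dimension should be the cone of $P_p^{-v}$. Symmetrically, $u-v$ lies in $\sigma_{P_p^v}$. Dimension complementarity follows because $v$ simply paves $P$ from both sides at $p$: the incoming arrows at $p$ for $v$ and for $-v$ together give all $\dim P$ edges at $p$, since $P$ is simple at $p$ within each face. The main obstacle is making this intersection argument rigorous. I would try to verify it via the Białynicki-Birula cells instead: the classes $[V(P_p^{v})]$ form a basis (a smooth paving, by edge-unimodularity and Observation~\ref{obs:tot_uni}), and so do the opposite classes $[V(P_q^{-v})]$.

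I then show duality. By stratification, $V(P_p^v)\cap V(P_q^{-v})$ is empty unless $q\le p$ in the Białynicki-Birula order, and it is the transverse point $p$ when $q=p$. Transversality holds because $P_p^v$ and $P_p^{-v}$ are complementary smooth faces at the vertex $p$. Edge-unimodularity makes the lattice index $[N:N_\sigma+N_\tau]$ equal to $1$. This gives a perfect pairing with upper-triangular, unipotent intersection matrix. Since both families of cells give bases, the diagonal class is $\sum_p [V(P_p^{-v})]\otimes[V(P_p^{v})]^{\vee}$, expanded through this pairing. Finally, the Fulton--Sturmfels formula with the choice $v$ shows that the off-diagonal coefficients vanish, because $m_{\sigma,\tau}\neq0$ forces $\sigma=\sigma_{P_p^{-v}}$ for the vertex $p=\tau$'s maximizer. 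This yields the stated formula.
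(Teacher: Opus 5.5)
Your opening move matches the paper's: apply Fulton--Sturmfels with displacement $v$, and show that $m_{\sigma_F,\sigma_G}\neq 0$ exactly when $(F,G)=(P_p^{-v},P_p^{v})$, with value $1$. However, the crux, that a nonzero coefficient \emph{forces} this shape, is never established.

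The Minkowski reformulation is incorrect. The normal cone of a face $F+G'$ of a Minkowski sum is $\sigma_F\cap\sigma_{G'}$, not a sum of cones. Moreover, for complementary-dimensional $\sigma_F,\sigma_G$ the cone $\sigma_F-\sigma_G$ is typically full-dimensional. So $v\in\sigma_F-\sigma_G$ is an open condition, not ``$v$ is maximized along $F-G$''. The segment paragraph ends in ``should be''. Your last sentence simply asserts that $m_{\sigma,\tau}\neq0$ forces $\sigma=\sigma_{P_p^{-v}}$, which is the claim to be proved.

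The missing argument (the paper's) runs as follows. Given $u\in\sigma_F$ with $u-v\in\sigma_G$, let $F_1,\dots,F_n$ be the faces maximizing $u-tv$ for $t\in[0,1]$. The relations $F_{i+1}^{v}=F_i^{-v}$ string together $v$-increasing edge paths from $F_n^{v}$ to $F_1^{-v}$. Stratification then gives $P^{v}_{F_n^{v}}\subseteq P^{v}_{F_1^{-v}}$. On the other hand, $F\subseteq P^{-v}_{F_1^{-v}}$, $G\subseteq P^{v}_{F_n^{v}}$ and $\dim P_q^{v}+\dim P_q^{-v}=\dim P$ give $\dim P^{v}_{F_1^{-v}}\le\dim P^{v}_{F_n^{v}}$. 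Hence $F_1^{-v}=F_n^{v}=p$, $F=P_p^{-v}$ and $G=P_p^{v}$. For the converse you must also \emph{produce} a point of $\sigma_F\cap(\sigma_G+v)$. The paper does this by splitting $v=u+w$ along $N=M_F^\perp\oplus M_G^\perp$, which edge-unimodularity provides; only then is the index $1$.

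Your fallback via Bia\l ynicki-Birula duality fails under the stated hypotheses. The proposition only assumes that $\pm v$ \emph{stratify} $P$, not that they simply pave it. So Observation~\ref{obs:tot_uni} does not apply, and the classes $[V(P_p^{v})]$ need not form a basis. Your claim that the arrows at $p$ for $v$ and $-v$ together number $\dim P$ is also false at non-simple vertices.

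All of this happens in the paper's own application. Take an increasing covector on the square pyramid $P(01)$, with bases $13,14,23,24,34$. The apex $34$ has four edges, and $P^{v}_{34}=P$ is not simple there. The faces $P_p^{v}$ have dimensions $0,1,1,2,3$, giving two classes in $A_1(X_{01})\cong\Z$ and only one in $A_2(X_{01})\cong\Z^2$. Complementarity of dimensions must therefore come from the stratification lemma the paper cites, not from simplicity.

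Finally, on a singular $X_P$ there is no intersection pairing on $A_{\bl}$ (no Poincar\'e duality). So an ``upper-triangular intersection matrix'' between $V(P_p^{v})$ and $V(P_q^{-v})$ is not available, and neither is the dual-basis expansion of $\delta_*[X_P]$ built from it. The result has to be read off directly from the Fulton--Sturmfels coefficients.
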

\begin{proof}
    Let $F$ and $G$ be any two complementary-dimensional faces of $P$. It suffices to show that
    in the relation above for the given covector $v$, we have
    $m_{\sigma_F, \sigma_G} = 0$ unless $F = P_p^{-v}, G = P_p^v$ for some vertex $p$ in $P$, in which case $m_{\sigma_F, \sigma_G} = 1$.
    Suppose $m_{\sigma_F, \sigma_G} \neq 0$.
    Then $\sigma_F \cap (\sigma_G + v) \neq \varnothing$. Hence there is some $u \in N_{\R}$ with $u \in \sigma_F$ and $u-v \in \sigma_G$. 
    As $t$ varies from $0$ to $1$, considering the face on which $u - tv$ is maximized gives a sequence $F_1, F_2, \dots, F_n$ of faces of $P$ with $n \geq 2$. The sequence alternates between vertices and higher-dimensional faces. If $F_i$ is a vertex then 
    $F_{i-1}^{-v} = F_i = F_{i+1}^{v}$.
    Write $p \middarrow q$ for vertices $p, q$ in $P$ to mean that there is path of edges (possibly of length $0$) joining $p$ to $q$ along which $v$ increases. We have
    \[
        F_n^{-v} \middarrow F_n^{v} = F_{n-1}^{-v}\middarrow \cdots  \middarrow F_2^{v} = F_1^{-v} \middarrow F_1^{v}.
    \]
    Whenever we have an edge from $p$ to $q$ with $v(p)< v(q)$, we have $p\in  P_q^v$. Since $v$ stratifies $P$, this forces $P_p^v \subseteq P_q^v$. By transitivity, the same is true if we only assume $p \middarrow q$.
    Using this on the $\middarrow$ sequence above,
    we deduce $P_{F_n^{v}}^v \subseteq P_{F_1^{-v}}^v$. Again using that $v$ stratifies $P$, we have $\dim P_p^v + \dim P_p^{-v} = \dim P$ by \cite{strat_poly}*{Lemma 2.14}. The choice of $u$ implies $F \subseteq F_1 \subseteq P_{F_1^{-v}}^{-v}$ and $G \subseteq F_n \subseteq P_{F_n^{v}}^v$. So
    \begin{equation*}
        \dim P =  \dim F + \dim G \leq \dim P_{F_1^{-v}}^{-v} + \dim P_{F_n^{v}}^v  = \dim P - \dim P_{F_1^{-v}}^v +  \dim P_{F_n^{v}}^v.
        \tag{$\star$}\label{eq:dim}
    \end{equation*}
    Thus $\dim P_{F_1^{-v}}^v \leq \dim P_{F_n^{v}}^v$, so that $P_{F_1^{-v}}^v = P_{F_n^{v}}^v$ by the containment shown earlier.
    This is only possible if $F_1^{-v} = F_n^{v} = p$. Thus, the inequality in \eqref{eq:dim} must be an equality. We conclude that $F = P_p^{-v}$ and $G = P_p^v$.

    Conversely, assume $F = P_p^{-v}, G= P_p^{v}$ for some vertex $p$ in $P$. Again by \cite{strat_poly}*{Lemma 2.14}, $\dim F + \dim G = \dim P$. 
    The edge-unimodularity assumption gives
    $M = M_{F} \oplus  M_{G}$
    where $M_F \deq M \cap \Lin F$
    and $M_G \deq M \cap \Lin G$,
    where $\Lin F$ (resp.\ $\Lin G$) is the linear span of $F - F$ (resp.\ $G - G$). 
    Taking duals gives $N = M^{\vee} = M_F^{\vee} \oplus M_G^{\vee}$. Then $M_F^{\vee}$ is identified with $M_G^{\perp}$ and $M_G^{\vee}$ is identified with $M_{F}^{\perp}$, so $N = M_F^{\perp} \oplus M_G^{\perp}$.
    Write $v = u + w$ for $u \in M_F^{\perp}$
    and $w \in M_G^{\perp}$.
    Then, $u$ is maximized on $F$ and $u-v$ is maximized on $G$. So $u \in \sigma_F$ and $u - v \in \sigma_G$. Since $N_{\sigma_F} = M_F^{\perp}$
    and $N_{\sigma_G} = M_G^{\perp}$, we also have 
    $m_{\sigma_F, \sigma_G} = [N : N_{\sigma_F} + N_{\sigma_G}] = [N : M_F^{\perp} + M_G^{\perp}] = 1$.
\end{proof}

We now apply \Cref{prop:diagonal} to the case of snake den matroid base polytopes. Our stratifying covector will be any \textbf{increasing} covector $v$, i.e.\ with coordinates $v_1 < v_2  < \dots$. We start by identifying the faces $P(s)_p^{\pm v}$ for this choice of $v$.

\begin{Df}
    For any snake den $s$ and any basis $B$ of $s$ let $s_B^+$ (resp.\ $s_B^-$) be the slither whose bases are the bases of $s$ which are bounded above (resp. below) by $B$ in the Gale order. 
\end{Df}

We observe that since $s_B^+$ and $s_B^-$ are slithers subordinate to $s$, they correspond to faces of $P(s)$.

\begin{Lm}\label[Lm]{lm:stratifying}
    Let $s$ be a snake den. Let $v$ be an increasing covector. Then both $v$ and $-v$ stratify $P(s)$. More precisely, for any basis $B$ of $s$, we have
     $P(s)_{e_B}^v= P({s_B^+})$ and $P(s)_{e_B}^{-v}= P({s_B^-})$. 
\end{Lm}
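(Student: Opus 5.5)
The plan is to verify the two face identities directly from the edge structure of matroid polytopes, and to deduce the paving and stratification properties from them. Throughout, realize the snake den $s$ as a lattice path matroid $M[A,C]$; a snake den is a toric Richardson matroid, so this is possible. Its bases are then the Gale interval $[A,C]_G$.

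\emph{Direction of edges.} By \Cref{thm:edge_dirs}, every edge of $P(s)$ at $e_B$ joins $e_B$ to $e_{B-a+b}$ for some $a\in B$, $b\notin B$. Pairing with an increasing covector gives
\[
\inner{e_{B-a+b}-e_B, v}=v_b-v_a,
\]
which is negative exactly when $b<a$. In that case $B-a+b<_G B$. So along any edge, $v$ strictly decreases exactly when we move strictly down in the Gale order. More generally, $B'\le_G B$ implies $\inner{e_{B'},v}\le\inner{e_B,v}$: sorting the elements, $b'_i\le b_i$ for each $i$ and $v$ is increasing. In particular no edge is perpendicular to $v$.

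\emph{$P(s_B^+)$ is a face.} The bases of $s_B^+$ form the Gale interval $[A,B]_G$, so $s_B^+=M[A,B]$. Its diagram is the sub-skew-diagram of that of $s$ cut off by the path of $B$. It therefore contains no $2\times2$ box and is a toric Richardson matroid, hence a slither. It is a weak image of $s$ of the same rank, so it is subordinate to $s$. By the equivalence of slither morphisms with coordinate morphisms (\Cref{subsec:equiv}, via Lucas's theorem), $P(s_B^+)$ is a face of $P(s)$.

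\emph{Paving at $e_B$.} By the previous paragraph, $e_B$ is the $v$-maximum of $P(s_B^+)$. Now let $F$ be any face of $P(s)$ with $F^v=e_B$. Every vertex of $F$ is joined to $e_B$ by a $v$-increasing edge path inside $F$, and by the first step each edge of that path moves up in the Gale order. Hence every vertex of $F$ is Gale-below $B$, and $F\subseteq P(s_B^+)$. So $P(s_B^+)$ is the unique maximal face on which $e_B$ maximizes $v$, that is, $P(s)^v_{e_B}=P(s_B^+)$.

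\emph{Stratification.} The closure $V(P(s_B^+))$ is the union of the orbits $O_G$ over faces $G\subseteq P(s_B^+)$. For each such $G$, the vertex $G^v=e_C$ has $C\le_G B$, and $O_G$ lies in the cell $X^v_{e_C}$. Conversely, if $C\le_G B$, then the whole cell $X^v_{e_C}$ is the union of the $O_G$ with $G\subseteq P(s_C^+)\subseteq P(s_B^+)$. So each cell closure is a union of cells, and $v$ stratifies $P(s)$.

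\emph{The covector $-v$.} Apply the same argument with all inequalities reversed: $-v$ decreases along an edge exactly when we move up in the Gale order. This yields $P(s)^{-v}_{e_B}=P(s_B^-)$, where $s_B^-=M[B,C]$ is again a toric Richardson sub-diagram. Alternatively, this follows by dualizing, since $*$ reverses both the ground set order and the Gale order.

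The step I expect to be the main obstacle is the face claim. It relies on identifying $s_B^+$ as a lattice path matroid whose diagram still has no $2\times2$ box, and on the Lucas-based correspondence between weak images and faces. If that route is awkward, I would instead exhibit $P(s_B^+)$ directly as the intersection of the initial faces $\D{P(s)}{[k]}$, taken over those $k$ at which the path of $B$ touches the upper boundary of the diagram of $s$.
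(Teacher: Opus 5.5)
Your proof is correct and is essentially the paper's argument: edges of $P(s)$ are single-element exchanges by \Cref{thm:edge_dirs}, so an increasing $v$ goes up along an edge exactly when the Gale order does, $P(s_B^+)$ is a face, and transitivity of the Gale order gives the stratification. Your monotone-edge-path argument just replaces the paper's local check that the $v$-decreasing edges at $e_B$ are exactly the edges of $P(s_B^+)$ at $e_B$.

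One inessential aside is wrong. Dualizing does not exchange $v$ and $-v$: complementation combined with order reversal preserves the Gale order and sends increasing covectors to increasing covectors. The symmetry that would work is reversing the ground-set order alone ($180^\circ$ rotation of the diagram), but your direct argument for $-v$ already suffices.
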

\begin{proof}
    We start by showing $P(s)_{e_B}^v = P({s_B^+})$. It will suffice to show
    that among the vertices $q$ of $P(s)$ adjacent to $p = e_B$, those satisfying
    $v(q) \leq v(p)$ are precisely the ones lying in the face $P(s_B^+)$. For this in turn, it suffices to show that if $p = e_B, q = e_{B'}$ are adjacent vertices in $P(s)$ then $v(p) < v(q)$ if and only if in the Gale order we have $B \leq_G B'$.
    By \Cref{thm:edge_dirs} we have that $B' = B - b + b'$ for some ground set elements $b \neq b'$.
    Since $v$ is an increasing vector, we then have $B \leq_G B'$ if and only if $b < b'$. 
    This is equivalent to $v(p) = v(e_B) \leq v(e_{B'}) = v(q)$.
    The stratification condition is now reduced to the claim that $p \in P(s)_q^v$ defines a partial order on vertices $p, q$ in $P$. This is in fact so, since it is given precisely by the Gale order on bases.
    The analogous statements for $-v$ are handled similarly.
\end{proof}

The following is immediate from \Cref{prop:diagonal} and \Cref{lm:stratifying} since matroid base polytopes are edge-unimodular. 

\begin{Prop}\label{prop:coproduct_formula}
    Let $s$ be a snake den and $\delta\colon X_{s} \to X_s\times X_s$ be the diagonal embedding. Then, 
    \[
        \delta_*[X_s] = \sum_{B \in \mathcal{B}(M(s))} [{s_{B}^-}] \otimes [{s_{B}^+}].
    \]
\end{Prop}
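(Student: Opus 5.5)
The plan is to apply \Cref{prop:diagonal} directly, taking $P = P(s)$ and $v$ an increasing covector. The hypotheses of that proposition are satisfied. First, $P(s)$ is a matroid base polytope, so by \Cref{thm:edge_dirs} all its edge directions are of the form $e_x - e_y$; the matrix of these directions is a submatrix of the incidence matrix of a directed complete graph, which is totally unimodular, so $P(s)$ is edge-unimodular. Second, \Cref{lm:stratifying} shows that both $v$ and $-v$ stratify $P(s)$. If $s$ is a snake den that is not a single snake, the same reasoning applies to the product polytope: it is still a matroid base polytope, and \Cref{lm:stratifying} is stated for snake dens, so no separate product argument is needed.

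With the hypotheses in hand, \Cref{prop:diagonal} gives
\[
\delta_*[X_s] = \sum_{p} [V(P(s)_p^{-v})] \otimes [V(P(s)_p^{v})].
\]
Here $p$ ranges over the vertices of $P(s)$, which are exactly the points $e_B$ for $B \in \mathcal{B}(M(s))$. The second part of \Cref{lm:stratifying} identifies $P(s)_{e_B}^{v} = P(s_B^+)$ and $P(s)_{e_B}^{-v} = P(s_B^-)$. By the paper's convention, $[\sigma]$ denotes the class of the torus-invariant subvariety $V(P(\sigma))$, so substituting these identifications yields the claimed formula. The tensor-product form of the statement uses the K\"unneth isomorphism for complete toric varieties, \cite{int_theory_spherical}*{Theorem 2}.

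The only point needing real care is which factor receives $v$ and which receives $-v$. This depends on the sign conventions in the Fulton--Sturmfels formula and on the convention used for $X_p^{v}$. I would check it against the statement of \Cref{prop:diagonal} exactly as written, which puts $P_p^{-v}$ in the first factor. I would also note that the coalgebra is cocommutative, so swapping the two factors would give an equally valid formula; any residual sign ambiguity therefore does no harm. Apart from this, the argument is immediate.
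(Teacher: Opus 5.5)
Your proposal is correct and matches the paper's argument, which likewise deduces the formula immediately from \Cref{prop:diagonal} and \Cref{lm:stratifying}, using that matroid base polytopes are edge-unimodular. Your justification of edge-unimodularity via total unimodularity of directed-graph incidence matrices simply spells out a fact the paper takes as known.
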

\noindent
This agrees nicely with $\nsym$. 
Recognizing that $s_B^+$ and $s_B^-$ are ribbons, we can write
\[
    \Delta R_s = \sum_{B \in \mathcal{B}(M(s))} R_{s_B^-} \otimes R_{s_B^+}.\tag{$\blacklozenge$}
    \label{eq:coprod_R}
\]
This is the identity for the coproduct in $\nsym$ in  \Cref{prop:coproduct_nsym}.
\begin{Cor}\label[Cor]{cor:diagonal}
    Let $s$ be any snake den and let $\delta\colon X_s \to X_s \times X_s$. Identifying $A_{\bl}(X_s \times X_s)$ with $A_{\bl}(X_s) \otimes A_{\bl}(X_s)$ via the K\"unneth isomorphism, there is a commutative diagram:
    \[
        \begin{tikzcd}
            A_{\bl}(X_s) \arrow[r, "\psi_s"] \arrow[d, "\delta_*"] 
              & \nsym \arrow[d, "\Delta"] \\
            A_{\bl}(X_s)\otimes A_{\bl}(X_s) \arrow[r, "\psi_s \otimes \psi_s"] 
              & \nsym \otimes \nsym. 
        \end{tikzcd}
    \]
    Recall from \Cref{thm:homology_relation} that, when $s$ is a snake, $\psi_s$ maps $A_{\bl}(X_s)$ isomorphically onto $\nsym_s$.
\end{Cor}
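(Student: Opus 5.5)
By linearity and the basis results, it suffices to check the square on a spanning set of $A_{\bl}(X_s)$. The plan is to use the classes $[V(F)]$ of faces of $P(s)$ and reduce every case to \Cref{prop:coproduct_formula}, pushed forward along slither morphisms.

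First, let $F$ be a face of $P(s)$ with slither word $\sigma$. I view $\sigma$ as a morphism $j\colon X_{\ove{\sigma}} \to X_s$ whose image is $V(F)$. The diagonal is natural, so $\delta_{X_s}\circ j = (j\times j)\circ \delta_{X_{\ove\sigma}}$. The K\"unneth isomorphism is compatible with pushforward along products of proper maps, since $[U\times V]\mapsto [j(U)\times j(V)]$ with degrees preserved. Hence
\[
\delta_*[V(F)] = (j_*\otimes j_*)\,\delta_*[X_{\ove\sigma}] .
\]

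Next I apply \Cref{prop:coproduct_formula} to the snake den $\ove\sigma$, which gives
\[
\delta_*[X_{\ove\sigma}] = \sum_{B} [\ove\sigma_B^-]\otimes[\ove\sigma_B^+] .
\]
I then apply $\psi_s\otimes\psi_s$. By the functoriality clause of \Cref{thm:homology_morphisms}, $\psi_s\circ j_* = \psi_{\ove\sigma}$, so the left-bottom route yields $\sum_B \psi_{\ove\sigma}([\ove\sigma_B^-])\otimes\psi_{\ove\sigma}([\ove\sigma_B^+])$.

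Each $\ove\sigma_B^\pm$ is a face of $P(\ove\sigma)$. Its class is pushed forward from the fundamental class of its pruning, and the pruning is a snake den. Functoriality, the product rule and the normalization in \Cref{thm:homology_morphisms} then give $\psi_{\ove\sigma}([\ove\sigma_B^\pm]) = R_{\ove\sigma_B^\pm}$, the generalized ribbon function of that ribbon diagram. Here I must check that pruning matches the ribbon $\lambda/\nu$ cut out by the lattice path $B$. I believe this is the intended reading of ``recognizing that $s_B^\pm$ are ribbons'': vines contribute nothing, and corner-joined components multiply as $R_\alpha R_\beta = R_{\alpha\oplus\beta}$.

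On the other route, $\psi_s([V(F)]) = R_{\ove\sigma}$ by the same reasoning. By \Cref{prop:coproduct_nsym}, $\Delta R_{\ove\sigma} = \sum_\nu R_{\lambda/\nu}\otimes R_{\nu/\mu}$, summed over lattice paths $\nu$ in the diagram. This is exactly \eqref{eq:coprod_R} for $\ove\sigma$, so the two routes agree on $[V(F)]$.

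Finally, the classes $[V(F)]$ span $A_{\bl}(X_s)$. For a snake they include the basis of \Cref{cor:basis_of_A}. For a snake den, products of those basis elements are again face classes, as in the proof of \Cref{thm:homology_morphisms}. Commutativity on a spanning set gives commutativity of the square.

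The main obstacle is the bookkeeping identification of $\ove\sigma_B^-$ and $\ove\sigma_B^+$ with the skew pieces $\nu/\mu$ and $\lambda/\nu$, in the correct tensor order. This includes checking that ``Gale-below $B$'' corresponds to the lower region and hence to the left tensor factor in \Cref{prop:coproduct_nsym}, and handling vines correctly when $\nu$ touches the boundary. I would first verify this on the single snake $\pr a\rp$ and then extend to dens using the product decomposition of both sides.
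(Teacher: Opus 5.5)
Your proposal is correct and follows the paper's proof. The paper likewise uses functoriality of $\psi$ to reduce to fundamental classes of snake dens; functoriality together with naturality of the diagonal under slither embeddings is exactly your face-by-face computation. On fundamental classes the claim is \Cref{prop:coproduct_formula} combined with \eqref{eq:coprod_R}.

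The orientation bookkeeping you flag is already settled by \eqref{eq:coprod_R}, and you have it reversed: the bases Gale-below $B$ form $s_B^+$, the upper piece $\nu/\mu$, which sits in the \emph{right} tensor factor. The slip is harmless in any case, since $\nsym$ is cocommutative.
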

\begin{proof}
    By the functoriality of $\psi_s$, it is enough to prove that the diagram commutes in degree $\dim X_s$.
    Given \Cref{thm:homology_morphisms}  and \eqref{eq:coprod_R}, this is precisely the content of \Cref{prop:coproduct_formula}.
\end{proof}

\begin{Df}
    Let $s$ be a snake.
    As we have seen in \Cref{cor:basis_of_A}, the classes $[r_s(t)]$ as $t$ ranges over the extended binary strings in $\ebs(s)$
    of length $k-1$ form a $\Z$-basis of 
    $A_k(X_s)$. We write $c_t$ for the elements of the dual basis of $A^k(X_s)$. (The notation suppresses the dependence on $s$.)
\end{Df}
We can now state the dual form of \Cref{prop:coproduct_formula}, in Chow cohomology.
Recall from \Cref{ssec:hopf_stuff} the notation $F_t$ for the fundamental quasisymmetric function indexed by a binary string $t$.
\begin{Cor}\label[Cor]{cor:struct_cont}
    Let $s$ be a snake and let $u, v,t \in \ebs(s)$.  Write $\lambda_{u, v}^t$ for the (unique) structure constants satisfying
    \[
        c_u \cup c_v = \sum_{t\in \ebs(s)} \lambda_{u, v}^t c_t. 
    \]
    Then, $\lambda_{u, v}^t = [F_t](F_u F_v)$,
    where the symbol $[F_t]$ means we are extracting the coefficient of $F_t$ in the subsequent expression.
\end{Cor}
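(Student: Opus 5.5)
The plan is to dualize \Cref{cor:diagonal}. By \Cref{lm:kunneth_chow} (whose hypothesis holds for complete toric varieties by \cite{int_theory_spherical}*{Theorem 2}), the Kronecker map $A^\bl(X_s)\to\Hom(A_\bl(X_s),\Z)$ is an isomorphism of rings. The ring structure on the right is the one dual to the coalgebra structure $\Delta=\kappa^{-1}\circ\delta_*$. Concretely, for any classes $c,c'$ and any $\alpha\in A_\bl(X_s)$ we have $\inner{c\cup c',\alpha}=\inner{c\otimes c',\delta_*\alpha}$. Hence
\[
\lambda_{u,v}^t=\inner{c_u\cup c_v,[r_s(t)]}=\inner{c_u\otimes c_v,\delta_*[r_s(t)]}.
\]
So it suffices to compute the coefficient of $[r_s(u)]\otimes[r_s(v)]$ in $\delta_*[r_s(t)]$, written in the basis of \Cref{cor:basis_of_A}.

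Next, transport this computation to $\nsym$ via $\psi_s$. By \Cref{thm:homology_relation}, $\psi_s$ is an isomorphism onto $\nsym_s$ sending $[r_s(t)]\mapsto R_t$. By \Cref{cor:diagonal}, $(\psi_s\otimes\psi_s)\circ\delta_*=\Delta\circ\psi_s$. Therefore the coefficient we want equals the coefficient of $R_u\otimes R_v$ in $\Delta R_t\in\nsym\otimes\nsym$. One point needs checking: $\Delta R_t$ must lie in $\nsym_s\otimes\nsym_s$, so that reading off coefficients in the image of $\psi_s\otimes\psi_s$ is legitimate. This follows from commutativity of the diagram itself, since $\Delta R_t$ is the image of $\delta_*[r_s(t)]$. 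Alternatively, every term $R_{u'}\otimes R_{v'}$ appearing in $\Delta R_t$ has $u',v'$ subwords of $t$, hence of $s$. Because the $R_w$ form a basis of $\nsym$, the coefficient is then unambiguous.

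Finally, use the duality between $\nsym$ and $\qsym$: the pairing has $\inner{F_w,R_{w'}}=\delta_{w,w'}$, and multiplication in $\qsym$ is dual to comultiplication in $\nsym$. This gives
\[
[R_u\otimes R_v]\,\Delta R_t=\inner{F_u\otimes F_v,\Delta R_t}=\inner{F_uF_v,R_t}=[F_t](F_uF_v).
\]
I expect the main care to be bookkeeping rather than any real obstacle. One check is that the extended-string conventions ($\eps$, with $F_\eps=R_\eps=1$, $c_\eps$ dual to the point class $[r_s(\eps)]$) are consistent on both sides. The other is that the degree grading matches under $\psi_s$, which is what lets us work degree by degree.
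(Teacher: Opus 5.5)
Your proposal is correct and follows the paper's proof essentially verbatim. The paper's argument is the single chain $\lambda_{u,v}^t = \langle c_u\cup c_v,[r_s(t)]\rangle = \langle c_u\otimes c_v,\delta_*[r_s(t)]\rangle = \langle F_u\otimes F_v,\Delta R_t\rangle = \langle F_uF_v,R_t\rangle = [F_t](F_uF_v)$, justified by the same inputs you cite: the Fulton--MacPherson--Sottile--Sturmfels K\"unneth theorem, the Kronecker-isomorphism lemma, and the diagonal compatibility of $\psi_s$. Your extra check, that coefficients can be read off inside $\nsym_s\otimes\nsym_s$ because $\psi_s$ is an isomorphism onto a span of basis elements, is a detail the paper leaves implicit.
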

\begin{proof}
    Write $\inner{-,-}$ for the canonical pairings between Chow cohomology and homology and between $\qsym$ and $\nsym$. Since $\psi_s([r_s(t)]) = R_t$, the following is formal from \cite{int_theory_spherical}*{Theorem 2}, \Cref{lm:kunneth_chow}, and \Cref{cor:diagonal}. 
    \[
        \lambda_{u,v}^t = 
        \inner{c_u \cup c_v , [r_s(t)]}
        = \inner{c_u \otimes c_v,  \delta_* [r_s(t)]}
        = \inner{F_u \otimes F_v, \Delta R_t}
        = \langle F_u F_v, R_t \rangle = [F_t](F_u F_v). 
    \qedhere
    \]
\end{proof}

\begin{Thm}\label{thm:cohomology_functorial}
    Let $s$ be a snake den. Then, the $\Z$-module homomorphism $ \psi_s^{\vee}\colon \qsym \to A^\bl(X_s)$ dual to $\psi_s$ is a graded ring homomorphism. 
    Suppose further that $s$ is a snake.
    Then $\psi_s^{\vee}$ is surjective; it maps
     $F_t$ to $c_t$ if $t\in \ebs(s)$ and to $0$ otherwise.
     Its kernel is then the $\Z$-submodule $\Span_{\Z} \{F_t \mid t \notin \ebs(s) \}$. Thus for any snake $s$ we get a graded ring isomorphism $A^\bl(X_s) \cong \qsym_s \deq \qsym/\ide{F_t \mid t \notin \ebs(s)}.$
\end{Thm}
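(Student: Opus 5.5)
We plan to obtain the ring structure by dualizing the compatibility of $\psi_s$ with coproducts, and then to read off the image and kernel from the bases.

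\emph{Step 1: $\psi_s^\vee$ is a ring homomorphism.} For any snake den $s$, the Künneth map is an isomorphism by \cite{int_theory_spherical}*{Theorem 2}. By \Cref{lm:kunneth_chow}, the Kronecker map identifies $A^\bl(X_s)$ with $\Hom(A_\bl(X_s),\Z)$, and under this identification the cup product is dual to $\delta_*$ (via Künneth), while the unit $1\in A^0(X_s)$ is dual to the pushforward $e$ to a point. Define $\psi_s^\vee(f)\deq\inner{f,\psi_s(-)}$. \Cref{cor:diagonal} says $\Delta\circ\psi_s=(\psi_s\otimes\psi_s)\circ\delta_*$. Pairing with $F_u\otimes F_v$ and using $\inner{F_uF_v,x}=\inner{F_u\otimes F_v,\Delta x}$ gives
\[\inner{\psi_s^\vee(F_uF_v),c}=\inner{F_u\otimes F_v,\Delta\psi_s(c)}=\inner{\psi_s^\vee(F_u)\otimes\psi_s^\vee(F_v),\delta_*c}=\inner{\psi_s^\vee(F_u)\cup\psi_s^\vee(F_v),c}\]
for every $c\in A_\bl(X_s)$, so $\psi_s^\vee$ is multiplicative.

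For unitality we check degree $0$. The group $A_0(X_s)$ is spanned by the class of a point. This is the class of a vertex, i.e.\ a slither with empty pruning, so $\psi_s$ sends it to $\psi_\eps([X_\eps])=1=R_\eps$ by functoriality and normalization (\Cref{thm:homology_morphisms}). Hence $\psi_s^\vee(1)$ pairs to $1$ with the point class. Since the degree map realizes the unit, $\psi_s^\vee(1)=1$. Degree preservation holds because $\psi_s$ is graded: \Cref{obs:dim_count} places $[r_s(t)]$ and $R_t$ both in degree $\abs{t}+1$. The one technical point is that $\qsym$ and $\nsym$ are graded duals of one another, so the pairing is degreewise perfect and these formal manipulations are legitimate.

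\emph{Step 2: snakes.} For a snake $s$, \Cref{thm:homology_relation} gives $\psi_s([r_s(t)])=R_t$ for $t\in\ebs(s)$, and the classes $[r_s(t)]$ form a basis of $A_\bl(X_s)$. Then
\[\inner{\psi_s^\vee(F_w),[r_s(t)]}=\inner{F_w,R_t}=\delta_{w,t},\]
since the bases $\{F_w\}$ and $\{R_t\}$ are dual. So $\psi_s^\vee(F_w)=c_w$ when $w\in\ebs(s)$, and $\psi_s^\vee(F_w)=0$ otherwise, because a class pairing to zero with a basis of $A_\bl(X_s)$ is zero by \Cref{lm:kunneth_chow}.

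Surjectivity follows because the $c_t$ form a basis of $A^\bl(X_s)$. The kernel is the span of the $F_w$ with $w\notin\ebs(s)$, since the images of the remaining $F_t$ are linearly independent.

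\emph{Step 3: the kernel is an ideal.} This is the only point needing an extra argument, and it is automatic because the kernel of a ring map is an ideal. It is also consistent with \Cref{prop:quasisubword}: if $w$ is not a subword of $s$ and $F_t$ occurs in $F_wF_v$, then $w$ is a subword of $t$, so $t$ is not a subword of $s$ either. Hence the span equals the ideal $\ide{F_t\mid t\notin\ebs(s)}$, and we obtain $A^\bl(X_s)\cong\qsym_s$ as graded rings.

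Two points need care. The first is keeping the degree conventions and the $\eps$ convention consistent. The second is verifying that the cup product is genuinely the dual of $\delta_*$ under the Kronecker isomorphism, which follows from \Cref{lm:kunneth_chow}. This is the same formal fact used in \Cref{cor:struct_cont}; that corollary already gives multiplicativity on the basis $c_t$ in the snake case, and Step 1 extends it to snake dens.
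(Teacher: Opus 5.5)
Your proof is correct, but it is organized differently from the paper's.

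\textbf{The paper's route.} It first reduces the snake den case to the snake case. It writes $\psi_s^{\vee}$ as $\psi_{s'}^{\vee}$ followed by Chow-cohomology pullback along a slither embedding of $X_s$ into a snake variety $X_{s'}$. Then, for a snake, it proves multiplicativity by matching structure constants in the dual basis $c_t$ via \Cref{cor:struct_cont}. That corollary only concerns $u,v\in\ebs(s)$. So the paper needs \Cref{prop:quasisubword} as an extra combinatorial input, to see that $\psi_s^{\vee}(F_uF_v)=0$ whenever $u$ or $v$ is not a subword of $s$.

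\textbf{Your route.} You dualize \Cref{cor:diagonal} directly against an arbitrary class $c\in A_\bl(X_s)$. This is basis-free, which buys you the following.
\begin{itemize}
\item It applies to snake dens directly. There is no reduction step, and no need for the classes $c_t$, which the paper only defines for snakes.
\item It treats the $F_u$ killed by $\psi_s^{\vee}$ on the same footing as the others. The ideal property of $\Span_{\Z}\{F_t\mid t\notin\ebs(s)\}$ therefore becomes a consequence rather than an input.
\item Taking $s$ to be the snake $t$, your argument even recovers \Cref{prop:quasisubword} as a byproduct.
\end{itemize}

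\textbf{Trade-offs.} The paper's route buys an explicit formula, $\lambda_{u,v}^t=[F_t](F_uF_v)$, for the cup product on the affine-paving dual basis, recorded as a result in its own right. You additionally verify unitality, via the point class and the normalization $\psi_\eps([X_\eps])=1$, which the paper leaves implicit.
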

\begin{proof}
    The first claim for snake dens follows from the case of snakes
    and the functoriality in \Cref{thm:homology_morphisms}
    since for any snake $s'$ into which $s$ admits a morphism, $\psi_s^{\vee}$ factors as $\qsym \xrightarrow{\psi_{s'}^{\vee}}A^{\bl}(X_{s'})\to A^{\bl}(X_s)$, where the second map is pullback in Chow cohomology.
    We may therefore assume that $s$ is a snake for the rest of the proof.

    The rule for where $F_t$ maps is immediate upon dualizing 
    \Cref{thm:homology_morphisms}.
    The surjectivity and kernel are then clear from this description.
    It remains only to show that $\psi_s^{\vee}$ is a ring homomorphism.
    Let $F_u, F_v \in \qsym$. Write $F_u F_v = \sum_{t} [F_t](F_u F_v) F_t$. Then, $\psi_s^\vee(F_uF_v)= \sum_{t \in\ebs(s)} [F_t](F_u F_v) c_t$. By \Cref{cor:struct_cont}, we have $\psi_s^\vee(F_u)  \cup \psi_s^\vee(F_v) = \sum_{t \in \ebs (s)} [F_t](F_u F_v) c_t$ if $u, v\in \ebs (s)$ and $\psi_s^\vee(F_u) \cup \psi_s^\vee(F_v) = 0$ otherwise. So it is enough to show for all $t\in \ebs(s)$ that $[F_t](F_u F_v) = 0$ unless $u, v \in\ebs(s)$; this follows from \Cref{prop:quasisubword}.
\end{proof}

\section{Relationship to the Grassmannian}\label{sec:rel_to_grass}

Our goal in this section is to explain the maps on Chow homology and cohomology induced by the embedding of snake dens into the Grassmannian. Recall from \Cref{sec:t_orb} that for any snake den $M$ of rank $r$ on $[n]$ we have chosen a canonical realization inducing a map $\phi_M\colon X_{P(M)} \to \Gr(\rk M,E(M))$. First, we show that these morphisms interact well with slither morphisms. 

\begin{Df}\label[Df]{df:slither_grass}
    Let $S\colon N \to M$ be a slither. Write $f_S\colon E(N) \to E(M)$ for the inclusion of groundsets. Define an embedding $i_S\colon \Gr(\rk N, E(N)) \to \Gr(\rk M, E(M))$ by $i_S^*(p_I) = p_{B}$ if $I = f_S(B)\sqcup \Gamma(S)$ for some $B \in \binom{E(N)}{\rk(N)}$ and $i_S^*(p_I) = 0$ otherwise.\footnote{That the image is contained in $\Gr(\rk M, E(M))$ can be seen by checking that the pullbacks $i^*_S(p_I)$ satisfy the Plücker relations.}
\end{Df}

\begin{Lm}\label[Lm]{lm:little_square}
    Let $N$ and $M$ be snake den matroids and let $S \colon N \to M$ be a slither morphism. Let $i_S$ be as in \Cref{df:slither_grass}. Then, the following diagram commutes
    \[
        \begin{tikzcd}
            X_{P(N)} \arrow[r, "\phi_N"] \arrow[d, "S"'] & \Gr(\rk N, E(N)) \arrow[d, "i_S"] \\
            X_{P(M)} \arrow[r, "\phi_M"'] & \Gr(\rk M, E(M))
        \end{tikzcd}. 
    \]
\end{Lm}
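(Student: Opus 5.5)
Both composites $\phi_M \circ S$ and $i_S \circ \phi_N$ are morphisms from the irreducible variety $X_{P(N)}$ into the separated scheme $\Gr(\rk M, E(M))$. It therefore suffices to show that they agree on the dense open torus orbit $O_{P(N)} \cong T_N$. We check this in Pl\"ucker coordinates, after first making both maps equivariant with respect to the same torus action.

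\textbf{Step 1: equivariance.} Let $T_{E(N)}$ act on $\Gr(\rk M, E(M))$ through the homomorphism $T_{E(N)} \to T_{E(M)}$ that sends $(t_e)_e$ to the element with $f_S(e)$-coordinate $t_e$ and all other coordinates $1$. By \Cref{df:slither_grass}, $i_S^*(p_{f_S(B)\sqcup\Gamma(S)}) = p_B$, so $i_S$ is equivariant for this action: scaling the coordinate of $e$ rescales $p_B$ and $p_{f_S(B)\sqcup\Gamma(S)}$ by the same factor $t^{e_B}$. The map $\phi_N$ is $T_{E(N)}$-equivariant by construction. The embedding $S\colon X_{P(N)} \to X_{P(M)}$ comes from the coordinate morphism $F$ of \Cref{subsec:equiv}, so it is equivariant along the induced map of tori. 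Since $\phi_M$ is $T_{E(M)}$-equivariant, the composite $\phi_M\circ S$ is equivariant for the same action of $T_{E(N)}$.

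\textbf{Step 2: reduction to the base point.} By Step 1 both composites are $T_{E(N)}$-equivariant. They therefore agree on the dense orbit once they agree at the distinguished point $1$ of the dense torus. On one side, $i_S(\phi_N(1))$ has Pl\"ucker coordinates $p_I = 1$ exactly when $I = f_S(B) \sqcup \Gamma(S)$ with $B \in \mathcal{B}(N)$, and $p_I = 0$ otherwise. Because $S$ is a slither with $\ove S \cong N$ via $f_S$, these sets $I$ are precisely the bases of $S$. So $i_S(\phi_N(1))$ is the point with $0/1$ Pl\"ucker coordinates supported on $\mathcal{B}(S)$.

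On the other side, we need $\phi_M$ to send $S(1)$, the distinguished point of the orbit $O_{P(S)}$, to this same point. By \Cref{sec:toric}, $S(1)$ is the limit $\lim_{t\to 0}\lambda(t)\cdot 1$ for a cocharacter $\lambda$ in the relative interior of the cone $\sigma_{P(S)}$. Apply $\phi_M$ and use the realization of \Cref{prop:easy_realization}, in which every nonzero coordinate equals $1$. Along $\lambda(t)$, the coordinates $p_I$ for $e_I \in P(S)$ all acquire the same extremal power of $t$, and every other basis of $M$ is strictly dominated. After projective rescaling, the limit is therefore the $0/1$ vector supported on the vertices of $P(S)$, that is, on $\mathcal{B}(S)$.

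\textbf{Main obstacle.} The delicate point is the sign convention in Step 2. The torus action on $\Gr$ is $L \mapsto A_L g^{-1}$, and the toric embedding uses the characters $\chi^{-m}$. One must check that the face selected by the limit in $\Gr$ is $P(S)$ itself and not the opposite face. I would settle this by noting that the equivariant identification $X_{P(M)} \cong \ove{T\cdot\phi_M(1)}$ of \Cref{sec:t_orb} sends fixed points $x_B \mapsto \Bbbk^B$. It is therefore compatible with the orbit–face correspondence, so the limit along the cone $\sigma_{P(S)}$ must land in the orbit whose fixed points are $\{\Bbbk^B : B\in\mathcal{B}(S)\}$. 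Given this, the computation in Step 2 identifies the limit point exactly, and the two composites agree.
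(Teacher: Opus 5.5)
Your argument is correct, but it takes a different route from the paper's. The paper uses no density, equivariance, or limit arguments. It compares the two composites directly, pulling back each Pl\"ucker coordinate $p_I$ as a section of $\mathcal{O}(P(N))$. Both pullbacks vanish unless $I = f_S(B)\sqcup\Gamma(S)$ with $B\in\mathcal{B}(N)$. In that case $\phi_N^*(i_S^*(p_I)) = \phi_N^*(p_B) = \chi^{-B}$, while $\phi_M^*(p_I)=\chi^{-I}$ and $S^*(\chi^{-I})=\chi^{-B}$, because the coordinate morphism sends $e_B$ to $e_I$.

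That is a one-line global computation. It takes for granted that $\phi_M$ is the map given by the linear system of $P(M)$ with $p_I\mapsto\chi^{-I}$, and that the line bundles are compatible under $S$. The orientation question you raise is thus silently absorbed into the conventions of \Cref{sec:toric}.

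Your approach replaces that bookkeeping with a rigidity argument: separatedness, irreducibility, and $T_{E(N)}$-equivariance reduce everything to the base point, after which one explicit limit remains. It is longer, but it uses only the orbit-level description of $\phi_M$ and makes the orientation issue visible.

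That issue actually resolves directly from the conventions. Under $L\mapsto A_Lg^{-1}$, the coordinate $p_I$ scales by $\chi^{-e_I}(g)$. Along $\lambda(t)$ it therefore behaves like $t^{-\langle e_I,\lambda\rangle}$. As $t\to 0$ the dominant coordinates lie on the face maximizing $\lambda$, which is exactly $P(S)$ when $\lambda$ is in the relative interior of $\sigma_{P(S)}$. Your fixed-point argument via $x_B\mapsto\Bbbk^B$ reaches the same conclusion.
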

\begin{proof}
    Let $I \in \binom{E(M)}{\rk M}$. 
    It is easy to see that $\phi_N^*(i_S^*(p_I)) = 0 = S^*(\phi_M^*(p_I))$ unless $I = f_S(B) \sqcup \Gamma(S)$ for some $B \in \mathcal{B}(N)$.
    If $I$ is of this form 
    then $\phi_N^*(i_S^*(p_I)) = \phi_N^*(p_B) = \chi^{-B}$ where $\chi^{-B}$ is the section of $\mathcal{O}(P(N))$ corresponding to $B$.
    In the other direction, 
    $\phi_M^*(p_I) = \chi^{-I} \in \mathcal{O}(P(M))$ since $I$ is a basis of $M$ and $S^*(\chi^{-I}) = \chi^{-B}$ since the
    coordinate morphism induced by $S$ sends 
    $e_B$ to $e_I$.
\end{proof}

Note that the map $i_S \colon \Gr(\rk N,E(N)) \to \Gr(\rk M, E(M))$ is, up to a reordering of the groundset, the standard inclusion of $\Gr(r,n)$ in $\Gr(k, m)$ where $r = \rk N, n = |E(N)|$, $k = \rk M$, and $m = |E(M)|$. This implies that on homology, we have $(i_S)_*([\Omega_\lambda]) = [\Omega_\lambda]$. 
This gives the commutative diagram
\[
    \begin{tikzcd}[
      column sep={7em,between origins},
      row sep={3em,between origins}]
      A_{\bl}\bigl(\operatorname{Gr}(\rk N,E(N))\bigr)
        \arrow[rr, "{(i_S)_*}"] \arrow[dr, hook]
      && A_{\bl}\bigl(\operatorname{Gr}(\rk M,E(M))\bigr)
        \arrow[dl, hook'] \\
      & \sym &
    \end{tikzcd}
\]
By \Cref{thm:homology_morphisms}, we similarly have the commutative diagram.
\[
\begin{tikzcd}[
  column sep={7em,between origins},
  row sep={3em,between origins}
]
  A_{\bl}(X_{P(N)})
    \arrow[rr, "{S_*}"] \arrow[dr]
  && A_{\bl}(X_{P(M)})
    \arrow[dl] \\
  & \nsym &
\end{tikzcd}
\]
The natural expectation is that these should be compatible with the surjection $\nsym \to \sym$ taking $R_\alpha \mapsto s_\alpha$. That is, the pushforward in Chow homology induced by the inclusion of a toric Richardson subvariety
into the Grassmannian should agree with
 the map sending non-commutative ribbon functions to
 the corresponding
 ribbon Schur functions. We now show this is indeed the case. 
\begin{Prop}\label[Prop]{prop:compat}
    Let $S\colon N \to M$ be a slither morphism. Then the following diagram commutes. 
    \[
        \begin{tikzcd}[
  column sep={8em,between origins},
  row sep={4em,between origins}
]
  & \nsym \arrow[rr]
      \arrow[from=4-1, to=1-2]
      \arrow[from=4-3, to=1-4]
  && \sym \\
  A_\bl(X_{P(N)})
    \arrow[ur]
    \arrow[rr, crossing over, "(\phi_N)_*" below]
    \arrow[dd, "{S_*}"']
  && A_\bl\bigl(\operatorname{Gr}(\rk N,E(N))\bigr)
       \arrow[ur]
       \arrow[dd, "{(i_S)_*}"']
  & \\
  & & & \\
  A_\bl(X_{P(M)}) \arrow[rr, "(\phi_M)_*" below]
  && A_\bl\bigl(\operatorname{Gr}(\rk M,E(M))\bigr) &
\end{tikzcd}
    \]
\end{Prop}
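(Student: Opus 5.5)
The plan is to check the cube one face at a time. Most faces already commute by earlier results. The left triangle ($S_*$ compatible with the maps to $\nsym$) is the functoriality in \Cref{thm:homology_morphisms}. The right triangle ($(i_S)_*$ compatible with the inclusions into $\sym$) is the observation just before the statement that $(i_S)_*[\Omega_\lambda]=[\Omega_\lambda]$. The front square commutes after applying $A_\bl$, by \Cref{lm:little_square} and functoriality of proper pushforward. The back edge $\nsym\to\sym$ is the fixed map $R_\alpha\mapsto s_\alpha$. What remains is the top parallelogram, for an arbitrary snake den $N$:
\[
A_\bl(X_{P(N)})\xrightarrow{\psi_N}\nsym\to\sym \quad\text{equals}\quad A_\bl(X_{P(N)})\xrightarrow{(\phi_N)_*}A_\bl(\Gr(\rk N,E(N)))\hookrightarrow\sym .
\]
The same argument then applies verbatim to $M$.

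To prove this identity it suffices to test it on the classes $[V(F)]$ of torus-invariant subvarieties, since these span $A_\bl(X_{P(N)})$. The $[r_s(t)]$ form a basis for each snake factor (\Cref{cor:basis_of_A}), and products of them form a basis for a snake den. Let $F$ be a face with slither word $\sigma\colon\ove\sigma\to N$.
\begin{itemize}
\item \emph{Upper path.} By \Cref{thm:homology_relation} and \Cref{thm:homology_morphisms}, $\psi_N[V(F)]=R_{\ove\sigma}$, where $\ove\sigma$ is read as a (possibly disconnected) ribbon. Its image in $\sym$ is the ribbon Schur function $s_{\ove\sigma}$.
\item \emph{Lower path.} Since $[V(F)]=\sigma_*[X_{\ove\sigma}]$, \Cref{lm:little_square} gives $(\phi_N)_*[V(F)]=(i_\sigma)_*(\phi_{\ove\sigma})_*[X_{\ove\sigma}]$. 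Because $(i_\sigma)_*$ preserves Schubert classes, hence their images in $\sym$, it is enough to show that $(\phi_{\ove\sigma})_*[X_{\ove\sigma}]$ corresponds to $s_{\ove\sigma}$ in $\sym$.
\end{itemize}

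This last step is the main point, and it is the same statement for every snake den $P$: $\phi_P$ maps $X_P$ isomorphically (so with degree one) onto the Richardson variety $\Omega_\lambda^\mu$, where $\lambda/\mu$ is the snake den diagram of $P$. Granting that, the class is $s_{\lambda/\mu}$, which is the ribbon Schur function of that diagram, and we are done. I would argue the isomorphism as follows.
\begin{itemize}
\item The snake den diagram contains no $2\times 2$ box. So by \Cref{cor:toric_richardsons_in_GR} the Richardson variety $\Omega_\lambda^\mu$ has a dense $T$-orbit, namely the orbit of a general point.
\item A general point realizes the lattice path matroid $M[A,B]$, which equals $P$ (\Cref{subsec:richardson_and_snake}).
\item $P$ is regular, so by \Cref{thm:unique_torus} that dense orbit closure is the unique $T$-orbit closure with moment polytope $P(P)$.
\item That unique orbit closure is exactly $\phi_P(X_P)$.
\item Finally, $\phi_P$ is an isomorphism onto its image, since the orbit closure is normal and isomorphic to $X_{P(P)}$ (\Cref{sec:t_orb}).
\end{itemize}
The one point to double-check is that the bases of $M[A,B]$ are the lattice paths in the diagram including its vines. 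This holds because that is how the snake den diagram was defined.
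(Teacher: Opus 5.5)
Your proposal is correct and follows essentially the same route as the paper. You reduce to the square comparing $A_\bl(X_{P(N)})\to A_\bl(\Gr(\rk N,E(N)))\to\sym$ with $A_\bl(X_{P(N)})\to\nsym\to\sym$, use \Cref{lm:little_square} to reduce to the fundamental class, and then match $(\phi_N)_*[X_{P(N)}]=[\Omega_{\lambda_B}^{\lambda_A}]\mapsto s_{\lambda_B/\lambda_A}$ against $R_{\lambda_B/\lambda_A}$. Your extra argument that $\phi_N$ maps $X_{P(N)}$ isomorphically onto the Richardson variety (via its dense orbit and \Cref{thm:unique_torus}) simply spells out what the paper cites as ``the discussion in \Cref{subsec:richardson_and_snake}''.
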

\begin{proof}
Given the discussion above, it remains only to prove the commutativity of the square
\[
\begin{tikzcd}[
  row sep={3em,between origins},
  column sep={10em,between origins}
]
  A_{\bl}(X_{P(N)}) \arrow[r] \arrow[d]
    & A_{\bl}\bigl(\operatorname{Gr}(\rk N,E(N))\bigr) \arrow[d] \\
  \nsym \arrow[r] & \sym
\end{tikzcd}
\]
By \Cref{lm:little_square}, it is enough to chase $[X_{P(N)}]$. 
From the discussion in \Cref{subsec:richardson_and_snake}, we get  $(\phi_N)_*([X_{P(N)}]) = [\Omega_{\lambda_B}^{\lambda_A}]$, where
$N$ is the lattice path matroid associated to the Gale interval $[A,B]_G$.
The image of  $[\Omega_{\lambda_B}^{\lambda_A}]$ in 
$\sym$ is $s_{\lambda_B/\lambda_A}$.
Since \Cref{thm:homology_morphisms} shows that  $[X_{P(N)}]$ maps to $R_{\lambda_B/\lambda_A}$ in $\nsym$, we are done. 
\end{proof}

\begin{Obs}
    It follows formally from \Cref{prop:compat} that the following diagram also commutes.
    \[
        \begin{tikzcd}[
  column sep={8em,between origins},
  row sep={4em,between origins}
]
  & \qsym \arrow[rr, <-]
      \arrow[from=4-1, to=1-2, <-]
      \arrow[from=4-3, to=1-4, <-]
  && \sym \\
  A^\bl(X_{P(N)})
    \arrow[ur, <-]
    \arrow[rr, <-, crossing over, "\phi_N^*" below]
    \arrow[dd, <-, "{{S}^*}"']
  && A^\bl\bigl(\operatorname{Gr}(\rk N,E(N))\bigr)
       \arrow[ur, <-]
       \arrow[dd, <-, "{(i_S)^*}"']
  & \\
  & & & \\
  A^\bl(X_{P(M)}) \arrow[rr, <-, "\phi_M^*" below]
  && A^\bl\bigl(\operatorname{Gr}(\rk M,E(M))\bigr) &
\end{tikzcd}
    \]
\end{Obs}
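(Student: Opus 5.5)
The plan is to obtain the cohomological cube by dualizing the homological cube of \Cref{prop:compat}, face by face. Every group in that cube is free of finite rank in each degree. For $X_{P(N)}$ and $X_{P(M)}$ this follows from the affine pavings of \Cref{sec:paving} (using products of pavings for snake dens). For the Grassmannians it is classical. For $\nsym$, $\sym$ and $\qsym$ we work with graded duals. So we may apply the functor $\Hom(-,\Z)$ degreewise, and it reverses every arrow.

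First, we identify each dualized vertex with the object drawn. By \Cref{lm:kunneth_chow}, together with the Künneth isomorphism for toric varieties \cite{int_theory_spherical}*{Theorem 2} and the standard analogue for Grassmannians (which have affine pavings), the Kronecker map $A^\bl(X)\to\Hom(A_\bl(X),\Z)$ is a ring isomorphism for each of the four varieties. The graded dual of $\nsym$ is $\qsym$, and the graded dual of $\sym$ is $\sym$ itself, via the Hall inner product for which the Schur functions are self-dual.

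Second, we identify each dualized arrow. For a proper morphism $f$, the dual of $f_*$ under the Kronecker pairing is $f^*$, by the projection formula $\inner{f^*a,c}=\inner{a,f_*c}$. This applies to $S$, $i_S$, $\phi_N$ and $\phi_M$. The dual of $\psi$ is $\psi^\vee$, as in \Cref{thm:cohomology_functorial}. On the Grassmannian side, the map $A_\bl(\Gr)\to\sym$ sends $[\Omega_\lambda]\mapsto s_\lambda$. Its dual sends $s_\lambda\mapsto\sigma^\lambda$ (or to $0$ when $\lambda$ does not fit), which is exactly the quotient map described in \Cref{sec:richardson}. Finally, the surjection $\nsym\to\sym$, $R_\alpha\mapsto s_\alpha$, dualizes to the inclusion $\sym\hookrightarrow\qsym$.

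The only point needing care is this last identification. We must check that $\inner{s_\lambda, R_\alpha}_{\qsym\times\nsym}=\inner{s_\lambda,s_\alpha}_{\sym}$. This is the standard fact that the map $\nsym\to\sym$ is adjoint to the inclusion $\sym\hookrightarrow\qsym$ (see \cite{GR20}). Concretely, $s_\lambda=\sum_{T}F_{\Des(T)}$ summed over standard Young tableaux of shape $\lambda$, and the ribbon Schur function $s_\alpha$ has Schur coefficients counting the same tableaux.

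Once every face of the homological cube is known to commute, the dual faces commute as well, since dualization is a contravariant functor. This gives the diagram in the statement.
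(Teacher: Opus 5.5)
Your proposal is correct and follows the same route as the paper, which simply notes that the cohomological diagram is the degreewise $\Z$-dual of the homological one in \Cref{prop:compat}. You spell out the identifications the paper leaves implicit: Kronecker duality for the four varieties, $f^*$ adjoint to $f_*$, the Hall self-duality of $\sym$, and the adjointness of $\nsym\to\sym$ with the inclusion $\sym\hookrightarrow\qsym$.
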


\section{An infinite quasisymmetric Grassmannian}\label{sec:infinite}

We now restrict ourselves to working over $\C$ and consider varieties with their analytic topology.
We will require some basic facts of algebraic topology; the books \cite{hatcher}, \cite{may99}, and \cite{may12} provide more than enough background.
Our goal will be to assemble the finite-dimensional snake den varieties into an infinite-dimensional ind-variety $Q_{\infty}$. 
The (singular) homology and cohomology groups of this space will be $\nsym$ and $\qsym$ respectively. 
We will equip $Q_{\infty}$ with the structure of a (weak) $H$-group. As a result, $H_{\bl}(Q_{\infty}, \Z)$ and $H^{\bl}(Q_{\infty}, \Z)$ become dual graded Hopf algebras. The isomorphisms 
$H_{\bl}(Q_{\infty}, \Z) \cong \nsym$ and $H^{\bl}(Q_{\infty}, \Z) \cong \qsym$ will be in the sense of Hopf algebras.
The previously constructed morphisms 
$H_{\bl}(X_s, \Z) \to \nsym$ and 
$\qsym\to H^{\bl}(X_s, \Z)$ for snake dens $s$
will be induced by inclusion maps 
$X_s \hookrightarrow Q_{\infty}$.\footnote{In Sections \ref{sec:homology} and \ref{sec:cohomology} we constructed such morphisms for Chow homology. As pointed out there, \Cref{lm:cycle_class} allows us to transport these results unchanged to singular homology and cohomology.}
Thus the behavior of the finite snake varieties is to be seen as a finite shadow of the story for $Q_{\infty}$.

\subsection{The construction}\label{subsec:construction}
We recall the following two observations about the category of snake den varieties:
\begin{itemize}
    \item Any two snake dens admit slither morphisms  to a common snake.
    \item Any two slither morphisms from a snake den variety into a snake variety are homotopic (\Cref{cor:homotopic}). Hence any two slither morphisms between snake den varieties are coequalized by (i.e.\ become equal after post-composing with) a further slither
    morphism into a snake variety.
\end{itemize}
These two observations combine to say that snake dens form a  filtered subcategory of the homotopy category of topological spaces.
Hence, morally, it should be possible to take the direct limit of all snake den varieties in this category. To avoid a plethora of technical issues, we will instead take the directed limit of a sequence of snakes in the category of topological spaces.
Thus we start by choosing an infinite sequence of extended snakes
$s(0), s(1), s(2), s(3), \dots$ (represented by their snake words) along with slithers 
$t(i, i+1)\colon s(i) \to s(i+1)$.\footnote{This implies that each $s(i)$ is a subword of $s(i+1)$.} 
For convenience, we insist that $s(0) = \eps$ be a point.
We also insist that the number of alternations between $0$ and $1$ in $s(i)$ be unbounded as $i \longrightarrow \infty$. A concrete choice (for $i \geq 1$) is given by setting $s(i)$ to be the initial segment of length $i-1$ taken from the infinite binary string $10101010\dots$ and setting 
$t(i, i+1) = \text{$\pr s(i) \rp \elll$ or $\pr s(i) \rp \llle$}$, depending on the parity of $i$. Later, we shall see that the choices we make here are almost entirely inconsequential.

We write $X(i)$ for $X_{s(i)}$ and still write 
$t(i, i+1)$ for the induced morphism of toric varieties.
More generally, for $i \leq j$ we write $t(i,j)\colon X(i) \to X(j)$ for 
$t(j-1, j) \circ \dots \circ t(i+1, i+2) \circ t(i, i+1)$. In particular, $t(i,i)$ is the identity map $X(i) \to X(i)$.
Equipped with their analytic topology over $\C$, each $X(i)$ is a compact Hausdorff space by \cite{SGA1}*{Propositions 3.1(viii), 3.2(v)}; it is simply connected by \cite{fult93toric}*{\textsection~3.2}.
Let $Q_{\infty}$ be the topological direct limit $\varinjlim_i X(i)$ and let $t(i)\colon X(i) \to Q_{\infty}$ for $i \geq 0$ be the canonical inclusions. It is easy to show, by a combination of Urysohn's lemma and Tietze's extension theorem \cite{TTD}*{Theorems 1.1.1, 1.1.2}, that $Q_{\infty}$ is a normal Hausdorff space. 
Given that each $X(i)$ is simply connected, 
the following observation implies that $Q_{\infty}$ is too.
\begin{Prop}
[{\cite{TTD}*{Proposition 1.4.5}}]\label{prop:to_finite}
    Let $X_1 \hookrightarrow X_2 \hookrightarrow \dots$ be an infinite sequence of embeddings of Hausdorff topological spaces. 
    Then any continuous map $Y \to \varinjlim_i X_i$ from a compact Hausdorff space $Y$ factors through $X_j$ for some finite $j$. 
\end{Prop}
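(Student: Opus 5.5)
The plan is to argue by contradiction, choosing one point of the image of $f\colon Y \to \varinjlim_i X_i$ from each stage that the image escapes, and showing that these points form an infinite closed discrete subset of the compact set $f(Y)$. Identify each $X_i$ with its image in $X_\infty \deq \varinjlim_i X_i$. The direct limit topology declares $U \subseteq X_\infty$ open (resp.\ closed) if and only if $U \cap X_i$ is open (resp.\ closed) in $X_i$ for every $i$. Each $X_i$ is closed in $X_{i+1}$: it is compact in the cases of interest, and for general Hausdorff spaces one uses that the embeddings are closed, as is implicit in the cited setting. Hence each $X_i$ is closed in $X_\infty$ with the subspace topology.

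Suppose $f(Y)$ is contained in no $X_j$. Passing to a subsequence of indices, I choose points $y_k \in Y$ with $x_k \deq f(y_k) \in X_{n_k} \setminus X_{n_{k-1}}$ for a strictly increasing sequence $n_1 < n_2 < \cdots$. Let $S = \{x_k\}$. For every $i$, the intersection $S \cap X_i$ is finite, because only the $x_k$ with $n_{k-1} < i$ can lie in $X_i$. A finite subset of a Hausdorff space is closed, so $S$ is closed in $X_\infty$. The same argument applies to every subset of $S$, so $S$ is discrete in the subspace topology.

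The set $f(Y)$ is compact, and $S$ is a closed subset of it, so $S$ is compact. A compact discrete space is finite, which contradicts the infinitude of $S$ (the $x_k$ are pairwise distinct because the sets $X_{n_k}\setminus X_{n_{k-1}}$ are disjoint). Therefore $f(Y) \subseteq X_j$ for some $j$.

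Finally, since $X_j$ carries the subspace topology from $X_\infty$, the corestriction $Y \to X_j$ is continuous. This gives the required factorization.

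The main obstacle is the topological bookkeeping in the first paragraph. One must verify that the subspace topology on $X_j$ agrees with its own topology, so that continuity of the factored map holds. One must also confirm that finite sets are closed in each $X_i$, which requires the Hausdorff hypothesis. Note that the argument needs $f(Y)$ compact but never uses that $X_\infty$ is Hausdorff.
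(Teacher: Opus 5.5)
Your argument is correct, and it is the standard one behind the cited result. The paper gives no proof of its own, only the reference to tom Dieck. The core idea is right: pick one image point from each new stage; the resulting set meets every $X_i$ in a finite, hence closed, set, so it is an infinite closed discrete subset of the compact set $f(Y)$, which is impossible.

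The only soft spot is your first paragraph. An embedding of Hausdorff spaces need not be closed (e.g.\ $(0,1)\hookrightarrow[0,1]$), so for the statement as given you may not assume $X_i$ is closed in $X_{i+1}$. You never actually need it, though.

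\begin{itemize}
\item The discreteness argument uses only that points are closed in each $X_i$.
\item That $X_j$ carries the subspace topology of $X_\infty$ holds for any sequence of embeddings. Given $U$ open in $X_j$, choose inductively $V_j=U$ and $V_{k+1}$ open in $X_{k+1}$ with $V_{k+1}\cap X_k=V_k$. Then $V=\bigcup_{k\ge j}V_k$ satisfies $V\cap X_k=V_k$ for $k\ge j$ and $V\cap X_k=U\cap X_k$ for $k<j$. So $V$ is open in $X_\infty$ with $V\cap X_j=U$, which gives continuity of the corestriction $Y\to X_j$.
\end{itemize}

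With that replacement your proof covers the general statement. In the paper's application, where each $X(i)$ is compact, your closedness remark is valid anyway.
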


We now explain how to embed \emph{any} snake den into $Q_{\infty}$. First suppose $s$ is a snake. The choice of the sequence $s(0), s(1), s(2), \dots$ ensures that we can find some $s(i)$ such that the binary string $s$ is a substring of $s(i)$. Then we embed $X_s$ into $Q_{\infty}$ using the composition 
$X_s \hookrightarrow X(i) \hookrightarrow Q_{\infty}$, where the first map is a slither morphism. If $s = s(i)$, this agrees with the colimit inclusion $X(i) \hookrightarrow Q_{\infty}$. If $s$ is a general snake den, we first embed $X_s$ into a snake variety $X_{s'}$ via a slither morphism and then embed $X_s$ into 
$Q_{\infty}$ using $X_s \hookrightarrow X_{s'} \hookrightarrow Q_{\infty}$. 
Any embedding $X_s \to Q_{\infty}$ obtained by such a procedure will be called a \textbf{slither embedding}.
There are infinitely many slither embeddings of any given snake den $X_s$ into $Q_{\infty}$; however, they are all \emph{homotopic}.
Indeed, given a pair of inclusions $X_s \hookrightarrow X(i) \hookrightarrow Q_{\infty}$
and $X_s \hookrightarrow X(j) \hookrightarrow Q_{\infty}$ as above, the two resulting slither embeddings are homotopic in 
$X(\max(i,j))$ by \Cref{cor:homotopic}. 

This uniqueness of slither embeddings up to homotopy ensures that we have well-defined \emph{functorial} maps
$H_{\bl}(X_s, \Z) \to H_{\bl}(Q_{\infty}, \Z)$ and $H^{\bl}(Q_{\infty}, \Z) \to H^{\bl}(X_{s}, \Z)$ for all snake dens $s$, the latter being a ring homomorphism.
By \emph{functoriality}, we mean the commutativity of the diagrams
\[
\begin{tikzcd}
    H_{\bl}(X_s, \Z)\ar[rr]\ar[rd] && H_{\bl}(X_{s'}, \Z)\ar[ld] && H^{\bl}(Q_{\infty}, \Z)\ar[rd]\ar[ld]\\
    &H_{\bl}(Q_{\infty}, \Z) && H^{\bl}(X_{s'}, \Z) && H^{\bl}(X_{s}, \Z)\ar[ll, <-]
\end{tikzcd}
\]
In the next subsection, we will show that 
\[
    \varinjlim H_{\bl}(X_s, \Z) = H_{\bl}(Q_{\infty}, \Z) \cong \nsym \qquad \text{and} \qquad 
    \text{$\varprojlim H^{j}(X_s, \Z) = H^{j}(Q_{\infty}, \Z) \cong \qsym^j$ for each $j$},
\]
the maps to $\nsym$ and $\qsym$ being the ones constructed in Sections \ref{sec:homology} and \ref{sec:cohomology}.
The sense of the isomorphisms must be clarified. A priori, $H_{\bl}(Q_{\infty}, \Z)$ is simply a graded abelian group and $H^{\bl}(Q_{\infty}, \Z)$ a graded ring. However, we will show how  to equip $Q_{\infty}$ with the structure of a (weak) $H$-group, so that its total homology and cohomology groups become dual graded Hopf algebras. The isomorphisms 
$H_{\bl}(Q_{\infty}, \Z) \cong \nsym$ and $H^{\bl}(Q_{\infty}, \Z) \cong \qsym$ will be isomorphisms of graded Hopf algebras.

The following technical considerations will be useful below.
\begin{Df}\label{df:cofibration}
    An embedding of topological spaces $i\colon X \to Y$ is a \textbf{cofibration} if, for each continuous map $g\colon Y \to Z$ and each homotopy 
    $H_t\colon X \to Z$ (with parameter $t \in [0, 1]$) satisfying 
    $H_0 = g \circ i$, there exists an extension $\widetilde{H}_t\colon Y \to Z$ such that 
    $\widetilde{H}_0 = g$ and
    $\widetilde{H}_t \circ i = H_t$.
\end{Df}

\begin{Prop}\label{prop:CW}
    Every projective $\C$-variety $X$ (with its analytic topology) admits the structure of a finite CW complex.
    Each inclusion of a closed subvariety $Y \subseteq X$ into $X$ is a cofibration. 
\end{Prop}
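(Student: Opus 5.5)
This statement is essentially a citation of two standard facts, so the plan is to reduce both parts to the classical triangulation theory of real semialgebraic/analytic sets and then to the homotopy extension property for CW pairs.

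For the first part, embed $X$ in some $\Pb^N_\C$. Viewed as a real algebraic set via $\Pb^N_\C \subseteq \R^{M}$ (for instance through the real Veronese-type embedding of $\Pb^N_\C$ as Hermitian rank-one projections), $X(\C)$ becomes a compact real semialgebraic set. I would invoke the triangulation theorem for semialgebraic sets (Łojasiewicz, Hironaka; see Bochnak--Coste--Roy, Theorem 9.2.1). It says that a compact semialgebraic set $K$, together with finitely many semialgebraic subsets $K_1,\dots,K_m$, admits a semialgebraic homeomorphism to a finite simplicial complex under which each $K_i$ is a union of open simplices. A finite simplicial complex is a finite CW complex, so this gives the first claim.

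For the second part, apply the same theorem to the pair $(X, Y)$, with $Y$ a closed subvariety and hence a closed semialgebraic subset. This yields a simultaneous triangulation in which $Y$ corresponds to a subcomplex. The pair $(X, Y)$ is therefore homeomorphic to a CW pair. A CW pair has the homotopy extension property (Hatcher, Proposition 0.16), since $X\times\{0\}\cup Y\times[0,1]$ is a deformation retract of $X\times[0,1]$. The homotopy extension property for arbitrary target $Z$ is exactly the condition in \Cref{df:cofibration}, so the inclusion is a cofibration.

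The only real subtlety is getting a \emph{compatible} triangulation, i.e.\ one in which $Y$ is a subcomplex, rather than two unrelated CW structures. This is why I would use the version of the triangulation theorem that allows finitely many distinguished semialgebraic subsets, rather than merely citing ``varieties are triangulable''. I must also make sure the analytic topology on $X(\C)$ agrees with the Euclidean topology of its real semialgebraic model. This holds because the embedding $\Pb^N_\C \hookrightarrow \R^M$ is a homeomorphism onto a closed real algebraic set.
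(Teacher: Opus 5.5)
Your proposal is correct and follows essentially the same route as the paper. Both realize $X$ as a compact real semialgebraic set, use the Lojasiewicz--Hironaka triangulation theorem (Bochnak--Coste--Roy, Theorem 9.2.1) to triangulate $X$ with $Y$ as a subcomplex, and then get the cofibration property from Hatcher's Proposition 0.16 for the CW pair $(X,Y)$. The only difference is that you sketch the real-affine embedding of $\Pb^N_\C$ via Hermitian projections, where the paper simply cites Bochnak--Coste--Roy, Proposition 3.4.6.
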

\begin{proof}
It follows from a theorem of Lojasiewicz \cite{Lojasiewicz} (see also Hironaka \cite{hironaka} and 
\cite{RAG}*{Theorem 9.2.1}), along with the fact that every complex projective algebraic variety is a real affine variety \cite{RAG}*{Proposition 3.4.6}, 
that $X$ admits a triangulation of which $Y$ is a subcomplex. This proves the first claim, and implies that 
$(X, Y)$ is a CW-pair. Any CW-pair gives rise to a cofibration by \cite{hatcher}*{Proposition 0.16}.
\end{proof}
\noindent
The following is an easy consequence of \Cref{df:cofibration} and \Cref{prop:product_colim} below (applied to $Z = [0, 1]$).
\begin{Obs}[{cf.\ \cite{spectra}*{Lemma 1.2.2.4}}]\label{obs:cofibration_limit}
    If $X_1 \hookrightarrow X_2 \hookrightarrow \dots$ is an infinite sequence of cofibrations, then each inclusion 
    $X_i \hookrightarrow \varinjlim_j X_j$ is a cofibration.
\end{Obs}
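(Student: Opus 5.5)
The plan is to apply the homotopy extension property one stage at a time and glue the resulting homotopies along the direct limit. Fix $i$, a space $Z$, a continuous map $g\colon \varinjlim_j X_j \to Z$ and a homotopy $H_t\colon X_i \to Z$ with $H_0 = g|_{X_i}$. We will build, by induction on $j \geq i$, homotopies $H^{(j)}_t\colon X_j \to Z$ with three properties: $H^{(i)} = H$; $H^{(j)}_0 = g|_{X_j}$; and $H^{(j+1)}_t$ restricts to $H^{(j)}_t$ on $X_j$.

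For the inductive step, apply \Cref{df:cofibration} to the cofibration $X_j \hookrightarrow X_{j+1}$. The map to extend from is $g|_{X_{j+1}}$ and the homotopy to extend is $H^{(j)}$. The two agree at time $0$, so the definition supplies $H^{(j+1)}$ with the required properties. For $j > i+1$ the restriction condition at earlier stages follows by transitivity. The outputs are used as the next inputs, so no choices need to be coordinated.

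Next we assemble these homotopies into one map. The $H^{(j)}$ are compatible, so they define a single function $\widetilde{H}\colon [0,1] \times \varinjlim_j X_j \to Z$ which restricts to $H^{(j)}$ on each $[0,1] \times X_j$. By construction $\widetilde{H}_0 = g$ and $\widetilde{H}_t|_{X_i} = H_t$, which are exactly the conditions in \Cref{df:cofibration}.

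The main obstacle is continuity of $\widetilde{H}$. Each restriction to $[0,1]\times X_j$ is continuous, so $\widetilde H$ is continuous on $\varinjlim_j([0,1]\times X_j)$. What we need, though, is continuity on $[0,1]\times \varinjlim_j X_j$ with the product topology. Product with a space does not commute with colimits in general, and this is precisely where \Cref{prop:product_colim} enters. Applied with $Z=[0,1]$, which is compact Hausdorff, it should identify $[0,1]\times\varinjlim_j X_j$ homeomorphically with $\varinjlim_j([0,1]\times X_j)$. Once we have that identification, $\widetilde H$ is continuous by the universal property of the colimit topology, and the proof is complete.
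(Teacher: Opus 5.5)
Your proposal is correct and is essentially the paper's argument: the paper deduces the observation from \Cref{df:cofibration} by extending the homotopy one stage at a time, then glues the stages using \Cref{prop:product_colim} with $Z=[0,1]$, exactly as you do. One caveat, which applies equally to the paper: \Cref{prop:product_colim} is stated only for compact Hausdorff $X_j$ (enough for every use in the paper), so for arbitrary $X_j$ you should instead cite the general fact that $[0,1]\times(-)$ preserves colimits because $[0,1]$ is locally compact Hausdorff.
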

\noindent
\Cref{prop:CW} and \Cref{obs:cofibration_limit} together show that each slither embedding is a cofibration.

\begin{Df}
    A \textbf{well-pointed} topological space is a topological space $X$ along with a choice of basepoint $x \in X$ such that the inclusion $x \hookrightarrow X$ is a cofibration. A \textbf{pointed map} between well-pointed topological spaces $X, Y$ is a continuous map $X \to Y$ sending the basepoint of $X$ to the basepoint of $Y$.
    A \textbf{pointed homotopy} between pointed maps is a homotopy through pointed maps.
\end{Df}

\begin{Df}\label{df:weak_homotopy}
    The \textbf{weak homotopy category} is the category in which the objects are well-pointed topological spaces homotopy equivalent to CW complexes and the morphisms are pointed continuous maps \emph{considered up to pointed weak homotopy}. Here, two continuous maps 
    $f_1, f_2\colon X \to Y$ are \textbf{(pointedly) weakly homotopic} if $f_1 \circ g$ is (pointedly) homotopic to $f_2 \circ g$ for every (pointed) continuous map $g\colon Z \to X$ where $Z$ is a \emph{finite} CW complex. A \textbf{weak $H$-group} is a group object in the weak homotopy category.
\end{Df}

Explicitly, a weak $H$-group consists of a pointed space $X$ along with pointed continuous maps $\mu\colon X \times X \to X$ and 
$\iota \colon X \to X$ such that the diagrams below commute up to weak homotopy. 
From left to right, these are the associativity, unitality, and invertibility axioms. 
Here
$e\colon \text{pt} \to X$ is the inclusion of the basepoint,
$\delta\colon X \to X \times X$ is the diagonal map, and $c\colon X \to X$ is the constant map to the basepoint.
\[
\begin{tikzcd}[column sep=0.7cm]
    X \times X \times X\ar[r, "\mu \times \Id"]\ar[d, "\Id \times \mu" left] & X \times X\ar[d, "\mu"] && X\ar[rd, "\Id"] 
    \ar[r, "e \times \Id"]\ar[d, "\Id \times e" left] & X\times X\ar[d, "\mu"] &&
    X\ar[d, "\delta" left] \ar[r, "c"] & X & X\ar[l, "c" above]\ar[d,"\delta"]\\
    X \times X\ar[r, "\mu" below] & X && X \times X\ar[r, "\mu" below] & X && X \times X \ar[r, "\Id \times \iota" below] & X \times X\ar[u, "\mu" right] & X \times X\ar[l, "\iota \times \Id"]
\end{tikzcd}
\]
Before describing the $H$-group structure on $Q_{\infty}$, we check that this space belongs to the weak homotopy category. Since each $X(i)$ is a CW complex by \Cref{prop:CW}, \cite{birkhoff}*{Proposition 3.2} shows that $Q_{\infty}$ has the homotopy type of a CW complex.\footnote{The example at the start of \cite{birkhoff}*{\textsection~3.2} shows that an ind-variety need not admit the structure of a CW complex.}
The space
$Q_{\infty}$ has a canonical basepoint, namely $t(0): X(0) \hookrightarrow Q_{\infty}$, whose inclusion is a cofibration by \Cref{obs:cofibration_limit}. To define the product map $\mu$, we will need some topological preliminaries.
We start with the following observation.
\begin{Obs}\label{obs:extension}
Let $h\colon X \to Y$ be a cofibration, and let
$f\colon X \to Z$ and $g\colon Y \to Z$ be continuous maps such that 
$g \circ h$ is homotopic to $f$. 
Then there exists a continuous map 
$\widetilde{g}\colon Y \to Z$ homotopic to $g$ such that $\widetilde{g} \circ h = f$.
\end{Obs}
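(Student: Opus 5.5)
The plan is to apply the homotopy extension property of the cofibration $h$ once, to a homotopy chosen to run from $g \circ h$ to $f$, and then take the end of the extended homotopy. By hypothesis there is a homotopy $H_t\colon X \to Z$ with $H_0 = g \circ h$ and $H_1 = f$. (If the given homotopy runs the other way, reverse its parameter, $t \mapsto 1-t$.) This gives exactly the data needed to invoke \Cref{df:cofibration} with the continuous map $g\colon Y \to Z$ and the homotopy $H_t$ on $X$, since the compatibility condition $H_0 = g \circ h$ holds.

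Applying \Cref{df:cofibration}, we obtain an extension $\widetilde{H}_t\colon Y \to Z$ with $\widetilde{H}_0 = g$ and $\widetilde{H}_t \circ h = H_t$ for all $t$. We then set $\widetilde{g} \deq \widetilde{H}_1$. Evaluating at $t = 1$ gives $\widetilde{g} \circ h = H_1 = f$. The family $\widetilde{H}_t$ itself is a homotopy from $\widetilde{H}_0 = g$ to $\widetilde{H}_1 = \widetilde{g}$, so $\widetilde{g}$ is homotopic to $g$.

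There is no genuine obstacle. The only point needing care is the direction of the homotopy: it must begin at $g \circ h$ so that it matches the initial condition required in \Cref{df:cofibration}, and this is arranged by the reparametrization $t \mapsto 1-t$. If one wants the pointed version later, the same argument applies verbatim to pointed homotopies, provided the cofibration property is used relative to the basepoint. For the statement as given, however, no basepoint considerations are needed.
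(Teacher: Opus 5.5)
Your proof is correct and is essentially the same as the paper's. You take a homotopy $H_t$ from $g\circ h$ to $f$, extend it through the cofibration starting from $g$, and set $\widetilde{g} = \widetilde{H}_1$; you also make explicit that $\widetilde{H}_t$ is a homotopy from $g$ to $\widetilde{g}$, which the paper leaves implicit.
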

\begin{proof}
    Let $H_t\colon X \to Z$ be a homotopy with 
    $H_0 = g \circ h$ and $H_1 = f$.
    Applying \Cref{df:cofibration}, we get a homotopy $\widetilde{H}_t\colon Y \to Z$ such that $\widetilde{H}_0 = g$ and 
    $\widetilde{H}_1 \circ h = f$. Set $\widetilde{g} \deq \widetilde{H}_1$.
\end{proof}
\begin{Lm}\label{lm:pointedness}
    Let $X, Y$ be well-pointed topological spaces, with $Y$ simply connected.
    Then each continuous map $f\colon X \to Y$ is weakly homotopic to a pointed map; two pointed continuous maps $f, g\colon X \to Y$ are pointedly weakly homotopic if and only if they are weakly homotopic.
\end{Lm}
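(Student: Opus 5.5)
Throughout, $x_0 \in X$ and $y_0 \in Y$ denote the basepoints. The plan is to use the cofibration property of the basepoint inclusion $x_0 \hookrightarrow X$ via \Cref{obs:extension}. Simple connectivity of $Y$ enters only through path-connectedness in the first claim and through $\pi_1(Y, y_0) = 0$ in the second.

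For the first claim, let $f\colon X \to Y$ be continuous. Since $Y$ is simply connected, it is path-connected, so choose a path $\gamma$ from $f(x_0)$ to $y_0$. The constant homotopy $H_t \colon \{x_0\} \to Y$, $H_t(x_0) = \gamma(t)$, has $H_0 = f \circ i$, where $i\colon x_0 \hookrightarrow X$ is a cofibration. \Cref{obs:extension} then produces $\widetilde{f}$ homotopic to $f$ with $\widetilde{f}(x_0) = y_0$. An honest homotopy is in particular a weak homotopy, so the first claim follows immediately.

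For the second claim, one direction is trivial: a pointed weak homotopy is a weak homotopy. Indeed, if $f\circ g \simeq_* g'\circ g$ for all pointed $g$ from finite CW complexes, then an arbitrary $g\colon Z \to X$ with $Z$ a finite CW complex can be handled componentwise. On each path component of $Z$ (a finite CW complex, hence well-pointed at a $0$-cell), $g$ is homotopic to a pointed map, by the first argument applied with target $X$. I expect this to need care, since $X$ need not be path-connected. If it is not, I would instead just compare: for a non-pointed $g$, the unpointed statement $f\circ g \simeq g'\circ g$ follows because $f\circ g$ and $g'\circ g$ are each related by reparametrized homotopies. Simpler still, I would restrict attention to $X = Q_\infty$ and products of it, which are path-connected; this is the case needed. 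For the converse, take pointed $f, g$ and suppose they are weakly homotopic. Fix a pointed $h\colon Z \to X$ with $Z$ finite CW. An unpointed homotopy $H$ from $f\circ h$ to $g\circ h$ traces a loop $\ell(t) = H_t(z_0)$ at $y_0$. Since $\pi_1(Y, y_0) = 0$, $\ell$ is null-homotopic rel endpoints. The standard argument (e.g.\ \cite{hatcher}*{Proposition 4A.2}, for well-pointed $Z$) then modifies $H$ into a pointed homotopy. Concretely, I would apply the homotopy extension property of $(Z \times I, Z \times \partial I \cup z_0 \times I)$, which holds because $z_0 \hookrightarrow Z$ is a cofibration, to the null-homotopy of $\ell$.

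The main obstacle is this last step: making the loop-contraction argument rigorous using only well-pointedness rather than a CW structure on $X$. Here it suffices that the source $Z$ is a finite CW complex with a $0$-cell basepoint. Then the pair above is a CW pair, and \cite{hatcher}*{Proposition 0.16} supplies the needed extension.
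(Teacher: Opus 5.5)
Your first claim and the converse half of the second claim (weakly homotopic $\Rightarrow$ pointedly weakly homotopic) are correct and follow the paper's route. The paper also derives the first claim from \Cref{obs:extension} using a path from $f(x_0)$ to $y_0$. For the converse it cites \cite{may12}*{Lemma 1.4.2}, which is exactly the loop-contraction argument via the homotopy extension property of $(Z\times I,\, Z\times\partial I\cup z_0\times I)$ that you sketch.

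The gap is in the direction you call trivial: pointedly weakly homotopic $\Rightarrow$ weakly homotopic. You correctly notice that weak homotopy must be tested against \emph{unpointed} maps $g\colon Z\to X$. Your reduction to pointed test maps homotopes $g$, component by component, to a pointed map via the first claim. That step needs $X$ to be path-connected, because the first claim uses path-connectedness of the \emph{target}, and here the target is $X$. Path-connectedness of $X$ is not a hypothesis of the lemma. When $X$ is path-connected your componentwise argument does work. In general, however, the appeal to ``reparametrized homotopies'' is not an argument, and restricting to $X=Q_\infty$ proves a weaker statement than the one posed.

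The missing idea is to adjoin a disjoint basepoint. Let $f,f'\colon X\to Y$ be the two pointed maps, and let $g\colon Z\to X$ be any map from a finite CW complex. Put $Z_+=Z\sqcup\{*\}$, again a finite CW complex with basepoint a $0$-cell, and extend $g$ to the pointed map $g_+\colon Z_+\to X$ with $g_+(*)=x_0$. The hypothesis gives a pointed homotopy $f\circ g_+\simeq_* f'\circ g_+$. Its restriction to $Z\times I$ is a homotopy $f\circ g\simeq f'\circ g$. This uses no connectivity of $X$ at all, which is why the paper can dismiss this direction as trivial.
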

\begin{proof}
    Since $Y$ is path-connected, all inclusions of a point into $Y$ are homotopic; so the first statement follows from \Cref{obs:extension}.
    For the second statement, the forward implication is trivial. We prove the converse. Suppose $h\colon Z \to X$ is a pointed map from a finite well-pointed CW complex $Z$. We must show that $f \circ h$ and $g \circ h$ are homotopic as pointed maps $Z \to Y$. By hypothesis, they are homotopic as unpointed maps; the rest follows from \cite{may12}*{Lemma 1.4.2}.
\end{proof}
Since all the spaces we deal with are simply connected, \Cref{lm:pointedness} ensures that we can safely ignore all pointedness requirements in \Cref{df:weak_homotopy}. We will henceforth do so without further comment.
\begin{Prop}\label{prop:product_colim}
    Let $X_1 \hookrightarrow X_2 \hookrightarrow \dots$ and 
    $Y_1 \hookrightarrow Y_2 \hookrightarrow \dots$ be infinite sequences of embeddings of compact Hausdorff topological spaces, and let $Z$ be another such space. Then 
    there are natural homeomorphisms
    \[
        Z \times \varinjlim_{i} X_i \cong \varinjlim_{i} Z \times X_i \qquad \text{and} \qquad \varinjlim_{i,j} X_i \times Y_j \cong \varinjlim_i X_i \times \varinjlim_j Y_j.
    \]
\end{Prop}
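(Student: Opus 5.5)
We have to prove \Cref{prop:product_colim}: the product of a compact Hausdorff space with a sequential colimit of compact Hausdorff spaces is again the colimit of the products, and the same holds for the product of two such colimits. In both cases there is an obvious continuous bijection from the colimit of products to the product of colimits. Surjectivity holds because every point of $\varinjlim X_i\times\varinjlim Y_j$ already lies in some $X_i\times Y_j$. Injectivity holds because all the maps are embeddings. Continuity follows from the universal property of the colimit, since each $X_i\times Y_j\to \varinjlim X_i\times \varinjlim Y_j$ is continuous. So the real content is that the inverse is continuous. Equivalently, the product topology on the right is at least as fine as the colimit topology: a subset $U$ whose intersection with every $X_i\times Y_j$ is open must be open in the product topology. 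I treat the second homeomorphism, since the first is its special case $Y_j=Z$ for all $j$.

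Fix such a $U$ and a point $(x,y)\in U$, with $x\in X_{i_0}$ and $y\in Y_{i_0}$. I will build neighbourhoods $A_k\subseteq X_{i_0+k}$ of $x$ and $B_k\subseteq Y_{i_0+k}$ of $y$, each open in its stage, with three properties.
\begin{itemize}
\item $\overline{A_k}\subseteq A_{k+1}\cap X_{i_0+k}$, and similarly for the $B_k$.
\item $\overline{A_k}\times\overline{B_k}\subseteq U$.
\item The closures are taken in the compact stages, so they are compact.
\end{itemize}

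The inductive step uses the tube lemma. Suppose $K=\overline{A_k}\times\overline{B_k}$ is compact and contained in the open set $U\cap(X_{i_0+k+1}\times Y_{i_0+k+1})$. The generalized tube lemma then gives open sets $A'\supseteq\overline{A_k}$ and $B'\supseteq\overline{B_k}$ in the $(k+1)$-st stages with $A'\times B'\subseteq U$. Compact Hausdorff spaces are normal, so we may shrink these to open $A_{k+1}$ and $B_{k+1}$ that still contain $\overline{A_k}$ and $\overline{B_k}$ and whose closures lie in $A'$ and $B'$. This keeps $\overline{A_{k+1}}\times\overline{B_{k+1}}\subseteq U$.

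Now set $A=\bigcup_k A_k$ and $B=\bigcup_k B_k$. Then $A\times B\subseteq U$, since $A_k\times B_k\subseteq U$ and the families are increasing. Moreover $A$ is open in the colimit: for $m\ge i_0$, the intersection $A\cap X_m$ equals $\bigcup_{k\ge m-i_0}(A_k\cap X_m)$, and each $A_k\cap X_m$ is open in $X_m$ because $X_m$ carries the subspace topology from $X_{i_0+k}$ (the maps are embeddings). For $m<i_0$ we use $A\cap X_m=(A\cap X_{i_0})\cap X_m$. The same argument shows $B$ is open. So $(x,y)$ has a product neighbourhood $A\times B$ inside $U$, and $U$ is open in the product topology.

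The main obstacle is this inductive tube-lemma construction. The base case needs care: we must start from a compact neighbourhood of $(x,y)$ inside $U\cap(X_{i_0}\times Y_{i_0})$, which exists because compact Hausdorff spaces are locally compact. Finally, naturality of the homeomorphisms is immediate, since the maps are the canonical ones.
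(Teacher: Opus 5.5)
Your proof is correct. It differs from the paper mainly in being self-contained. The paper proves nothing directly here: it deduces the first homeomorphism as the special case $Y_j = Z$ of the second, exactly as you do, and cites \cite{HSTH}*{Theorem 4.1} for the second. You instead reduce to showing that a set $U$ open in the colimit topology is open in the product topology. You then build a box neighbourhood $A\times B\subseteq U$ of each point by an inductive generalized-tube-lemma (Wallace) plus normality construction. Your sets $A=\bigcup_k A_k$ and $B=\bigcup_k B_k$ are open in the colimits because each $A_k$, $B_k$ is open in a stage and the earlier stages carry the subspace topology.

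The citation buys brevity and places the fact in the literature. Your route shows exactly where the hypothesis enters: compactness of $\overline{A_k}\times\overline{B_k}$ is what lets you apply the tube lemma inside each stage. It also goes through, with the obvious modification of choosing each $A_{k+1}$, $B_{k+1}$ with compact closure, when the stages are only locally compact Hausdorff.

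One small tidy-up concerns the base case. You need a product neighbourhood $\overline{A_0}\times\overline{B_0}\subseteq U$, not just some compact neighbourhood. Regularity of $X_{i_0}$ and $Y_{i_0}$ provides this, or you can simply start the induction from the compact set $\{x\}\times\{y\}$.
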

\begin{proof}
    The first statement is the special case of the second. The second is \cite{HSTH}*{Theorem 4.1}.
\end{proof}
\noindent
We can now define the multiplication map $\mu$ on $Q_{\infty}$. 
\begin{Prop}\label{prop:define_mult}
    Up to weak homotopy, there exists a unique continuous map 
    $\mu\colon Q_{\infty} \times Q_{\infty} \to Q_{\infty}$ making the following diagram commute up to homotopy for all $i, j$.
    \[
    \begin{tikzcd}
        X(i) \times X(j) \ar[d, hook, "t(i) \times t(j)" left]\ar[r, equals] & \ar[r, equals] X_{\pr s(i) \rp} \times X_{\pr s(j) \rp}& X_{\pr s(i) \rp \pr s(j) \rp}\ar[d, hook]\\
        Q_{\infty} \times Q_{\infty}\ar[rr, "\mu" below] && Q_{\infty}
    \end{tikzcd}
    \]
    Here, the inclusion
    $X_{\pr s(i) \rp \pr s(j) \rp} \to Q_{\infty}$ is an arbitrary slither embedding.
\end{Prop}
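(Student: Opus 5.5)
Existence and uniqueness will both come from the identification $Q_{\infty}\times Q_{\infty}\cong \varinjlim_{i,j} X(i)\times X(j)$ of \Cref{prop:product_colim}; we build $\mu$ compatibly on this filtration and then pass to the colimit. For each $i$ let $\mu_i\colon X(i)\times X(i)\to Q_{\infty}$ be the composite of the identification $X(i)\times X(i)=X_{\pr s(i)\rp\pr s(i)\rp}$ with a slither embedding. The diagonal filtration $X(i)\times X(i)$ is cofinal among the $X(i)\times X(j)$. By \Cref{prop:product_colim} and \Cref{obs:cofibration_limit}, each inclusion $X(i)\times X(i)\hookrightarrow X(i+1)\times X(i+1)$ is a cofibration, being a product of cofibrations of CW pairs (\Cref{prop:CW}).

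\emph{Compatibility step.} The restriction of $\mu_{i+1}$ to $X(i)\times X(i)$ is the composite of $X_{\pr s(i)\rp\pr s(i)\rp}\to X_{\pr s(i+1)\rp\pr s(i+1)\rp}$, given by the product slither $t(i,i+1)t(i,i+1)$, with a slither embedding. It is therefore itself a slither embedding of the snake den $\pr s(i)\rp\pr s(i)\rp$. Slither embeddings of a fixed snake den are all homotopic: they factor through some snake $X(k)$, and there \Cref{cor:homotopic} applies. Hence $\mu_{i+1}|_{X(i)\times X(i)}\simeq\mu_i$.

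\emph{Inductive rigidification.} Using \Cref{obs:extension}, replace $\mu_{i+1}$ by a homotopic map $\widetilde\mu_{i+1}$ whose restriction equals $\widetilde\mu_i$ exactly, starting from $\widetilde\mu_0=\mu_0$. The $\widetilde\mu_i$ then glue to a continuous $\mu$ on the colimit, and \Cref{prop:product_colim} guarantees that the colimit topology is the product topology. For general $i,j$, take $k=\max(i,j)$. Then $\mu\circ(t(i)\times t(j))$ is the restriction of $\widetilde\mu_k\simeq\mu_k$ to $X(i)\times X(j)$. This restriction is again a slither embedding of $X_{\pr s(i)\rp\pr s(j)\rp}$, so it is homotopic to any other slither embedding, which gives the required homotopy-commutative square.

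\emph{Uniqueness.} Suppose $\mu'$ also satisfies the condition, and let $g\colon Z\to Q_\infty\times Q_\infty$ be a map from a finite CW complex. Since $Z$ is compact Hausdorff, \Cref{prop:to_finite} applied to the product filtration shows that $g$ factors through some $X(i)\times X(j)$. On that piece both $\mu$ and $\mu'$ are homotopic to a slither embedding, hence to each other. Thus $\mu\circ g\simeq\mu'\circ g$, i.e.\ $\mu$ and $\mu'$ are weakly homotopic; by \Cref{lm:pointedness} pointedness can be ignored.

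The main obstacle is the compatibility step. It needs that the restriction of $\mu_{i+1}$ is genuinely a slither embedding, i.e.\ that composites and products of slither morphisms are slither morphisms into a snake, and it needs the homotopy class to be independent of the chosen snake. Both reduce to \Cref{cor:homotopic} together with \Cref{lm:simprop}(1),(2). Some care is also needed with the topology of the product of colimits, which is handled by \Cref{prop:product_colim}.
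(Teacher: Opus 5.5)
Your proposal is correct and follows essentially the same route as the paper. Uniqueness comes from \Cref{prop:to_finite} applied to the product filtration. Existence comes from inductively rigidifying slither embeddings on the cofinal diagonal filtration $X(i)\times X(i)$ via \Cref{obs:extension}, with general $i,j$ then handled by \Cref{cor:homotopic}. You also explicitly check that $X(i)\times X(i)\hookrightarrow X(i+1)\times X(i+1)$ is a cofibration, which the paper uses silently; this follows most directly from \Cref{prop:CW}, since it is the inclusion of a closed subvariety of a projective variety.
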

\begin{proof}
    We start by noting that 
    $\varinjlim_{i,j} X(i) \times X(j) = Q_{\infty} \times Q_{\infty}$ by \Cref{prop:product_colim}.
    For uniqueness of $\mu$, suppose $\mu_1, \mu_2$ are two maps as above. Let $Z$ be any finite CW complex and $f\colon Z \to Q_{\infty} \times Q_{\infty}$ a continuous map.
    By \Cref{prop:to_finite}, $f$ factors through $X(i) \times X(j)$ for some finite $i,j$. The commutativity of the above diagram then implies that $\mu_1 \circ f$ and 
    $\mu_2 \circ f$ are homotopic. We conclude that $\mu_1$ and $\mu_2$ are weakly homotopic.
    
    For existence, it suffices to ensure commutativity when $i = j$ (this again uses \Cref{cor:homotopic}). Since $Q_{\infty} \times Q_{\infty} = \varinjlim_{i} X(i) \times X(i)$ (again using \Cref{prop:product_colim}), we may define $\mu$ by defining 
    maps $\mu_i\colon X(i) \times X(i) \to Q_{\infty}$ such that $\mu_{i} = \mu_{i+1}|_{X(i) \times X(i)}$. We define $\mu_i$ by induction on $i$. To start off, we let
    $\mu_0$ send $\text{pt} = X_{s(0)} \times X_{s(0)}$ to the basepoint of $Q_{\infty}$. In the general case, suppose we have defined $\mu_i$.
    The
    composition 
    \[
        X(i) \times X(i) \hookrightarrow X(i+1) \times X(i+1) = X_{\pr s(i+1) \rp\pr s(i+1)\rp} \hookrightarrow Q_{\infty}
    \]
    is homotopic to the composition
    \[
        X(i) \times X(i) = X_{\pr s(i) \rp\pr s(i)\rp}  \hookrightarrow X_{\pr s(i+1) \rp\pr s(i+1)\rp} \hookrightarrow Q_{\infty},
    \]
    where the middle arrow is given by a slither morphism. Since the latter  of the two is homotopic to $\mu_i$ by the induction hypothesis, \Cref{obs:extension} shows that we can find $\mu_{i+1} \colon X(i+1) \times X(i+1) \to Q_{\infty}$ which is homotopic to $X(i+1) \times X(i+1) = X_{\pr s(i+1) \rp\pr s(i+1)\rp} \hookrightarrow Q_{\infty}$ such that 
    $\mu_{i+1}|_{X(i) \times X(i)} = \mu_i$. 
\end{proof}
\noindent
Having defined $\mu$ (in the weak homotopy category), we now check that it satisfies the axioms.
\begin{Prop}\label{prop:exist_H_group}
    Up to weak homotopy, there is a unique weak $H$-group structure on $Q_{\infty}$ for which $\mu$ is the multiplication map.
\end{Prop}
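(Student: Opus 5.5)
The plan is to verify each weak $H$-group axiom by reducing, via \Cref{prop:to_finite}, to maps from finite CW complexes landing in some finite stage, and then using the fact that all slither embeddings of a given snake den variety into a snake variety (hence into $Q_{\infty}$) are homotopic (\Cref{cor:homotopic}). Concretely, to check that two maps $\alpha,\beta\colon Q_\infty^{k} \to Q_\infty$ (with $k\in\{1,2,3\}$) are weakly homotopic, it suffices by \Cref{prop:product_colim} and \Cref{prop:to_finite} to check that $\alpha\circ (t(i)^{\times k})$ and $\beta\circ (t(i)^{\times k})$ are homotopic for every $i$. Then I would express each composite, up to homotopy, as a slither embedding of some snake den variety, and invoke uniqueness of slither embeddings up to homotopy.

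Associativity: by \Cref{prop:define_mult}, $\mu\circ(\mu\times\Id)$ restricted to $X(i)^3$ is homotopic to the composite $X(i)^3 = X_{\pr s(i)\rp\pr s(i)\rp}\times X(i) \hookrightarrow X_{s'}\times X(i) \to Q_\infty$, where $X_{\pr s(i)\rp\pr s(i)\rp}\hookrightarrow X_{s'}$ is a slither into some $X(m)$, followed by $\mu$; applying \Cref{prop:define_mult} again, and using \Cref{lm:simprop}(1) to see that products of slithers are slithers, this is homotopic to a slither embedding of $X_{\pr s(i)\rp\pr s(i)\rp\pr s(i)\rp}$. The same holds for $\mu\circ(\Id\times\mu)$, so both are homotopic. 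Unitality is identical: $\mu\circ(e\times\Id)$ on $X(i)$ is a slither embedding of $X_{\eps\,\pr s(i)\rp}=X(i)$, hence homotopic to $t(i)$. Uniqueness of the structure given $\mu$ is formal: in any group object inverses and units are unique.

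The main obstacle is the inverse $\iota$, which must be constructed. I would use the duality $s\mapsto s^*$: the map $L\mapsto L^{\perp}$ (or the order-reversing isomorphism $X_s\cong X_{s^*}$ composed with the coordinate flip $x\mapsto e_E-x$) gives homeomorphisms $X(i)\to X_{s(i)^*}$ compatible with slithers, and define $\iota$ as a colimit of these followed by slither embeddings, using \Cref{obs:extension} inductively as in the proof of \Cref{prop:define_mult}. But this likely realizes the antipode only up to sign issues, so an alternative I would try first is more abstract: since $Q_\infty$ is a connected homotopy-associative, homotopy-unital $H$-space of CW type, the shear map $(x,y)\mapsto (x,\mu(x,y))$ induces an isomorphism on integral homology (by the Künneth theorem and connectivity, as $H_\bl(Q_\infty)$ is the connected graded $\nsym$-like bialgebra via the stages), hence is a weak equivalence by Whitehead's theorem for simply connected spaces; a homotopy inverse of the shear then yields $\iota$ on finite CW complexes, which is exactly what the weak homotopy category demands. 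Checking that this inverse is compatible across stages—so that it defines an honest map on $Q_\infty$ rather than merely on each finite complex—is where I expect the real work, and I would handle it by the inductive cofibration-extension argument of \Cref{prop:define_mult}, using that homotopy classes $[X(i),Q_\infty]$ form a group (the standard fact that connected homotopy-associative $H$-spaces with CW domain give groups) so that inverses on $X(i)$ are unique and restrict correctly.
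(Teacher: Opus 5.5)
Your proof is correct. Associativity, unitality and uniqueness go exactly as in the paper: reduce via \Cref{prop:product_colim} and \Cref{prop:to_finite} to the stages $X(i)$, recognize each composite as homotopic to a slither embedding (of $X_{\pr s(i)\rp\pr s(j)\rp\pr s(k)\rp}$, resp.\ $X(i)$), and apply \Cref{cor:homotopic}. Uniqueness of $\iota$ is the formal group-object argument.

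Where you differ is the inverse. The paper does not construct $\iota$ at all. It cites West's appendix on weak $H$-spaces, which gives weak left and right inverses for a connected weak $H$-space of CW type, and then uses associativity to identify them. You instead prove this directly with the shear map $\sigma(x,y)=(x,\mu(x,y))$, and the argument is sound. By K\"unneth and connectedness, $\sigma_*$ is unitriangular with respect to the degree of the first tensor factor. This only needs $\mu\circ(e\times\Id)$ to induce the identity on homology, for which weak homotopy suffices. So $\sigma$ is a homology isomorphism of simply connected spaces, hence a weak equivalence; more directly still, on $\pi_n$ it is $(a,b)\mapsto(a,a+b)$. What this buys is a self-contained proof in place of a citation.

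The ``real work'' you anticipate is unnecessary, however. $Q_{\infty}\times Q_{\infty}=\varinjlim_i X(i)\times X(i)$ has the homotopy type of a CW complex by the same argument as for $Q_{\infty}$. So Whitehead's theorem makes $\sigma$ an honest homotopy equivalence with a globally defined homotopy inverse $\tau$. Then $\iota=\pi_2\circ\tau\circ(\Id,c)$ is already a map on all of $Q_{\infty}$, with $\mu\circ(\Id,\iota)\simeq c$ up to genuine homotopy. The mirrored shear $(x,y)\mapsto(\mu(x,y),y)$ gives a left inverse, and associativity identifies the two. Your stage-by-stage gluing via \Cref{obs:extension} and uniqueness of inverses in $[X(i),Q_{\infty}]$ would also work, just redundantly.

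Your suspicion about signs in the duality idea is well founded. The paper's remark realizes $\iota$ only after composing the duality maps $\ast_s$ with complex conjugation $\dagger_s$, which contributes the sign $(-1)^k$ on classes of $k$-dimensional torus-invariant subvarieties.
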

\begin{proof}
We start by verifying the unitality and associativity axioms for $\mu$.
First observe that, by construction, the composition 
$X(i) \times X(j) \to Q_{\infty} \times Q_{\infty} \xrightarrow[]{\mu} Q_{\infty}$ for any $i, j$ is homotopic to a slither embedding 
$X_{\pr s(i) \rp\pr s(j) \rp} \to Q_{\infty}$. 
Setting $j = 0$, 
the composition 
$X(i) \to Q_{\infty} \xrightarrow[]{\Id \times e} Q_{\infty} \times Q_{\infty} \xrightarrow[]{\mu} Q_{\infty}$ is thereby shown to be homotopic to $t(i)$. Since the image of $Z \to Q_{\infty}$ for any finite CW complex $Z$ lands in $X(i)$ for some finite $i$, this proves half of the unitality axiom. The other half is proven similarly.

The same trick proves associativity, provided we can show that the composition 
\[
X(i) \times X(j) \times X(k) \to Q_{\infty} \times Q_{\infty} \times Q_{\infty} \xrightarrow{\mu \times \Id} Q_{\infty} \times Q_{\infty} \xrightarrow[]{\mu} Q_{\infty}
\]
is homotopic to a slither embedding $X_{\pr s(i) \rp \pr s(j) \rp \pr s(k) \rp} \to Q_{\infty}$ (the other half will follow by symmetry). The composition of the first two maps is homotopic to 
${X_{\pr s(i) \rp \pr s(j) \rp} \times X(k) \xrightarrow[]{\sigma \times t(k)} Q_{\infty} \times Q_{\infty}}$ where $\sigma$ is a slither embedding. Choose $m$ such that $\sigma$ lands in $X(m)$. Then it is easy to see that 
\[
    X_{\pr s(i) \rp \pr s(j) \rp \pr s(k) \rp} = X_{\pr s(i) \rp \pr s(j) \rp} \times X(k) \to X(m) \times X(k) = X_{\pr s(m) \rp \pr s(k) \rp}
\]
is a slither morphism (given by the concatenation of the slither representing $X_{\pr s(i) \rp \pr s(j) \rp} \to X(m)$ with the identity slither $\pr s(k) \rp$). It therefore suffices to check that $X(m) \times X(k) \hookrightarrow Q_{\infty} \times Q_{\infty} \xrightarrow[]{\mu} Q_{\infty}$ is homotopic to a slither embedding; but this is true by construction.

Since $Q_{\infty}$ has the homotopy type of a CW complex, \cite{W67}*{Appendix} implies that the invertibility axiom is redundant in our setting.\footnote{The statement in \cite{W67} gives separate left- and right-inverses for $\mu$, but the associativity of $\mu$ forces these to be equal up to weak homotopy.}
The usual group-theory proof shows that the inverse map $\iota\colon Q_{\infty} \to Q_{\infty}$ is unique up to weak homotopy.
\end{proof}

\begin{Rmk}
    In the proof of \Cref{prop:exist_H_group}, the existence of the inverse map is proved indirectly. One can show that it is induced, in a suitable sense, from maps on finite snakes.
    For any snake $s$, the $P(s^*) = P(s)^*$ differs from $-P(s)$ only by a translation.
    As a result, we get a ``duality map'' 
    $\ast_s \colon X_s \to X_{s^*}$. 
    Write $\dagger_s\colon X_s \to X_s$ for the complex conjugation map. Then the $H$-group inverse $\iota\colon Q_{\infty} \to Q_{\infty}$ is induced by the composite maps $\ast_s \circ \dagger_s = \dagger_{s^*} \circ \ast_s$.
\end{Rmk}

\subsection{Homology and cohomology}
We say that a topological space is \textbf{good} if
it has the homotopy type of a CW complex and
its integral homology is free over $\Z$ and finitely generated in each degree.
Two weakly homotopic morphisms of topological spaces induce the same map on homology. 
The same is true on cohomology, provided the two spaces are good, since in this case homology and cohomology are dual graded $\Z$-modules (by the universal coefficient theorem for cohomology \cite{hatcher}*{Theorem 3.2}).
Thus homology and cohomology descend to the full subcategory of good spaces in the \emph{weak} homotopy category. 
Restricted to this subcategory, cohomology is a contravariant functor $H^{\bl}(-,\Z)$ to the category $\zalg$ of graded-commutative $\Z$-algebras (with the half-degree convention of \Cref{sec:chow_section}).  The K\"unneth theorem for cohomology \cite{hatcher}*{Theorem 3.15} shows that products of good spaces are good and that $H^{\bl}(-,\Z)$ sends products of \emph{good} spaces in the weak homotopy category to coproducts in $\zalg$, i.e.\ products in the opposite category $\zalg^{\opp}$. 
It also sends the terminal object in the weak homotopy category (namely, a point) to the terminal object in $\zalg^{\opp}$ (namely, $\Z$).
Functors that preserve finite products send group objects to group objects.
Graded-commutative Hopf algebras are exactly the group objects in 
$\zalg^{\opp}$.\footnote{See the discussion after \cite{GR20}*{Definition 1.3.3} for sign-related subtleties in the definition of a Hopf algebra. Since in our setting the cohomology vanishes in odd degree, no issue arises.}
It follows that the cohomology of any weak $H$-group $X$ (with a good underlying space) is canonically equipped with the structure of a graded-commutative Hopf algebra.
The comultiplication map in the Hopf algebra is given by the pullback map $\mu^*\colon H^{\bl}(X, \Z) \to H^{\bl}(X \times X, \Z)$ induced by the multiplication map $\mu$. 
The universal coefficient theorem for cohomology then shows that the homology of $X$ is naturally equipped with a Hopf algebra structure as well, namely that of the graded dual of the Hopf algebra $H^{\bl}(X, \Z)$.

We now show that $Q_{\infty}$ is good and relate
the Hopf algebra structures on its integral homology and cohomology to the homology and cohomology of finite snake dens and to the Hopf algebras $\nsym$ and $\qsym$.
It follows from \cite{may12}*{Proposition 2.5.6} 
that the total homology group of $Q_{\infty}$ is the direct limit of the total homology groups of
$X(i)$. In symbols,
\[
    H_\bl(Q_{\infty}, \Z) = \varinjlim H_\bl(X(i)) \cong \varinjlim\nsym_{s(i)}  =\nsym,
\]
where we have used \Cref{thm:homology_morphisms}.
This shows in particular that $Q_{\infty}$ is a good space.
The induced map on homology $H_\bl(X(i))\to H_\bl(Q_{\infty})$ is given by the map $\psi_{s(i)}$ in that theorem. More generally,
for any snake den $s$ and any slither embedding $X_s \hookrightarrow Q_{\infty}$, the fact that this embedding factors as $X_s \hookrightarrow X(i) \hookrightarrow Q_{\infty}$ (for large enough $i$) shows by 
\Cref{thm:homology_morphisms} that the pushforward in homology $H_{\bl}(X_s, \Z) \to H_{\bl}(Q_{\infty}, \Z)$ coincides with $\psi_{s}$.

Taking the \emph{graded dual} of the isomorphism  $H_{\bl}(Q_{\infty}, \Z) \cong \nsym$ gives the isomorphism of \emph{graded $\Z$-modules}
${H^{\bl}(Q_{\infty}, \Z) \cong \qsym}$.
We upgrade this to an isomorphism of Hopf algebras. 
The unit and counit are the inclusion and projection to $\Z$ in both cases, so these are preserved.
We will check that the isomorphism preserves both multiplication and comultiplication. It will then follow that the antipode is preserved too.

We start by checking that multiplication is preserved.
Since the duality functor is an involutive endofunctor on the category of finitely generated free $\Z$-modules, $H_{\bl}(Q_{\infty}, \Z) = \varinjlim H_{\bl}(X(i), \Z)$ implies
\[
    H^\bl(Q_{\infty}, \Z) = \varprojlim H^\bl(X(i), \Z) \cong \varprojlim\qsym_{s(i)}  =\qsym,
\]
where we take limits as \emph{graded} objects, i.e.\ we take limits in each degree separately.
These are a priori isomorphisms of graded {$\Z$-modules}. However, since the maps 
$H^{\bl}(Q_{\infty}) \to H^{\bl}(X(i))$ and 
$H^{\bl}(X(j)) \to H^{\bl}(X(i))$ are algebra homomorphisms, the equality 
$H^\bl(Q_{\infty}) = \varprojlim H^\bl(X(i))$ holds as graded algebras.
Similarly, 
$H^{\bl}(X(i)) \cong \qsym_{s(i)}$ is a graded algebra isomorphism by \Cref{thm:cohomology_functorial}.
So $\varprojlim H^\bl(X(i)) \cong \qsym$ holds as graded algebras. In particular, the isomorphism $H^\bl(Q_{\infty}) \cong \qsym$ preserves multiplication.

We now check that comultiplication is preserved.
For simplicity, we prove instead the dual statement that the isomorphism $H_{\bl}(Q_{\infty}, \Z) \cong \nsym$ preserves multiplication.
Note that the multiplication in $H_{\bl}(Q_{\infty}, \Z)$ is given by $m\colon H_\bl(Q_{\infty}, \Z)\otimes H_\bl(Q_{\infty}, \Z) \cong H_{\bl}(Q_{\infty} \times Q_{\infty}, \Z) \xrightarrow{\mu_*} H_\bl(Q_{\infty}, \Z)$ 
where $\mu_*$ is the pushforward in homology and the first map is the K\"unneth isomorphism in homology
(see \cite{hatcher}*{Theorem 3B.6}, \cite{may99}*{\textsection~17.2}).
Identifying $H_{\bl}(Q_{\infty}, \Z)$ with 
$\nsym$, it suffices to check that $m(R_s \otimes R_t) = R_sR_t$ for each pair of binary strings $s,t$. Choose $i$ such that $s$ and $t$ are both substrings of $s(i)$. \Cref{prop:define_mult} shows that the following diagram commutes up to homotopy.
\[
\begin{tikzcd}
    && X(i) \times X(i)\ar[dd, hook]\ar[r, hook] & X_{\pr s(i) \rp\pr s(i) \rp}\ar[dd, hook]\\
    X_{\pr s \rp\pr t\rp} \ar[r, equals] & X_s \times X_t\ar[ur, hook]\ar[dr, hook]\\
    && Q_{\infty} \times Q_{\infty}\ar[r, "\mu"] & Q_{\infty}
\end{tikzcd}
\]
The class $m(R_s \otimes R_t)$ is by definition the pushforward to $Q_{\infty}$ of the fundamental class 
$[X_s \times X_t]$ along the bottom path. The top-right path sends this class to $R_s R_t$ by \Cref{thm:homology_morphisms}. The commutativity of the diagram up to homotopy gives the desired equality.

Our discussion thus far has proved the following result.
\begin{Thm}\label{thm:X_infty_full_structure}
    The weak $H$-group structure on $Q_{\infty}$ endows $H_{\bl}(Q_{\infty}, \Z)$ and $H^{\bl}(Q_{\infty}, \Z)$ with Hopf algebra structures. These two Hopf algebras are 
    graded duals of one another, isomorphic to 
    $\nsym$ and $\qsym$ respectively.
    The induced maps $H_{\bl}(X_s, \Z) \to H_{\bl}(Q_{\infty}, \Z)$
    and $H^{\bl}(Q_{\infty}, \Z) \to H^{\bl}(X_s, \Z)$ for all snake dens $s$ thereby coincide with the maps constructed in Theorems \ref{thm:homology_morphisms} and \ref{thm:cohomology_functorial}. 
\end{Thm}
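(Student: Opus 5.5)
The statement is a summary, so the plan is to assemble the results already established in this section in a fixed order.

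\emph{Step 1: cohomology is a Hopf algebra.} First check that $Q_{\infty}$ is good. Its homology is the direct limit $\varinjlim H_\bl(X(i),\Z)$ by \cite{may12}*{Proposition 2.5.6}. Each term is identified with $\nsym_{s(i)}$ through $\psi_{s(i)}$, using \Cref{thm:homology_morphisms} and \Cref{lm:cycle_class}. Because the number of alternations in $s(i)$ is unbounded, every binary string is eventually a substring of some $s(i)$, so the limit is all of $\nsym$. This group is free and finitely generated in each degree, and $Q_\infty$ has CW homotopy type by \cite{birkhoff}. Hence $Q_\infty$ is good. The formal argument then applies: $H^\bl(-,\Z)$ sends finite products of good spaces to coproducts in $\zalg$ by Künneth, so it carries the weak $H$-group of \Cref{prop:exist_H_group} to a group object in $\zalg^{\opp}$. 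That makes $H^\bl(Q_\infty,\Z)$ a Hopf algebra, and $H_\bl(Q_\infty,\Z)$ gets the graded dual Hopf structure by universal coefficients.

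\emph{Step 2: the two algebra structures.} Dualizing the direct limit degreewise gives $H^\bl(Q_\infty)=\varprojlim H^\bl(X(i))$. All transition maps are ring homomorphisms, so this is an equality of graded rings. Each $H^\bl(X(i))\cong\qsym_{s(i)}$ as rings by \Cref{thm:cohomology_functorial}, and these isomorphisms are compatible with the transition maps by the functoriality in \Cref{thm:homology_morphisms}. So $H^\bl(Q_\infty)\cong\qsym$ as algebras. For the dual side, show that the Pontryagin product on $H_\bl(Q_\infty)$ matches the product of $\nsym$. Take $R_s\otimes R_t$ and pick $i$ with $s,t$ substrings of $s(i)$. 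The class is the pushforward of $[X_s\times X_t]=[X_{\pr s\rp\pr t\rp}]$. \Cref{prop:define_mult} says $\mu$ composed with the inclusion is homotopic to a slither embedding of $X_{\pr s(i)\rp\pr s(i)\rp}$. Then the Product axiom of \Cref{thm:homology_morphisms} sends this class to $R_sR_t$.

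\emph{Step 3: conclusion.} Multiplication in homology is dual to comultiplication in cohomology, and both sides have the same unit and counit. So the graded isomorphism $H^\bl(Q_\infty)\cong\qsym$ respects every structure map; the antipode then agrees as well, by Takeuchi's formula for connected Hopf algebras.

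\emph{Step 4: compatibility with $X_s$.} Any slither embedding of $X_s$ factors as $X_s\hookrightarrow X(i)\hookrightarrow Q_\infty$, and all slither embeddings are homotopic by \Cref{cor:homotopic}. So the pushforward $H_\bl(X_s)\to H_\bl(Q_\infty)$ is $\psi_{s(i)}\circ j_*=\psi_s$. Dualizing gives $\psi_s^\vee$ on cohomology.

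The main obstacle is the bookkeeping in Step 2. One must make sure that passing from the direct limit in homology to the inverse limit in cohomology preserves the ring structure, which needs degreewise finiteness and freeness. One must also make sure the homotopy-commutative square from \Cref{prop:define_mult} suffices for homology even though $\mu$ is defined only up to weak homotopy. This holds because $X_s\times X_t$ is a finite CW complex, so weak homotopy gives an honest homotopy on it.
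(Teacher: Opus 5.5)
Your proposal is correct and follows essentially the same route as the paper. The paper likewise identifies $H_\bl(Q_\infty,\Z)=\varinjlim H_\bl(X(i),\Z)\cong\nsym$ to get goodness, derives the Hopf structures formally from the weak $H$-group, matches the ring structure via $H^\bl(Q_\infty,\Z)=\varprojlim H^\bl(X(i),\Z)\cong\varprojlim\qsym_{s(i)}$, matches the Pontryagin product using the homotopy-commutative square of \Cref{prop:define_mult} together with \Cref{thm:homology_morphisms}, and then deduces the antipode and the compatibility for arbitrary snake dens $s$ by factoring slither embeddings through some $X(i)$.
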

Though we have so far only checked this when $s = s(i)$ for some $i$, the general case follows from the fact that $X_s$ admits a slither embedding into some $X(i)$, along with Theorems \ref{thm:homology_morphisms} and \ref{thm:cohomology_functorial}.

\begin{Rmk}\label{rmk:stability}
    A refinement of the last statement in \Cref{thm:X_infty_full_structure} is that $\nsym \cong H_{\bl}(Q_{\infty}, \Z)$ and $\qsym \cong H^{\bl}(Q_{\infty}, \Z)$ are the \emph{stable} homology and cohomology of $X_s$ for snakes $s$. More precisely, the maps
    $H_{\bl}(X_s, \Z) \to H_{\bl}(Q_{\infty}, \Z)$
    and $H^{\bl}(Q_{\infty}, \Z) \to H^{\bl}(X_s, \Z)$
    are isomorphisms in degrees $\leq d$, provided the binary string for $s$ has at least $2d$ alternations between $0$ and $1$.
    Furthermore, the first map is always an injection and the second is always a surjection.
\end{Rmk}

\begin{Rmk}\label{rmk:Grass}
    One can construct a weak $H$-group homomorphism $Q_{\infty} \to \Gr$ from $Q_{\infty}$ to the infinite Grassmannian.
    The map is constructed using the maps $\phi_M$ in \Cref{sec:rel_to_grass}.
    The product structures are compatible because both are induced by taking direct sums (of matroids or vector spaces).
    The results in \Cref{sec:rel_to_grass} then imply that the induced map on homology is the usual (graded) Hopf algebra homomorphism 
    $\nsym \to \sym$, and likewise the induced map on cohomology is the usual Hopf algebra homomorphism 
    $\sym \to \qsym$.
\end{Rmk}

We conclude this section by showing that, although our construction of $Q_{\infty}$ involved an infinite number of choices, this object is well-defined up to isomorphism in the weak homotopy category.
Suppose that we choose a second sequence of snakes 
$s'(0), s'(1), s'(2), \dots$ along with slithers 
$t'(i,i+1)\colon s'(i) \to s'(i+1)$ giving rise to a direct limit 
$Q_{\infty}' = \varinjlim_i X(i)'$, where $X(i)' \deq X_{s'(i)}$.
As before, we assume that the number of alternations between $0$ and $1$ in $s'(i)$ grows without bound.
Just as in \Cref{subsec:construction}, we  then construct a map 
$\mu'\colon Q_{\infty}' \times Q_{\infty}' \to Q_{\infty}'$ giving rise to a weak $H$-group structure on $Q_{\infty}'$.
\begin{Thm}\label{thm:unique}
The weak $H$-groups $Q_{\infty}$ and $Q_{\infty}'$ are isomorphic.
\end{Thm}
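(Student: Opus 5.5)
We will build mutually inverse maps $\alpha\colon Q_{\infty}\to Q_{\infty}'$ and $\beta\colon Q_{\infty}'\to Q_{\infty}$ in the weak homotopy category. Both are assembled from slither embeddings, just as $\mu$ was built in \Cref{prop:define_mult}. Then we check that each respects the multiplications up to weak homotopy.

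\textbf{Construction of $\alpha$.} Since the number of alternations in $s'(j)$ is unbounded, every binary string is a substring of some $s'(j)$. So each $X(i)$ admits a slither embedding $X(i)\hookrightarrow X(j)'\hookrightarrow Q_{\infty}'$. Call this composite $\alpha_i'$. By \Cref{cor:homotopic}, any two such slither embeddings are homotopic. In particular, $\alpha_{i+1}'\circ t(i,i+1)$ is homotopic to $\alpha_i'$, because a slither embedding precomposed with a slither is again a slither embedding.

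We now rectify these maps inductively, exactly as in the proof of \Cref{prop:define_mult}. Set $\alpha_0$ to be the basepoint map. Given $\alpha_i$, the inclusion $t(i,i+1)$ is a cofibration by \Cref{prop:CW}. So \Cref{obs:extension} produces $\alpha_{i+1}\simeq\alpha_{i+1}'$ with $\alpha_{i+1}|_{X(i)}=\alpha_i$. These maps glue to a continuous $\alpha\colon Q_{\infty}\to Q_{\infty}'$ with $\alpha\circ t(i)$ homotopic to a slither embedding for every $i$.

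Uniqueness of $\alpha$ up to weak homotopy follows as in \Cref{prop:define_mult}. By \Cref{prop:to_finite}, any map from a finite CW complex factors through some $X(i)$, where all candidate maps are homotopic. Construct $\beta$ symmetrically.

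\textbf{$\alpha$ and $\beta$ are inverse.} The composite $\beta\circ\alpha\circ t(i)$ is homotopic to $\beta\circ(\text{slither embedding }X(i)\to X(j)')$. This in turn is homotopic to $\beta\circ t'(j)\circ\sigma$, where $\sigma\colon X(i)\to X(j)'$ is a slither. Since $\beta\circ t'(j)$ is homotopic to a slither embedding into $Q_{\infty}$, the whole composite is homotopic to a slither embedding $X(i)\to Q_{\infty}$. By \Cref{cor:homotopic}, that slither embedding is homotopic to $t(i)$.

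Now take any finite CW complex $Z$ mapping to $Q_{\infty}$. By \Cref{prop:to_finite}, the map factors through some $X(i)$. Hence $\beta\circ\alpha$ is weakly homotopic to $\Id$, and likewise $\alpha\circ\beta$ is weakly homotopic to $\Id$.

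\textbf{$\alpha$ respects the multiplications.} We must show $\alpha\circ\mu\simeq_w\mu'\circ(\alpha\times\alpha)$. By \Cref{prop:product_colim} and \Cref{prop:to_finite}, it suffices to compare the two sides after restricting to $X(i)\times X(i)$.

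On the left, $\mu$ restricted to $X(i)\times X(i)$ is homotopic to a slither embedding $X_{\pr s(i)\rp\pr s(i)\rp}\to Q_{\infty}$ landing in some $X(m)$. Composing with $\alpha$ gives a map homotopic to a slither embedding into $Q_{\infty}'$.

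On the right, $\alpha\times\alpha$ restricted to $X(i)\times X(i)$ is homotopic to a product of slithers into $X(j)'\times X(j)'$. This product is a slither $X_{\pr s(i)\rp\pr s(i)\rp}\to X_{\pr s'(j)\rp\pr s'(j)\rp}$, obtained by concatenating slither words. Composing with $\mu'$ again gives something homotopic to a slither embedding.

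\Cref{cor:homotopic} then identifies the two sides. Units are preserved trivially, since both basepoints come from $X(0)=X'(0)=\mathrm{pt}$. Inverses are preserved automatically for a group-object morphism, because inverses are unique up to weak homotopy.

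\textbf{Main obstacle.} The step needing the most care is the bookkeeping of homotopies. Every identification must happen at a finite stage $X(i)$, where \Cref{cor:homotopic} applies. In particular, the homotopies between slither embeddings into different finite stages have to be pushed into a common larger snake before they can be compared. The rest is formal, following the template of \Cref{prop:define_mult} and \Cref{prop:exist_H_group}.
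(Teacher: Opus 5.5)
Your argument is correct, but it handles the key step differently from the paper. The paper constructs only one map $\eta\colon Q_\infty\to Q_\infty'$, which is exactly your $\alpha$. It then proves $\eta$ is an honest homotopy equivalence by a homological argument:
\begin{itemize}
\item by the stability statement of \Cref{rmk:stability}, $t(i)_*$ and $(\eta_i)_*$ are isomorphisms in degrees $\le d$ for $i\gg 0$, so $\eta_*$ is an isomorphism on integral homology;
\item since both spaces are simply connected and of CW homotopy type, the dual Whitehead theorem upgrades this to a homotopy equivalence.
\end{itemize}
Compatibility with multiplication is then checked by the same restriction to $X(i)\times X(i)$ that you use (the paper's cube diagram), so that part of your proof coincides with the paper's.

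You instead build $\beta$ symmetrically and verify
\[
\beta\circ\alpha\circ t(i)\simeq t(k)\circ\tau\circ\sigma\simeq t(k)\circ t(i,k)=t(i)
\]
directly from \Cref{cor:homotopic}. Here $\tau\colon X(j)'\to X(k)$ is the slither through which $\beta\circ t'(j)$ factors up to homotopy, and you use that a composite of slithers is a slither. You then pass to arbitrary finite CW complexes via \Cref{prop:to_finite}.

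This route is more elementary and self-contained:
\begin{itemize}
\item it needs no homology computation;
\item it does not rely on \Cref{rmk:stability}, which the paper states without proof;
\item it uses simple connectivity only to dispose of basepoint issues;
\item it produces exactly the isomorphism in the weak homotopy category that the theorem asks for.
\end{itemize}

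The paper's route yields the formally stronger statement that $\eta$ is a genuine homotopy equivalence. You can recover this as well. Your argument shows that $\alpha$ and $\beta$ induce mutually inverse maps on homotopy classes of maps from finite CW complexes, in particular on all $\pi_n$. Hence $\alpha$ is a weak equivalence between spaces of CW homotopy type, and Whitehead's theorem applies.
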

\begin{proof}
    We start by constructing a {homotopy equivalence}  (not weak!) between $Q_{\infty}$ and $Q_{\infty}'$. We construct a map 
    $\eta\colon Q_{\infty} \to Q_{\infty}'$ much as in \Cref{prop:define_mult}. Namely, it suffices to exhibit maps $\eta_i\colon X(i) \to Q_{\infty}'$ such that $\eta_{i+1} |_{X(i)} = \eta_i$. 
    We note that, as in the construction of $Q_{\infty}$, we have slither embeddings 
    $X(i) = X_{s(i)} \hookrightarrow X(j)' \hookrightarrow Q_{\infty}'$ (where $j$ depends on $i$) and these slither embeddings are uniquely defined up to homotopy.
    We will ensure that each map $\eta_i$ is homotopic to a slither embedding.
    
    The construction proceeds by induction on $i$. We map $X(0)$ to the basepoint of $X(0)'$. Having defined $\eta_i$, we observe that $X(i) \xrightarrow[]{\eta_i} Q_{\infty}'$ is homotopic to the slither embedding 
    $X(i) \hookrightarrow X(i+1) \hookrightarrow Q_{\infty}'$. By \Cref{obs:extension}, we can find $\eta_{i+1}\colon X(i + 1) \to Q_{\infty}'$ homotopic to the slither embedding such that 
    $\eta_{i+1}|_{X(i)} = \eta_i$.
    In this way, we construct a map $Q_{\infty} \to Q_{\infty}'$. We claim that this map induces an isomorphism on all homology groups. In fact, 
    \Cref{rmk:stability} for $Q_{\infty}$ and $Q_{\infty}'$ shows that the commutative diagram
    \[
        \begin{tikzcd}
            & H_{\bl}(Q_{\infty}, \Z)\ar[rd, "\eta_*"] &\\
            H_{\bl}(X(i), \Z)\ar[ur, "t(i)_*"]\ar[rr, "(\eta_i)_*" below] &&  H_{\bl}(Q_{\infty}', \Z)
        \end{tikzcd}
    \]
    consists of isomorphisms when we restrict to degrees $\leq d$ for large enough $i$. Since $d$ can be made arbitrarily large, $\eta_*$ is an isomorphism. Since $Q_{\infty}$ and $Q_{\infty}'$ are simply connected and are homotopy equivalent to CW complexes, the dual Whitehead theorems \cite{dual_W} imply that $\eta$ is a homotopy equivalence.

    It remains only to show that multiplication in $Q_{\infty}$ and $Q_{\infty}'$ coincide up to weak homotopy. For each $i$, and for $j$ sufficiently large, consider in the weak homotopy category the diagram
\[
\begin{tikzcd}[
  row sep={0cm,between origins},
  column sep={0cm,between origins},
  cells={nodes={inner sep=3pt,outer sep=0pt}},
  arrows={-{Stealth[length=2mm]},line width=.55pt},
  labels={font=\small,fill=white,inner sep=2pt}
]
X(i) \times X(i)
&[1.5cm] {} &[2.3cm] 
X_{\pr s(i)\rp\pr s(i)\rp} &[1.5cm] {} \\[1.2cm]
{} & 
Q_{\infty} \times Q_{\infty} & {} & 
Q_{\infty} \\[2cm]
X(j)' \times X(j)' & {} & X_{\pr s'(j) \rp \pr s'(j) \rp} & {} \\[1.2cm]
{} & 
Q_{\infty}' \times Q_{\infty}' & {} & 
Q_{\infty}'
\arrow[from=1-1,to=1-3,equals]
\arrow[from=1-1,to=3-1,"\eta_i \times \eta_i"']
\arrow[from=1-3,to=3-3,]
\arrow[from=3-1,to=3-3,equals]
\arrow[from=1-1,to=2-2]
\arrow[from=1-3,to=2-4]
\arrow[from=3-1,to=4-2]
\arrow[from=3-3,to=4-4]
\arrow[from=2-2,to=2-4,crossing over,"\mu"{pos=.35}]
\arrow[from=2-2,to=4-2,crossing over,"\eta \times \eta"'{pos=.38}]
\arrow[from=2-4,to=4-4,"\eta"]
\arrow[from=4-2,to=4-4,"\mu'"']
\end{tikzcd}
\]
The two lateral faces commute by construction. The top and bottom faces commute by \Cref{prop:define_mult} for $Q_{\infty}$ and $Q_{\infty}'$. The back face commutes because all maps involved are homotopic to slither morphisms. By a diagram chase, it follows that $\eta \circ \mu$ and $\mu' \circ (\eta \times \eta)$ agree when restricted to $X(i) \times X(i)$. Since every map to $Q_{\infty} \times Q_{\infty}$ from a finite CW complex factors through $X(i) \times X(i)$ for \emph{some} $i$, this shows that 
$\eta \circ \mu$ and $\mu' \circ (\eta \times \eta)$ are weakly homotopic.
\end{proof}

\Cref{thm:unique} shows that snake varieties limit to a well-defined group object in the weak homotopy category with homology and cohomology given by $\nsym$ and $\qsym$ respectively.
In \cite{BR08}, it is shown that another space, the loop space of the suspension of $\Pb^{\infty}_{\C}$ (denoted $\Omega \Sigma \C \Pb^\infty$), admits an $H$-group structure such that $H^\bl(\Omega \Sigma \C \Pb^\infty, \Z)\cong \qsym$ and $H_\bl(\Omega \Sigma \C \Pb^\infty, \Z)\cong \nsym$ as Hopf algebras. Adhering to the principle that there are no coincidences in mathematics, we advance the following conjecture.
\begin{Cj}\label{cj:homotopic2james}
    $Q_{\infty}$ is isomorphic to $\Omega \Sigma \C \Pb^\infty$ as a group object in the weak homotopy category.
    In particular, $Q_{\infty}$ and $\Omega \Sigma \C \Pb^\infty$ are homotopy equivalent as topological spaces.\footnote{The first part implies that they are weakly homotopy equivalent. Since they both have the homotopy type of a CW complex, they are homotopy equivalent by Whitehead's theorem.}
\end{Cj}
\noindent
One can further motivate the conjecture as follows. In \cite{PS26}, the space $\Omega \Sigma \C \Pb^{\infty}$ is shown to be homotopy equivalent to a direct limit $J \C \Pb^{\infty}$ of spaces $J_m \Pb^n_{\C} \deq (\Pb^n_{\C})^m/\!\sim$, where $\sim$ is the equivalence relation that identifies the $m$ natural copies of $(\Pb^n_{\C})^{m-1}$ inside the product 
$(\Pb^n_{\C})^m$. The spaces $(\Pb^n_{\C})^m$ may be viewed as the snake den $\underbrace{\pr 0^{n-1} \rp \dots \pr 0^{n-1} \rp}_{\text{$m$ components}}$
so admit slither embeddings into $Q_{\infty}$. 
The various induced embeddings of $(\Pb^n_{\C})^{m-1}$ into $Q_{\infty}$ are all homotopic. So it is not unreasonable to expect that, after replacing the map $(\Pb^n_{\C})^m \to Q_{\infty}$ with a homotopic one, the various copies of $(\Pb^n_{\C})^{m-1}$ can be made to coincide, giving rise to a map
$J_m \Pb^n \to Q_{\infty}$.
One can then hope to be able to pass to the direct limit to get a map $J \C \Pb^{\infty} \to Q_{\infty}$. The construction would ensure that this map is compatible with the $H$-group structures and induces an isomorphism on homology, so is a homotopy equivalence by the dual Whitehead theorems.

\appendix
\section{Numerology}\label{sec:numerology}
\noindent
 Chase \cite{C76} proved two interesting results about the number of subwords of given length taken from a fixed word.

\begin{Thm}[{\cite{C76}*{Theorems 3.1 and 4.1(ii)}}]\label{thm:chase}
    Let $\sigma_i$ be the number of distinct subwords of length $i$ taken from a string $s$ of length $m$ in some finite alphabet. Then 
    \begin{enumerate}
        \item $\sigma_i^2 \ge \sigma_{i-1}\sigma_{i+1}$ for all $0 < i < m.$
        \item $\sigma_i\le \sigma_{m-i}$ whenever $0\le i\le m-i.$
     \end{enumerate}
\end{Thm}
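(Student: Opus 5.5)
We treat the two parts separately and argue directly with words and their embeddings into $s$. For a word $w$ that is a subword of $s$, the \emph{leftmost embedding} is the greedy one: each letter of $w$ goes to the earliest available position of $s$. It is a canonical representative of the distinct subword $w$. For $0\le k\le m$, write $\ell_w(k)$ for the number of letters of $w$ whose leftmost embedding lies in $s_1\cdots s_k$. Then $\ell_w$ is weakly increasing in $k$, with unit steps, from $0$ to $\abs{w}$.

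\emph{Log-concavity.} The plan is to build an injection
\[
  \{\text{length } i-1\}\times\{\text{length } i+1\}\hookrightarrow\{\text{length } i\}\times\{\text{length } i\}
\]
between sets of distinct subwords.
\begin{enumerate}
  \item Given $(u,v)$, set $g(k)=\ell_v(k)-\ell_u(k)$. It starts at $0$, ends at $2$, and changes by at most $1$ at each step. So there is a least $k$ with $g(k)=1$.
  \item Cut $u=u_1u_2$ and $v=v_1v_2$ at position $k$ of $s$, so that $\abs{v_1}=\abs{u_1}+1$.
  \item Output the pair $(u_1v_2,\,v_1u_2)$. Both words have length $i$. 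Both are subwords of $s$: concatenate the prefix of one embedding with the suffix of the other, which is legal because the cut is at the common position $k$.
\end{enumerate}
To show injectivity, we reconstruct $(u,v)$ from its image $(x,y)=(u_1v_2,v_1u_2)$. The natural candidate is the symmetric rule: take the least $k$ with $\ell_y(k)-\ell_x(k)=1$ and swap the tails back.

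\emph{Expected main obstacle.} Injectivity is the hard step. The leftmost embedding of $u_1v_2$ need not restrict to the leftmost embeddings of $u_1$ and $v_2$: the tail $v_2$ may slide left once it follows $u_1$ instead of $v_1$. Then the reconstructed cut point can move. First I would try to show that the prefix parts are unaffected, namely that $\ell_x=\ell_u$ and $\ell_y=\ell_v$ up to $k$, and that $k$ is still the first crossing point, by induction along the greedy algorithm. If this fails, I would replace ``least $k$'' by the cut $k$ maximizing some potential, so that the cut is stable under the swap. Alternatively, I would use leftmost embeddings for prefixes and rightmost embeddings for suffixes, which makes the prefix and suffix parts independent of each other.

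\emph{Top-heaviness.} For $i\le m-i$ I would build an injection from distinct length-$i$ subwords to distinct length-$(m-i)$ subwords.
\begin{enumerate}
  \item Go down by steps: it suffices to give, for $j<m-j$, an injection from length-$j$ subwords to length-$(j+1)$ subwords, or an appropriate chain of such maps.
  \item Phrase this as a matching in the bipartite graph whose edges are the containment relations $w\sqsubseteq w'$ between length-$j$ and length-$(j+1)$ distinct subwords.
  \item Verify Hall's condition by a counting argument. For $w$ with leftmost embedding $L(w)$, insert the letter of $s$ at a position not in $L(w)$, chosen canonically (say, the first gap where insertion produces a new word). This uses $j<m/2$, which guarantees enough free positions.
\end{enumerate}
The log-concavity injection already gives $\sigma_{j}/\sigma_{j-1}$ nonincreasing. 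So if this matching argument proves difficult, I would instead combine log-concavity with a comparison of the two ends, using the reversal symmetry $\sigma_i(s)=\sigma_i(s^{\mathrm{rev}})$, and try to deduce the inequality $\sigma_i\le\sigma_{m-i}$ from unimodality plus this end comparison.
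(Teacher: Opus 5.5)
The paper does not prove this theorem; it quotes Chase (and Vatter's reproof of (1)). Your argument therefore has to stand on its own, and both parts have genuine gaps.

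In (1), the problem is not only that injectivity is hard to prove: the crossing map $(u,v)\mapsto(u_1v_2,\,v_1u_2)$ is not injective. Take $s=0010$ and $i=2$.
\begin{itemize}
\item For $(u,v)=(1,000)$, the leftmost embeddings are $\{3\}$ and $\{1,2,4\}$, so $g=(0,1,2,1,2)$ and the cut is $k=1$. Then $u_1$ is empty, $v_1=0$, $v_2=00$, and the image is $(00,01)$.
\item For $(u,v)=(0,010)$, the embeddings are $\{1\}$ and $\{1,3,4\}$, so $g=(0,0,0,1,2)$ and $k=3$. Then $u_1=0$, $v_1=01$, $v_2=0$, and the image is again $(00,01)$.
\end{itemize}
This is exactly the sliding you anticipated. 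In the second pair the tail $v_2=0$ sits at position $4$ in $v$ but at position $2$ in $x=00$, so the record of where the cut was is destroyed, and no reconstruction rule can recover it. The construction itself must change. Your fallbacks (a cut maximizing an unspecified potential, or leftmost prefixes with rightmost suffixes) are not concrete enough to check. Making a switching argument respect \emph{distinct} subwords rather than embeddings is precisely the nontrivial content of Chase's result, and it is missing.

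In (2), the reduction in your first step is invalid. Injections from length-$j$ to length-$(j+1)$ subwords for $j<m-j$ only show that $\sigma$ increases up to the middle. The inequality $\sigma_i\le\sigma_{m-i}$ compares across the middle, where $\sigma$ may already be decreasing. For example, $s=01010$ has $(\sigma_0,\dots,\sigma_5)=(1,2,4,7,5,1)$. Here $\sigma_1\le\sigma_4$ holds, but no chain of step-up injections can give it, since $\sigma_3>\sigma_4$.

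Your fallback fails for the same reason. Log-concavity and unimodality say nothing about where the peak lies. Reversal only gives $\sigma_i(s)=\sigma_i(s^{\mathrm{rev}})$, which relates no two different lengths. Comparisons at the ends do not propagate inward either: the log-concave sequence $1,2,4,3,2,1$ satisfies $\sigma_0\le\sigma_5$ and $\sigma_1\le\sigma_4$ but has $\sigma_2>\sigma_3$.

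Also, in your Hall sketch, having enough free positions is not the issue. A canonical insertion rule defines a map, and its injectivity (or Hall's condition) is what needs proof, which you do not address. What is actually required is a direct comparison between lengths $i$ and $m-i$, and the proposal does not supply one.
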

\noindent
We recall the definition of the $f$- and $h$-vectors of a polytope.
If $\dim P = d$, the $f$-vector is the tuple $(f_0, \dots, f_d)$
in which $f_i$ is the number of faces of dimension $i$ of $P$. The $h$-vector is defined by the identity $\sum_{i=0}^dh_ix^i = \sum_{i=0}^d f_i(x-1)^i$. 
The following result is well-known to experts, but we were unable to find a direct reference.
\begin{Prop}\label[Prop]{prop:hvectorbetti}
    Let $P$ be a lattice polytope of dimension $d$ and suppose the toric variety $X_P$ admits an affine paving. Then the Betti vector $(b_0, \dots, b_d)$ of $X_P$ coincides with the $h$-vector of $P.$
    \begin{proof}
        Let $(f_0, \dots, f_d)$ be the $f$-vector of $P.$ Then the class of $X_P$ in the Grothendieck ring of varieties is $\sum_i f_i[\Gm]^i = \sum_i f_i\left([\A^1]-1\right)^i$ whereas the affine paving gives $[X_P] = \sum_i b_i[\A^1]^i$.
        The claim follows, since there are no linear relations between the powers of $[\A^1]$ in this ring.
    \end{proof}
\end{Prop}
\noindent
\begin{proof}[Proof of Theorem~\ref{thm:bettinumbersprop}]
Let $b_i$ be the number of cells of dimension $i$ in the affine paving of $X_s$. 
Claim $(5)$ then follows from $\# \A^n(\mathbb{F}_q) = q^n$. Claim $(4)$ follows from the fact that the cell closures in the affine paving form a basis for the homology of $X_s$. 
Claim (3) follows from \Cref{prop:hvectorbetti}.
Claims $(1)$ and $(2)$ follow from \Cref{df:base2word} and the results proven in \Cref{sec:bijection}. 
    The log-concavity property follows from \Cref{thm:chase}. The top-heavy claim is clear for $i = 0$ since $b_0= 1$. For $i \geq 1$, it is equivalent to the assertion $\sigma_a\le \sigma_b$ for $a\le b\le m-a$, where $m = d-1$ is the length of $s$. The latter follows from log-concavity and the inequality $\sigma_a\le \sigma_{m-a}.$
\end{proof}
\begin{Rmk}
    Let $X$ be a projective variety over an algebraically closed field 
    which admits an affine paving. Let $b_i$ be the number of cells of dimension $i$. Then, $b_i \leq b_j$ for all $i \leq j \leq \dim X -i$. This follows from \cite{purity_paper}*{Theorem 3.1}.\footnote{
    The proof in \cite{purity_paper} is stated under the assumption of an affine stratification, but the proof uses only that it is an affine paving.}
    This fact was used to great effect in \cite{purity_paper} and \cite{HW17}.
    Since the dimension of a cell in $X_s$ corresponding to a subword of $s$ is one \emph{more} than the length of the word,
    this method produces in our setting a slightly smaller set of top-heavy inequalities than provided by \Cref{thm:chase}.
\end{Rmk}
In general, if the $h$-vector is nonnegative and log-concave, then so is the $f$-vector \cite{B84}*{Corollary 8.4}. In our case, the $h$-vector is log-concave \emph{except} at $i = 1$. Proving \Cref{cor:flogconcave} will therefore require some additional maneuvering. 

Recall that the $f$-vector can be obtained from the $h$-vector as follows \cite{S85}*{p. 213}. 
\begin{figure}[h!]
    \centering
    {\includestandalone{figures/triangle}}
    \caption{Obtaining the $f$-vector $5,8,5,1$ from the $h$-vector $1,1,2,1.$}
    \label{fig:h2f}
\end{figure}
Draw a Pascal-like triangle with $d+2$ rows by placing the $h$-vector in reverse order on the left side followed by an additional $0$, and $1$'s on the right side. 
Fill in the remaining entries down to row $d+1$ (rows are $0$-indexed) so that each entry is the sum of the two entries immediately above it.
The $f$-vector can be read off from the bottom row, ignoring the initial $0$. To be more precise, 
we write $T(i, j)$ for the 
entry in position $j$ in row $i$ (positions are also $0$-indexed). 
Then $T$ is uniquely determined by 
$T(i, 0) = h_{d-i}$ for $0 \leq i \leq d$, 
$T(d+1, 0) = 0$, $T(i,i) = 1$, and $T(i,j) = T(i-1,j-1) + T(i-1, j)$ for $0< j < i$.
The $f$-vector is given by $f_i = T(d+1, i+1).$ See \Cref{fig:h2f} for an example.
\begin{Lm}
    $T(i, 1)^2\ge T(i-1, 1)T(i+1, 1).$ That is, the $T(-, 1)$ diagonal is log-concave.
    \begin{proof}
        The sequence $T(i, 1)$ for $1 \leq i \leq d$ is obtained from the log-concave sequence $T(i, 0)$
        for $0 \leq i \leq d-1$
        by taking the first $d$ terms of its convolution with the log-concave sequence $1, \dots, 1$, so is log-concave by  \cite{M69}*{Theorem 1}. We have $T(d+1,1) -T(d,1)=T(d,0) = b_0 = 1$ and $T(d,1) - T(d-1,1) = T(d-1, 0) = b_1 = 1$. 
        It follows from this that $T(d,1)^2 \ge T(d+1, 1)T(d-1,1).$
    \end{proof}
\end{Lm}
\begin{proof}[Proof of \Cref{cor:flogconcave}]
    The triangular array $(T(i, j))_{1\le j\le i\le d+1}$ is the convolution array in the sense of \cite{FM20} with initial side sequence given by $T(-, 1)$ and convolution multiplier sequence given by the all $1$'s sequence. 
    \cite{FM20}*{Proposition 3}
    therefore shows that each row, in particular row $d+1$, is log-concave.
\end{proof}
\begin{proof}[Proof of \Cref{cor:hvectordominate}]
    \Cref{thm:bettinumbersprop} shows that $h_i$ is the number of lattice paths in the skew diagram of $s$ with exactly $i$ loose steps weakly above the lowest vertical loose step. \cite{S10}*{Corollary 3.4} shows that $h_i'$ is the number of lattice paths with exactly $i$ vertical loose steps.
    Since every such step evidently lies weakly above the lowest vertical loose step, the claim follows.
\end{proof}

\section{Toric Positroids}\label{sec:toric_positroids}

\begin{Df}
    A \textbf{positroid cell} in $\Gr(r, n)$ is any isomorphic image of an open Richardson variety 
    under the projection map from the full flag variety to the Grassmannian. 
    A \textbf{positroid variety} in $\Gr(r, n)$ is the closure of a positroid cell.
\end{Df}
\begin{Rmk}
The projection from the full flag variety to the Grassmannian of any \emph{closed} Richardson variety is the closure of a unique positroid cell, hence a positroid variety.
\end{Rmk}
\noindent
It is a fact (cf.\ \cite{Knutson_Lam_Speyer_2013})
that the matroid realized by a general point in a positroid cell (equivalently, positroid variety) is a positroid in the sense of \Cref{df:pos} and that, moreover, this induces a bijection between  positroid cells (equivalently, positroid varieties) in $\Gr(r, n)$ and positroids of rank $r$ on $[n]$.
For such a positroid $M$ we write $\mathring{\Pi}_M$ for the associated positroid cell and $\Pi_M$ for the closure of this cell.

\begin{Lm}\label[Lm]{lm:max_tori}
    Let $X$ be a projective toric variety with dense torus $T$. Let $T'$ be a torus acting on $X$. Then, after possibly conjugating the $T$-action by an automorphism of $X$, the action of $T'$ factors through $T$. 
\end{Lm}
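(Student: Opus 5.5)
The plan is to work inside the automorphism group $\Aut(X)$, using the structure theory of automorphism groups of complete toric varieties (Demazure, and Cox for the general complete case). For a projective toric variety $X$ the connected component $\Aut^0(X)$ is a linear algebraic group. Both $T$ and the image of $T'$ lie in it: $T$ because it acts faithfully, and the image of $T'$ because it is a connected algebraic group acting algebraically. By Demazure's theorem, as extended by Cox, $T$ is a \emph{maximal} torus of $\Aut^0(X)$. The root subgroups of $\Aut^0(X)$ relative to $T$ are the Demazure roots of the fan, and together with $T$ they generate $\Aut^0(X)$. Maximality also follows directly: any torus containing $T$ centralizes $T$, hence preserves each $T$-orbit and commutes with $T$ on the dense orbit $O_P\cong T$, so it acts through translations and therefore lies in $T$.

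The key steps, in order, are as follows.

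\begin{enumerate}
\item Verify that the action map $T'\times X\to X$ induces a homomorphism of algebraic groups $\rho\colon T'\to \Aut^0(X)$. Here $\Aut(X)$ is a group scheme locally of finite type, since $X$ is projective, so this follows from the representability of the automorphism functor.
\item Replace $T'$ by its image $\rho(T')$. This image is a torus, being the image of a torus under a homomorphism of algebraic groups.
\item Apply the conjugacy of maximal tori in the connected linear algebraic group $\Aut^0(X)$. The torus $\rho(T')$ lies in some maximal torus, and this maximal torus equals $gTg^{-1}$ for some $g\in\Aut^0(X)$.
\item Replace the $T$-action by its conjugate $t\cdot x = g(t\cdot g^{-1}x)$. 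This is again a toric structure on $X$, since $g$ is an automorphism, and the action of $T'$ now factors through it.
\end{enumerate}

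The main obstacle is the input that $\Aut^0(X)$ is a connected \emph{linear} algebraic group in which $T$ is a maximal torus. Linearity can fail for a general projective variety, where abelian variety parts may appear. For toric varieties it holds because $X$ is rational with $\mathrm{Pic}$ discrete and $H^1(X,\mathcal O_X)=0$, and I would cite Demazure and Cox for this rather than reprove it. If one prefers to avoid that machinery, an alternative is to pick a $T$-linearized very ample line bundle. One then shows that it can also be $T'$-linearized after passing to a power, using that $\mathrm{Pic}(X)$ is discrete so $T'$ fixes its class. This embeds $\Aut^0(X)$, or at least the group generated by $T$ and $T'$, into a subgroup of $\mathrm{PGL}$ that preserves $X$, and conjugacy of maximal tori can then be applied there.
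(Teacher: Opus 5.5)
Your proposal is correct and is essentially the paper's proof. The paper also places $T$ and the image of $T'$ in the linear algebraic group $\Aut(X)$ (citing Brion for linearity), shows that $T$ is a maximal torus by checking that a point $x$ of the dense orbit has trivial stabilizer in any torus $S\supseteq T$, so that $\dim S=\dim S\cdot x\le\dim X=\dim T$, and then concludes by conjugacy of maximal tori. Your ``translations on the dense orbit'' argument is an equally good substitute for that dimension count. One caution on citations: Cox's description of the automorphism group assumes $X$ is simplicial, which fails for the generally non-simple polytopes here, so the linearity input should come from Brion's theorem or from your linearization argument rather than from Demazure--Cox.
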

\begin{proof}
    Let $A = \Aut(X)$ be the automorphism group of $X$. By \cite{brion_aut}*{Theorem 1}, $A$ is a linear algebraic group. Moreover, $T$ is a maximal torus in $A$. To see this, suppose $T\subseteq S$ for some torus $S$ in $A$. Let $x$ be a point in the dense $T$-orbit of $X$. Let $s \in S$ be a point in the stabilizer of $x$. We claim $s$ stabilizes every point in $X$. This is clear for points in $S \cdot x$ since $S$ is commutative. Since $T\cdot x \subseteq S\cdot x$ is dense in $X$ the rest follows by continuity. We conclude that $x$ has trivial stabilizer in $S$. Thus, $\dim S  = \dim S \cdot x \leq \dim X = \dim T$. 

    The result now follows from the fact that every torus in a linear algebraic group is contained in a maximal torus, and that maximal tori in a linear algebraic group are all conjugate. 
\end{proof}
Recall that a matroid $N$ is a \textbf{minor}  of a matroid $M$ if $E(N) \subseteq E(M)$ and there exists $S \subseteq E(M) - E(N)$ such that 
$\mathcal{B}(N) = \{I \subseteq E(N) \mid I \sqcup S \in \mathcal{B}(M)\}$.
An argument similar to the following lemma appears as \cite{CanSaha2025}*{Theorem 3.1}.
\begin{Lm}\label[Lm]{lm:toric_implies_no_u24}
    Let $M$ be a positroid of rank $r$ on $[n]$. Assume $\Pi_M$ admits the structure of a toric variety. Then, $M$ has no minor isomorphic to $U_{2,4}$.
\end{Lm}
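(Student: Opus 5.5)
We argue by contradiction: suppose $M$ has a minor isomorphic to $U_{2,4}$, realized by disjoint sets $S, D \subseteq [n]$ with $\abs{[n]-S-D} = 4$. The plan is to extract from $\Pi_M$ a toric subvariety (or subquotient) isomorphic to $\Gr(2,4)$, or at least a $T'$-variety containing a non-toric torus orbit closure, and derive a contradiction with \Cref{lm:max_tori}.

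\textbf{Setting up the torus.} By hypothesis $\Pi_M$ is a toric variety for some torus $T$. The standard torus $T_{[n]}$ acts on $\Pi_M$, and so, by \Cref{lm:max_tori}, after conjugating by an automorphism, $T_{[n]}$ acts through $T$. Consequently every $T$-orbit closure in $\Pi_M$ is $T_{[n]}$-stable, and there are only finitely many $T$-orbits. Hence $\Pi_M$ has only finitely many $T_{[n]}$-invariant irreducible closed subvarieties that are unions of $T$-orbits. In particular, each $T_{[n]}$-orbit is contained in a $T$-orbit. The key consequence I will aim for is that $\Pi_M$ contains only \emph{finitely many} $T_{[n]}$-orbit closures of each dimension, or at least a bounded family.

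\textbf{Producing a continuous family.} Next we realize the minor geometrically. Contracting $S$ and deleting $D$ corresponds to taking the subvariety of $\Pi_M$ on which $\Bbbk^S$ is contained in the subspace (with the appropriate Plücker conditions) and then projecting. For a generic point $L$ of the positroid cell, the projection of $L\cap$(conditions) lands in the open cell of $\Gr(2,4)$ where all Plücker coordinates are nonzero. The cross-ratio $p_{13}p_{24}/(p_{12}p_{34})$ of these four coordinates is a $T_{[n]}$-invariant rational function on $\Pi_M$, since it is a ratio of Plücker monomials of $M$ with balanced weights. The plan is to show this function is nonconstant on $\Pi_M$. Nonconstancy follows because the positroid cell $\mathring{\Pi}_M$ surjects onto an open set of the corresponding positroid cell of $\Gr(2,4)$, which is its top cell, where the cross-ratio varies. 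This surjection can be obtained by the standard compatibility of positroid cells with minors (restriction/projection in Knutson--Lam--Speyer). A nonconstant $T_{[n]}$-invariant rational function means that generic $T_{[n]}$-orbits form a positive-dimensional family. This is incompatible with the first step: since each $T_{[n]}$-orbit lies in one of finitely many $T$-orbits, and $T_{[n]}$ acts on each $T$-orbit (a torsor) through a quotient torus, all $T_{[n]}$-orbits in a fixed $T$-orbit are translates. We therefore compare invariants directly. A $T_{[n]}$-invariant rational function restricted to the dense $T$-orbit $\cong T$ is a $T_{[n]}$-invariant function on $T$, i.e.\ a function on $T/\mathrm{im}(T_{[n]})$. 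Hence we instead need $\dim \Pi_M > \dim$ of a generic $T_{[n]}$-orbit, which contradicts density of a $T$-orbit only if $T_{[n]}\to T$ is surjective. I expect this to be the main obstacle. To resolve it I would first try to show the image of $T_{[n]}$ in $\Aut(\Pi_M)$ is a maximal torus, by exhibiting that its fixed points are isolated (the finitely many $\Bbbk^B$). A torus $T\supseteq$ image acting with isolated fixed points of the larger group still gives finitely many fixed points. Then, by a weight argument at a $T_{[n]}$-fixed point, the tangent weights of $T$ restrict injectively.

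Alternatively, and more simply: finitely many $T$-fixed points must contain the $T_{[n]}$-fixed points only if the latter are finite, which they are ($\Bbbk^B$). Each $T$-stable curve is $T_{[n]}$-stable, so $\Pi_M$ has finitely many $T_{[n]}$-stable curves. The family of $T_{[n]}$-stable curves can then be counted directly. The $U_{2,4}$ minor produces, through a suitable fixed point $\Bbbk^B$ whose local $T_{[n]}$-weights include two equal (proportional) weights $e_a-e_b$ arising from the $2\times2$ block, a $\Pb^1$-family of $T_{[n]}$-stable curves. That contradiction is the one I would pursue, checking the repeated weight via the local chart of $\Gr$ at $\Bbbk^B$ with $B\supseteq S$ chosen so that the minor's four elements give tangent directions $e_i-e_j$ with two equal.
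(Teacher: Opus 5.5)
Your stated plan (locate a copy of $\Gr(2,4)$ in $\Pi_M$ and play it against \Cref{lm:max_tori}) is the right one. However, neither of the two arguments you actually develop reaches a contradiction.

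\emph{Cross-ratio route.} The ``key consequence'' does not follow. Knowing that $T_{[n]}$ acts through $T$ still allows its image $T''$ to be a proper subtorus of $T$. In that case the $T''$-orbits in the dense $T$-orbit already form a positive-dimensional family, so a nonconstant $T_{[n]}$-invariant rational function contradicts nothing. Excluding $T''\subsetneq T$ is essentially the whole lemma, and your proposed fix is false: isolated fixed points do not make a torus maximal. For instance, a rank-one torus acting on $\Pb^2$ with weights $0,1,2$ has three isolated fixed points.

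\emph{Curve-counting route.} ``Each $T$-stable curve is $T_{[n]}$-stable'' only shows that the $T$-curves are among the $T_{[n]}$-curves. That is the wrong inclusion for deducing that the latter are finite. Moreover, the $\Pb^1$-family you are after does not exist. The tangent weights of $\Gr(r,n)$ at $\Bbbk^B$ are the $e_j-e_i$ with $i\in B$, $j\notin B$, and these are pairwise non-proportional. Hence $\Gr(r,n)$, and so $\Pi_M$, has only finitely many $T_{[n]}$-stable curves. Even in $\Gr(2,4)$ the continuous family of $T_{[n]}$-orbits lives in dimension $3$, not among curves.

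The missing idea is to realize $\Gr(2,4)$ as an honest closed subvariety of $\Pi_M$ that is stable under the full torus $T$, not merely under $T_{[n]}$. Your restrict-then-project description of the minor only produces a subquotient, which inherits nothing toric from $\Pi_M$. The paper proceeds as follows.
\begin{itemize}
\item By Oxley's Lemma 3.3.2 there are disjoint $A,B\subseteq[n]$ with $A$ independent of size $r-2$, $\abs{B}=4$, and $A\cup C\in\mathcal{B}(M)$ for every $C\in\binom{B}{2}$.
\item Set $\Omega=\{L : \Bbbk^A\subseteq L\subseteq\Bbbk^{A\cup B}\}\cong\Gr(2,4)$. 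The ideal of $\Pi_M$ is generated by the $p_I$ with $I\notin\mathcal{B}(M)$ (Knutson--Lam--Speyer, Theorem 5.14). The ideal of $\Omega$ is generated by the $p_I$ with $I$ not of the form $A\cup C$. Hence $\Omega\subseteq\Pi_M$.
\item Choose a cocharacter $v$ of $T_{[n]}$ ordering $A$ before $B$ before everything else. Then $\Omega$ is the closure of the \BB cell of $\Bbbk^{A\cup\{b_3,b_4\}}$.
\item After the conjugation of \Cref{lm:max_tori}, $v$ is a cocharacter of $T$. Since $T$ is connected, commutes with $v$, and $v$ has finitely many fixed points, $T$ preserves each \BB cell.
\item So $\Omega$ is an irreducible $T$-stable closed subvariety of the toric variety $\Pi_M$, i.e.\ an orbit closure, hence itself toric. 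This contradicts the fact that $\Gr(2,4)$ is not toric (its cohomology is not generated in degree $1$).
\end{itemize}
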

\begin{proof}
    Assume otherwise.
    By \cite{Oxley}*{Lemma 3.3.2}, we can find 
    disjoint $A , B \subseteq [n]$ satisfying:
    $A$ is independent of rank $r-2$, $|B| = 4$, and $A \cup C \in \mathcal{B}(M)$ for all $C \in \binom{B}{2}$. Define
    \[
        \Omega = \left\{ L \in \Gr(r, n) \mid \Bbbk ^A  \subseteq L \subseteq \Bbbk ^{A \cup B} \right\}. 
    \]
    By \cite{Knutson_Lam_Speyer_2013}*{Theorem 5.14}, the homogeneous ideal $I(\Pi_M)$ cutting out $\Pi_M$ is generated by the Plücker coordinates $\{p_I \mid I \notin \mathcal{B}(M)\}$. The ideal $I(\Omega)$ on the other hand is generated by the Plücker coordinates $\{p_I \mid \nexists C \in \binom{B}{2}, A \cup C = I\}$. Thus $I(\Pi_M) \subseteq I(\Omega)$ and so $\Omega \subseteq \Pi_M$.
    
    Let $X = \Gr(r, n)$ and let $T$ be its standard torus. Let $v$ be a cocharacter of $T$ inducing a total order on $[n]$ which places all of $A$ before all of $B$, and places $A \cup B$ before all other elements. Let $B = \{b_1, \dots, b_4\}$ where $b_1< b_2 < b_3 < b_4$ in the total order. Let $I = A \cup \{b_3, b_4\}$. Then, $\Omega$ coincides with the \BB cell closure $\ove{X_{\Bbbk^I}^{+}}$.

    Now, let $T'$ be the dense torus of $\Pi_M$. By \Cref{lm:max_tori}, after
    conjugating the $T'$-action by a suitable automorphism of $\Pi_M$, we may assume that the $T$-action on $\Pi_M$ factors through $T'$. Then $v$ defines a cocharacter of $T'$, so $\Omega$ is a $T'$-invariant subvariety of $\Pi_M$. This implies that $\Omega \cong \Gr(2,4)$ is toric. 
    But $\Gr(2,4)$ is \emph{not} toric, for instance because its cohomology is not generated in degree 1. 
\end{proof}

The following result is closely related to \cite{GKSB26}*{Theorem 3.15}.
\begin{Thm}\label[Thm]{thm:toric_pos}
    Assume $M$ is a positroid of rank $r$ on $[n]$. Let $T$ be the standard torus acting on $\Gr(r, n)$. The following are equivalent: 
    \begin{enumerate}
        \item $\mathring{\Pi}_M$ consists of a single $T$-orbit. 
        \item $\Pi_M$ admits the structure of a toric variety.
        \item $\Pi_M$ has a dense $T$-orbit.
        \item $M$ has no minor isomorphic to $U_{2,4}$. 
        \item $M$ is binary. 
        \item $M$ is regular. 
        \item $M$ is graphic.
        \item $M$ is a direct sum of series-parallel matroids (allowing loops and coloops). 
    \end{enumerate}
    \begin{proof}
        Note that (5) through (8) are equivalent by \cite{QR25}*{Corollary 4.1} and (4)$\Leftrightarrow$(5) is \cite{Oxley}*{Theorem 6.5.4}. 
        (1)$\Rightarrow$(3) is trivial.
        (3)$\Rightarrow$(2) is clear since $T$-orbit closures in $\Gr(r, n)$ are normal \cite{MichalekSturmfels2021}*{Section 13.2}. (2)$\Rightarrow$(4) is \Cref{lm:toric_implies_no_u24}. 
        We have (6)+(7)$\Rightarrow$(1) as follows.
        By \Cref{thm:unique_torus}, there is a unique $T$-orbit in $\Gr(r, n)$ whose points realize $M$. 
        By \cite{QR25}*{Theorem 4.3}(iii), 
        this $T$-orbit coincides with $\mathring{\Pi}_M$.
    \end{proof}
\end{Thm}

\begin{Cor}\label{cor:toric_richardsons_in_GR}
    Let $X$ be a Richardson variety in $\Gr(r, n)$. Let $T$ be the standard torus. The following are equivalent.  
    \begin{enumerate}
    \setcounter{enumi}{1}
        \item $X$ admits the structure of a toric variety. 
        \item $X$ has a dense $T$-orbit. 
        \item The skew diagram of $X$ contains no $2\times 2$ box. 
    \end{enumerate}
\end{Cor}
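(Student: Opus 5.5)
The plan is to deduce the corollary from \Cref{thm:toric_pos}. The point is that a Richardson variety in $\Gr(r,n)$ is a positroid variety. Write $X = \Omega_\lambda^\mu$ for $\mu \subseteq \lambda$. Then $X$ is the projection of a closed Richardson variety in the full flag variety (take the Richardson variety indexed by the Grassmannian permutations attached to $\lambda$ and $\mu$), so by the remark following the definition of positroid varieties we have $X = \Pi_M$ for a unique positroid $M$. This $M$ is the matroid realized by a general point of $X$, which by \Cref{subsec:richardson_and_snake} is the lattice path matroid $M[A,B]$ with $\lambda = \lambda_B$ and $\mu = \lambda_A$.

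\Cref{thm:toric_pos} (the equivalence of its items (2), (3), (4)) then gives that (2) and (3) of the corollary are both equivalent to $M[A,B]$ having no $U_{2,4}$ minor. It remains to show that this holds if and only if the skew diagram $\lambda/\mu$ contains no $2\times 2$ box.

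For the direction ``no $2\times 2$ box $\Rightarrow$ no $U_{2,4}$ minor'', I would use \Cref{subsec:richardson_and_snake}. There it is shown that a skew diagram with no $2\times2$ box gives a toric Richardson matroid, which is a slither: its pruning is a snake den, i.e.\ a direct sum of series-parallel matroids, plus loops and coloops. Hence $M$ satisfies item (8) of \Cref{thm:toric_pos}, so it is binary and has no $U_{2,4}$ minor.

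For the converse, suppose $\lambda/\mu$ contains a $2\times 2$ box. Its corners are lattice points $(x,y)$ and $(x+2,y+2)$, with $k = x+y$. I will exhibit a $U_{2,4}$ minor directly. Pick a lattice path in the diagram from the origin to $(x,y)$ and one from $(x+2,y+2)$ to the end. Let $S$ be the set of north steps on these two paths, and consider the elements $k+1,\dots,k+4$. Every lattice path between the two corners through the $2\times 2$ square uses exactly two north steps among these four positions, and all six choices occur because the whole square lies in the diagram. Contracting $S$ and deleting the remaining elements outside $\{k+1,\dots,k+4\}$ therefore gives $U_{2,4}$. Concretely, the minor is defined with this $S$, using that the full path with these fixed outer portions is a basis exactly when it stays in the diagram.

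I expect the main obstacle to be the first step, checking that $X$ is literally $\Pi_M$ for this lattice path matroid. This requires that a general point of $\Omega_\lambda^\mu$ realizes $M[A,B]$, which was asserted in \Cref{subsec:richardson_and_snake}, together with the standard fact that Richardson varieties project isomorphically from suitable Richardsons in the flag variety. I would cite \cite{Knutson_Lam_Speyer_2013} for this.
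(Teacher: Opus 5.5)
Your proposal is correct and follows the paper's proof: identify $X$ with the positroid variety $\Pi_M$ of the lattice path matroid $M = M[A,B]$, use items (2)--(4) of \Cref{thm:toric_pos}, and handle the direction ``no $2\times 2$ box $\Rightarrow$ no $U_{2,4}$ minor'' through the slither (series-parallel plus loops and coloops, hence regular) structure. The one difference is that you build the $U_{2,4}$ minor from a $2\times 2$ box by hand, where the paper simply cites \cite{JM18}*{Lemma 4.1}; your construction is valid, because $|S| = r-2$ forces the minor to have rank $2$ and each of the six $2$-subsets of $\{k+1,\dots,k+4\}$ completes $S$ to a lattice path lying in the diagram.
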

\begin{proof}
    Any Richardson variety in $\Gr(r, n)$ is a positroid variety. Criterion (Z) in the corollary statement is equivalent to criterion (Z) in \Cref{thm:toric_pos} 
    for $Z = 2,3,4$. This is clear for $Z = 2,3$.
    We argue that the same is true for $Z = 4$.
    If the skew diagram of $X$ contains a $2\times 2$ box then it has a $U_{2,4}$ minor by \cite{JM18}*{Lemma 4.1}.
    Conversely, if there are no $2\times 2$ boxes in the skew diagram, it must be a slither, so the corresponding matroid is regular by the discussion in \Cref{sec:t_orb}.
\end{proof}

\bibliography{bib}

\end{document}

%% file: bourbaki-warning.tex
\usepackage{manfnt,graphicx}

\newcommand{\warningsymbolheight}{3em}
\newcommand{\warningsymbol}{%
  \resizebox*{!}{\warningsymbolheight}{\normalfont\textdbend}%
}
\newsavebox{\warningiconbox}

\newenvironment{warning}{%
  \par\addvspace{.7\baselineskip}%
  \sbox{\warningiconbox}{\warningsymbol}%
  \noindent
  \begin{minipage}[c]{\wd\warningiconbox}%
    \usebox{\warningiconbox}%
  \end{minipage}%
  \hspace{.9em}%
  \begin{minipage}[c]{\dimexpr\linewidth-\wd\warningiconbox-.9em\relax}%
    \normalfont
    \setlength{\parindent}{0pt}%
    \setlength{\parskip}{.4\baselineskip}%
    \textbf{Warning.}\enspace\ignorespaces
}{%
  \end{minipage}%
  \par\addvspace{.7\baselineskip}%
  \ignorespacesafterend
}

%% file: figures/tuong_poset.tex
\begin{tikzpicture}[
    line cap=round,
    line join=round,
    old edge/.style={
      draw=black!27,
      line width=0.62pt,
      dash pattern=on 0.55pt off 1.15pt
    },
    kept edge/.style={draw=black!88, line width=0.95pt},
    cover/.style={
      draw=black!43,
      line width=0.68pt
    },
    vertex backing/.style={
      fill=white,
      minimum width=1.60cm,
      minimum height=1.78cm,
      inner sep=0pt
    },
    word label/.style={font=\large, inner sep=0pt}
  ]

  \newcommand{\edgea}[1]{\draw[#1] (0,0) -- (1,0);}
  \newcommand{\edgeb}[1]{\draw[#1] (0,0) -- (0,1);}
  \newcommand{\edgec}[1]{\draw[#1] (0,1) -- (1,1);}
  \newcommand{\edged}[1]{\draw[#1] (1,0) -- (1,1);}
  \newcommand{\edgee}[1]{\draw[#1] (1,0) -- (2,0);}
  \newcommand{\edgef}[1]{\draw[#1] (2,0) -- (2,1);}
  \newcommand{\edgeg}[1]{\draw[#1] (1,1) -- (2,1);}
  \newcommand{\edgeh}[1]{\draw[#1] (1,1) -- (1,2);}
  \newcommand{\edgei}[1]{\draw[#1] (1,2) -- (2,2);}
  \newcommand{\edgej}[1]{\draw[#1] (2,1) -- (2,2);}

  \newcommand{\fullskeleton}{%
    \edgea{old edge}\edgeb{old edge}\edgec{old edge}\edged{old edge}%
    \edgee{old edge}\edgef{old edge}\edgeg{old edge}\edgeh{old edge}%
    \edgei{old edge}\edgej{old edge}%
  }

  \newcommand{\rendervertex}[3]{%
    \node[vertex backing] at (#1) {};
    \begin{scope}[shift={(#1)}, x=0.48cm, y=0.48cm]
      \begin{scope}[shift={(-1,-0.12)}]
        \fullskeleton
        #3
      \end{scope}
    \end{scope}
    \node[word label] at ([yshift=-0.66cm]#1) {$#2$};
  }

  \coordinate (top) at (0,10.75);

  \coordinate (u1) at (-7.40,7.65); 
  \coordinate (u2) at (-3.70,7.65); 
  \coordinate (u3) at ( 0.00,7.65); 
  \coordinate (u4) at ( 3.70,7.65); 
  \coordinate (u5) at ( 7.40,7.65); 

  \coordinate (v7) at (-9.10,4.15); 
  \coordinate (v2) at (-6.50,4.15); 
  \coordinate (v3) at (-3.90,4.15); 
  \coordinate (v4) at (-1.30,4.15); 
  \coordinate (v5) at ( 1.30,4.15); 
  \coordinate (v6) at ( 3.90,4.15); 
  \coordinate (v8) at ( 6.50,4.15); 
  \coordinate (v1) at ( 9.10,4.15); 

  \coordinate (w1) at (-7.40,0.45); 
  \coordinate (w2) at (-3.70,0.45); 
  \coordinate (w3) at ( 0.00,0.45); 
  \coordinate (w4) at ( 3.70,0.45); 
  \coordinate (w5) at ( 7.40,0.45); 

  \foreach \coververtex in {u1,u2,u3,u4,u5}
    \draw[cover] ([yshift=-0.88cm]top) -- ([yshift=0.88cm]\coververtex);

  \draw[cover] ([yshift=-0.88cm]u1) -- ([yshift=0.88cm]v7);
  \draw[cover] ([yshift=-0.88cm]u1) -- ([yshift=0.88cm]v2);
  \draw[cover] ([yshift=-0.88cm]u1) -- ([yshift=0.88cm]v4);

  \draw[cover] ([yshift=-0.88cm]u2) -- ([yshift=0.88cm]v2);
  \draw[cover] ([yshift=-0.88cm]u2) -- ([yshift=0.88cm]v3);
  \draw[cover] ([yshift=-0.88cm]u2) -- ([yshift=0.88cm]v5);

  \draw[cover] ([yshift=-0.88cm]u3) -- ([yshift=0.88cm]v1);
  \draw[cover] ([yshift=-0.88cm]u3) -- ([yshift=0.88cm]v3);
  \draw[cover] ([yshift=-0.88cm]u3) -- ([yshift=0.88cm]v6);
  \draw[cover] ([yshift=-0.88cm]u3) -- ([yshift=0.88cm]v7);

  \draw[cover] ([yshift=-0.88cm]u4) -- ([yshift=0.88cm]v4);
  \draw[cover] ([yshift=-0.88cm]u4) -- ([yshift=0.88cm]v6);
  \draw[cover] ([yshift=-0.88cm]u4) -- ([yshift=0.88cm]v8);

  \draw[cover] ([yshift=-0.88cm]u5) -- ([yshift=0.88cm]v5);
  \draw[cover] ([yshift=-0.88cm]u5) -- ([yshift=0.88cm]v1);
  \draw[cover] ([yshift=-0.88cm]u5) -- ([yshift=0.88cm]v8);

  \draw[cover] ([yshift=-0.88cm]v1) -- ([yshift=0.88cm]w4);
  \draw[cover] ([yshift=-0.88cm]v1) -- ([yshift=0.88cm]w5);

  \draw[cover] ([yshift=-0.88cm]v2) -- ([yshift=0.88cm]w1);
  \draw[cover] ([yshift=-0.88cm]v2) -- ([yshift=0.88cm]w3);

  \draw[cover] ([yshift=-0.88cm]v3) -- ([yshift=0.88cm]w1);
  \draw[cover] ([yshift=-0.88cm]v3) -- ([yshift=0.88cm]w4);

  \draw[cover] ([yshift=-0.88cm]v4) -- ([yshift=0.88cm]w2);
  \draw[cover] ([yshift=-0.88cm]v4) -- ([yshift=0.88cm]w3);

  \draw[cover] ([yshift=-0.88cm]v5) -- ([yshift=0.88cm]w3);
  \draw[cover] ([yshift=-0.88cm]v5) -- ([yshift=0.88cm]w4);

  \draw[cover] ([yshift=-0.88cm]v6) -- ([yshift=0.88cm]w2);
  \draw[cover] ([yshift=-0.88cm]v6) -- ([yshift=0.88cm]w5);

  \draw[cover] ([yshift=-0.88cm]v7) -- ([yshift=0.88cm]w1);
  \draw[cover] ([yshift=-0.88cm]v7) -- ([yshift=0.88cm]w2);

  \draw[cover] ([yshift=-0.88cm]v8) -- ([yshift=0.88cm]w3);
  \draw[cover] ([yshift=-0.88cm]v8) -- ([yshift=0.88cm]w5);

  \rendervertex{top}{\pr 01\rp}{%
    \edgea{kept edge}\edgeb{kept edge}\edgec{kept edge}\edged{kept edge}%
    \edgee{kept edge}\edgef{kept edge}\edgeg{kept edge}\edgeh{kept edge}%
    \edgei{kept edge}\edgej{kept edge}%
  }

  \rendervertex{u1}{\pr 0\rp \llle}{%
    \edgea{kept edge}\edgeb{kept edge}\edgec{kept edge}\edged{kept edge}%
    \edgee{kept edge}\edgef{kept edge}\edgeg{kept edge}\edgej{kept edge}%
  }
  \rendervertex{u2}{\elll\pr 1\rp}{%
    \edgea{kept edge}\edged{kept edge}\edgee{kept edge}\edgef{kept edge}%
    \edgeg{kept edge}\edgeh{kept edge}\edgei{kept edge}\edgej{kept edge}%
  }
  \rendervertex{u3}{\pr\rp\pr\rp}{%
    \edgea{kept edge}\edgeb{kept edge}\edgec{kept edge}\edged{kept edge}%
    \edgeg{kept edge}\edgeh{kept edge}\edgei{kept edge}\edgej{kept edge}%
  }
  \rendervertex{u4}{\pr \elll 1\rp}{%
    \edgea{kept edge}\edgeb{kept edge}\edgec{kept edge}\edgee{kept edge}%
    \edgef{kept edge}\edgeg{kept edge}\edgeh{kept edge}\edgei{kept edge}%
    \edgej{kept edge}%
  }
  \rendervertex{u5}{\pr 0 \llle \rp}{%
    \edgea{kept edge}\edgeb{kept edge}\edgec{kept edge}\edged{kept edge}%
    \edgee{kept edge}\edgef{kept edge}\edgeh{kept edge}\edgei{kept edge}%
    \edgej{kept edge}%
  }

  \rendervertex{v1}{\pr\rp \llle \elll}{%
    \edgea{kept edge}\edgeb{kept edge}\edgec{kept edge}\edged{kept edge}%
    \edgeh{kept edge}\edgei{kept edge}%
  }
  \rendervertex{v2}{\elll \pr\rp \llle}{%
    \edgea{kept edge}\edged{kept edge}\edgee{kept edge}\edgef{kept edge}%
    \edgeg{kept edge}\edgej{kept edge}%
  }
  \rendervertex{v3}{\elll \llle \pr\rp}{%
    \edgea{kept edge}\edged{kept edge}\edgeg{kept edge}\edgeh{kept edge}%
    \edgei{kept edge}\edgej{kept edge}%
  }
  \rendervertex{v4}{\pr \elll \rp \llle}{%
    \edgea{kept edge}\edgeb{kept edge}\edgec{kept edge}\edgee{kept edge}%
    \edgef{kept edge}\edgeg{kept edge}\edgej{kept edge}%
  }
  \rendervertex{v5}{\elll \pr \llle \rp}{%
    \edgea{kept edge}\edged{kept edge}\edgee{kept edge}\edgef{kept edge}%
    \edgeh{kept edge}\edgei{kept edge}\edgej{kept edge}%
  }
  \rendervertex{v6}{\llle \elll \pr\rp}{%
    \edgeb{kept edge}\edgec{kept edge}\edgeg{kept edge}\edgeh{kept edge}%
    \edgei{kept edge}\edgej{kept edge}%
  }
  \rendervertex{v7}{\pr\rp \elll \llle}{%
    \edgea{kept edge}\edgeb{kept edge}\edgec{kept edge}\edged{kept edge}%
    \edgeg{kept edge}\edgej{kept edge}%
  }
  \rendervertex{v8}{\pr \elll \llle \rp}{%
    \edgea{kept edge}\edgeb{kept edge}\edgec{kept edge}\edgee{kept edge}%
    \edgef{kept edge}\edgeh{kept edge}\edgei{kept edge}\edgej{kept edge}%
  }

  \rendervertex{w1}{\elll\llle\elll\llle}{%
    \edgea{kept edge}\edged{kept edge}\edgeg{kept edge}\edgej{kept edge}%
  }
  \rendervertex{w2}{\llle \elll\elll\llle}{%
    \edgeb{kept edge}\edgec{kept edge}\edgeg{kept edge}\edgej{kept edge}%
  }
  \rendervertex{w3}{\elll \elll\llle \llle}{%
    \edgea{kept edge}\edgee{kept edge}\edgef{kept edge}\edgej{kept edge}%
  }
  \rendervertex{w4}{\elll\llle\llle \elll}{%
    \edgea{kept edge}\edged{kept edge}\edgeh{kept edge}\edgei{kept edge}%
  }
  \rendervertex{w5}{\llle \elll\llle \elll}{%
    \edgeb{kept edge}\edgec{kept edge}\edgeh{kept edge}\edgei{kept edge}%
  }

\end{tikzpicture}

%% file: figures/skew_diagrams.tex
\begin{tikzpicture}[
    x=0.62cm,
    y=0.62cm,
    cell fill/.style={fill=black!38},
    box edge/.style={
      draw=black!85,
      line width=0.7pt,
      line cap=butt,
      line join=miter
    },
    boundary path/.style={
      draw=black!85,
      line width=0.7pt,
      line cap=butt,
      line join=miter
    },
    classification label/.style={font=\normalsize},
    interval label/.style={font=\large}
  ]

  \newcommand{\cellat}[2]{%
    \path[cell fill] (#1,#2) rectangle ++(1,1);%
    \draw[box edge] (#1,#2) rectangle ++(1,1);%
  }

  \begin{scope}
    \foreach \x/\y in {0/0,2/2,3/2}{\cellat{\x}{\y}}

    \draw[boundary path]
      (0,0) -- (1,0) -- (1,2) -- (4,2) -- (4,3);
    \draw[boundary path]
      (0,0) -- (0,1) -- (1,1) -- (1,2) --
      (2,2) -- (2,3) -- (4,3);

    \node[classification label] at (2,3.75)
      {non-snake den toric Richardson};
    \node[interval label] at (2,-1.05)
      {$\{1,3,5\}\leq_{\mathrm G}\{2,3,7\}$};
  \end{scope}

  \begin{scope}[xshift=5.3cm]
    \foreach \x/\y in {
      0/0,
      0/1,1/1,2/1,
      0/2,1/2,2/2,3/2
    }{\cellat{\x}{\y}}
    \node[classification label] at (2,3.75)
      {non-toric Richardson};
    \node[interval label] at (2,-1.05)
      {$\{1,2,3\}\leq_{\mathrm G}\{2,5,7\}$};
  \end{scope}

  \begin{scope}[xshift=10.6cm]
    \foreach \x/\y in {
      0/0,1/0,
      1/1,2/1,
      2/2
    }{\cellat{\x}{\y}}

    \draw[boundary path]
      (0,0) -- (2,0) -- (2,1) -- (3,1) -- (3,3) -- (4,3);
    \draw[boundary path]
      (0,0) -- (0,1) -- (1,1) -- (1,2) --
      (2,2) -- (2,3) -- (4,3);

    \node[classification label] at (2,3.75)
      {non-snake den toric Richardson};
    \node[interval label] at (2,-1.05)
      {$\{1,3,5\}\leq_{\mathrm G}\{3,5,6\}$};
  \end{scope}

  \begin{scope}[xshift=2.65cm,yshift=-3.6cm]
    \foreach \x/\y in {
      0/0,1/0,
      1/1,2/1,
      2/2,3/2
    }{\cellat{\x}{\y}}

    \draw[boundary path]
      (0,0) -- (2,0) -- (2,1) -- (3,1) --
      (3,2) -- (4,2) -- (4,3);
    \draw[boundary path]
      (0,0) -- (0,1) -- (1,1) -- (1,2) --
      (2,2) -- (2,3) -- (4,3);

    \node[classification label] at (2,3.75)
      {snake toric Richardson};
    \node[interval label] at (2,-1.05)
      {$\{1,3,5\}\leq_{\mathrm G}\{3,5,7\}$};
  \end{scope}

  \begin{scope}[xshift=7.95cm,yshift=-3.6cm]
    \foreach \x/\y in {
      0/0,1/0,
      1/1,
      2/2,3/2
    }{\cellat{\x}{\y}}

    \draw[boundary path]
      (0,0) -- (2,0) -- (2,2) -- (4,2) -- (4,3);
    \draw[boundary path]
      (0,0) -- (0,1) -- (1,1) -- (1,2) --
      (2,2) -- (2,3) -- (4,3);

    \node[classification label] at (2,3.75)
      {snake den toric Richardson};
    \node[interval label] at (2,-1.05)
      {$\{1,3,5\}\leq_{\mathrm G}\{3,4,7\}$};
  \end{scope}

\end{tikzpicture}

%% file: figures/snakes_and_dens.tex
\begin{tikzpicture}[
    x=0.62cm,
    y=0.62cm,
    line cap=round,
    line join=round,
    cell fill/.style={fill=black!38},
    box edge/.style={
      draw=black!85,
      line width=0.7pt,
      line cap=butt,
      line join=miter
    },
    traversal/.style={
      draw=pathblue,
      line width=1.05pt,
      -{Stealth[length=3.2pt,width=3.6pt]}
    },
    endpoint/.style={circle, fill=black!88, inner sep=1.35pt},
    pinch point/.style={circle, fill=black!88, inner sep=1.55pt},
    bit label/.style={
      circle,
      fill=white,
      text=pathblue,
      font=\scriptsize\bfseries,
      inner sep=0.8pt
    },
    panel label/.style={font=\normalsize, align=center}
  ]

  \definecolor{pathblue}{RGB}{0,114,178}
  \providecommand{\pr}{\mathopen{(}}
  \providecommand{\rp}{\mathclose{)}}

  \newcommand{\fillcell}[2]{%
    \path[cell fill] (#1,#2) rectangle ++(1,1);%
  }

  \begin{scope}[xshift=5.15cm]
    \foreach \x/\y in {0/0,1/0,1/1,2/2,2/3,3/3}{%
      \fillcell{\x}{\y}%
    }

    \draw[box edge]
      (0,0) -- (2,0) -- (2,2) -- (1,2) --
      (1,1) -- (0,1) -- cycle;
    \draw[box edge] (1,0) -- (1,1);
    \draw[box edge] (1,1) -- (2,1);

    \draw[box edge]
      (2,2) -- (3,2) -- (3,3) -- (4,3) --
      (4,4) -- (2,4) -- cycle;
    \draw[box edge] (2,3) -- (3,3);
    \draw[box edge] (3,3) -- (3,4);

    \node[endpoint] at (0,0) {};
    \node[pinch point] at (2,2) {};
    \node[endpoint] at (4,4) {};

    \node[panel label] at (2,-0.82)
      {$\text{skew diagram for }\pr 01\rp\,\pr 10\rp$};
  \end{scope}

  \begin{scope}
    \foreach \x/\y in {0/0,1/0,1/1,2/1,3/1,3/2,4/2,4/3}{%
      \fillcell{\x}{\y}%
      \draw[box edge] (\x,\y) rectangle ++(1,1);%
    }

    \draw[traversal] (0.50,0.50) -- (1.50,0.50);
    \draw[traversal] (1.50,0.50) -- (1.50,1.50);
    \draw[traversal] (1.50,1.50) -- (2.50,1.50);
    \draw[traversal] (2.50,1.50) -- (3.50,1.50);
    \draw[traversal] (3.50,1.50) -- (3.50,2.50);
    \draw[traversal] (3.50,2.50) -- (4.50,2.50);
    \draw[traversal] (4.50,2.50) -- (4.50,3.50);

    \node[bit label] at (1.00,0.50) {$0$};
    \node[bit label] at (1.50,1.00) {$1$};
    \node[bit label] at (2.00,1.50) {$0$};
    \node[bit label] at (3.00,1.50) {$0$};
    \node[bit label] at (3.50,2.00) {$1$};
    \node[bit label] at (4.00,2.50) {$0$};
    \node[bit label] at (4.50,3.00) {$1$};

    \node[endpoint] at (0.50,0.50) {};
    \node[endpoint] at (4.50,3.50) {};
    \node[panel label] at (2.5,-0.82)
      {$\text{skew diagram for }\pr 0100101\rp$};
  \end{scope}

\end{tikzpicture}

%% file: figures/slither_word_diagram.tex
\begin{tikzpicture}[
    x=1.28cm,
    y=1.28cm,
    line cap=round,
    line join=round,
    old edge/.style={
      draw=black!25,
      line width=0.68pt,
      dash pattern=on 0.7pt off 1.35pt
    },
    kept edge/.style={draw=black!88, line width=1.25pt},
    antidiagonal/.style={
      draw=black!42,
      line width=0.72pt,
      dash pattern=on 2.3pt off 1.8pt
    },
    crossing/.style={
      circle,
      fill=black!62,
      draw=white,
      line width=0.45pt,
      inner sep=1.55pt
    },
    cut number/.style={
      circle,
      draw=black!52,
      fill=white,
      text=black!72,
      line width=0.65pt,
      font=\scriptsize,
      inner sep=1.7pt
    }
  ]

  \newcommand{\edgea}[1]{\draw[#1] (0,0) -- (1,0);}
  \newcommand{\edgeb}[1]{\draw[#1] (0,0) -- (0,1);}
  \newcommand{\edgec}[1]{\draw[#1] (0,1) -- (1,1);}
  \newcommand{\edged}[1]{\draw[#1] (1,0) -- (1,1);}
  \newcommand{\edgee}[1]{\draw[#1] (1,0) -- (2,0);}
  \newcommand{\edgef}[1]{\draw[#1] (2,0) -- (2,1);}
  \newcommand{\edgeg}[1]{\draw[#1] (1,1) -- (2,1);}
  \newcommand{\edgeh}[1]{\draw[#1] (1,1) -- (1,2);}
  \newcommand{\edgei}[1]{\draw[#1] (1,2) -- (2,2);}
  \newcommand{\edgej}[1]{\draw[#1] (2,1) -- (2,2);}

  \draw[antidiagonal] (-0.38,0.88) -- (0.88,-0.38);
  \draw[antidiagonal] (-0.38,1.88) -- (1.88,-0.38);
  \draw[antidiagonal] ( 0.12,2.38) -- (2.38, 0.12);
  \draw[antidiagonal] ( 1.12,2.38) -- (2.38, 1.12);

  \node[cut number] at (-0.38,0.88) {$1$};
  \node[cut number] at (-0.38,1.88) {$2$};
  \node[cut number] at ( 0.12,2.38) {$3$};
  \node[cut number] at ( 1.12,2.38) {$4$};

  \edgea{old edge}\edgeb{old edge}\edgec{old edge}\edged{old edge}%
  \edgee{old edge}\edgef{old edge}\edgeg{old edge}\edgeh{old edge}%
  \edgei{old edge}\edgej{old edge}%

  \draw[old edge] (0,1) -- (0,2) -- (1,2);

  \edgea{kept edge}\edgeb{kept edge}\edgec{kept edge}%
  \edgee{kept edge}\edgef{kept edge}\edgeg{kept edge}\edgej{kept edge}%

  \node[crossing] at (0,0.5) {};
  \node[crossing] at (0.5,0) {};
  \node[crossing] at (0.5,1) {};
  \node[crossing] at (1.5,0) {};
  \node[crossing] at (1.5,1) {};
  \node[crossing] at (2,0.5) {};
  \node[crossing] at (2,1.5) {};

  \node[anchor=north] at (1,-0.68) {$
    \begin{array}{c|c|c|c}
      k=1 & k=2 & k=3 & k=4 \\
      \hline
      \pr & \elll & \rp & \llle
    \end{array}
  $};

\end{tikzpicture}

%% file: figures/hydra.tex
\begin{tikzpicture}[
  line cap=round,line join=round,
  kept/.style={draw=black!88,line width=1.25pt},
  absent/.style={draw=black!25,line width=.68pt,
    dash pattern=on .7pt off 1.35pt},
  cut/.style={draw=black!42,line width=.72pt,
    dash pattern=on 2.3pt off 1.8pt},
  seam/.style={circle,draw=black!52,fill=white,
    inner sep=1.7pt,line width=.65pt},
  crossing/.style={circle,fill=black!62,draw=white,
    line width=.45pt,inner sep=1.55pt},
  move/.style={->,draw=black!42,line width=.72pt}
]
\begin{scope}[yshift=0cm]
\begin{scope}[shift={(0,0)},x=.5cm,y=.5cm]
\draw[cut] (.15,3.35) -- (2.85,.65);
\draw[kept] (0,0) -- (1,0);
\draw[kept] (0,0) -- (0,1);
\draw[kept] (0,1) -- (1,1);
\draw[kept] (1,0) -- (1,1);
\draw[kept] (1,0) -- (2,0);
\draw[kept] (1,1) -- (2,1);
\draw[kept] (2,0) -- (2,1);
\draw[kept] (1,1) -- (1,2);
\draw[absent] (1,2) -- (2,2);
\draw[kept] (2,1) -- (2,2);
\draw[kept] (1,2) -- (1,3);
\draw[kept] (1,3) -- (2,3);
\draw[kept] (2,2) -- (2,3);
\draw[kept] (2,2) -- (3,2);
\draw[kept] (2,3) -- (3,3);
\draw[kept] (3,2) -- (3,3);
\draw[kept] (3,2) -- (4,2);
\draw[absent] (3,3) -- (4,3);
\draw[kept] (4,2) -- (4,3);
\draw[kept] (3,3) -- (3,4);
\draw[kept] (3,4) -- (4,4);
\draw[kept] (4,3) -- (4,4);
\end{scope}
\begin{scope}[shift={(3.2,0)},x=.5cm,y=.5cm]
\draw[cut] (.15,3.35) -- (2.85,.65);
\draw[kept] (0,0) -- (1,0);
\draw[kept] (0,0) -- (0,1);
\draw[kept] (0,1) -- (1,1);
\draw[kept] (1,0) -- (1,1);
\draw[kept] (1,0) -- (2,0);
\draw[kept] (1,1) -- (2,1);
\draw[kept] (2,0) -- (2,1);
\draw[kept] (1,1) -- (1,2);
\draw[kept] (1,3) -- (2,3);
\draw[kept] (2,2) -- (2,3);
\draw[kept] (2,2) -- (3,2);
\draw[kept] (2,3) -- (3,3);
\draw[kept] (3,2) -- (3,3);
\draw[kept] (3,2) -- (4,2);
\draw[absent] (3,3) -- (4,3);
\draw[kept] (4,2) -- (4,3);
\draw[kept] (3,3) -- (3,4);
\draw[kept] (3,4) -- (4,4);
\draw[kept] (4,3) -- (4,4);
\node[seam] at (1,2) {};
\node[seam] at (2,2) {};
\node[seam] at (1,3) {};
\node[seam] at (2,1) {};
\end{scope}
\begin{scope}[shift={(6.4,0)},x=.5cm,y=.5cm]
\draw[kept] (0,0) -- (1,0);
\draw[kept] (0,0) -- (0,1);
\draw[kept] (0,1) -- (1,1);
\draw[kept] (1,0) -- (1,1);
\draw[kept] (1,0) -- (2,0);
\draw[kept] (1,1) -- (2,1);
\draw[kept] (2,0) -- (2,1);
\draw[kept] (1,1) -- (1,2);
\draw[kept] (1,2) -- (2,2);
\draw[kept] (2,1) -- (2,2);
\draw[kept] (2,1) -- (3,1);
\draw[kept] (2,2) -- (3,2);
\draw[kept] (3,1) -- (3,2);
\draw[kept] (3,1) -- (4,1);
\draw[absent] (3,2) -- (4,2);
\draw[kept] (4,1) -- (4,2);
\draw[kept] (3,2) -- (3,3);
\draw[kept] (3,3) -- (4,3);
\draw[kept] (4,2) -- (4,3);
\node[crossing] at (1,2) {};
\node[crossing] at (2,1) {};
\end{scope}
\draw[move] (2.25,1) -- (2.95,1);
\draw[move] (5.45,1) -- (6.15,1);
\end{scope}
\begin{scope}[yshift=-3cm]
\begin{scope}[shift={(0,0)},x=.5cm,y=.5cm]
\draw[cut] (.15,3.35) -- (2.85,.65);
\draw[kept] (0,0) -- (0,1);
\draw[kept] (0,0) -- (1,0);
\draw[kept] (1,0) -- (1,1);
\draw[kept] (0,1) -- (1,1);
\draw[kept] (0,1) -- (0,2);
\draw[kept] (1,1) -- (1,2);
\draw[kept] (0,2) -- (1,2);
\draw[kept] (1,1) -- (2,1);
\draw[absent] (2,1) -- (2,2);
\draw[kept] (1,2) -- (2,2);
\draw[kept] (2,1) -- (3,1);
\draw[kept] (3,1) -- (3,2);
\draw[kept] (2,2) -- (3,2);
\draw[kept] (2,2) -- (2,3);
\draw[kept] (3,2) -- (3,3);
\draw[kept] (2,3) -- (3,3);
\draw[kept] (2,3) -- (2,4);
\draw[absent] (3,3) -- (3,4);
\draw[kept] (2,4) -- (3,4);
\draw[kept] (3,3) -- (4,3);
\draw[kept] (4,3) -- (4,4);
\draw[kept] (3,4) -- (4,4);
\end{scope}
\begin{scope}[shift={(3.2,0)},x=.5cm,y=.5cm]
\draw[cut] (.15,3.35) -- (2.85,.65);
\draw[kept] (0,0) -- (0,1);
\draw[kept] (0,0) -- (1,0);
\draw[kept] (1,0) -- (1,1);
\draw[kept] (0,1) -- (1,1);
\draw[kept] (0,1) -- (0,2);
\draw[kept] (1,1) -- (1,2);
\draw[kept] (0,2) -- (1,2);
\draw[kept] (1,1) -- (2,1);
\draw[kept] (3,1) -- (3,2);
\draw[kept] (2,2) -- (3,2);
\draw[kept] (2,2) -- (2,3);
\draw[kept] (3,2) -- (3,3);
\draw[kept] (2,3) -- (3,3);
\draw[kept] (2,3) -- (2,4);
\draw[absent] (3,3) -- (3,4);
\draw[kept] (2,4) -- (3,4);
\draw[kept] (3,3) -- (4,3);
\draw[kept] (4,3) -- (4,4);
\draw[kept] (3,4) -- (4,4);
\node[seam] at (2,1) {};
\node[seam] at (2,2) {};
\node[seam] at (3,1) {};
\node[seam] at (1,2) {};
\end{scope}
\begin{scope}[shift={(6.4,0)},x=.5cm,y=.5cm]
\draw[kept] (0,0) -- (0,1);
\draw[kept] (0,0) -- (1,0);
\draw[kept] (1,0) -- (1,1);
\draw[kept] (0,1) -- (1,1);
\draw[kept] (0,1) -- (0,2);
\draw[kept] (1,1) -- (1,2);
\draw[kept] (0,2) -- (1,2);
\draw[kept] (1,1) -- (2,1);
\draw[kept] (2,1) -- (2,2);
\draw[kept] (1,2) -- (2,2);
\draw[kept] (1,2) -- (1,3);
\draw[kept] (2,2) -- (2,3);
\draw[kept] (1,3) -- (2,3);
\draw[kept] (1,3) -- (1,4);
\draw[absent] (2,3) -- (2,4);
\draw[kept] (1,4) -- (2,4);
\draw[kept] (2,3) -- (3,3);
\draw[kept] (3,3) -- (3,4);
\draw[kept] (2,4) -- (3,4);
\node[crossing] at (2,1) {};
\node[crossing] at (1,2) {};
\end{scope}
\draw[move] (2.25,1) -- (2.95,1);
\draw[move] (5.45,1) -- (6.15,1);
\end{scope}
\end{tikzpicture}

%% file: figures/pruning_slither.tex
\begin{tikzpicture}[
  line cap=round,line join=round,
  kept/.style={draw=black!88,line width=1.25pt},
  absent/.style={draw=black!25,line width=.68pt,
    dash pattern=on .7pt off 1.35pt},
  move/.style={-{Stealth[length=5pt]},draw=black!42,line width=.72pt}
]
\begin{scope}[x=.4375cm,y=.4375cm]
  \draw[kept] (0,0) rectangle (1,1);

  \draw[kept] (1,1) -- (2,1) -- (2,2) -- (4,2)
    -- (4,3) -- (5,3) -- (5,5) -- (4,5)
    -- (4,4) -- (3,4) -- (3,3) -- (1,3) -- cycle;
  \draw[kept] (3,3) -- (4,3);
  \draw[absent] (1,2) -- (2,2) -- (2,3);
  \draw[absent] (3,2) -- (3,3);
  \draw[absent] (4,3) -- (4,4) -- (5,4);

  \draw[kept] (5,5) -- (6,5) -- (6,6);

  \draw[kept] (6,6) -- (10,6) -- (10,7) -- (11,7)
    -- (11,8) -- (9,8) -- (9,7) -- (6,7) -- cycle;
  \draw[kept] (7,6) -- (7,7);
  \draw[kept] (9,7) -- (10,7);
  \draw[absent] (8,6) -- (8,7);
  \draw[absent] (9,6) -- (9,7);
  \draw[absent] (10,7) -- (10,8);

  \draw[kept] (11,8) -- (13,8);
\end{scope}

\draw[move] (6.1,1.75) -- (7.1,1.75);

\begin{scope}[shift={(7.65,.65625)},x=.4375cm,y=.4375cm]
  \draw[kept] (0,0) rectangle (1,1);
  \draw[kept] (1,1) rectangle (2,3);
  \draw[kept] (1,2) -- (2,2);
  \draw[kept] (2,3) -- (4,3) -- (4,5) -- (3,5)
    -- (3,4) -- (2,4) -- cycle;
  \draw[kept] (3,3) -- (3,4) -- (4,4);
\end{scope}
\end{tikzpicture}

%% file: figures/local_moves.tex
\begin{tikzpicture}[
  x=1cm,y=1cm,line cap=round,line join=round,
  old edge/.style={draw=black!25,line width=.68pt,
    dash pattern=on .7pt off 1.35pt},
  kept edge/.style={draw=black!88,line width=1.25pt},
  table rule/.style={draw=black!30,line width=.45pt}
]
\draw[table rule] (2,0) rectangle (14.5,-4.4);
\foreach \y in {-.7,-2.2,-2.9}
  \draw[table rule] (2,\y) -- (14.5,\y);
\foreach \j in {1,...,9}
  \draw[table rule] ({2+1.25*\j},0) -- ({2+1.25*\j},-4.4);
\node at (2.625,-0.35) {$0$};
\begin{scope}[shift={(2.175,-1.675)},x=.45cm,y=.45cm]
\draw[kept edge] (0,0) -- (1,0);
\draw[kept edge] (0,0) -- (0,1);
\draw[kept edge] (0,1) -- (1,1);
\draw[kept edge] (1,0) -- (1,1);
\draw[kept edge] (1,0) -- (2,0);
\draw[kept edge] (1,1) -- (2,1);
\draw[kept edge] (2,0) -- (2,1);
\end{scope}
\node at (2.625,-2.55) {$\elll$};
\begin{scope}[shift={(2.175,-3.875)},x=.45cm,y=.45cm]
\draw[kept edge] (0,0) -- (1,0);
\draw[kept edge] (0,0) -- (0,1);
\draw[kept edge] (0,1) -- (1,1);
\draw[old edge] (1,0) -- (1,1);
\draw[kept edge] (1,0) -- (2,0);
\draw[kept edge] (1,1) -- (2,1);
\draw[kept edge] (2,0) -- (2,1);
\end{scope}
\node at (3.875,-0.35) {$1$};
\begin{scope}[shift={(3.65,-1.9)},x=.45cm,y=.45cm]
\draw[kept edge] (0,0) -- (1,0);
\draw[kept edge] (0,0) -- (0,1);
\draw[kept edge] (0,1) -- (1,1);
\draw[kept edge] (1,0) -- (1,1);
\draw[kept edge] (0,1) -- (0,2);
\draw[kept edge] (0,2) -- (1,2);
\draw[kept edge] (1,1) -- (1,2);
\end{scope}
\node at (3.875,-2.55) {$\llle$};
\begin{scope}[shift={(3.65,-4.1)},x=.45cm,y=.45cm]
\draw[kept edge] (0,0) -- (1,0);
\draw[kept edge] (0,0) -- (0,1);
\draw[old edge] (0,1) -- (1,1);
\draw[kept edge] (1,0) -- (1,1);
\draw[kept edge] (0,1) -- (0,2);
\draw[kept edge] (0,2) -- (1,2);
\draw[kept edge] (1,1) -- (1,2);
\end{scope}
\node at (5.125,-0.35) {$\pr 0$};
\begin{scope}[shift={(4.675,-1.675)},x=.45cm,y=.45cm]
\draw[kept edge] (0,0) -- (1,0);
\draw[kept edge] (0,0) -- (0,1);
\draw[kept edge] (0,1) -- (1,1);
\draw[kept edge] (1,0) -- (1,1);
\draw[kept edge] (1,0) -- (2,0);
\draw[kept edge] (1,1) -- (2,1);
\draw[kept edge] (2,0) -- (2,1);
\end{scope}
\node at (5.125,-2.55) {$\elll\pr$};
\begin{scope}[shift={(4.675,-3.875)},x=.45cm,y=.45cm]
\draw[kept edge] (0,0) -- (1,0);
\draw[old edge] (0,0) -- (0,1);
\draw[old edge] (0,1) -- (1,1);
\draw[kept edge] (1,0) -- (1,1);
\draw[kept edge] (1,0) -- (2,0);
\draw[kept edge] (1,1) -- (2,1);
\draw[kept edge] (2,0) -- (2,1);
\end{scope}
\node at (6.375,-0.35) {$\pr 1$};
\begin{scope}[shift={(6.15,-1.9)},x=.45cm,y=.45cm]
\draw[kept edge] (0,0) -- (1,0);
\draw[kept edge] (0,0) -- (0,1);
\draw[kept edge] (0,1) -- (1,1);
\draw[kept edge] (1,0) -- (1,1);
\draw[kept edge] (0,1) -- (0,2);
\draw[kept edge] (0,2) -- (1,2);
\draw[kept edge] (1,1) -- (1,2);
\end{scope}
\node at (6.375,-2.55) {$\llle\pr$};
\begin{scope}[shift={(6.15,-4.1)},x=.45cm,y=.45cm]
\draw[old edge] (0,0) -- (1,0);
\draw[kept edge] (0,0) -- (0,1);
\draw[kept edge] (0,1) -- (1,1);
\draw[old edge] (1,0) -- (1,1);
\draw[kept edge] (0,1) -- (0,2);
\draw[kept edge] (0,2) -- (1,2);
\draw[kept edge] (1,1) -- (1,2);
\end{scope}
\node at (7.625,-0.35) {$0\rp$};
\begin{scope}[shift={(7.175,-1.675)},x=.45cm,y=.45cm]
\draw[kept edge] (0,0) -- (1,0);
\draw[kept edge] (0,0) -- (0,1);
\draw[kept edge] (0,1) -- (1,1);
\draw[kept edge] (1,0) -- (1,1);
\draw[kept edge] (1,0) -- (2,0);
\draw[kept edge] (1,1) -- (2,1);
\draw[kept edge] (2,0) -- (2,1);
\end{scope}
\node at (7.625,-2.55) {$\rp\elll$};
\begin{scope}[shift={(7.175,-3.875)},x=.45cm,y=.45cm]
\draw[kept edge] (0,0) -- (1,0);
\draw[kept edge] (0,0) -- (0,1);
\draw[kept edge] (0,1) -- (1,1);
\draw[kept edge] (1,0) -- (1,1);
\draw[old edge] (1,0) -- (2,0);
\draw[kept edge] (1,1) -- (2,1);
\draw[old edge] (2,0) -- (2,1);
\end{scope}
\node at (8.875,-0.35) {$1\rp$};
\begin{scope}[shift={(8.65,-1.9)},x=.45cm,y=.45cm]
\draw[kept edge] (0,0) -- (1,0);
\draw[kept edge] (0,0) -- (0,1);
\draw[kept edge] (0,1) -- (1,1);
\draw[kept edge] (1,0) -- (1,1);
\draw[kept edge] (0,1) -- (0,2);
\draw[kept edge] (0,2) -- (1,2);
\draw[kept edge] (1,1) -- (1,2);
\end{scope}
\node at (8.875,-2.55) {$\rp\llle$};
\begin{scope}[shift={(8.65,-4.1)},x=.45cm,y=.45cm]
\draw[kept edge] (0,0) -- (1,0);
\draw[kept edge] (0,0) -- (0,1);
\draw[kept edge] (0,1) -- (1,1);
\draw[kept edge] (1,0) -- (1,1);
\draw[old edge] (0,1) -- (0,2);
\draw[old edge] (0,2) -- (1,2);
\draw[kept edge] (1,1) -- (1,2);
\end{scope}
\node at (10.125,-0.35) {$01$};
\begin{scope}[shift={(9.675,-1.9)},x=.45cm,y=.45cm]
\draw[kept edge] (0,0) -- (1,0);
\draw[kept edge] (0,0) -- (0,1);
\draw[kept edge] (0,1) -- (1,1);
\draw[kept edge] (1,0) -- (1,1);
\draw[kept edge] (1,0) -- (2,0);
\draw[kept edge] (1,1) -- (2,1);
\draw[kept edge] (2,0) -- (2,1);
\draw[kept edge] (1,1) -- (1,2);
\draw[kept edge] (1,2) -- (2,2);
\draw[kept edge] (2,1) -- (2,2);
\end{scope}
\node at (10.125,-2.55) {$\rp\pr$};
\begin{scope}[shift={(9.675,-4.1)},x=.45cm,y=.45cm]
\draw[kept edge] (0,0) -- (1,0);
\draw[kept edge] (0,0) -- (0,1);
\draw[kept edge] (0,1) -- (1,1);
\draw[kept edge] (1,0) -- (1,1);
\draw[old edge] (1,0) -- (2,0);
\draw[kept edge] (1,1) -- (2,1);
\draw[old edge] (2,0) -- (2,1);
\draw[kept edge] (1,1) -- (1,2);
\draw[kept edge] (1,2) -- (2,2);
\draw[kept edge] (2,1) -- (2,2);
\end{scope}
\node at (11.375,-0.35) {$10$};
\begin{scope}[shift={(10.925,-1.9)},x=.45cm,y=.45cm]
\draw[kept edge] (0,0) -- (1,0);
\draw[kept edge] (0,0) -- (0,1);
\draw[kept edge] (0,1) -- (1,1);
\draw[kept edge] (1,0) -- (1,1);
\draw[kept edge] (0,1) -- (0,2);
\draw[kept edge] (0,2) -- (1,2);
\draw[kept edge] (1,1) -- (1,2);
\draw[kept edge] (1,1) -- (2,1);
\draw[kept edge] (1,2) -- (2,2);
\draw[kept edge] (2,1) -- (2,2);
\end{scope}
\node at (11.375,-2.55) {$\rp\pr$};
\begin{scope}[shift={(10.925,-4.1)},x=.45cm,y=.45cm]
\draw[kept edge] (0,0) -- (1,0);
\draw[kept edge] (0,0) -- (0,1);
\draw[kept edge] (0,1) -- (1,1);
\draw[kept edge] (1,0) -- (1,1);
\draw[old edge] (0,1) -- (0,2);
\draw[old edge] (0,2) -- (1,2);
\draw[kept edge] (1,1) -- (1,2);
\draw[kept edge] (1,1) -- (2,1);
\draw[kept edge] (1,2) -- (2,2);
\draw[kept edge] (2,1) -- (2,2);
\end{scope}
\node at (12.625,-0.35) {$\pr\rp$};
\begin{scope}[shift={(12.4,-1.675)},x=.45cm,y=.45cm]
\draw[kept edge] (0,0) -- (1,0);
\draw[kept edge] (0,0) -- (0,1);
\draw[kept edge] (0,1) -- (1,1);
\draw[kept edge] (1,0) -- (1,1);
\end{scope}
\node at (12.625,-2.55) {$\llle\elll$};
\begin{scope}[shift={(12.4,-3.875)},x=.45cm,y=.45cm]
\draw[old edge] (0,0) -- (1,0);
\draw[kept edge] (0,0) -- (0,1);
\draw[kept edge] (0,1) -- (1,1);
\draw[old edge] (1,0) -- (1,1);
\end{scope}
\node at (13.875,-0.35) {$\pr\rp$};
\begin{scope}[shift={(13.65,-1.675)},x=.45cm,y=.45cm]
\draw[kept edge] (0,0) -- (1,0);
\draw[kept edge] (0,0) -- (0,1);
\draw[kept edge] (0,1) -- (1,1);
\draw[kept edge] (1,0) -- (1,1);
\end{scope}
\node at (13.875,-2.55) {$\elll\llle$};
\begin{scope}[shift={(13.65,-3.875)},x=.45cm,y=.45cm]
\draw[kept edge] (0,0) -- (1,0);
\draw[old edge] (0,0) -- (0,1);
\draw[old edge] (0,1) -- (1,1);
\draw[kept edge] (1,0) -- (1,1);
\end{scope}
\end{tikzpicture}

%% file: figures/01_example.tex
\begin{tikzpicture}[
  x=1.28cm,y=1.28cm,line cap=round,line join=round,
  kept edge/.style={draw=black!88,line width=1.25pt},
  old edge/.style={draw=black!25,line width=.68pt,
    dash pattern=on .7pt off 1.35pt},
  antidiagonal/.style={draw=black!42,line width=.72pt,
    dash pattern=on 2.3pt off 1.8pt},
  vertex/.style={circle,fill=black!75,draw=white,
    line width=.45pt,inner sep=2pt}
]
\newcommand{\continuations}{%
  \draw[kept edge] (-.55,0) -- (0,0);
  \draw[kept edge] (-.55,1) -- (0,1);
  \draw[kept edge] (2,1) -- (2.55,1);
  \draw[kept edge] (2,2) -- (2.55,2);
}
\begin{scope}
  \continuations
  \draw[antidiagonal] (.45,1.55) -- (2.35,-.35);
  \node[anchor=south east,font=\small,text=black!65]
    at (.45,1.55) {$x+y=i$};
  \draw[kept edge] (0,0) -- (2,0) -- (2,2) -- (1,2)
    -- (1,1) -- (0,1) -- cycle;
  \draw[kept edge] (1,0) -- (1,1) -- (2,1);
  \node[vertex,label={[font=\small,xshift=-6pt]above left:$v$}] at (1,1) {};
\end{scope}

\draw[-{Stealth[length=5pt]},draw=black!52,line width=.8pt]
  (3.25,1) -- (4.75,1);

\begin{scope}[shift={(6,0)}]
  \continuations
  \draw[old edge] (1,0) -- (2,0) -- (2,1);
  \draw[kept edge] (0,0) rectangle (1,1);
  \draw[kept edge] (1,1) rectangle (2,2);
  \node[vertex,label={[font=\small,xshift=-6pt]above left:$v$}] at (1,1) {};
\end{scope}

\end{tikzpicture}

%% file: figures/unfolded.tex
\tikzset{every picture/.style={line width=0.75pt}}

\begin{tikzpicture}[
  scale=1.05,
  line cap=round,
  line join=round,
  every node/.style={font=\small},
  vertex/.style={circle,fill=black,inner sep=2.1pt},
  cut edge/.style={draw=black,line width=1.0pt},
  fold edge/.style={draw=black!55,line width=0.85pt,densely dashed},
  face label/.style={inner sep=0pt},
  edge label/.style={inner sep=0pt},
  vertex label/.style={inner sep=0pt,font=\small\bfseries}
]

  \coordinate (v14) at (-2.40, 2.40);
  \coordinate (v24) at ( 2.40, 2.40);
  \coordinate (v23) at ( 2.40,-2.40);
  \coordinate (v13) at (-2.40,-2.40);

  \coordinate (aT) at ( 0, 5.15);
  \coordinate (aR) at ( 5.15, 0);
  \coordinate (aB) at ( 0,-5.15);
  \coordinate (aL) at (-5.15, 0);

  \fill[black!2] (v14)--(v24)--(v23)--(v13)--cycle;
  \fill[black!4] (v14)--(aT)--(v24)--cycle;
  \fill[black!6] (v24)--(aR)--(v23)--cycle;
  \fill[black!4] (v23)--(aB)--(v13)--cycle;
  \fill[black!6] (v13)--(aL)--(v14)--cycle;

  \draw[cut edge]
    (v14)--(aT)--(v24)--(aR)--(v23)--(aB)--(v13)--(aL)--cycle;
  \draw[fold edge] (v14)--(v24)--(v23)--(v13)--cycle;

  \node[face label] at (0,0)       {$\pr\rp\pr\rp$};
  \node[face label] at (0,3.50)    {$\pr 0\rp\llle$};
  \node[face label] at (3.50,0)    {$\elll\pr 1\rp$};
  \node[face label] at (0,-3.50)   {$\pr 0\llle\rp$};
  \node[face label] at (-3.50,0)   {$\pr\elll 1\rp$};

  \node[edge label] at ( 0, 1.90) {$\pr\rp\elll\llle$};
  \node[edge label] at ( 1.90,0)  {$\elll\llle\pr\rp$};
  \node[edge label] at ( 0,-1.90) {$\pr\rp\llle\elll$};
  \node[edge label] at (-1.90,0)  {$\llle\elll\pr\rp$};

  \node[edge label] at (-1.55, 4.05) {$\pr\elll\rp\llle$};
  \node[edge label] at ( 1.55, 4.05) {$\elll\pr\rp\llle$};

  \node[edge label] at ( 4.05, 1.55) {$\elll\pr\rp\llle$};
  \node[edge label] at ( 4.05,-1.55) {$\elll\pr\llle\rp$};

  \node[edge label] at ( 1.55,-4.05) {$\elll\pr\llle\rp$};
  \node[edge label] at (-1.55,-4.05) {$\pr\elll\llle\rp$};

  \node[edge label] at (-4.05,-1.55) {$\pr\elll\llle\rp$};
  \node[edge label] at (-4.05, 1.55) {$\pr\elll\rp\llle$};

  \node[vertex] at (v14) {};
  \node[vertex] at (v24) {};
  \node[vertex] at (v23) {};
  \node[vertex] at (v13) {};
  \node[vertex] at (aT) {};
  \node[vertex] at (aR) {};
  \node[vertex] at (aB) {};
  \node[vertex] at (aL) {};

  \node[vertex label,anchor=south east]
    at ([xshift=-5pt,yshift=5pt]v14) {$\llle\elll\elll\llle$};
  \node[vertex label,anchor=south west]
    at ([xshift=5pt,yshift=5pt]v24) {$\elll\llle\elll\llle$};
  \node[vertex label,anchor=north west]
    at ([xshift=5pt,yshift=-5pt]v23) {$\elll\llle\llle\elll$};
  \node[vertex label,anchor=north east]
    at ([xshift=-5pt,yshift=-5pt]v13) {$\llle\elll\llle\elll$};

  \node[vertex label,anchor=south]
    at ([yshift=6pt]aT) {$\elll\elll\llle\llle$};
  \node[vertex label,anchor=west]
    at ([xshift=6pt]aR) {$\elll\elll\llle\llle$};
  \node[vertex label,anchor=north]
    at ([yshift=-6pt]aB) {$\elll\elll\llle\llle$};
  \node[vertex label,anchor=east]
    at ([xshift=-6pt]aL) {$\elll\elll\llle\llle$};

\end{tikzpicture}

%% file: figures/BB_face_word.tex
\renewcommand{\arraystretch}{1.5}
\(
\begin{array}{rcccccccccccc}
  1s
    & \longrightarrow & 1 & 0 & 1 & 0 & 0 & 1 & 1 & 0 & 1 & 0 & {} \\
  \text{right-most embedding of }1t
    & \longrightarrow & {} & {} & 1 & {} & 0 & {} & 1 & 0 & {} & 0 & {} \\
  \text{the slither word $r_s(t)$}
    & \longrightarrow & $\llle$ & \elll & \pr & \elll & 0 & \llle & 1 & 0 & \llle & 0 & \rp
\end{array}
\)

%% file: figures/subseqbij_2.tex
\begin{tikzpicture}[
    x=1.15cm,
    y=1.15cm,
    line cap=round,
    line join=round,
    box edge/.style={draw=black!85, line width=0.7pt},
    dotted edge/.style={draw=black!75, line width=0.7pt, densely dotted},
    ghost box/.style={
      draw=black!25,
      line width=0.55pt,
      dash pattern=on 2.2pt off 2.2pt
    },
    hatched box/.style={
      pattern={Lines[angle=45, distance=5pt, line width=0.3pt]},
      pattern color=black!38
    },
    blue squiggle/.style={
      draw=pathblue,
      line width=1.4pt,
      decorate,
      decoration={snake, amplitude=1.05pt, segment length=8pt}
    },
    blue edge/.style={draw=pathblue, line width=1.9pt},
    red edge/.style={draw=pathred, line width=1.9pt},
    marked vertex/.style={circle, fill=black!90, inner sep=1.25pt},
    separator/.style={draw=black!28, line width=0.6pt}
  ]

  \definecolor{pathblue}{RGB}{0,114,178}
  \definecolor{pathred}{RGB}{213,94,0}

  \newcommand{\horizontalhatchedbase}{%
    \path[hatched box] (0,0) rectangle (1,1);
    \draw[dotted edge] (0,1) -- (0,0) -- (1,0);
    \draw[box edge] (0,1) -- (2,1);
    \draw[box edge] (1,0) -- (2,0) -- (2,1);
    \draw[box edge] (1,0) -- (1,1);
  }

  \newcommand{\verticalhatchedbase}{%
    \path[hatched box] (0,0) rectangle (1,1);
    \draw[dotted edge] (0,1) -- (0,0) -- (1,0);
    \draw[box edge] (0,1) -- (0,2) -- (1,2);
    \draw[box edge] (1,0) -- (1,2);
    \draw[box edge] (0,1) -- (1,1);
  }

  \begin{scope}[xshift=3.5cm]
    \node at (0,6.3) {\huge $s=\widetilde{s}\,0$};
    \node at (0,2.65) {\huge $s=\widetilde{s}\,1$};
  \end{scope}

  \begin{scope}[xshift=0cm, yshift=5.3cm]
    \horizontalhatchedbase
    \draw[blue squiggle] (0,0) -- (1,1);
    \draw[red edge] (1,1) -- (2,1);
    \node[marked vertex] at (1,1) {};
    \node[marked vertex] at (2,1) {};
  \end{scope}

  \begin{scope}[xshift=4.7cm, yshift=5.3cm]
    \horizontalhatchedbase
    \draw[blue squiggle] (0,-1) -- (1,0);
    \draw[blue edge] (1,0) -- (1,1);
    \draw[red edge] (1,0) -- (2,0) -- (2,1);
    \node[marked vertex] at (1,1) {};
    \node[marked vertex] at (2,1) {};
  \end{scope}

  \begin{scope}[xshift=0.575cm]
    \verticalhatchedbase
    \draw[blue squiggle] (0,0) -- (1,1);
    \draw[red edge] (1,1) -- (1,2);
    \node[marked vertex] at (1,1) {};
    \node[marked vertex] at (1,2) {};
  \end{scope}

  \begin{scope}[xshift=5.275cm]
    \verticalhatchedbase
    \draw[blue squiggle] (-1,0) -- (0,1);
    \draw[blue edge] (0,1) -- (1,1);
    \draw[red edge] (0,1) -- (0,2) -- (1,2);
    \node[marked vertex] at (1,1) {};
    \node[marked vertex] at (1,2) {};
  \end{scope}

  \coordinate (row divider height) at (0,3.217);
  \draw[separator]
    (current bounding box.west |- row divider height) --
    (current bounding box.east |- row divider height);

\end{tikzpicture}

%% file: figures/tikz_equivalences.tex
\begin{tikzpicture}[
  solid edge/.style={
    draw=black,
    line width=1.5pt,
    line cap=round,
    line join=round
  },
  thin edge/.style={
    draw=black,
    line width=0.75pt,
    line cap=round,
    line join=round,
    dash pattern=on 4pt off 2pt
  },
  dotted edge/.style={
    solid edge,
    dash pattern=on 0pt off 7pt
  },
  equivalence/.style={
    <->,
    >=Stealth,
    line width=1.05pt,
    shorten <=2pt,
    shorten >=2pt
  },
  arrow label/.style={
    midway,
    above=3pt,
    inner sep=1pt,
    font=\large\rmfamily\upshape
  }
]

\path[use as bounding box] (-0.2,-0.4) rectangle (14.1,7.75);

\begin{scope}[shift={(0,0.5)}]
  \fill[gray!20] (4.7,6.0) rectangle (5.7,7.0);
  \fill[gray!20] (8.4,6.0) rectangle (9.4,7.0);

  \draw[solid edge] (4.7,6.0) -- (4.7,7.0) -- (5.7,7.0);
  \draw[thin edge] (4.7,6.0) -- (5.7,6.0) -- (5.7,7.0);

  \draw[equivalence] (6.45,6.50)
    -- node[arrow label] {\huge I} (7.65,6.50);

  \draw[solid edge] (9.4,7.0) -- (9.4,6.0) -- (8.4,6.0);
  \draw[thin edge] (9.4,7.0) -- (8.4,7.0) -- (8.4,6.0);

  \node at (5.2,5.5) {\Huge \llle\elll};
  \node at (8.9,5.5) {\Huge \elll\llle};
\end{scope}

\node at (1,3.5) {\Huge \elll\pr};
\node at (5,3.5) {\Huge \pr\elll};

\fill[gray!20] (0,4.0) rectangle (2,5.0);
\draw[dotted edge] (1,5.0) -- (2,5.0) -- (2,4);
\draw[solid edge] (1,5.0) -- (1,4.0) -- (2,4.0);
\draw[solid edge] (0.0,4.0) -- (1,4);
\draw[thin edge] (1,5) -- (0,5) -- (0,4);

\draw[equivalence] (2.55,4.40)
  -- node[arrow label] {\huge II} (3.65,4.40);

\fill[gray!20] (4,4.0) rectangle (6,5.0);
\draw[solid edge] (5,5) -- (4,5.0) -- (4,4) -- (6,4);
\draw[dotted edge] (5,5.0) -- (6,5.0) -- (6,4);
\draw[thin edge] (5,4) -- (5,5);

\node at (9,3.5) {\huge \rp\elll};
\node at (13,3.5) {\huge \elll\rp};

\fill[gray!20] (8,4.0) rectangle (10,5.0);
\fill[gray!20] (12,4.0) rectangle (14,5.0);

\draw[dotted edge] (8,5.0) -- (8,4) -- (9,4);
\draw[solid edge] (8,5.0) -- (9,5.0);
\draw[solid edge] (9,4) -- (9,5.0) -- (10,5);
\draw[thin edge] (9,4) -- (10,4) -- (10,5);

\draw[equivalence] (10.65,4.40)
  -- node[arrow label] {\huge III} (11.75,4.40);

\draw[dotted edge] (12.0,5.0) -- (12.0,4) -- (13,4);
\draw[solid edge] (12.0,5.0) -- (14,5.0)
  -- (14,4) -- (13,4);
\draw[thin edge] (13,5) -- (13,4);

\node at (1.5,0) {\Huge \llle\pr};
\node at (4.5,0) {\Huge \pr\llle};

\fill[gray!20] (1,0.5) rectangle (2,2.5);
\fill[gray!20] (4,0.5) rectangle (5,2.5);

\draw[dotted edge] (1,2.5) -- (2,2.5) -- (2,1.5);
\draw[thin edge] (2,1.5) -- (2,0.5) -- (1,0.5);
\draw[solid edge] (1,2.5) -- (1,0.5);
\draw[solid edge] (1,1.5) -- (2,1.5);

\draw[equivalence] (2.55,1.60)
  -- node[arrow label] {\huge IV} (3.65,1.60);

\draw[solid edge] (4,2.5) -- (4,0.5)
  -- (5,0.5) -- (5,1.5);
\draw[dotted edge] (5,1.5) -- (5,2.5) -- (4,2.5);
\draw[thin edge] (4,1.5) -- (5,1.5);

\node at (9.5,0) {\Huge \rp\llle};
\node at (12.5,0) {\Huge \llle\rp};

\fill[gray!20] (9,0.5) rectangle (10,2.5);
\fill[gray!20] (12,0.5) rectangle (13,2.5);

\draw[dotted edge] (9,1.5) -- (9,0.5) -- (10,0.5);
\draw[solid edge] (10,0.5) -- (10,2.5);
\draw[solid edge] (10,1.5) -- (9,1.5);
\draw[thin edge] (9,1.5) -- (9,2.5) -- (10,2.5);

\draw[equivalence] (10.65,1.60)
  -- node[arrow label] {\huge V} (11.75,1.60);

\draw[dotted edge] (12.0,1.5) -- (12.0,0.5) -- (13,0.5);
\draw[solid edge] (12.0,1.5) -- (12,2.5)
  -- (13,2.5) -- (13,0.5);
\draw[thin edge] (12,1.5) -- (13,1.5);

\end{tikzpicture}

%% file: figures/slither_moves_12.tex
\begingroup
\newcommand{\slAmbient}{%
  \fill[black!6] (0,0) rectangle (1,1);
  \fill[black!6] (1,0) rectangle (2,1);
  \fill[black!6] (2,0) rectangle (3,1);
  \fill[black!6] (2,1) rectangle (3,2);
  \fill[black!6] (3,1) rectangle (4,2);
  \fill[black!6] (3,2) rectangle (4,3);
  \fill[black!6] (3,3) rectangle (4,4);
  \draw[ambient]
      (0,0) -- (0,1)
      (0,0) -- (1,0)
      (0,1) -- (1,1)
      (1,0) -- (1,1)
      (1,0) -- (2,0)
      (1,1) -- (2,1)
      (2,0) -- (2,1)
      (2,0) -- (3,0)
      (3,0) -- (3,1)
      (2,1) -- (3,1)
      (2,1) -- (2,2)
      (2,2) -- (3,2)
      (3,1) -- (3,2)
      (3,1) -- (4,1)
      (4,1) -- (4,2)
      (3,2) -- (4,2)
      (3,2) -- (3,3)
      (4,2) -- (4,3)
      (3,3) -- (4,3)
      (3,3) -- (3,4)
      (3,4) -- (4,4)
      (4,3) -- (4,4);
}
\begin{tikzpicture}[
  x=3.4mm,y=3.4mm,
  font=\small,
  ambient/.style={draw=black!30,line width=.3pt,dash pattern=on 1pt off 1pt},
  slither/.style={draw=black,line width=.8pt,line cap=round,line join=round},
  move/.style={->,>=stealth,line width=.45pt}
]

  \begin{scope}[shift={({0},{0})}]
    \slAmbient
    \draw[slither]
      (0,0) -- (0,1)
      (0,1) -- (1,1)
      (1,1) -- (2,1)
      (2,1) -- (3,1)
      (3,1) -- (3,2)
      (3,2) -- (3,3)
      (3,2) -- (4,2)
      (4,2) -- (4,3)
      (3,3) -- (4,3)
      (3,3) -- (3,4)
      (3,4) -- (4,4)
      (4,3) -- (4,4);
    \node[below=5pt] at (2,0) {$\llle \elll \elll \elll \llle \langle 1 \rangle$};
  \end{scope}

  \begin{scope}[shift={({6},{0})}]
    \slAmbient
    \draw[slither]
      (0,0) -- (0,1)
      (0,1) -- (1,1)
      (1,1) -- (2,1)
      (2,1) -- (3,1)
      (3,1) -- (3,2)
      (3,1) -- (4,1)
      (3,2) -- (3,3)
      (4,1) -- (4,2)
      (4,2) -- (4,3)
      (3,3) -- (4,3)
      (3,3) -- (3,4)
      (3,4) -- (4,4)
      (4,3) -- (4,4);
    \node[below=5pt] at (2,0) {$\llle \elll \elll \elll \langle \llle 1 \rangle$};
  \end{scope}

  \begin{scope}[shift={({12},{0})}]
    \slAmbient
    \draw[slither]
      (0,0) -- (0,1)
      (0,1) -- (1,1)
      (1,1) -- (2,1)
      (2,1) -- (2,2)
      (2,1) -- (3,1)
      (2,2) -- (3,2)
      (3,1) -- (4,1)
      (3,2) -- (3,3)
      (4,1) -- (4,2)
      (4,2) -- (4,3)
      (3,3) -- (4,3)
      (3,3) -- (3,4)
      (3,4) -- (4,4)
      (4,3) -- (4,4);
    \node[below=5pt] at (2,0) {$\llle \elll \elll \langle \elll \llle 1 \rangle$};
  \end{scope}

  \begin{scope}[shift={({18},{0})}]
    \slAmbient
    \draw[slither]
      (0,0) -- (1,0)
      (1,0) -- (1,1)
      (1,1) -- (2,1)
      (2,1) -- (2,2)
      (2,1) -- (3,1)
      (2,2) -- (3,2)
      (3,1) -- (4,1)
      (3,2) -- (3,3)
      (4,1) -- (4,2)
      (4,2) -- (4,3)
      (3,3) -- (4,3)
      (3,3) -- (3,4)
      (3,4) -- (4,4)
      (4,3) -- (4,4);
    \node[below=5pt] at (2,0) {$\elll \llle \elll \langle \elll \llle 1 \rangle$};
  \end{scope}

  \begin{scope}[shift={({24},{0})}]
    \slAmbient
    \draw[slither]
      (0,0) -- (1,0)
      (1,0) -- (2,0)
      (2,0) -- (2,1)
      (2,1) -- (2,2)
      (2,1) -- (3,1)
      (2,2) -- (3,2)
      (3,1) -- (4,1)
      (3,2) -- (3,3)
      (4,1) -- (4,2)
      (4,2) -- (4,3)
      (3,3) -- (4,3)
      (3,3) -- (3,4)
      (3,4) -- (4,4)
      (4,3) -- (4,4);
    \node[below=5pt] at (2,0) {$\elll \elll \llle \langle \elll \llle 1 \rangle$};
  \end{scope}

  \begin{scope}[shift={({30},{0})}]
    \slAmbient
    \draw[slither]
      (0,0) -- (1,0)
      (1,0) -- (2,0)
      (2,0) -- (2,1)
      (2,0) -- (3,0)
      (2,1) -- (2,2)
      (3,0) -- (3,1)
      (2,2) -- (3,2)
      (3,1) -- (4,1)
      (3,2) -- (3,3)
      (4,1) -- (4,2)
      (4,2) -- (4,3)
      (3,3) -- (4,3)
      (3,3) -- (3,4)
      (3,4) -- (4,4)
      (4,3) -- (4,4);
    \node[below=5pt] at (2,0) {$\elll \elll \langle \llle \elll \llle 1 \rangle$};
  \end{scope}

  \begin{scope}[shift={({30},{-8.5})}]
    \slAmbient
    \draw[slither]
      (0,0) -- (1,0)
      (1,0) -- (1,1)
      (1,0) -- (2,0)
      (1,1) -- (2,1)
      (2,0) -- (3,0)
      (2,1) -- (2,2)
      (3,0) -- (3,1)
      (2,2) -- (3,2)
      (3,1) -- (4,1)
      (3,2) -- (3,3)
      (4,1) -- (4,2)
      (4,2) -- (4,3)
      (3,3) -- (4,3)
      (3,3) -- (3,4)
      (3,4) -- (4,4)
      (4,3) -- (4,4);
    \node[below=5pt] at (2,0) {$\elll \langle \elll \llle \elll \llle 1 \rangle$};
  \end{scope}

  \begin{scope}[shift={({24},{-8.5})}]
    \slAmbient
    \draw[slither]
      (0,0) -- (0,1)
      (0,0) -- (1,0)
      (0,1) -- (1,1)
      (1,0) -- (2,0)
      (1,1) -- (2,1)
      (2,0) -- (3,0)
      (2,1) -- (2,2)
      (3,0) -- (3,1)
      (2,2) -- (3,2)
      (3,1) -- (4,1)
      (3,2) -- (3,3)
      (4,1) -- (4,2)
      (4,2) -- (4,3)
      (3,3) -- (4,3)
      (3,3) -- (3,4)
      (3,4) -- (4,4)
      (4,3) -- (4,4);
    \node[below=5pt] at (2,0) {$\langle \elll \elll \llle \elll \llle 1 \rangle$};
  \end{scope}

  \begin{scope}[shift={({18},{-8.5})}]
    \slAmbient
    \draw[slither]
      (0,0) -- (0,1)
      (0,0) -- (1,0)
      (0,1) -- (1,1)
      (1,0) -- (2,0)
      (1,1) -- (2,1)
      (2,0) -- (3,0)
      (3,0) -- (3,1)
      (2,1) -- (3,1)
      (2,1) -- (2,2)
      (2,2) -- (3,2)
      (3,1) -- (4,1)
      (3,2) -- (3,3)
      (4,1) -- (4,2)
      (3,3) -- (3,4)
      (4,2) -- (4,3)
      (3,4) -- (4,4)
      (4,3) -- (4,4);
    \node[below=5pt] at (2,0) {$\langle \elll \elll 1 \elll \llle \llle \rangle$};
  \end{scope}

  \begin{scope}[shift={({12},{-8.5})}]
    \slAmbient
    \draw[slither]
      (0,0) -- (0,1)
      (0,0) -- (1,0)
      (0,1) -- (1,1)
      (1,0) -- (2,0)
      (1,1) -- (2,1)
      (2,0) -- (3,0)
      (3,0) -- (3,1)
      (2,1) -- (3,1)
      (2,1) -- (2,2)
      (2,2) -- (3,2)
      (3,1) -- (4,1)
      (3,2) -- (3,3)
      (4,1) -- (4,2)
      (3,3) -- (4,3)
      (4,2) -- (4,3)
      (4,3) -- (4,4);
    \node[below=5pt] at (2,0) {$\langle \elll \elll 1 \elll \llle \rangle \llle$};
  \end{scope}

  \begin{scope}[shift={({6},{-8.5})}]
    \slAmbient
    \draw[slither]
      (0,0) -- (0,1)
      (0,0) -- (1,0)
      (0,1) -- (1,1)
      (1,0) -- (2,0)
      (1,1) -- (2,1)
      (2,0) -- (3,0)
      (3,0) -- (3,1)
      (2,1) -- (3,1)
      (2,1) -- (2,2)
      (2,2) -- (3,2)
      (3,1) -- (4,1)
      (3,2) -- (4,2)
      (4,1) -- (4,2)
      (4,2) -- (4,3)
      (4,3) -- (4,4);
    \node[below=5pt] at (2,0) {$\langle \elll \elll 1 \elll \rangle \llle \llle$};
  \end{scope}

  \begin{scope}[shift={({0},{-8.5})}]
    \slAmbient
    \draw[slither]
      (0,0) -- (0,1)
      (0,0) -- (1,0)
      (0,1) -- (1,1)
      (1,0) -- (2,0)
      (1,1) -- (2,1)
      (2,0) -- (3,0)
      (3,0) -- (3,1)
      (2,1) -- (3,1)
      (2,1) -- (2,2)
      (2,2) -- (3,2)
      (3,1) -- (3,2)
      (3,2) -- (4,2)
      (4,2) -- (4,3)
      (4,3) -- (4,4);
    \node[below=5pt] at (2,0) {$\langle \elll \elll 1 \rangle \elll \llle \llle$};
  \end{scope}

  \foreach \s in {0,6,12,18,24}{
    \draw[move] ({\s+4.3},2) -- ({\s+5.7},2);
  }
  \draw[move] (32,-2.1) -- (32,-4.1);
  \foreach \s in {6,12,18,24,30}{
    \draw[move] ({\s-.3},-6.5) -- ({\s-1.7},-6.5);
  }
\end{tikzpicture}
\endgroup

%% file: figures/stuck_moves.tex
\begin{tikzpicture}[
  solid edge/.style={
    draw=black,
    line width=1.2pt,
    line cap=round,
    line join=round
  },
  dotted edge/.style={
    solid edge,
    dash pattern=on 0pt off 4pt
  },
  equivalence/.style={
    <->,
    >=Stealth,
    line width=1.05pt,
    shorten <=2pt,
    shorten >=2pt
  }
]



\fill[gray!20] (2,3.0) rectangle (4, 4.0);
\fill[gray!20] (3,3.0) rectangle (4, 5.0);

\draw[dotted edge] (3,5.0) -- (4,5.0) -- (4,4);
\draw[solid edge]  (3,5.0) -- (3,4.0) -- (4,4.0);
\draw[solid edge]  (2.0,4.0) -- (3,4);
\begin{scope}[shift={(3,0)}]
    \fill[gray!20] (2,3.0) rectangle (3, 5.0);
\fill[gray!20] (2,4.0) rectangle (4, 5.0);

\draw[dotted edge] (3,5.0) -- (4,5.0) -- (4,4);
\draw[solid edge]  (3,5.0) -- (3,4.0) -- (4,4.0);
\draw[solid edge]  (3.0,3.0) -- (3,4);
\end{scope}

\end{tikzpicture}

%% file: figures/stuck_cases.tex
\begin{tikzpicture}[
  solid edge/.style={
    draw=black,
    line width=1.2pt,
    line cap=round,
    line join=round
  },
redd edge/.style={
  draw=red,
  line width=1.2pt,
  line cap=round,
  line join=round
},
  dotted edge/.style={
    solid edge,
    dash pattern=on 0pt off 4pt
  },
  equivalence/.style={
    <->,
    >=Stealth,
    line width=1.05pt,
    shorten <=2pt,
    shorten >=2pt
  }
]

\fill[gray!20] (1,3) rectangle (4,4);
\fill[gray!20] (3,3) rectangle (4,5);

\draw[dotted edge] (3,5) -- (4,5) -- (4,4);
\draw[solid edge] (3,5) -- (3,4) -- (4,4);
\draw[solid edge] (1,3) -- (1,4) -- (3,4);
\draw[redd edge] (1,3) -- (3,3) -- (3,4);

\begin{scope}[shift={(5,0)}]
  \fill[gray!20] (0,3) rectangle (4,4);
  \fill[gray!20] (3,3) rectangle (4,5);

  \draw[dotted edge] (3,5) -- (4,5) -- (4,4);
  \draw[solid edge] (3,5) -- (3,4) -- (4,4);
  \draw[solid edge] (1,3) -- (1,4) -- (3,4);
  \draw[solid edge] (0,4) -- (1,4);
  \draw[dotted edge] (1,3) -- (0,3) -- (0,4);
\end{scope}

\end{tikzpicture}

%% file: figures/triangle.tex
\begin{tikzpicture}[
  x=12mm, y=10mm,
  every node/.style={font=\large},scale=0.5
]
  \foreach \r/\entries in {
    0/{1},
    1/{2,1},
    2/{1,3,1},
    3/{1,4,4,1},
  } {
    \foreach \entry [count=\c from 0] in \entries {
      \node at ({\c-\r/2}, {-\r}) {$\entry$};
    }
  }

  \foreach \entry [count=\c from 0] in {0,5,8,5,1} {
    \node at ({\c-2}, -4) {${\entry}$};
  }
\end{tikzpicture}

%% file: bib.bib
@article {NST25,
    AUTHOR = {Nadeau, Philippe and Spink, Hunter and Tewari, Vasu},
     TITLE = {Quasisymmetric divided differences},
   JOURNAL = {S\'em. Lothar. Combin.},
  FJOURNAL = {S\'eminaire Lotharingien de Combinatoire},
    VOLUME = {93B},
      YEAR = {2025},
     PAGES = {Art. 87, 11},
      ISSN = {1286-4889},
   MRCLASS = {05E05 (05E10)},
  MRNUMBER = {4972548},
}

@misc{BGNST25,
      title={Equivariant quasisymmetry and noncrossing partitions}, 
      author={Nantel Bergeron and Lucas Gagnon and Philippe Nadeau and Hunter Spink and Vasu Tewari},
      year={2025},
      eprint={2504.15234},
      archivePrefix={arXiv},
      primaryClass={math.CO},
      url={https://arxiv.org/abs/2504.15234}, 
      note={arXiv:2504.15234},
}

@article {BGNST26,
    AUTHOR = {Bergeron, Nantel and Gagnon, Lucas and Nadeau, Philippe and
              Spink, Hunter and Tewari, Vasu},
     TITLE = {The quasisymmetric flag variety: a toric complex on
              noncrossing partitions},
   JOURNAL = {Forum Math. Pi},
  FJOURNAL = {Forum of Mathematics. Pi},
    VOLUME = {14},
      YEAR = {2026},
     PAGES = {Paper No. e18, 45},
      ISSN = {2050-5086},
   MRCLASS = {14M15 (05E05 14N15 52B20)},
  MRNUMBER = {5100456},
       DOI = {10.1017/fmp.2026.10034},
       URL = {https://doi.org/10.1017/fmp.2026.10034},
}

@misc{NST24,
      title={The geometry of quasisymmetric coinvariants}, 
      author={Philippe Nadeau and Hunter Spink and Vasu Tewari},
      year={2024},
      eprint={2410.12643},
      archivePrefix={arXiv},
      primaryClass={math.AG},
      url={https://arxiv.org/abs/2410.12643}, 
      note ={arXiv:2410.12643}
}

@article {NT24,
    AUTHOR = {Nadeau, Philippe and Tewari, Vasu},
     TITLE = {Forest polynomials and the class of the permutahedral variety},
   JOURNAL = {Adv. Math.},
  FJOURNAL = {Advances in Mathematics},
    VOLUME = {453},
      YEAR = {2024},
     PAGES = {Paper No. 109834, 33},
      ISSN = {0001-8708,1090-2082},
   MRCLASS = {05E05 (05A05)},
  MRNUMBER = {4773088},
MRREVIEWER = {Tanja\ Stojadinovi\'c},
       DOI = {10.1016/j.aim.2024.109834},
       URL = {https://doi.org/10.1016/j.aim.2024.109834},
}

@article {BS22,
    AUTHOR = {Bae, Younghan and Schmitt, Johannes},
     TITLE = {{C}how rings of stacks of prestable curves {I}},
      NOTE = {With an appendix by Bae, Schmitt and Jonathan Skowera},
   JOURNAL = {Forum Math. Sigma},
  FJOURNAL = {Forum of Mathematics. Sigma},
    VOLUME = {10},
      YEAR = {2022},
     PAGES = {Paper No. e28, 47},
      ISSN = {2050-5094},
   MRCLASS = {14H10 (14C15 14C17)},
  MRNUMBER = {4430955},
MRREVIEWER = {Reinier\ Kramer},
       DOI = {10.1017/fms.2022.21},
       URL = {https://doi.org/10.1017/fms.2022.21},
}

@article {ACFW13,
    AUTHOR = {Abramovich, Dan and Cadman, Charles and Fantechi, Barbara and
              Wise, Jonathan},
     TITLE = {{E}xpanded degenerations and pairs},
   JOURNAL = {Comm. Algebra},
  FJOURNAL = {Communications in Algebra},
    VOLUME = {41},
      YEAR = {2013},
    NUMBER = {6},
     PAGES = {2346--2386},
      ISSN = {0092-7872,1532-4125},
   MRCLASS = {14D23 (14H10 14N35)},
  MRNUMBER = {3225278},
MRREVIEWER = {Jo\~ao\ Paulo\ Santos},
       DOI = {10.1080/00927872.2012.658589},
       URL = {https://doi.org/10.1080/00927872.2012.658589},
}

@misc{JM18,
      title={Well-quasi-ordering in lattice path matroids}, 
      author={Meenu Mariya Jose and Dillon Mayhew},
      year={2018},
      eprint={1806.10260},
      archivePrefix={arXiv},
      primaryClass={math.CO},
      note = {arXiv:1806.10260}
}

@article {KMSRA18,
    AUTHOR = {Knauer, Kolja and Mart\'inez-Sandoval, Leonardo and Ram\'irez
              Alfons\'in, Jorge Luis},
     TITLE = {On lattice path matroid polytopes: integer points and
              {E}hrhart polynomial},
   JOURNAL = {Discrete Comput. Geom.},
  FJOURNAL = {Discrete \& Computational Geometry. An International Journal
              of Mathematics and Computer Science},
    VOLUME = {60},
      YEAR = {2018},
    NUMBER = {3},
     PAGES = {698--719},
      ISSN = {0179-5376,1432-0444},
   MRCLASS = {52B40 (05B35 52B20)},
  MRNUMBER = {3849147},
MRREVIEWER = {Winfried\ Hochst\"attler},
       DOI = {10.1007/s00454-018-9965-4},
       URL = {https://doi.org/10.1007/s00454-018-9965-4},
}

@unpublished{spectra,
  author = {Malkiewich, Cary},
  title  = {Spectra and Stable Homotopy Theory},
  year   = {2026},
  note   = {Book manuscript, draft dated May 16, 2026},
  url    = {https://people.math.binghamton.edu/malkiewich/spectra_book_draft.pdf}
}

@incollection {Sta77,
    AUTHOR = {Stanley, Richard P.},
     TITLE = {Some combinatorial aspects of the {S}chubert calculus},
 BOOKTITLE = {Combinatoire et repr\'esentation du groupe sym\'etrique
              ({A}ctes {T}able {R}onde {CNRS}, {U}niv. {L}ouis-{P}asteur
              {S}trasbourg, {S}trasbourg, 1976)},
    SERIES = {Lecture Notes in Math.},
    VOLUME = {Vol. 579},
     PAGES = {217--251},
 PUBLISHER = {Springer, Berlin-New York},
      YEAR = {1977},
   MRCLASS = {05-02 (05A19 14M15 14N10)},
  MRNUMBER = {465880},
MRREVIEWER = {S.\ L.\ Kleiman},
}

@article {Sta90,
    AUTHOR = {Stanton, Dennis},
     TITLE = {Unimodality and {Y}oung's lattice},
   JOURNAL = {J. Combin. Theory Ser. A},
  FJOURNAL = {Journal of Combinatorial Theory. Series A},
    VOLUME = {54},
      YEAR = {1990},
    NUMBER = {1},
     PAGES = {41--53},
      ISSN = {0097-3165,1096-0899},
   MRCLASS = {05A17 (05A30 06A07)},
  MRNUMBER = {1051777},
MRREVIEWER = {E.\ Rodney\ Canfield},
       DOI = {10.1016/0097-3165(90)90004-G},
       URL = {https://doi.org/10.1016/0097-3165(90)90004-G},
}

@incollection {Thi01,
    AUTHOR = {Thibon, Jean-Yves},
     TITLE = {Lectures on noncommutative symmetric functions},
 BOOKTITLE = {Interaction of combinatorics and representation theory},
    SERIES = {MSJ Mem.},
    VOLUME = {11},
     PAGES = {39--94},
 PUBLISHER = {Math. Soc. Japan, Tokyo},
      YEAR = {2001},
      ISBN = {4-931469-14-0},
   MRCLASS = {05E05 (20C08 33D80)},
  MRNUMBER = {1862149},
MRREVIEWER = {Andrzej\ Daszkiewicz},
}

@article {HW17,
    AUTHOR = {Huh, June and Wang, Botong},
     TITLE = {Enumeration of points, lines, planes, etc.},
   JOURNAL = {Acta Math.},
  FJOURNAL = {Acta Mathematica},
    VOLUME = {218},
      YEAR = {2017},
    NUMBER = {2},
     PAGES = {297--317},
      ISSN = {0001-5962,1871-2509},
   MRCLASS = {05A15 (52C10)},
  MRNUMBER = {3733101},
MRREVIEWER = {Michael\ J.\ Falk},
       DOI = {10.4310/ACTA.2017.v218.n2.a2},
       URL = {https://doi.org/10.4310/ACTA.2017.v218.n2.a2},
}

@misc{BGS26,
      title={Torus actions on compactified braid varieties and polytopality of subword complexes}, 
      author={Lara Bossinger and Mikhail Gorsky and José Simental},
      year={2026},
      eprint={2609.12414},
      archivePrefix={arXiv},
      primaryClass={math.AG},
      note = {arXiv:2609.12414}
}

@article {PS24,
    AUTHOR = {Pechenik, Oliver and Satriano, Matthew},
     TITLE = {James reduced product schemes and double quasisymmetric
              functions},
   JOURNAL = {Adv. Math.},
  FJOURNAL = {Advances in Mathematics},
    VOLUME = {449},
      YEAR = {2024},
     PAGES = {Paper No. 109737, 28},
      ISSN = {0001-8708,1090-2082},
   MRCLASS = {05E05 (05E14 14N15 55N91)},
  MRNUMBER = {4749343},
MRREVIEWER = {Laura\ Colmenarejo},
       DOI = {10.1016/j.aim.2024.109737},
       URL = {https://doi.org/10.1016/j.aim.2024.109737},
}

@article {JLS21,
    AUTHOR = {Jahn, Dennis and L\"owe, Robert and Stump, Christian},
     TITLE = {{M}inkowski decompositions for generalized associahedra of
              acyclic type},
   JOURNAL = {Algebr. Comb.},
  FJOURNAL = {Algebraic Combinatorics},
    VOLUME = {4},
      YEAR = {2021},
    NUMBER = {5},
     PAGES = {757--775},
      ISSN = {2589-5486},
   MRCLASS = {13F60},
  MRNUMBER = {4339351},
MRREVIEWER = {Jiarui\ Fei},
       DOI = {10.5802/alco.177},
       URL = {https://doi.org/10.5802/alco.177},
}

@article {BS18,
    AUTHOR = {Brodsky, Sarah B. and Stump, Christian},
     TITLE = {Towards a uniform subword complex description of acyclic
              finite type cluster algebras},
   JOURNAL = {Algebr. Comb.},
  FJOURNAL = {Algebraic Combinatorics},
    VOLUME = {1},
      YEAR = {2018},
    NUMBER = {4},
     PAGES = {545--572},
      ISSN = {2589-5486},
   MRCLASS = {13F60 (05E15 20F55)},
  MRNUMBER = {3875076},
MRREVIEWER = {Li\ Li},
       DOI = {10.5802/alco.25},
       URL = {https://doi.org/10.5802/alco.25},
}

@article {PS15,
    AUTHOR = {Pilaud, Vincent and Stump, Christian},
     TITLE = {Brick polytopes of spherical subword complexes and generalized
              associahedra},
   JOURNAL = {Adv. Math.},
  FJOURNAL = {Advances in Mathematics},
    VOLUME = {276},
      YEAR = {2015},
     PAGES = {1--61},
      ISSN = {0001-8708,1090-2082},
   MRCLASS = {05E45 (05E15 05E30 20F55 52B12)},
  MRNUMBER = {3327085},
MRREVIEWER = {Jian-yi\ Shi},
       DOI = {10.1016/j.aim.2015.02.012},
       URL = {https://doi.org/10.1016/j.aim.2015.02.012},
}

@article {PS12,
    AUTHOR = {Pilaud, Vincent and Santos, Francisco},
     TITLE = {The brick polytope of a sorting network},
   JOURNAL = {European J. Combin.},
  FJOURNAL = {European Journal of Combinatorics},
    VOLUME = {33},
      YEAR = {2012},
    NUMBER = {4},
     PAGES = {632--662},
      ISSN = {0195-6698,1095-9971},
   MRCLASS = {52B12 (05C75 52C30)},
  MRNUMBER = {2864447},
       DOI = {10.1016/j.ejc.2011.12.003},
       URL = {https://doi.org/10.1016/j.ejc.2011.12.003},
}

@article {BCDMTY24,
    AUTHOR = {Bazier-Matte, V\'eronique and Chapelier-Laget, Nathan and
              Douville, Guillaume and Mousavand, Kaveh and Thomas, Hugh and
              Yildirim, Emine},
     TITLE = {A{BHY} associahedra and {N}ewton polytopes of
              {$F$}-polynomials for cluster algebras of simply laced finite
              type},
   JOURNAL = {J. Lond. Math. Soc. (2)},
  FJOURNAL = {Journal of the London Mathematical Society. Second Series},
    VOLUME = {109},
      YEAR = {2024},
    NUMBER = {1},
     PAGES = {Paper No. e12817, 27},
      ISSN = {0024-6107,1469-7750},
   MRCLASS = {13F60 (14M25 16G20 52B05)},
  MRNUMBER = {4680199},
MRREVIEWER = {Olga\ Kravchenko},
       DOI = {10.1112/jlms.12817},
       URL = {https://doi.org/10.1112/jlms.12817},
}

@article {BKT14,
    AUTHOR = {Baumann, Pierre and Kamnitzer, Joel and Tingley, Peter},
     TITLE = {Affine {M}irkovi\'c-{V}ilonen polytopes},
   JOURNAL = {Publ. Math. Inst. Hautes \'Etudes Sci.},
  FJOURNAL = {Publications Math\'ematiques. Institut de Hautes \'Etudes
              Scientifiques},
    VOLUME = {120},
      YEAR = {2014},
     PAGES = {113--205},
      ISSN = {0073-8301,1618-1913},
   MRCLASS = {17B37 (16G20 17B67)},
  MRNUMBER = {3270589},
MRREVIEWER = {Csaba\ Sz\'ant\'o},
       DOI = {10.1007/s10240-013-0057-y},
       URL = {https://doi.org/10.1007/s10240-013-0057-y},
}

@misc{ABDMPT26,
      title={Flows on graphs with cycles, locally gentle algebras, and the Mutoperhedron}, 
      author={Antoine Abram and Jose Bastidas and Benjamin Dequêne and Alejandro H. Morales and GaYee Park and Hugh Thomas},
      year={2026},
      eprint={2601.08150},
      archivePrefix={arXiv},
      primaryClass={math.CO},
      note = {arXiv:2601.08150}
}

@article {FMP26,
    AUTHOR = {Ferroni, Luis and Morales, Alejandro H. and Panova, Greta},
     TITLE = {Skew shapes, {E}hrhart positivity, and beyond},
   JOURNAL = {Proc. Lond. Math. Soc. (3)},
  FJOURNAL = {Proceedings of the London Mathematical Society. Third Series},
    VOLUME = {133},
      YEAR = {2026},
    NUMBER = {2},
     PAGES = {Paper No. e70204},
      ISSN = {0024-6115,1460-244X},
   MRCLASS = {52B20 (05A17 06A07 52B40)},
  MRNUMBER = {5119669},
       DOI = {10.1112/plms.70204},
       URL = {https://doi.org/10.1112/plms.70204},
}

@article{DKLT96,
  author  = {Duchamp, G{\'e}rard and Krob, Daniel and
             Leclerc, Bernard and Thibon, Jean-Yves},
  title   = {Fonctions quasi-sym{\'e}triques, fonctions
             sym{\'e}triques non commutatives et alg{\`e}bres
             de {Hecke} {\`a} {$q=0$}},
  journal = {C. R. Acad. Sci. Paris S{\'e}r. I Math.},
  volume  = {322},
  number  = {2},
  year    = {1996},
  pages   = {107--112},
  url     = {https://hal.science/hal-00018535}
}

@article {MR95,
    AUTHOR = {Malvenuto, Claudia and Reutenauer, Christophe},
     TITLE = {Duality between quasi-symmetric functions and the {S}olomon
              descent algebra},
   JOURNAL = {J. Algebra},
  FJOURNAL = {Journal of Algebra},
    VOLUME = {177},
      YEAR = {1995},
    NUMBER = {3},
     PAGES = {967--982},
      ISSN = {0021-8693,1090-266X},
   MRCLASS = {05E05 (16W30)},
  MRNUMBER = {1358493},
MRREVIEWER = {Jean-Yves\ Thibon},
       DOI = {10.1006/jabr.1995.1336},
       URL = {https://doi.org/10.1006/jabr.1995.1336},
}

@article{Les47,
  author  = {Lesieur, L{\'e}once},
  title   = {Les probl{\`e}mes d'intersection sur une
             vari{\'e}t{\'e} de {Grassmann}},
  journal = {C. R. Acad. Sci. Paris},
  volume  = {225},
  year    = {1947},
  pages   = {916--917}
}

@article{Pie93,
  author  = {Pieri, Mario},
  title   = {Sul problema degli spazi secanti},
  journal = {Rend. Ist. Lombardo (2)},
  volume  = {26},
  year    = {1893},
  pages   = {534--546}
}

@article{Gia02,
  author  = {Giambelli, Giovanni Zeno},
  title   = {Risoluzione del problema degli spazi secanti},
  journal = {Mem. R. Accad. Sci. Torino (2)},
  volume  = {52},
  year    = {1902},
  pages   = {171--211}
}

@Book{Sch01,
 author    = { Issai Schur },
 title     = { {\"{U}}ber eine {K}lasse von {M}atrizen, die sich einer gegebenen {M}atrix zuordnen lassen },
 publisher = { Dieterich },
 year      = { 1901 },
 address   = { Berlin; G{\"{o}}ttingen }
}

@article{Cau15,
  title={Mémoire sur les fonctions qui ne peuvent
  obtenir que deux valeurs égales et de signes
  contraires par suite des transpositions opérées
  entre les variables qu'elles renferment},
  author={Augustin Louis Cauchy},
  journal={Journal de l'École polytechnique},
  volume={10},
  number={17},
  url = {http://sites.mathdoc.fr/cgi-bin/oeitem?id=OE\_CAUCHY_2_1_91_0},
  pages={29--112},
  year={1815},
  note = {Œuvres, ser. 2, vol. 1, pp. 91--169}
}

@article {QR25,
    AUTHOR = {Quail, Jeremy and Rombach, Puck},
     TITLE = {Positroid envelopes and graphic positroids},
   JOURNAL = {Comb. Theory},
  FJOURNAL = {Combinatorial Theory},
    VOLUME = {5},
      YEAR = {2025},
    NUMBER = {3},
     PAGES = {Paper No. 1, 31},
      ISSN = {2766-1334},
   MRCLASS = {05B35},
  MRNUMBER = {4962115},
MRREVIEWER = {Shaopu\ Zhang},
}

@article {S10,
    AUTHOR = {Schweig, Jay},
     TITLE = {On the {$h$}-vector of a lattice path matroid},
   JOURNAL = {Electron. J. Combin.},
  FJOURNAL = {Electronic Journal of Combinatorics},
    VOLUME = {17},
      YEAR = {2010},
    NUMBER = {1},
     PAGES = {Note 3, 6},
      ISSN = {1077-8926},
   MRCLASS = {05B35 (05E45)},
  MRNUMBER = {2578897},
       DOI = {10.37236/452},
       URL = {https://doi.org/10.37236/452},
}

@article {PPVJ23,
    AUTHOR = {Padrol, Arnau and Pilaud, Vincent and Ritter, Julian},
     TITLE = {Shard polytopes},
   JOURNAL = {Int. Math. Res. Not. IMRN},
  FJOURNAL = {International Mathematics Research Notices. IMRN},
      YEAR = {2023},
    NUMBER = {9},
     PAGES = {7686--7796},
}

@incollection {B84,
    AUTHOR = {Brenti, Francesco},
     TITLE = {Log-concave and unimodal sequences in algebra, combinatorics,
              and geometry: an update},
 BOOKTITLE = {Jerusalem combinatorics '93},
    SERIES = {Contemp. Math.},
    VOLUME = {178},
     PAGES = {71--89},
 PUBLISHER = {Amer. Math. Soc., Providence, RI},
      YEAR = {1994},
      ISBN = {0-8218-0294-1},
   MRCLASS = {05A20 (05-02)},
  MRNUMBER = {1310575},
MRREVIEWER = {Christian\ Krattenthaler},
       DOI = {10.1090/conm/178/01893},
       URL = {https://doi.org/10.1090/conm/178/01893},
}

@incollection {S85,
    AUTHOR = {Stanley, Richard P.},
     TITLE = {The number of faces of simplicial polytopes and spheres},
 BOOKTITLE = {Discrete geometry and convexity ({N}ew {Y}ork, 1982)},
    SERIES = {Ann. New York Acad. Sci.},
    VOLUME = {440},
     PAGES = {212--223},
 PUBLISHER = {New York Acad. Sci., New York},
      YEAR = {1985},
      ISBN = {0-89766-275-},
   MRCLASS = {52A25},
  MRNUMBER = {809209},
MRREVIEWER = {P.\ McMullen},
       DOI = {10.1111/j.1749-6632.1985.tb14556.x},
       URL = {https://doi.org/10.1111/j.1749-6632.1985.tb14556.x},
}

@article {FM20,
    AUTHOR = {Foldes, Stephan and Major, L\'aszl\'o},
     TITLE = {Log-concavity of rows of {P}ascal type triangles},
   JOURNAL = {Util. Math.},
  FJOURNAL = {Utilitas Mathematica},
    VOLUME = {116},
      YEAR = {2020},
     PAGES = {203--210},
      ISSN = {0315-3681},
   MRCLASS = {40A05 (05A15 11B83)},
  MRNUMBER = {4243175},
       DOI = {10.1007/s00013-020-01526-4},
       URL = {https://doi.org/10.1007/s00013-020-01526-4},
}

@article {M69,
    AUTHOR = {Menon, K. V.},
     TITLE = {On the convolution of logarithmically concave sequences},
   JOURNAL = {Proc. Amer. Math. Soc.},
  FJOURNAL = {Proceedings of the American Mathematical Society},
    VOLUME = {23},
      YEAR = {1969},
     PAGES = {439--441},
      ISSN = {0002-9939,1088-6826},
   MRCLASS = {40.10},
  MRNUMBER = {246012},
MRREVIEWER = {F.\ P.\ Cass},
       DOI = {10.2307/2037189},
       URL = {https://doi.org/10.2307/2037189},
}

@article {FS25,
    AUTHOR = {Ferroni, Luis and Schr\"oter, Benjamin},
     TITLE = {Face enumeration for split matroid polytopes},
   JOURNAL = {Combin. Probab. Comput.},
  FJOURNAL = {Combinatorics, Probability and Computing},
    VOLUME = {34},
      YEAR = {2025},
    NUMBER = {4},
     PAGES = {528--544},
      ISSN = {0963-5483,1469-2163},
   MRCLASS = {52B05 (05B35 52B40)},
  MRNUMBER = {4929046},
       DOI = {10.1017/s0963548325000021},
       URL = {https://doi.org/10.1017/s0963548325000021},
}

@misc{BGST26,
      title={The Quasisymmetric {G}rassmannian}, 
      author={Nantel Bergeron and Lucas Gagnon and Hunter Spink and Vasu Tewari},
      year={2026},
      eprint={2604.24903},
      archivePrefix={arXiv},
      primaryClass={math.AG},
      note={arXiv:2604.24903}
}

@article {L02,
    AUTHOR = {Li, Jun},
     TITLE = {A degeneration formula of {GW}-invariants},
   JOURNAL = {J. Differential Geom.},
  FJOURNAL = {Journal of Differential Geometry},
    VOLUME = {60},
      YEAR = {2002},
    NUMBER = {2},
     PAGES = {199--293},
      ISSN = {0022-040X,1945-743X},
   MRCLASS = {14N35},
  MRNUMBER = {1938113},
       URL = {http://projecteuclid.org/euclid.jdg/1090351102},
}

@article {L01,
    AUTHOR = {Li, Jun},
     TITLE = {Stable morphisms to singular schemes and relative stable
              morphisms},
   JOURNAL = {J. Differential Geom.},
  FJOURNAL = {Journal of Differential Geometry},
    VOLUME = {57},
      YEAR = {2001},
    NUMBER = {3},
     PAGES = {509--578},
      ISSN = {0022-040X,1945-743X},
   MRCLASS = {14N35},
  MRNUMBER = {1882667},
MRREVIEWER = {Andreas\ Gathmann},
       URL = {http://projecteuclid.org/euclid.jdg/1090348132},
}

@article {O19,
    AUTHOR = {Oesinghaus, Jakob},
     TITLE = {Quasisymmetric functions and the {C}how ring of the stack of
              expanded pairs},
   JOURNAL = {Res. Math. Sci.},
  FJOURNAL = {Research in the Mathematical Sciences},
    VOLUME = {6},
      YEAR = {2019},
    NUMBER = {1},
     PAGES = {Paper No. 5, 18},
      ISSN = {2522-0144,2197-9847},
   MRCLASS = {14C15 (05E05 14A20 14C17 16T05)},
  MRNUMBER = {3911800},
MRREVIEWER = {Aigli\ Papantonopoulou},
       DOI = {10.1007/s40687-018-0168-7},
       URL = {https://doi.org/10.1007/s40687-018-0168-7},
}

@misc{PS26,
      title={Quasisymmetric {S}chubert calculus}, 
      author={Oliver Pechenik and Matthew Satriano},
      year={2026},
      eprint={2205.12415},
      archivePrefix={arXiv},
      note = {arXiv:2205.12415},
      primaryClass={math.AT},
}

@article {C76,
    AUTHOR = {Chase, P. J.},
     TITLE = {Subsequence numbers and logarithmic concavity},
   JOURNAL = {Discrete Math.},
  FJOURNAL = {Discrete Mathematics},
    VOLUME = {16},
      YEAR = {1976},
    NUMBER = {2},
     PAGES = {123--140},
      ISSN = {0012-365X,1872-681X},
   MRCLASS = {05A05 (05A10)},
  MRNUMBER = {540988},
       DOI = {10.1016/0012-365X(76)90140-0},
       URL = {https://doi.org/10.1016/0012-365X(76)90140-0},
}

@misc{V26,
      title={Log-concavity of subsequence counts of words}, 
      author={Vincent Vatter},
      year={2026},
      eprint={2608.22147},
      archivePrefix={arXiv},
      primaryClass={math.CO},
      note = {arXiv:2608.22147} 
}

@article {L11,
    AUTHOR = {Lam, Thomas},
     TITLE = {Affine {S}chubert classes, {S}chur positivity, and
              combinatorial {H}opf algebras},
   JOURNAL = {Bull. Lond. Math. Soc.},
  FJOURNAL = {Bulletin of the London Mathematical Society},
    VOLUME = {43},
      YEAR = {2011},
    NUMBER = {2},
     PAGES = {328--334},
      ISSN = {0024-6093,1469-2120},
   MRCLASS = {14N15 (05E05 16T30 55P35 55R40)},
  MRNUMBER = {2781213},
MRREVIEWER = {Letterio\ Gatto},
       DOI = {10.1112/blms/bdq110},
       URL = {https://doi.org/10.1112/blms/bdq110},
}

@book {MT91,
    AUTHOR = {Mimura, Mamoru and Toda, Hirosi},
     TITLE = {Topology of {L}ie groups. {I}, {II}},
    SERIES = {Translations of Mathematical Monographs},
    VOLUME = {91},
   EDITION = {Japanese},
 PUBLISHER = {American Mathematical Society, Providence, RI},
      YEAR = {1991},
     PAGES = {iv+451},
      ISBN = {0-8218-4541-1},
   MRCLASS = {55-02 (22-02 55P99 57T20 57T25)},
  MRNUMBER = {1122592},
MRREVIEWER = {V.\ P.\ Snaith},
       DOI = {10.1090/mmono/091},
       URL = {https://doi.org/10.1090/mmono/091},
}

@misc{ELPSW26,
      title={{B}ruhat intervals that are large hypercubes}, 
      author={Jordan Ellenberg and Nicolas Libedinsky and David Plaza and José Simental and Geordie Williamson},
      year={2026},
      eprint={2601.01235},
      archivePrefix={arXiv},
      primaryClass={math.CO},
      note = {arXiv:2601.01235}
}

@article {GH25,
    AUTHOR = {Gao, Yibo and Hodges, Reuven},
     TITLE = {Orbit structures and complexity in {S}chubert and {R}ichardson
              varieties},
   JOURNAL = {S\'em. Lothar. Combin.},
  FJOURNAL = {S\'eminaire Lotharingien de Combinatoire},
    VOLUME = {93B},
      YEAR = {2025},
     PAGES = {Art. 105, 12},
      ISSN = {1286-4889},
   MRCLASS = {14N15 (05E14)},
  MRNUMBER = {4973263},
}

@inproceedings {TW15,
    AUTHOR = {Tsukerman, Emmanuel and Williams, Lauren},
     TITLE = {{B}ruhat interval polytopes},
 BOOKTITLE = {Proceedings of {FPSAC} 2015},
    SERIES = {Discrete Math. Theor. Comput. Sci. Proc.},
     PAGES = {13--24},
 PUBLISHER = {Assoc. Discrete Math. Theor. Comput. Sci., Nancy},
      YEAR = {2015},
   MRCLASS = {52B12 (06A07 20F55)},
  MRNUMBER = {3470848},
MRREVIEWER = {Du\v sko\ Joji\'c},
}

@article {S22,
    AUTHOR = {Safonkin, N. A.},
     TITLE = {Semifinite harmonic functions on the zigzag graph},
      NOTE = {Translation of Funktsional. Anal. i Prilozhen. {\bf 56}
              (2022), no. 3, 52--74.},
   JOURNAL = {Funct. Anal. Appl.},
  FJOURNAL = {Functional Analysis and its Applications},
    VOLUME = {56},
      YEAR = {2022},
    NUMBER = {3},
     PAGES = {199--215},
      ISSN = {0016-2663,1573-8485},
   MRCLASS = {05E05 (46L05)},
  MRNUMBER = {4542839},
       DOI = {10.1134/s0016266322030042},
       URL = {https://doi.org/10.1134/s0016266322030042},
}

@article {NT09,
    AUTHOR = {Novelli, Jean-Christophe and Thibon, Jean-Yves},
     TITLE = {Superization and {$(q,t)$}-specialization in combinatorial
              {H}opf algebras},
   JOURNAL = {Electron. J. Combin.},
  FJOURNAL = {Electronic Journal of Combinatorics},
    VOLUME = {16},
      YEAR = {2009},
    NUMBER = {2},
     PAGES = {Research Paper 21, 46},
      ISSN = {1077-8926},
   MRCLASS = {05E05 (05C05 16T30)},
  MRNUMBER = {2576384},
MRREVIEWER = {\^Angela\ Mestre},
       DOI = {10.37236/87},
       URL = {https://doi.org/10.37236/87},
}

@article {KLT97,
    AUTHOR = {Krob, D. and Leclerc, B. and Thibon, J.-Y.},
     TITLE = {Noncommutative symmetric functions. {II}. {T}ransformations of
              alphabets},
   JOURNAL = {Internat. J. Algebra Comput.},
  FJOURNAL = {International Journal of Algebra and Computation},
    VOLUME = {7},
      YEAR = {1997},
    NUMBER = {2},
     PAGES = {181--264},
      ISSN = {0218-1967,1793-6500},
   MRCLASS = {05E05 (15A15)},
  MRNUMBER = {1433196},
MRREVIEWER = {Arun\ Ram},
       DOI = {10.1142/S0218196797000113},
       URL = {https://doi.org/10.1142/S0218196797000113},
}

@article {GKLLRT95,
    AUTHOR = {Gelfand, Israel M. and Krob, Daniel and Lascoux, Alain and
              Leclerc, Bernard and Retakh, Vladimir S. and Thibon,
              Jean-Yves},
     TITLE = {Noncommutative symmetric functions},
   JOURNAL = {Adv. Math.},
  FJOURNAL = {Advances in Mathematics},
    VOLUME = {112},
      YEAR = {1995},
    NUMBER = {2},
     PAGES = {218--348},
      ISSN = {0001-8708,1090-2082},
   MRCLASS = {05E05 (15A15 16W30)},
  MRNUMBER = {1327096},
MRREVIEWER = {Arun\ Ram},
       DOI = {10.1006/aima.1995.1032},
       URL = {https://doi.org/10.1006/aima.1995.1032},
}

@article {HM25,
    AUTHOR = {Hicks, Angela and McCloskey, Robert},
     TITLE = {A combinatorial perspective on the noncommutative symmetric
              functions},
   JOURNAL = {Electron. J. Combin.},
  FJOURNAL = {Electronic Journal of Combinatorics},
    VOLUME = {32},
      YEAR = {2025},
    NUMBER = {1},
     PAGES = {Paper No. 1.53, 40},
      ISSN = {1077-8926},
   MRCLASS = {05E05},
  MRNUMBER = {4883616},
MRREVIEWER = {Mark\ J.\ Wildon},
       DOI = {10.37236/13129},
       URL = {https://doi.org/10.37236/13129},
}

@misc{GR20,
      title={{H}opf Algebras in Combinatorics}, 
      author={Darij Grinberg and Victor Reiner},
      year={2020},
      eprint={1409.8356},
      archivePrefix={arXiv},
      primaryClass={math.CO},
      note = {arXiv:1409.8356}
}

@article {H16,
    AUTHOR = {Huang, Jia},
     TITLE = {A tableau approach to the representation theory of 0-{H}ecke
              algebras},
   JOURNAL = {Ann. Comb.},
  FJOURNAL = {Annals of Combinatorics},
    VOLUME = {20},
      YEAR = {2016},
    NUMBER = {4},
     PAGES = {831--868},
}

@article {W67,
    AUTHOR = {West, Robert W.},
     TITLE = {Weak {$H$}-spaces},
   JOURNAL = {J. Math. Mech.},
  FJOURNAL = {J. Math. Mech.},
    VOLUME = {17},
      YEAR = {1967},
     PAGES = {421--431},
   MRCLASS = {55.40},
  MRNUMBER = {216502},
MRREVIEWER = {Edgar\ H.\ Brown, Jr.},
       DOI = {10.1512/iumj.1968.17.17023},
       URL = {https://doi.org/10.1512/iumj.1968.17.17023},
}

@book {may12,
    AUTHOR = {May, J. P. and Ponto, K.},
     TITLE = {More concise algebraic topology},
    SERIES = {Chicago Lectures in Mathematics},
      NOTE = {Localization, completion, and model categories},
 PUBLISHER = {University of Chicago Press, Chicago, IL},
      YEAR = {2012},
     PAGES = {xxviii+514},
      ISBN = {978-0-226-51178-8; 0-226-51178-2},
   MRCLASS = {55-02 (16T05 18G55 55P60)},
  MRNUMBER = {2884233},
MRREVIEWER = {Ismar\ Voli\'c},
}

@article {Facial_structures,
    AUTHOR = {An, Suhyung and Jung, JiYoon and Kim, Sangwook},
     TITLE = {Facial structures of lattice path matroid polytopes},
   JOURNAL = {Discrete Math.},
  FJOURNAL = {Discrete Mathematics},
    VOLUME = {343},
      YEAR = {2020},
    NUMBER = {1},
     PAGES = {111628, 11},
}

@article{BB,
 ISSN = {0003486X, 19398980},
 author = {A. Bialynicki-Birula},
 journal = {Annals of Mathematics},
 number = {3},
 pages = {480--497},
 publisher = {[Annals of Mathematics, Trustees of Princeton University on Behalf of the Annals of Mathematics, Mathematics Department, Princeton University]},
 title = {Some Theorems on Actions of Algebraic Groups},
 volume = {98},
 year = {1973}
}

@article {fulton94,
    AUTHOR = {Fulton, William and Sturmfels, Bernd},
     TITLE = {Intersection theory on toric varieties},
   JOURNAL = {Topology},
  FJOURNAL = {Topology. An International Journal of Mathematics},
    VOLUME = {36},
      YEAR = {1997},
    NUMBER = {2},
     PAGES = {335--353},
}

@misc{strat_poly,
      title={Vertex Posets, Monotone Path Polytopes, and {C}how Polynomials}, 
      author={Mateusz Michałek and Leonid Monin and Botong Wang},
      year={2026},
      eprint={2604.27515},
      archivePrefix={arXiv},
      primaryClass={math.CO},
      url={https://arxiv.org/abs/2604.27515}, 
      note = {arXiv:2604.27515}
}

@misc{struct_prop,
      title={Structural properties of {B}ia{\l}ynicki-{B}irula decompositions}, 
      author={Teddy Gonzales and Chayim Lowen},
      year={2026},
      eprint={2604.27634},
      archivePrefix={arXiv},
      primaryClass={math.AG}, 
      note = {arXiv:2604.27634}
}

@book{cox2011toric,
  title={Toric Varieties},
  author={Cox, D. A. and Little, J. B. and Schenck, H. K.},
  series={Graduate studies in mathematics},
  year={2011},
  publisher={American Mathematical Society}
}

@book{fult93toric,
 author = {Fulton, William},
 publisher = {Princeton University Press},
 series = {Annals of Mathematics Studies},
 title = {Introduction to Toric Varieties},
 year = {1993},
}

@book{fult_int_theory,
  author    = {Fulton, William},
  title     = {Intersection Theory},
  edition   = {2},
  publisher = {Springer New York, NY},
  year      = {1998},
  doi       = {10.1007/978-1-4612-1700-8},
  isbn      = {978-0-387-98549-7}
}

@article{int_theory_spherical,
  author  = {Fulton, William and MacPherson, Robert and Sottile, Frank and Sturmfels, Bernd},
  title   = {Intersection Theory on Spherical Varieties},
  journal = {Journal of Algebraic Geometry},
  volume  = {4},
  number  = {1},
  pages   = {181--193},
  year    = {1995}
}

@book{Oxley,
    author = {Oxley, James},
    title = {Matroid Theory},
    publisher = {Oxford University Press},
    year = {2011},
}

@article {Lucas75,
    AUTHOR = {Lucas, Dean},
     TITLE = {Weak maps of combinatorial geometries},
   JOURNAL = {Trans. Amer. Math. Soc.},
  FJOURNAL = {Transactions of the American Mathematical Society},
    VOLUME = {206},
      YEAR = {1975},
     PAGES = {247--279},
}

@article{haase21,
    AUTHOR = {Haase, Christian and Paffenholz, Andreas and Piechnik, Lindsay
              C. and Santos, Francisco},
     TITLE = {Existence of unimodular triangulations---positive results},
   JOURNAL = {Mem. Amer. Math. Soc.},
  FJOURNAL = {Memoirs of the American Mathematical Society},
    VOLUME = {270},
      YEAR = {2021},
    NUMBER = {1321},
}

@article {BR08,
    AUTHOR = {Baker, Andrew and Richter, Birgit},
     TITLE = {Quasisymmetric functions from a topological point of view},
   JOURNAL = {Math. Scand.},
  FJOURNAL = {Mathematica Scandinavica},
    VOLUME = {103},
      YEAR = {2008},
    NUMBER = {2},
     PAGES = {208--242},
      ISSN = {0025-5521,1903-1807},
   MRCLASS = {55R40 (05E05 55P35 57T05)},
  MRNUMBER = {2484353},
MRREVIEWER = {Beno\^it\ Fresse},
       DOI = {10.7146/math.scand.a-15078},
       URL = {https://doi.org/10.7146/math.scand.a-15078},
}

@incollection {Edmonds,
    AUTHOR = {Edmonds, Jack},
     TITLE = {Submodular functions, matroids, and certain polyhedra},
 BOOKTITLE = {Combinatorial {S}tructures and their {A}pplications ({P}roc. {C}algary {I}nternat. {C}onf., {C}algary, {A}lta., 1969)},
     PAGES = {69--87},
 PUBLISHER = {Gordon and Breach, New York-London-Paris},
      YEAR = {1970},
}

@article {GGMS,
    AUTHOR = {Gel\cprime fand, I. M. and Goresky, R. M. and MacPherson, R.
              D. and Serganova, V. V.},
     TITLE = {Combinatorial geometries, convex polyhedra, and {S}chubert
              cells},
   JOURNAL = {Adv. in Math.},
  FJOURNAL = {Advances in Mathematics},
    VOLUME = {63},
      YEAR = {1987},
    NUMBER = {3},
     PAGES = {301--316},
      ISSN = {0001-8708},
   MRCLASS = {14M15 (05B35 22E45 22E70 32C38 32C45 32M10)},
  MRNUMBER = {877789},
MRREVIEWER = {Hiroaki\ Terao},
       DOI = {10.1016/0001-8708(87)90059-4},
       URL = {https://doi.org/10.1016/0001-8708(87)90059-4},
}

@article {GS87,
    AUTHOR = {Gel\cprime fand, I. M. and Serganova, V. V.},
     TITLE = {Combinatorial geometries and the strata of a torus on
              homogeneous compact manifolds},
   JOURNAL = {Uspekhi Mat. Nauk},
  FJOURNAL = {Akademiya Nauk SSSR i Moskovskoe Matematicheskoe Obshchestvo.
              Uspekhi Matematicheskikh Nauk},
    VOLUME = {42},
      YEAR = {1987},
    NUMBER = {2(254)},
     PAGES = {107--134, 287},
}

@book{MichalekSturmfels2021,
  author    = {Micha{\l}ek, Mateusz and Sturmfels, Bernd},
  title     = {Invitation to Nonlinear Algebra},
  series    = {Graduate Studies in Mathematics},
  volume    = {211},
  publisher = {American Mathematical Society},
  address   = {Providence, Rhode Island},
  year      = {2021},
  isbn      = {978-1-4704-5367-1}
}

@book{SGA1,
  shorthand = {SGA1},
  author    = {Grothendieck, Alexander and Raynaud, Mich{\`e}le},
  title     = {Rev{\^e}tements {\'e}tales et groupe fondamental ({SGA} 1)},
  series    = {Lecture Notes in Mathematics},
  volume    = {224},
  publisher = {Springer-Verlag},
  address   = {Berlin},
  year      = {1971},
}

@incollection {hironaka,
    AUTHOR = {Hironaka, Heisuke},
     TITLE = {Triangulations of algebraic sets},
 BOOKTITLE = {Algebraic geometry ({P}roc. {S}ympos. {P}ure {M}ath., {V}ol.
              29, {H}umboldt {S}tate {U}niv., {A}rcata, {C}alif., 1974)},
    SERIES = {Proc. Sympos. Pure Math.},
    VOLUME = {Vol. 29},
     PAGES = {165--185},
 PUBLISHER = {Amer. Math. Soc., Providence, RI},
      YEAR = {1975},
}

@article {Lojasiewicz,
    AUTHOR = {Lojasiewicz, S.},
     TITLE = {Triangulation of semi-analytic sets},
   JOURNAL = {Ann. Scuola Norm. Sup. Pisa Cl. Sci. (3)},
  FJOURNAL = {Annali della Scuola Normale Superiore di Pisa. Classe di Scienze. Serie III},
    VOLUME = {18},
      YEAR = {1964},
     PAGES = {449--474},
}

@book{RAG,
    AUTHOR = {Bochnak, Jacek and Coste, Michel and Roy, Marie-Fran{\c{c}}oise},
     TITLE = {Real algebraic geometry},
    SERIES = {Ergebnisse der Mathematik und ihrer Grenzgebiete (3) [Results
              in Mathematics and Related Areas (3)]},
    VOLUME = {36},
      NOTE = {Translated from the 1987 French original,
              Revised by the authors},
 PUBLISHER = {Springer-Verlag, Berlin},
      YEAR = {1998},
}

@book{hatcher,
  author    = {Allen Hatcher},
  title     = {Algebraic Topology},
  publisher = {Cambridge University Press},
  year      = {2002}
}

@misc{postnikov06,
      title={Total positivity, {G}rassmannians, and networks}, 
      author={Alexander Postnikov},
      year={2006},
      eprint={math/0609764},
      archivePrefix={arXiv},
      primaryClass={math.CO}, 
      note = {arXiv:math/0609764}
}

@book {TTD,
    AUTHOR = {tom Dieck, Tammo},
     TITLE = {Algebraic topology},
    SERIES = {EMS Textbooks in Mathematics},
 PUBLISHER = {European Mathematical Society (EMS), Z\"urich},
      YEAR = {2008},
}

@article{Knutson_Lam_Speyer_2013, title={Positroid varieties: juggling and geometry}, volume={149}, DOI={10.1112/S0010437X13007240}, number={10}, journal={Compositio Mathematica}, author={Knutson, Allen and Lam, Thomas and Speyer, David E.}, year={2013}, pages={1710–1752}}

@article {oh,
    AUTHOR = {Oh, Suho},
     TITLE = {Positroids and {S}chubert matroids},
   JOURNAL = {J. Combin. Theory Ser. A},
  FJOURNAL = {Journal of Combinatorial Theory. Series A},
    VOLUME = {118},
      YEAR = {2011},
    NUMBER = {8},
     PAGES = {2426--2435},
      ISSN = {0097-3165,1096-0899},
   MRCLASS = {05B35 (52B40)},
  MRNUMBER = {2834184},
MRREVIEWER = {Anna\ de Mier},
       DOI = {10.1016/j.jcta.2011.06.006},
       URL = {https://doi.org/10.1016/j.jcta.2011.06.006},
}

@misc{birkhoff,
      title={The topology of {B}irkhoff varieties}, 
      author={Luke Gutzwiller and Stephen A. Mitchell},
      year={2009},
      eprint={0809.1091},
      archivePrefix={arXiv},
      primaryClass={math.AT},
      note = {arXiv:0809.1091}
}

@book {may99,
    AUTHOR = {May, J. P.},
     TITLE = {A concise course in algebraic topology},
    SERIES = {Chicago Lectures in Mathematics},
 PUBLISHER = {University of Chicago Press, Chicago, IL},
      YEAR = {1999},
}

@article{BB_Example,
  title={Some properties of the decompositions of algebraic varieties determined by actions of a torus},
  author={A. {B}ia{\l}ynicki-{B}irula},
  year={1976},
  journal = {Bull. Acad. Polon. Sci. S{\'e}r. Sci. Math. Astronom. Phys.},
  PAGES = {667--674},
  volume = {24},
  no = {9},
}

@InProceedings{chow_basis,
author={F. Rossell{\'o}-Llompart and S. Xamb{\'o}-Descamps},
title={Computing {C}how groups},
booktitle={Algebraic Geometry {S}undance 1986},
year={1988},
publisher={Springer},
pages={220--234},
}

@book{Fulton1997,
  author    = {Fulton, William},
  title     = {{Y}oung Tableaux: With Applications to Representation Theory and Geometry},
  series    = {London Mathematical Society Student Texts},
  volume    = {35},
  publisher = {Cambridge University Press},
  address   = {Cambridge},
  year      = {1997},
  isbn      = {978-0-521-56724-4}
}

@article{LeeMasudaPark2021,
  author  = {Lee, Eunjeong and Masuda, Mikiya and Park, Seonjeong},
  title   = {Toric {B}ruhat interval polytopes},
  journal = {Journal of Combinatorial Theory, Series A},
  volume  = {179},
  pages   = {105387},
  year    = {2021},
  doi     = {10.1016/j.jcta.2020.105387}
}

@article{LeeMasudaPark2023,
  author  = {Lee, Eunjeong and Masuda, Mikiya and Park, Seonjeong},
  title   = {Toric {R}ichardson varieties of {C}atalan type and
             {W}edderburn--{E}therington numbers},
  journal = {European Journal of Combinatorics},
  volume  = {108},
  pages   = {103617},
  year    = {2023},
  doi     = {10.1016/j.ejc.2022.103617}
}

@article{CanSaha2025,
  author  = {Can, Mahir Bilen and Saha, Pinakinath},
  title   = {Toric {R}ichardson varieties},
  journal = {Communications in Algebra},
  volume  = {53},
  number  = {5},
  pages   = {1770--1790},
  year    = {2025},
  doi     = {10.1080/00927872.2024.2422028}
}

@misc{GKSB26,
  author        = {Gorsky, Eugene and Kim, Soyeon and
                   Sherman-Bennett, Melissa},
  title         = {Unexpected toric {R}ichardson varieties},
  year          = {2026},
  eprint        = {2603.29260},
  archivePrefix = {arXiv},
  primaryClass  = {math.AG},
  note = {arXiv:2603.29260}
}

@incollection {Gessel1984,
    AUTHOR = {Gessel, Ira M.},
     TITLE = {Multipartite {$P$}-partitions and inner products of skew
              {S}chur functions},
 BOOKTITLE = {Combinatorics and algebra ({B}oulder, {C}olo., 1983)},
    SERIES = {Contemp. Math.},
    VOLUME = {34},
     PAGES = {289--317},
 PUBLISHER = {Amer. Math. Soc., Providence, RI},
      YEAR = {1984},
      ISBN = {0-8218-5029-6},
   MRCLASS = {05A17 (20C30)},
  MRNUMBER = {777705},
MRREVIEWER = {J.\ D\'esarm\'enien},
       DOI = {10.1090/conm/034/777705},
       URL = {https://doi.org/10.1090/conm/034/777705},
}

@incollection {brion_aut,
    AUTHOR = {Brion, Michel},
     TITLE = {Automorphism groups of almost homogeneous varieties},
 BOOKTITLE = {Facets of algebraic geometry. {V}ol. {I}},
    SERIES = {London Math. Soc. Lecture Note Ser.},
    VOLUME = {472},
     PAGES = {54--76},
 PUBLISHER = {Cambridge Univ. Press, Cambridge},
      YEAR = {2022},
}

@article {TOTARO_chow,
    AUTHOR = {Totaro, Burt},
     TITLE = {{C}how groups, {C}how cohomology, and linear varieties},
   JOURNAL = {Forum Math. Sigma},
  FJOURNAL = {Forum of Mathematics. Sigma},
    VOLUME = {2},
      YEAR = {2014},
     PAGES = {Paper No. e17, 25},
      ISSN = {2050-5094},
   MRCLASS = {14C15 (14F42 14M20 14M25)},
  MRNUMBER = {3264256},
MRREVIEWER = {Jinhyun\ Park},
       DOI = {10.1017/fms.2014.15},
       URL = {https://doi.org/10.1017/fms.2014.15},
}

@article{purity_paper,
  author  = {Björner, Anders and Ekedahl, Torsten},
  title   = {On the shape of {B}ruhat intervals},
  journal = {Annals of Mathematics},
  volume  = {170},
  number  = {2},
  pages   = {799--817},
  year    = {2009},
  doi     = {10.4007/annals.2009.170.799},
  url     = {https://doi.org/10.4007/annals.2009.170.799}
}

@book {munkres84,
    AUTHOR = {Munkres, James R.},
     TITLE = {Elements of algebraic topology},
 PUBLISHER = {Addison-Wesley Publishing Company, Menlo Park, CA},
      YEAR = {1984},
     PAGES = {ix+454},
      ISBN = {0-201-04586-9},
   MRCLASS = {55-01},
  MRNUMBER = {755006},
MRREVIEWER = {Christopher\ W.\ Stark},
}

@book{vakil,
    AUTHOR = {Vakil, Ravi},
     TITLE = {The rising sea---foundations of algebraic geometry},
 PUBLISHER = {Princeton University Press, Princeton, NJ},
      YEAR = {2025},
     PAGES = {xxiii+662},
      ISBN = {978-0-691-26866-8; 978-0-691-26867-5; 978-0-691-26868-2},
   MRCLASS = {14-01},
  MRNUMBER = {4942570},
}

@incollection {dual_W,
    AUTHOR = {May, J. P.},
     TITLE = {The dual {W}hitehead theorems},
 BOOKTITLE = {Topological topics},
    SERIES = {London Math. Soc. Lecture Note Ser.},
    VOLUME = {86},
     PAGES = {46--54},
 PUBLISHER = {Cambridge Univ. Press, Cambridge},
      YEAR = {1983},
      ISBN = {0-521-27581-4},
   MRCLASS = {55P10},
  MRNUMBER = {827247},
       DOI = {10.1017/CBO9780511600760.004},
       URL = {https://doi.org/10.1017/CBO9780511600760.004},
}

@article {HSTH,
    AUTHOR = {Hirai, Takeshi and Shimomura, Hiroaki and Tatsuuma, Nobuhiko
              and Hirai, Etsuko},
     TITLE = {Inductive limits of topologies, their direct products, and
              problems related to algebraic structures},
   JOURNAL = {J. Math. Kyoto Univ.},
  FJOURNAL = {Journal of Mathematics of Kyoto University},
    VOLUME = {41},
      YEAR = {2001},
    NUMBER = {3},
     PAGES = {475--505},
      ISSN = {0023-608X},
   MRCLASS = {54H11 (54B15)},
  MRNUMBER = {1878717},
MRREVIEWER = {E.\ Colebunders},
       DOI = {10.1215/kjm/1250517614},
       URL = {https://doi.org/10.1215/kjm/1250517614},
}
